\documentclass[11pt, letterpaper]{article}

\usepackage{amsmath, amssymb, amsthm, mathtools}
\usepackage{algorithm}
\usepackage{algpseudocode}
\usepackage{booktabs}
\usepackage{array}
\usepackage{graphicx}
\usepackage[margin=1in]{geometry}
\usepackage{hyperref}
\usepackage{xcolor}
\usepackage[activate={true,nocompatibility},final,tracking=true,kerning=true,factor=1100,stretch=10,shrink=10]{microtype}
\usepackage{float}

\newtheorem{theorem}{Theorem}
\newtheorem{proposition}{Proposition}
\newtheorem{lemma}{Lemma}
\newtheorem{corollary}{Corollary}
\theoremstyle{remark}
\newtheorem{remark}{Remark}
\theoremstyle{definition}
\newtheorem{definition}{Definition}

\newcommand{\R}{\mathbb{R}}
\newcommand{\Iup}{\mathcal{I}_{\mathrm{up}}}
\newcommand{\Idown}{\mathcal{I}_{\mathrm{down}}}
\newcommand{\Aactive}{\mathcal{A}}
\newcommand{\Afrozen}{\mathcal{F}}
\newcommand{\bfu}{\boldsymbol{u}}
\newcommand{\bfq}{\boldsymbol{q}}
\newcommand{\bfy}{\boldsymbol{y}}
\newcommand{\bfx}{\boldsymbol{x}}
\newcommand{\bfG}{\boldsymbol{G}}
\newcommand{\bftau}{\boldsymbol{\tau}}
\newcommand{\bfs}{\boldsymbol{s}}
\newcommand{\onevec}{\mathbf{1}}
\DeclareMathOperator*{\argmax}{arg\,max}
\DeclareMathOperator*{\argmin}{arg\,min}

\algnewcommand\algorithmicforeach{\textbf{for each}}
\algdef{S}[FOR]{ForEach}[1]{\algorithmicforeach\ #1\ \textbf{do}}
\begin{document}
\sloppy

\title{Sequential Minimal Optimization for $\varepsilon$-SVR\\
       with MAPE Loss and Sample-Dependent Box Constraints}
\author{Pablo Benavides-Herrera \and Riemann Ruiz-Cruz \and Juan Diego S\'{a}nchez-Torres}
\date{}
\maketitle

\begin{abstract}
Support vector regression with Mean Absolute Percentage Error (MAPE)
loss is theoretically well-motivated for forecasting applications
where accuracy is evaluated in relative terms, but the
sample-dependent dual box constraints it induces have not been
addressed in the published SMO literature. We derive a Sequential Minimal
Optimization algorithm for this setting and prove a
structural-invariance result: the MAPE modification affects exactly
two components of the SMO iteration --- working-set selection and
analytic-update clipping --- leaving gradient bookkeeping and
curvature computation identical to classical epsilon-SVR. Building
on this invariance, we establish four efficiency improvements
(asymmetric freeze-counters, warm-starting, block working-set
updates of size four, and per-pair tolerance scaling) and resolve
a previously-open convergence problem for the odd-symmetry kernel
variant via adaptive spectral regularization. Numerical validation
against three reference solvers across eleven synthetic
configurations certifies solution agreement within standard
tolerance. Wall-time benchmarks show the present algorithm achieves
the lowest median runtime on every tested configuration against
OSQP, MOSEK, and Clarabel. At production scale, the algorithm
converges on the California Housing benchmark while the patched
LIBSVM reference implementation reaches its iteration ceiling
without satisfying optimality --- demonstrating the practical
necessity of the theoretical efficiency mechanisms. An open-source
R package and an explicit solver-adaptation recipe are provided.
\end{abstract}

\noindent\textbf{Keywords:} support vector regression; sequential minimal
optimization; mean absolute percentage error; sample-dependent box
constraints; working-set selection; shrinking heuristic; symmetric
kernel; convex quadratic programming.

\medskip
\noindent\textbf{MSC2020:} 65K05, 90C25, 62J02, 68T05.

\section{Introduction and motivation}
\label{sec:intro}

Support vector regression (SVR)~\cite{Vapnik1995, Drucker1997, Smola2004}
casts the regression problem as a convex quadratic program (QP) whose
solution is sparse and governed by an $\varepsilon$-insensitive loss. In
applied forecasting domains --- electricity demand
prediction~\cite{Wang2024, Aziz2024, Zhang2024}, demand and supply-chain
management~\cite{Hasan2025}, and short-term financial forecasting --- model
accuracy is routinely reported, and frequently contractually specified, in
terms of the Mean Absolute Percentage Error
(MAPE)~\cite{Makridakis1993, Hyndman2006, Tofallis2015}. Standard SVR,
however, minimizes a translation-invariant loss that treats all residuals
equally regardless of target magnitude, creating a mismatch between the
training objective and the evaluation metric.

Benavides-Herrera et al.~\cite{benavides2025support, benavides2026unified}
address this mismatch by embedding MAPE directly into the SVR primal
formulation. The use of MAPE as
a training loss is theoretically justified by de Myttenaere et
al.~\cite{DeMyttenaere2016}, who prove three foundational properties:
(i)~existence of an optimal MAPE regression model under mild moment
conditions on the target distribution; (ii)~universal consistency of
empirical risk minimization under MAPE loss; and (iii)~equivalence between
MAPE minimization and weighted-mean-absolute-error regression with
sample-specific weights $w_k = 100/y_k$. The structural modification
analyzed in the present paper is the algorithmic counterpart of this
equivalence in the kernelized $\varepsilon$-insensitive setting: the
resulting dual quadratic program has \emph{sample-dependent box constraints}
$\alpha_k, \alpha_k^* \in [0,\, 100C/y_k]$, with larger allowances for
observations with smaller targets.

Efficient large-scale training of SVR relies on Sequential Minimal
Optimization (SMO)~\cite{Platt1998, Platt1999,
KeerthiShevadeBhattacharyyaMurthy2001NeuralComp}, which decomposes the QP
into a sequence of analytically solvable two-variable subproblems and
avoids storing the full kernel matrix. The convergence of SMO for
$\varepsilon$-SVR with \emph{uniform} box constraints is well
established~\cite{FanChenLin2005JMLR, Chang2011}, and the LIBSVM
implementation~\cite{Chang2011} incorporates the second-order working-set
selection rule WSS3 of Fan, Chen, and Lin~\cite{FanChenLin2005JMLR} together
with the shrinking heuristic of Joachims~\cite{Joachims1999}, yielding the
de facto standard SVR solver. Whether and how SMO extends to
\emph{sample-dependent} constraints has not been analyzed in the published
literature: the working-set feasibility sets, the clipping step, and the
shrinking criteria all reference the upper bound $C$, and it is not
immediately clear which of these components require modification and which
remain unchanged. Interior-point solvers such as OSQP~\cite{osqp2020},
MOSEK~\cite{Andersen2000MosekHomogeneousIPM}, and the recent open-source
Clarabel~\cite{GoulartChen2024Clarabel} handle sample-dependent box
constraints natively and are practical for moderate problem sizes; however,
their per-iteration arithmetic scales as $O(N^2)$ in memory and arithmetic
for dense problems, whereas SMO requires only $O(|\Aactive|)$ gradient
operations and $O(1)$ kernel evaluations per iteration (amortized with
caching), making it the method of choice for large-scale training where $N$
reaches tens of thousands and the solution is expected to be sparse.

\paragraph{Contributions.}
This paper makes the following six contributions to the literature on SMO
and percentage-error-aware support vector regression. Each contribution is
keyed to a theorem or section and is verifiable against either an explicit
proof or the validation campaign of \S\ref{sec:validation}. The novelty
status of each contribution --- derivative, plug-in, or strictly novel ---
is stated explicitly so that the reader can locate the boundary between the
prior state of the art and the present paper without ambiguity.

\textbf{(C1) Structural-invariance theorem for the MAPE-SVR SMO}
(Theorem~\ref{thm:invariance} in \S\ref{sec:invariance}): the per-sample
box constraint $C_k = 100C/y_k$ confines its algorithmic effect to exactly
two SMO components --- the working-set candidate sets $\Iup, \Idown$ and
the analytic-update clipping bounds $R_{i^*}, R_{j^*}$ --- leaving the
curvature formula, the incremental gradient bookkeeping, the kernel-cache
logic, and the convergence inheritance from~\cite{FanChenLin2005JMLR}
structurally identical to the standard $\varepsilon$-SVR SMO
of~\cite{Platt1998, Platt1999,
KeerthiShevadeBhattacharyyaMurthy2001NeuralComp, Flake2002, Chang2011}.
This is the cornerstone result of the paper: it converts a problem that
\emph{appears} to require a from-scratch SMO derivation into a localized
two-site modification of an existing solver.

\textbf{(C2) Shrinking-asymmetry result for the Joachims heuristic under
MAPE scaling} (Lemma~\ref{lem:asymmetry} in \S\ref{sec:bias-shrink}): the
four shrinking criteria of~\cite{Joachims1999}, when written in the unified
$\bftau$-coordinate system, exhibit a quantified threshold offset of
$2y_k\varepsilon/100$ between the $\alpha$- and $\alpha^*$-criteria. The
asymmetry scales linearly in the target magnitude $y_k$, so high-target
samples experience greater asymmetry than low-target samples; concretely,
$\alpha_k^* = 0$ variables freeze earlier, and $\alpha_k^* = C_k$ variables
freeze later, than their $\alpha_k$-counterparts. This is the SMO-internal
shadow of the well-known sensitivity of MAPE to large targets.

\textbf{(C3) Plug-in extension to the symmetric-kernel variant
(MAPE-SVR-Sym)} (\S\ref{sec:symmetric}): for shift-invariant or
reflection-symmetric problems --- building on the virtual-example /
regularization equivalence of Niyogi-Girosi-Poggio~\cite{Niyogi1998} and
the general invariant-kernel framework of
Haasdonk-Burkhardt~\cite{Haasdonk2007}, operationalized at the kernel-Gram
level by Espinoza-Suykens-De Moor~\cite{Espinoza2005} --- the substitution
$\Omega \leftarrow \Omega_s = \tfrac{1}{2}(\Omega + a\Omega^*)$ for $a \in
\{+1, -1\}$ adapts Algorithm~\ref{alg:smo} to even or odd target symmetry
without further modification. The case $a = +1$ inherits PSD via Aronszajn
closure~\cite{Aronszajn1950}; the case $a = -1$ is the subject of (C4).

\textbf{(C4) Convergence resolution for the odd-symmetry case}
(Theorem~\ref{thm:spectral} in \S\ref{sec:symmetric}): the previously-open
convergence problem for $a = -1$ --- where $\Omega_s$ may fail to be PSD
and the convergence proof of~\cite{FanChenLin2005JMLR} does not extend
without modification --- is resolved by the \emph{adaptive spectral
regularization} algorithm (Algorithm~\ref{alg:spectral}) of
\S\ref{sec:symmetric}, with explicit perturbation bound
(Lemma~\ref{lem:perturbation}) and empirical validation on configurations
C9 and C10 of \S\ref{sec:validation}. The regularization is monitored at
the spectral level rather than imposed uniformly, so that $a = -1$
instances that happen to be PSD are not perturbed at all.

\textbf{(C5) Four theoretical efficiency improvements}
(Theorems~\ref{thm:asym-freeze}, \ref{thm:warm-start},
\ref{thm:block-k4}, and~\ref{thm:per-sample-tol} in
\S\ref{sec:complexity}): the asymmetry result of (C2) and the WSS3
working-set discipline of (C1) jointly motivate a battery of efficiency
improvements that exploit MAPE-specific structure absent from the
uniform-$C$ literature. These are (i)~\emph{asymmetric freeze-counter} with
separate thresholds $n_{\min}^{\alpha}$ and $n_{\min}^{\alpha^*}$;
(ii)~\emph{cross-validation warm-starting} via inheritance of the dual
variables across hyperparameter folds; (iii)~\emph{block-$k=4$ working
sets} that solve four-variable analytic subproblems instead of two-variable
subproblems --- \emph{this is a strictly novel contribution of this paper,
not a port of any prior result, and it is the first algorithmic departure
from the $k=2$ minimal-feasible-block default of~\cite{Platt1998} in the
published $\varepsilon$-SVR literature}; (iv)~\emph{per-pair tolerance
scaling} that calibrates the KKT-violation tolerance against the
WSS1 convergence pair rather than uniformly to $\bar y$. Cumulative
speedup is workload-dependent and is recalibrated against empirical
measurements of the companion \texttt{psvr}
package~\cite{BenavidesHerrera2026Rpsvr} in Corollary~\ref{cor:speedup}
of \S\ref{sec:complexity}. The California Housing comparison of
\S\ref{sec:large-scale} reports a practical consequence on
real-world data with heterogeneous targets: at the same
hyperparameters, \texttt{psvr-Rcpp} converges in under
$200{,}000$ SMO iterations while standard LIBSVM reaches its
$10^7$-iteration internal cap without satisfying the KKT
criterion --- the per-sample structure addressed by
Theorems~\ref{thm:asym-freeze} and~\ref{thm:per-sample-tol} is
the mechanism that closes this gap.

\textbf{(C6) LIBSVM drop-in modification recipe} (Appendix~\ref{app:libsvm}):
adapting an existing LIBSVM-based $\varepsilon$-SVR solver to the MAPE
variant requires fewer than fifteen lines of C\texttt{++} across five
code-modification sites, with the unchanged remainder constituting a
structural-invariance certificate for any LIBSVM-derived ecosystem (the
C\texttt{++} core, the Python wrapper, the R \texttt{e1071} package, the
MATLAB bundled implementation, the \texttt{kernlab} and various Java/C\#
wrappers). Ports to scikit-learn, \texttt{kernlab} (R), and \texttt{e1071}
(R) are also detailed, lowering the engineering barrier to MAPE-SVR
adoption to near zero for practitioners with existing LIBSVM-based
pipelines.

The companion \texttt{psvr} R package~\cite{BenavidesHerrera2026Rpsvr}
implements (C1)--(C5) end-to-end. The numerical validation of
\S\ref{sec:validation} demonstrates solution agreement to within
$10^{-2}$ infinity-norm against three independent reference QP solvers
--- OSQP~\cite{osqp2020} (operator splitting),
MOSEK~\cite{Andersen2000MosekHomogeneousIPM} (commercial interior-point),
and Clarabel~\cite{GoulartChen2024Clarabel} (open-source interior-point) ---
across eleven synthetic configurations spanning $N \in \{50, 300, 1000\}$,
three percentage-tube widths $\varepsilon \in \{5\%, 10\%, 15\%\}$, and
both kernel variants (MAPE-SVR, MAPE-SVR-Sym with $a = +1$, MAPE-SVR-Sym
with $a = -1$). The tightest configuration is C8 at $9.16 \times
10^{-3}$, attributable to its longest convergence trajectory accumulating
the most floating-point arithmetic; the three reference solvers agree
among themselves to better than $10^{-8}$ on every configuration.
\S\ref{sec:wall-time} extends the validation campaign to a wall-time
comparison against the same three reference solvers across the eleven
configurations and an additional $50 \le N \le 2{,}000$ scaling sweep;
the C\texttt{++}-core engine of \texttt{psvr} reports the lowest median
wall time on every configuration tested.

\paragraph{Novelty positioning.}
Despite extensive prior work on (i)~loss-modified SVR, (ii)~instance-weighted
SVM training, (iii)~SMO decomposition methods with uniform box constraints,
and (iv)~alternative decomposition families, no prior work analyzes the SMO
algorithm under sample-dependent box constraints induced by MAPE loss in
$\varepsilon$-SVR. The theoretical equivalence between MAPE minimization
and weighted-mean-absolute-error regression with weights $1/y_k$ is
established in~\cite{DeMyttenaere2016} but is \emph{not} operationalized at
the SMO level there. The structural-invariance result (C1) and the
algorithmic improvements (C2)--(C5) are therefore the principal additions
to the literature. Section~\ref{sec:related} surveys the state of the art
across these four families and locates the gap that the present paper
closes.

\paragraph{Outline.}
Section~\ref{sec:preliminaries} develops a self-contained Preliminaries
treatment of standard SMO so that the MAPE-SVR adaptation in subsequent
sections can be presented by analogy --- facilitating both pedagogical
understanding and practitioner adoption. Section~\ref{sec:main} (Main
result) collects the technical contributions in six subsections.
The first four establish the algorithmic core:
\S\ref{sec:dual-kkt} develops the dual quadratic program with gradient
decomposition and KKT optimality conditions; \S\ref{sec:smo-inner}
derives the SMO inner loop (working-set selection WSS3 plus the analytic
two-variable update); \S\ref{sec:invariance} states and proves the
structural-invariance Theorem~\ref{thm:invariance} (the cornerstone
result); and \S\ref{sec:bias-shrink} covers bias recovery, the shrinking
heuristic with asymmetry result (Lemma~\ref{lem:asymmetry}), and
Algorithm~\ref{alg:smo} with the convergence
Theorem~\ref{thm:convergence}.
The final two extend and optimize:
\S\ref{sec:symmetric} extends the algorithm to the symmetric-kernel
variant MAPE-SVR-Sym including the adaptive spectral-regularization
Theorem~\ref{thm:spectral}; and \S\ref{sec:complexity} covers
per-iteration complexity together with the four efficiency-improvement
Theorems~\ref{thm:asym-freeze}, \ref{thm:warm-start},
\ref{thm:block-k4}, and~\ref{thm:per-sample-tol} and the
combined-effect Corollary~\ref{cor:speedup}. Section~\ref{sec:validation}
reports numerical validation against three reference QP solvers, including
a fully worked $N=3$ trace as Example~\ref{ex:n3}.
Section~\ref{sec:conclusions} concludes with summary of contributions,
position within the broader research program, limitations, and future
work. Appendix~\ref{app:libsvm} provides the LIBSVM drop-in modification
recipe.

\section{Preliminaries}
\label{sec:preliminaries}

This section establishes the notation, formal objects, convex-analysis
machinery, and the standard $\varepsilon$-SVR + SMO baseline that the
MAPE-SVR derivation in Sections~\ref{sec:dual-kkt}--\ref{sec:complexity}
develops by analogy. Each foundational object --- Mercer kernel, RKHS,
feature map, $\varepsilon$-insensitive loss, MAPE loss, convex QP, box
constraint, Slater point, saddle point, Lagrangian, KKT conditions,
clipping function, active and frozen sets --- is promoted to a formal
Definition or Theorem with a one-sentence intuition and a one-sentence
forward-reference to where it is first used. Readers familiar with
classical SMO may skim Section~\ref{sec:standard-smo} and proceed to
Section~\ref{sec:related}; Sections~\ref{sec:notation},
\ref{sec:convex-anchors}, and \ref{sec:mape-setup} are consulted by
reference from later sections.

\subsection{Notation and Setting}
\label{sec:notation}

\paragraph{Training set, input and target spaces.}
Throughout the paper, the training data is the finite collection
\begin{equation}
  \mathcal{D} = \{(\bfx_k, y_k)\}_{k=1}^{N}, \qquad \bfx_k \in \mathcal{X}, \qquad y_k \in \mathcal{Y},
\end{equation}
with \emph{input space} $\mathcal{X} \subseteq \R^p$ and \emph{target space}
$\mathcal{Y} \subseteq \R_+$. The strict positivity of every $y_k$ is
required for the MAPE loss (Definition~\ref{def:mape-loss}) to be finite;
the standard $\varepsilon$-SVR primal of
Section~\ref{sec:standard-eps-svr} admits any $y_k \in \R$, so $\mathcal{Y}
\subseteq \R_+$ is specific to the MAPE adaptation in
Section~\ref{sec:mape-setup} and Section~\ref{sec:dual-kkt}. Sample sizes
range from $N = 50$ (smallest synthetic configuration of
Section~\ref{sec:validation}) to $N \approx 10^5$ (practical SMO upper
limit; beyond this, dual coordinate descent~\cite{Hsieh2008ICML,
Ho2012JMLR} is preferable).

\begin{definition}[Mercer kernel]
\label{def:mercer-kernel}
A \emph{Mercer kernel} on $\mathcal{X}$ is a symmetric function $K :
\mathcal{X} \times \mathcal{X} \to \R$ such that for every finite
collection $\{\bfx_1, \ldots, \bfx_n\} \subset \mathcal{X}$ the
\emph{Gram matrix} $\Omega \in \R^{n \times n}$ defined by $\Omega_{ij}
:= K(\bfx_i, \bfx_j)$ is symmetric and positive semi-definite (PSD).
\end{definition}

\textit{Intuition.} Symmetry plus positive semi-definiteness on every
finite Gram block is the discrete characterization of an inner-product
structure in some implicit high-dimensional feature space; this is
precisely the structure that the kernel trick exploits.
\textit{Forward-reference.} The Mercer property of $K$ is invoked in
Section~\ref{sec:standard-eps-svr} (the $\varepsilon$-SVR primal-to-dual
derivation), in Section~\ref{sec:dual-kkt} (PSD of the dual Hessian $P =
[\Omega, -\Omega; -\Omega, \Omega]$), and in
Section~\ref{sec:symmetric} (PSD analysis of the symmetrized kernel
$\Omega_s$). The original characterization is due to Mercer's theorem on
integral operators~\cite[\S 4.6, Theorem 4.49]{Steinwart2008}.

\begin{definition}[Reproducing-kernel Hilbert space]
\label{def:rkhs}
Let $K$ be a Mercer kernel on $\mathcal{X}$. The
\emph{reproducing-kernel Hilbert space} (RKHS) associated with $K$,
denoted $\mathcal{H} = \mathcal{H}_K$, is the unique Hilbert space of
functions $f : \mathcal{X} \to \R$ that contains every section
$K(\bfx, \cdot)$ as a member ($\bfx \in \mathcal{X}$) and satisfies
the \emph{reproducing property}
\begin{equation}
  f(\bfx) = \langle f,\, K(\bfx, \cdot) \rangle_{\mathcal{H}}, \qquad \forall f \in \mathcal{H}, \quad \forall \bfx \in \mathcal{X}.
  \label{eq:reproducing}
\end{equation}
\end{definition}

\textit{Intuition.} Point evaluation $f \mapsto f(\bfx)$ becomes a
continuous linear functional represented by the section $K(\bfx,
\cdot) \in \mathcal{H}$; this is the property that makes pointwise
prediction well defined for every $f \in \mathcal{H}$.
\textit{Forward-reference.} The RKHS structure is the implicit
hypothesis class for both the $\varepsilon$-SVR primal of
Section~\ref{sec:standard-eps-svr} (Definition~\ref{def:eps-svr-primal})
and its MAPE analog of Section~\ref{sec:mape-setup}; the representer
theorem of Schölkopf-Herbrich-Smola~\cite{Scholkopf2001} guarantees that
the empirical risk minimizer admits a finite kernel expansion in $N$
training-point sections, which is the structural reason the dual is
finite-dimensional in $\R^{2N}$. Canonical references are
Aronszajn~\cite{Aronszajn1950} and
Steinwart-Christmann~\cite[\S 4.2]{Steinwart2008}.

\begin{definition}[Feature map]
\label{def:feature-map}
A \emph{feature map} of the Mercer kernel $K$ is a function $\varphi :
\mathcal{X} \to \mathcal{H}$ into the RKHS $\mathcal{H}$ such that
\begin{equation}
  K(\bfx, \bfx') = \langle \varphi(\bfx),\, \varphi(\bfx') \rangle_{\mathcal{H}}, \qquad \forall \bfx, \bfx' \in \mathcal{X}.
  \label{eq:feature-map}
\end{equation}
The \emph{canonical feature map} is $\varphi(\bfx) := K(\bfx, \cdot)
\in \mathcal{H}$, which satisfies~\eqref{eq:feature-map} by direct
application of the reproducing property~\eqref{eq:reproducing}.
\end{definition}

\textit{Intuition.} The feature map lifts the input data into a Hilbert
space where the kernel is realized as an inner product; this is the
formal substrate of the \emph{kernel trick} --- every algorithm that uses
inputs only through pairwise inner products can be kernelized by
replacing $\langle \bfx, \bfx' \rangle$ with $K(\bfx, \bfx')$.
\textit{Forward-reference.} The feature map appears explicitly in the
$\varepsilon$-SVR primal (Definition~\ref{def:eps-svr-primal}) through
the regression function $f(\bfx) = \langle w, \varphi(\bfx)\rangle +
b$, and is eliminated when passing to the dual by the kernel
identity~\eqref{eq:feature-map}.

\begin{definition}[$\varepsilon$-insensitive loss]
\label{def:eps-loss}
For $\varepsilon \ge 0$, the \emph{$\varepsilon$-insensitive loss} of a
residual $r \in \R$ is
\begin{equation}
  \ell_\varepsilon(r) := \max(0,\, |r| - \varepsilon).
  \label{eq:eps-loss}
\end{equation}
For a regression model $f$ and a training pair $(\bfx, y)$,
$\ell_\varepsilon$ is evaluated at $r = y - f(\bfx)$.
\end{definition}

\textit{Intuition.} Residuals smaller in magnitude than $\varepsilon$
incur zero loss (the \emph{$\varepsilon$-tube}), while larger residuals
are penalized linearly with slope $1$; this is the original loss of
Vapnik~\cite{Vapnik1995} and produces sparse solutions because every
training point inside the tube has zero subgradient and therefore exits
the support set. \textit{Forward-reference.} The
$\varepsilon$-insensitive loss is the loss functional of the standard
$\varepsilon$-SVR primal (Definition~\ref{def:eps-svr-primal}) and its
modification to the MAPE setting in
Definition~\ref{def:mape-loss}~+~Section~\ref{sec:mape-setup}.

\begin{definition}[MAPE loss]
\label{def:mape-loss}
For a strictly positive target $y > 0$ and a prediction $\hat{y} \in
\R$, the \emph{mean absolute percentage error loss} of a single
residual is
\begin{equation}
  \ell_{\mathrm{MAPE}}(\hat{y},\, y) := \frac{|y - \hat{y}|}{y} \cdot 100.
  \label{eq:mape-loss}
\end{equation}
The \emph{$\varepsilon$-insensitive percentage residual} loss, used in
the MAPE-SVR primal of Sections~\ref{sec:mape-setup}
and~\ref{sec:dual-kkt}, is the composition $\ell_\varepsilon \circ
\ell_{\mathrm{MAPE}}$ applied to the percentage residual: $\max(0, \,
\ell_{\mathrm{MAPE}}(\hat y, y) - \varepsilon)$, with $\varepsilon$ now
measured in \emph{percentage points} rather than in the units of $y$.
\end{definition}

\textit{Intuition.} MAPE is the percentage analog of MAE --- a residual
is reported as a fraction of the magnitude of the target rather than in
absolute units, which makes the loss scale-invariant and dimensionless
and therefore directly comparable across forecasting problems with very
different target magnitudes. \textit{Forward-reference.} The MAPE loss
is the substituted loss of the MAPE-SVR primal
(Definition~\ref{def:mape-svr-primal} in Section~\ref{sec:mape-setup})
and produces the sample-dependent box constraint $C_k = 100C/y_k$ of
Definition~\ref{def:mape-svr-dual}. The theoretical justification for
using MAPE as a regression loss --- in particular the existence of an
optimal MAPE regression model under mild moment conditions and the
equivalence of MAPE minimization to weighted-MAE regression with
weights $w_k = 100/y_k$ --- is established by de Myttenaere et
al.~\cite{DeMyttenaere2016}; see Section~\ref{sec:related} for a
detailed survey.

\begin{definition}[Percentage residual]
\label{def:perc-residual}
The \emph{percentage residual} of a regression function $f$ at training
pair $(\bfx_k, y_k)$ with $y_k > 0$ is
\begin{equation}
  r_k^{\%}(f) := \frac{y_k - f(\bfx_k)}{y_k} \cdot 100, \qquad |r_k^{\%}(f)| = \ell_{\mathrm{MAPE}}(f(\bfx_k), y_k).
  \label{eq:perc-residual}
\end{equation}
\end{definition}

\textit{Intuition.} The percentage residual is the \emph{signed} version
of the MAPE loss: it carries the sign of $y_k - f(\bfx_k)$ and lives in
$\R$ rather than in $\R_+$, which makes the symmetric tube
$|r_k^{\%}| \le \varepsilon$ split naturally into the upper-tube
constraint $r_k^{\%} \le \varepsilon$ and the lower-tube constraint
$r_k^{\%} \ge -\varepsilon$ --- the two-sided form needed for the dual
variables $\alpha_k, \alpha_k^*$ in Section~\ref{sec:mape-setup}.
\textit{Forward-reference.} The percentage-residual decomposition is the
structural starting point for the MAPE-SVR primal-to-dual derivation
(Proposition~\ref{prop:mape-dual} in Section~\ref{sec:mape-setup}).

\begin{definition}[Sign vector]
\label{def:sign-vector}
The \emph{sign vector} is $\bfs \in \{+1, -1\}^{2N}$ defined componentwise by
\begin{equation}
  s_i := \begin{cases} +1, & 1 \le i \le N \quad (\text{$\alpha$-block: upper-tube multipliers})\\ -1, & N+1 \le i \le 2N \quad (\text{$\alpha^*$-block: lower-tube multipliers}). \end{cases}
  \label{eq:sign}
\end{equation}
\end{definition}

\textit{Intuition.} The sign vector encodes whether dual index $i$ is
the upper-tube or lower-tube constraint of training point $k(i)$; the
signed-effective gradient $\tau_i := -s_i G_i$
(Section~\ref{sec:dual-kkt}) collapses the two cases into one
monotonicity criterion. \textit{Forward-reference.} The sign vector is
the notation making the SMO machinery sign-block-invariant ---
see Proposition~\ref{prop:curvature-invariance} of
Section~\ref{sec:smo-inner} (curvature invariance) and
Section~\ref{sec:standard-smo} (gradient-update derivation).

\begin{definition}[Training-index map]
\label{def:k-of-i}
The \emph{training-index map} $k : \{1, \ldots, 2N\} \to \{1, \ldots, N\}$ is
\begin{equation}
  k(i) := \begin{cases} i, & 1 \le i \le N\\ i - N, & N+1 \le i \le 2N. \end{cases}
  \label{eq:k-of-i}
\end{equation}
\end{definition}

\textit{Intuition.} $i \mapsto k(i)$ associates dual variable $i$ with
the training point that produces it; both $\alpha_k$ ($i = k$) and
$\alpha_k^*$ ($i = N+k$) come from $(\bfx_k, y_k)$, and quantities
like the kernel column $\Omega_{:, k(i)}$, the target $y_{k(i)}$, and
the per-sample bound $C_{k(i)}$ depend only on $k(i)$.
\textit{Forward-reference.} Used throughout the SMO machinery:
kernel-column access in Section~\ref{sec:standard-smo}, bound lookup in
Definition~\ref{def:mape-svr-dual} of Section~\ref{sec:mape-setup},
curvature formula $\eta = \Omega_{pp} - 2\Omega_{pq} + \Omega_{qq}$ with
$p = k(i^*), q = k(j^*)$ in Proposition~\ref{prop:curvature-invariance}
of Section~\ref{sec:smo-inner}.

\subsection{Standard \texorpdfstring{$\varepsilon$-SVR}{epsilon-SVR} Primal and Dual}
\label{sec:standard-eps-svr}

\begin{definition}[Standard $\varepsilon$-SVR primal]
\label{def:eps-svr-primal}
The \emph{standard $\varepsilon$-SVR primal} of
Vapnik~\cite{Vapnik1995}, Drucker et al.~\cite{Drucker1997}, and
Smola-Schölkopf~\cite{Smola2004}, with regularization parameter $C >
0$, tube width $\varepsilon > 0$, feature map $\varphi$ of Mercer
kernel $K$, and slacks $\xi_k, \xi_k^* \ge 0$, is
\begin{equation}
  \min_{w \in \mathcal{H},\, b \in \R,\, \xi, \xi^* \ge 0}\; \tfrac{1}{2}\|w\|_{\mathcal{H}}^2 + C\sum_{k=1}^{N}(\xi_k + \xi_k^*)
  \label{eq:eps-svr-primal}
\end{equation}
subject to, for every $k = 1, \ldots, N$,
\begin{equation*}
  y_k - \langle w, \varphi(\bfx_k)\rangle_{\mathcal{H}} - b \le \varepsilon + \xi_k, \qquad \langle w, \varphi(\bfx_k)\rangle_{\mathcal{H}} + b - y_k \le \varepsilon + \xi_k^*.
\end{equation*}
\end{definition}

\textit{Intuition.} Minimize the regularizer $\tfrac{1}{2}\|w\|^2$
subject to every training residual being within $\varepsilon$ of zero,
paying linearly for violations via the slacks; $C$ controls the
regularization-fit trade-off. \textit{Forward-reference.} The MAPE-SVR
primal (Definition~\ref{def:mape-svr-primal}) modifies this by
replacing the absolute residual with the percentage residual inside the
tube constraint.

\begin{theorem}[Mercer's representation]
\label{thm:mercer}
Let $K$ be a continuous Mercer kernel on a compact $\mathcal{X}
\subseteq \R^p$. Then there exist a Hilbert space $\mathcal{H}$, a
feature map $\varphi : \mathcal{X} \to \mathcal{H}$, and an orthonormal
expansion $\{\sqrt{\lambda_j}\,\psi_j(\cdot)\}_{j \ge 1}$ such that
\begin{equation}
  K(\bfx, \bfx') = \langle \varphi(\bfx), \varphi(\bfx')\rangle_{\mathcal{H}} = \sum_{j=1}^{\infty}\lambda_j\,\psi_j(\bfx)\,\psi_j(\bfx'), \qquad \lambda_j \ge 0,
  \label{eq:mercer}
\end{equation}
where $\{\lambda_j, \psi_j\}$ are the eigenpairs of the integral
operator $T_K f(\bfx) := \int_{\mathcal{X}} K(\bfx, \bfx')
f(\bfx')\,\mathrm{d}\bfx'$ on $L^2(\mathcal{X})$.
\end{theorem}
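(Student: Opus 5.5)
The plan is the classical route through the spectral theorem for compact self-adjoint operators. Since $\mathcal{X}$ is compact and $K$ is continuous, $K$ is bounded and square-integrable on $\mathcal{X}\times\mathcal{X}$. The integral operator $T_K$ is therefore Hilbert--Schmidt on $L^2(\mathcal{X})$, hence compact, and it is self-adjoint because $K$ is symmetric.

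\textbf{Step 1 (spectral decomposition).} The spectral theorem gives an orthonormal system $\{\psi_j\}$ of eigenfunctions with real eigenvalues $\lambda_j \to 0$. It also gives the decomposition $L^2(\mathcal{X}) = \ker T_K \oplus \overline{\operatorname{span}}\{\psi_j\}$.

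\textbf{Step 2 ($\lambda_j \ge 0$).} For continuous $f$, approximate $\int\!\!\int K(\bfx,\bfx')f(\bfx)f(\bfx')\,\mathrm{d}\bfx\,\mathrm{d}\bfx'$ by Riemann sums $\sum_{a,b} K(\bfx_a,\bfx_b)f(\bfx_a)f(\bfx_b)w_aw_b$. Each such sum is nonnegative by the Gram-matrix PSD property of Definition~\ref{def:mercer-kernel}. Passing to the limit and using density of $C(\mathcal{X})$ in $L^2$ gives $\langle T_K f,f\rangle \ge 0$, so every $\lambda_j \ge 0$. Since $\psi_j = \lambda_j^{-1}T_K\psi_j$ for $\lambda_j>0$, and $T_K$ maps $L^2$ into $C(\mathcal{X})$ by uniform continuity of $K$, each such $\psi_j$ may be taken continuous.

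\textbf{Step 3 (uniform convergence of the series).} This is the main obstacle: upgrading $L^2$ convergence of $\sum_j \lambda_j\psi_j(\bfx)\psi_j(\bfx')$ to pointwise, indeed uniform, convergence. Define the remainder $K_n := K - \sum_{j\le n}\lambda_j\psi_j\otimes\psi_j$. This is again a continuous symmetric kernel, and its operator $T_K - \sum_{j\le n}\lambda_j\psi_j\langle\psi_j,\cdot\rangle$ is positive. Applying the Step~2 argument in reverse (a continuous kernel with positive operator has a nonnegative diagonal, via approximate delta functions) gives $K_n(\bfx,\bfx)\ge 0$. Hence $\sum_{j\le n}\lambda_j\psi_j(\bfx)^2 \le K(\bfx,\bfx)$, so the diagonal series converges pointwise. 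Cauchy--Schwarz then gives pointwise convergence off the diagonal. The limit $\tilde K$ has the same operator as $K$; the difference kernel is continuous with zero operator and therefore vanishes identically, at least on the diagonal. This yields $\sum_j\lambda_j\psi_j(\bfx)^2 = K(\bfx,\bfx)$. Dini's theorem on compact $\mathcal{X}$, for a monotone sequence of continuous functions converging to a continuous limit, upgrades this to uniform convergence, and Cauchy--Schwarz transfers uniformity to the full kernel.

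\textbf{Step 4 (feature map).} Take $\mathcal{H} = \ell^2$ and $\varphi(\bfx) := (\sqrt{\lambda_j}\,\psi_j(\bfx))_{j\ge1}$. This is well defined because $\sum_j\lambda_j\psi_j(\bfx)^2 = K(\bfx,\bfx) < \infty$, and then $\langle\varphi(\bfx),\varphi(\bfx')\rangle_{\ell^2} = \sum_j\lambda_j\psi_j(\bfx)\psi_j(\bfx') = K(\bfx,\bfx')$, which is exactly \eqref{eq:mercer}. Alternatively, one can cite~\cite[Theorem 4.49]{Steinwart2008} for Steps 1--3. The paper itself treats the result as classical, so a citation-level proof is acceptable.

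Step 3 is the delicate part. The identification of the pointwise limit with $K$ everywhere, not just almost everywhere, rests on continuity together with the positivity-of-diagonal argument.
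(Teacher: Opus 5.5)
Your proposal follows the same classical route as the paper. The paper's proof only cites Steinwart--Christmann's Mercer theorem and sketches compactness of $T_K$ $\Rightarrow$ discrete nonnegative spectrum $\Rightarrow$ the $\ell^2$ lifting $\varphi(\bfx)=(\sqrt{\lambda_j}\,\psi_j(\bfx))_{j\ge1}$. Your Steps~1--4 supply the Hilbert--Schmidt, positivity, Dini and lifting details that the citation hides. There is, however, one hypothesis you use silently in Step~3, and the statement omits it as well.

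Two moves in Step~3 need every nonempty relatively open subset of $\mathcal{X}$ to have positive Lebesgue measure: building approximate delta functions at $\bfx$, and passing from ``zero in $L^2$'' to ``zero everywhere'' for a continuous kernel or function. In other words, Lebesgue measure on $\mathcal{X}$ must have full support, which is the condition $\operatorname{supp}\mu=\mathcal{X}$ in the cited theorem. Without it the statement is false. For finite $\mathcal{X}$ one has $L^2(\mathcal{X})=\{0\}$, so there are no eigenpairs at all. For $\mathcal{X}=[0,1]\cup\{2\}$, take $K(\bfx,\bfx')=\mathbf{1}_{\{2\}}(\bfx)\,\mathbf{1}_{\{2\}}(\bfx')$; it is continuous and PSD because $2$ is isolated, yet $T_K=0$ while $K(2,2)=1$. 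So your closing remark is slightly off: the everywhere-identification rests on continuity \emph{plus full support}. Add that hypothesis explicitly, e.g.\ $\mathcal{X}=\overline{\operatorname{int}\mathcal{X}}$.

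A smaller point: before Dini you do not yet know that $\tilde K$ is jointly continuous. What Cauchy--Schwarz gives, via $\sum_j\lambda_j\psi_j(\bfx')^2\le\sup_{\boldsymbol{z}\in\mathcal{X}}K(\boldsymbol{z},\boldsymbol{z})$, is uniform convergence in $\bfx'$ for each fixed $\bfx$, i.e.\ separate continuity. So run the identification row by row, using your continuous representatives $\psi_j=\lambda_j^{-1}T_K\psi_j$:
\begin{itemize}
\item set $g_{\bfx}:=K(\bfx,\cdot)-\tilde K(\bfx,\cdot)$, which is continuous;
\item $\langle g_{\bfx},\psi_j\rangle=\lambda_j\psi_j(\bfx)-\lambda_j\psi_j(\bfx)=0$ for every $j$, so $g_{\bfx}\in\ker T_K$;
\item $\|g_{\bfx}\|_{L^2}^2=(T_Kg_{\bfx})(\bfx)$, which is $0$ because the continuous function $T_Kg_{\bfx}$ vanishes a.e.;
\item full support is used in the previous item and again to conclude $g_{\bfx}\equiv 0$.
\end{itemize}
This gives $\tilde K=K$ everywhere, after which your Dini step delivers uniform convergence.
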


\begin{proof}
Steinwart-Christmann~\cite[\S 4.6, Theorem 4.49]{Steinwart2008};
Aronszajn~\cite{Aronszajn1950} for the RKHS construction. Compactness
of $T_K$ (from continuity of $K$ on compact $\mathcal{X}$) yields a
discrete non-negative spectrum, and $\bfx \mapsto \varphi(\bfx) :=
(\sqrt{\lambda_j}\,\psi_j(\bfx))_{j \ge 1} \in \ell^2$ provides the
lifting.
\end{proof}

\textit{Forward-reference.} Theorem~\ref{thm:mercer} is the rigorous
foundation for the kernel trick: inner products $\langle \varphi(\bfx_k),
\varphi(\bfx_\ell)\rangle$ in the dual derivation become $K(\bfx_k,
\bfx_\ell) = \Omega_{k\ell}$, computable without materializing
$\varphi$ or $\mathcal{H}$.

\paragraph{Derivation of the dual.}
Introduce dual multipliers $\alpha_k \ge 0$ for the upper-tube
constraint, $\alpha_k^* \ge 0$ for the lower-tube constraint, and
$\mu_k, \mu_k^* \ge 0$ for the slack non-negativity constraints $\xi_k
\ge 0, \xi_k^* \ge 0$. The Lagrangian
(Definition~\ref{def:lagrangian} of
Section~\ref{sec:convex-anchors}) is
\begin{multline*}
  \mathcal{L}(w, b, \xi, \xi^*; \alpha, \alpha^*, \mu, \mu^*) = \tfrac{1}{2}\|w\|^2 + C\sum_{k=1}^{N}(\xi_k + \xi_k^*) \\
  + \sum_{k=1}^{N}\alpha_k\!\left[y_k - \langle w, \varphi(\bfx_k)\rangle - b - \varepsilon - \xi_k\right] \\
  + \sum_{k=1}^{N}\alpha_k^*\!\left[\langle w, \varphi(\bfx_k)\rangle + b - y_k - \varepsilon - \xi_k^*\right] - \sum_{k=1}^{N}(\mu_k\xi_k + \mu_k^*\xi_k^*).
\end{multline*}

Stationarity with respect to the primal variables yields four conditions:
\begin{align}
  \frac{\partial\mathcal{L}}{\partial w} &= w - \sum_{k=1}^{N}(\alpha_k - \alpha_k^*)\varphi(\bfx_k) = 0 \;\Longrightarrow\; w = \sum_{k=1}^{N}(\alpha_k - \alpha_k^*)\varphi(\bfx_k), \label{eq:w-stationarity}\\
  \frac{\partial\mathcal{L}}{\partial b} &= \sum_{k=1}^{N}(\alpha_k^* - \alpha_k) = 0 \;\Longrightarrow\; \sum_{k=1}^{N}(\alpha_k - \alpha_k^*) = 0, \label{eq:b-stationarity}
\end{align}
\begin{equation*}
  \frac{\partial\mathcal{L}}{\partial\xi_k} = C - \alpha_k - \mu_k = 0 \;\Longrightarrow\; \alpha_k = C - \mu_k \in [0, C], \quad \frac{\partial\mathcal{L}}{\partial\xi_k^*} = C - \alpha_k^* - \mu_k^* = 0 \;\Longrightarrow\; \alpha_k^* \in [0, C].
\end{equation*}
The first identity~\eqref{eq:w-stationarity} is the \emph{representer
expansion} of the optimal $w$; substituting it back together with the
kernel identity $\langle \varphi(\bfx_k), \varphi(\bfx_\ell)\rangle =
K(\bfx_k, \bfx_\ell) = \Omega_{k\ell}$ from~\eqref{eq:feature-map}
yields the dual problem.

\begin{definition}[Standard $\varepsilon$-SVR dual]
\label{def:eps-svr-dual}
The \emph{standard $\varepsilon$-SVR dual} is the convex quadratic program
\begin{equation}
  \min_{\bfu \in \R^{2N}}\; \tfrac{1}{2}\bfu^\top P\,\bfu + \bfq^\top\bfu \quad\text{s.t.}\quad [\onevec^\top, -\onevec^\top]\,\bfu = 0, \quad 0 \le u_i \le C,\; i = 1, \ldots, 2N,
  \label{eq:eps-svr-dual}
\end{equation}
with stacked dual vector $\bfu = [\alpha_1, \ldots, \alpha_N, \alpha_1^*,
\ldots, \alpha_N^*]^\top \in \R^{2N}$, \emph{block Hessian}
$P = \begin{bmatrix} \Omega & -\Omega \\ -\Omega & \Omega \end{bmatrix}
\in \R^{2N \times 2N}$ where $\Omega \in \R^{N \times N}$ is the kernel
Gram matrix (Definition~\ref{def:mercer-kernel}) of the training inputs,
and \emph{linear coefficient} $\bfq = [\varepsilon\onevec - \bfy,\,
\varepsilon\onevec + \bfy]^\top \in \R^{2N}$ where $\bfy = (y_1,
\ldots, y_N)^\top$.
\end{definition}

\textit{Intuition.} The primal regression problem in $\mathcal{H}$
(potentially infinite-dimensional) is recast as a finite-dimensional
convex QP in $\R^{2N}$, with the kernel matrix $\Omega$ encoding the
geometry of the data via inner products in feature space; the equality
constraint $[\onevec^\top, -\onevec^\top]\bfu = 0$ is the dual image of
the bias-stationarity condition~\eqref{eq:b-stationarity}, and the box
constraints encode the trade-off between regularization and fit.
\textit{Forward-reference.} This is the dual that the MAPE-SVR
formulation in Definition~\ref{def:mape-svr-dual} of
Section~\ref{sec:mape-setup} modifies --- only the linear coefficient
$\bfq$ changes and the box constraint becomes sample-dependent.

\begin{remark}[Sign-convention note for the by-analogy adaptation]
\label{rem:sign-convention}
In the standard $\varepsilon$-SVR dual above, $\bfq$ has signs
$[+\varepsilon - y_k,\, +\varepsilon + y_k]$. In the MAPE-SVR
dual~\eqref{eq:mape-dual} of Section~\ref{sec:dual-kkt}, the analogue
becomes $\bfq = [\bfy(\varepsilon/100 - 1),\, \bfy(\varepsilon/100 +
1)] = [-\bfy(1 - \varepsilon/100),\, +\bfy(1 + \varepsilon/100)]$.
Two structural changes are visible: (i) the sign in front of $y_k$ flips
because the MAPE constraint is rearranged as $|y_k - f(\bfx_k)| \le
(y_k/100)(\varepsilon + \xi_k)$, multiplying both sides by $y_k/100$ and
re-grouping; (ii) every term carries the additional factor $y_k/100$,
which on the box-constraint side produces the sample-dependent bound
$C_k = 100C/y_k$ derived in Section~\ref{sec:mape-setup}
(Proposition~\ref{prop:mape-dual}).
\end{remark}

\paragraph{Prediction formula.}
The model prediction at a new input $\bfx$ is recovered via the
representer expansion~\eqref{eq:w-stationarity}:
\begin{equation}
  f(\bfx) = \langle w, \varphi(\bfx)\rangle_{\mathcal{H}} + b = \sum_{k=1}^{N}(\alpha_k - \alpha_k^*)\,K(\bfx_k, \bfx) + b.
  \label{eq:prediction}
\end{equation}
The bias $b$ is recovered post-convergence as the midpoint of the
converged dual-threshold interval (see Sections~\ref{sec:bias-shrink}
and~\ref{sec:standard-smo}).

\subsection{Convex-Analysis Anchors}
\label{sec:convex-anchors}

This subsection collects the convex-analysis machinery used implicitly
throughout --- Lagrangian, KKT, strong duality, Slater point, saddle
point, Sion's minimax --- formalizing the substrate of
Section~\ref{sec:dual-kkt} and Sections~\ref{sec:smo-inner}--\ref{sec:complexity}.
Canonical references: Rockafellar~\cite{Rockafellar1970},
Boyd-Vandenberghe~\cite[\S 5.1--5.5]{BoydVandenberghe2004},
Bertsekas-Nedi\'{c}-Ozdaglar~\cite{BertsekasNedicOzdaglar2003}.

\begin{definition}[Convex set, convex function, convex QP]
\label{def:convex-qp}
A set $S \subseteq \R^n$ is \emph{convex} if $\theta\bfx + (1-\theta)\bfy
\in S$ for every $\bfx, \bfy \in S$ and $\theta \in [0, 1]$. A function
$f : S \to \R$ on a convex set $S$ is \emph{convex} if $f(\theta\bfx +
(1-\theta)\bfy) \le \theta f(\bfx) + (1-\theta)f(\bfy)$. A
\emph{convex quadratic program} (convex QP) is the optimization problem
\begin{equation}
  \min_{\bfu \in \R^n}\; \tfrac{1}{2}\bfu^\top P\bfu + \bfq^\top\bfu \quad\text{s.t.}\quad A\bfu = \boldsymbol{b}, \quad L_i \le u_i \le U_i\;(i = 1, \ldots, n),
  \label{eq:convex-qp}
\end{equation}
where $P \in \R^{n \times n}$ is symmetric positive semi-definite, $A
\in \R^{m \times n}$, $\boldsymbol{b} \in \R^m$, and $L_i \le U_i$ for
every $i$.
\end{definition}

\textit{Intuition.} The convex QP is the canonical form of the dual
problems studied in this paper --- a convex quadratic objective, a single
linear equality, and component-wise box constraints; the entire SMO
algorithm of Sections~\ref{sec:dual-kkt}--\ref{sec:complexity} operates
on this form. \textit{Forward-reference.} The standard $\varepsilon$-SVR
dual (Definition~\ref{def:eps-svr-dual}) and the MAPE-SVR dual
(Definition~\ref{def:mape-svr-dual}) are both convex QPs in this sense,
with $n = 2N$, $m = 1$, the equality constraint $[\onevec^\top,
-\onevec^\top]\bfu = 0$, and either uniform $[0, C]$ bounds (standard)
or sample-dependent $[0, C_k]$ bounds (MAPE).

\begin{definition}[Box constraint]
\label{def:box}
A \emph{box constraint} is a feasibility region of the form
\begin{equation}
  \{\bfu \in \R^n : L_i \le u_i \le U_i, \; i = 1, \ldots, n\} = \prod_{i=1}^{n}[L_i, U_i],
  \label{eq:box}
\end{equation}
i.e., a Cartesian product of closed intervals. The \emph{uniform box}
has $L_i = L$ and $U_i = U$ for every $i$; the
\emph{sample-dependent box} allows $U_i$ to vary with $i$ (and likewise
for $L_i$).
\end{definition}

\textit{Intuition.} Box constraints are the simplest non-trivial
component-wise feasibility region and the only kind that appears in the
dual problems of this paper; the box structure makes the two-variable
analytic update (Section~\ref{sec:smo-inner}) closed-form via clipping.
\textit{Forward-reference.} The standard $\varepsilon$-SVR dual
(Definition~\ref{def:eps-svr-dual}) has the uniform box $[0, C]$; the
MAPE-SVR dual (Definition~\ref{def:mape-svr-dual}) has the
\emph{sample-dependent} box $[0, C_k]$ with $C_k = 100C/y_k$, which is
the sole feasibility-region difference between the two formulations and
the locus of the structural-invariance result
Theorem~\ref{thm:invariance} of Section~\ref{sec:invariance}.

\begin{definition}[Slater point]
\label{def:slater}
A \emph{Slater point} of a convex optimization problem with inequality
constraints $g_i(\bfu) \le 0$ ($i = 1, \ldots, m$) and equality
constraints $A\bfu = \boldsymbol{b}$ is a feasible point $\bfu_0$ such
that the inequality constraints are \emph{strictly} satisfied:
$g_i(\bfu_0) < 0$ for every $i$. For a convex QP with box constraints
(Definition~\ref{def:box}), a Slater point is a feasible $\bfu_0$ with
$L_i < (\bfu_0)_i < U_i$ for every $i$ --- strictly inside every box.
\end{definition}

\textit{Intuition.} The Slater point is the standard hypothesis for the
constraint qualification that activates strong duality
(Theorem~\ref{thm:strong-duality} below); for a convex problem with
linear equality and component-wise inequality constraints, Slater's
condition reduces to the existence of a feasible point in the
\emph{interior} of every inequality. \textit{Forward-reference.} For the
dual QP of Section~\ref{sec:dual-kkt}, a Slater point is given by
$\alpha_k = \alpha_k^* = C_{k(i)}/2$ for every $k$, which satisfies the
equality constraint $\sum_k (\alpha_k - \alpha_k^*) = 0$ and lies
strictly inside every box $[0, C_{k(i)}]$. This existence is asserted
explicitly in Section~\ref{sec:dual-kkt} to justify strong duality and
the necessity-and-sufficiency of the KKT conditions.

\begin{definition}[Lagrangian, dual function, duality gap]
\label{def:lagrangian}
For a convex problem of the form $\min_{\bfu} f(\bfu)$ subject to
$g_i(\bfu) \le 0$ ($i = 1, \ldots, m$) and $A\bfu = \boldsymbol{b}$,
the \emph{Lagrangian} $\mathcal{L} : \R^n \times \R_+^m \times \R^p \to
\R$ is
\begin{equation*}
  \mathcal{L}(\bfu; \boldsymbol{\lambda}, \boldsymbol{\nu}) := f(\bfu) + \sum_{i=1}^{m}\lambda_i\,g_i(\bfu) + \boldsymbol{\nu}^\top(A\bfu - \boldsymbol{b}),
\end{equation*}
the \emph{dual function} $g : \R_+^m \times \R^p \to \R \cup \{-\infty\}$ is
\begin{equation*}
  g(\boldsymbol{\lambda}, \boldsymbol{\nu}) := \inf_{\bfu \in \R^n}\,\mathcal{L}(\bfu; \boldsymbol{\lambda}, \boldsymbol{\nu}),
\end{equation*}
and the \emph{duality gap} at primal-dual feasible $(\bfu,
\boldsymbol{\lambda}, \boldsymbol{\nu})$ is $f(\bfu) -
g(\boldsymbol{\lambda}, \boldsymbol{\nu}) \ge 0$. \emph{Strong duality}
holds when this gap is zero at the optimum.
\end{definition}

\textit{Intuition.} The Lagrangian relaxes the constraints into the
objective with multipliers; the dual function is the pointwise minimum
of $\mathcal{L}$ in $\bfu$, always a concave function of the
multipliers; strong duality is the property that the primal minimum and
the dual maximum coincide, which holds for every convex problem
satisfying a constraint qualification
(Theorem~\ref{thm:strong-duality}). \textit{Forward-reference.} The
Lagrangian appears explicitly in the primal-to-dual derivation of
Section~\ref{sec:standard-eps-svr} and is restated for the dual QP itself
in Section~\ref{sec:dual-kkt}, with multipliers $\rho$ for the equality
constraint and $\lambda_i, \mu_i$ for the box constraints.

\begin{theorem}[Strong duality for convex QP]
\label{thm:strong-duality}
For a convex QP~\eqref{eq:convex-qp} with $P \succeq 0$ and a non-empty
bounded polytope feasibility region, \emph{strong duality holds}: the
primal optimal $p^*$ equals the dual optimal $d^*$, and both are
attained.
\end{theorem}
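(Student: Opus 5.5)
The plan is to prove Theorem~\ref{thm:strong-duality} directly from the structure of the convex QP~\eqref{eq:convex-qp}, without invoking Slater's condition. Slater would suffice when a strictly interior feasible point exists, but the statement only assumes a non-empty bounded polytope. That polytope may be degenerate, for instance when some $L_i = U_i$ or the equality constraint pins a face. The argument goes through polyhedral (linear) constraint qualification instead.

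\textbf{Step 1: primal attainment.} The feasible set is non-empty, closed and bounded, hence compact. The objective $\tfrac12\bfu^\top P\bfu + \bfq^\top\bfu$ is continuous, so Weierstrass gives a minimizer $\bfu^\star$ and $p^* > -\infty$.

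\textbf{Step 2: existence of multipliers.} Write the box as the inequalities $L_i - u_i \le 0$ and $u_i - U_i \le 0$. All constraints are affine, so the Abadie constraint qualification holds automatically: the tangent cone of a polyhedron at any point equals its linearized cone. Hence the KKT conditions are necessary at $\bfu^\star$. Concretely, there exist $\boldsymbol{\nu}$ and $\boldsymbol{\lambda}^L, \boldsymbol{\lambda}^U \ge 0$ with:
\begin{itemize}
  \item stationarity, $P\bfu^\star + \bfq + A^\top\boldsymbol{\nu} - \boldsymbol{\lambda}^L + \boldsymbol{\lambda}^U = 0$;
  \item complementary slackness for each box constraint.
\end{itemize}
I would obtain these multipliers by Farkas' lemma. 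First-order optimality says $\nabla f(\bfu^\star)^\top \boldsymbol{d} \ge 0$ for every direction $\boldsymbol{d}$ in the cone of feasible directions, which is polyhedral: $A\boldsymbol{d}=0$, $d_i \ge 0$ on active lower bounds, and $d_i \le 0$ on active upper bounds. Farkas then expresses $\nabla f(\bfu^\star)$ as a nonnegative combination of the active constraint normals plus a term in $\mathrm{range}(A^\top)$. This step is where the main care is needed. The statement's hypotheses carry no interiority assumption, so the proof must rely on linearity of the constraints rather than Slater. It must also handle possible redundancy in $A$ and coincident bounds $L_i = U_i$, which Farkas does without any rank assumption.

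\textbf{Step 3: zero gap and dual attainment.} For the multipliers from Step 2, the Lagrangian $\mathcal{L}(\cdot;\boldsymbol{\lambda},\boldsymbol{\nu})$ is a convex quadratic in $\bfu$ because $P \succeq 0$. Stationarity says $\bfu^\star$ is a critical point, so it is a global minimizer and $g(\boldsymbol{\lambda},\boldsymbol{\nu}) = \mathcal{L}(\bfu^\star;\boldsymbol{\lambda},\boldsymbol{\nu})$. Complementary slackness and feasibility make every added term vanish, so this equals $f(\bfu^\star) = p^*$. Weak duality, $g \le p^*$ for all dual-feasible multipliers, then shows that $d^* = p^*$ and that $d^*$ is attained at these multipliers.

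Two shortcuts are worth noting. One could alternatively cite the Frank--Wolfe theorem or the LP/QP duality results of \cite[\S 5.2.3]{BoydVandenberghe2004}, which cover affine constraints with feasibility only. In the paper's application, the Slater point $\alpha_k=\alpha_k^*=C_k/2$ of Definition~\ref{def:slater} gives an even shorter route via \cite{Rockafellar1970}.
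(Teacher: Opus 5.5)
Your argument is correct, and it takes a different route from the paper. The paper's proof is a one-line citation. It observes that for purely affine constraints Slater's condition reduces to mere feasibility (the refined Slater condition of \cite[\S 5.2.3]{BoydVandenberghe2004}, equivalently \cite[\S 28]{Rockafellar1970}) and reads off zero gap and attainment from that theorem; this is exactly the shortcut you mention at the end.

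You instead prove the result from first principles. Weierstrass gives primal attainment, Farkas on the polyhedral cone of feasible directions gives existence of multipliers, and convexity of the Lagrangian (from $P \succeq 0$) shows these multipliers achieve $g = p^*$, after which weak duality closes the argument.

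The citation buys brevity. Your route buys self-containment and precision on two points the paper glosses:
\begin{enumerate}
\item The paper's phrase ``satisfy Slater's condition trivially'' is true only in the refined sense. A strictly interior feasible point can fail to exist (e.g.\ $L_i = U_i$, or an equality constraint pinning a face). That is the degenerate case you handle via Farkas with no rank or interiority assumption.
\item The refined-Slater theorem delivers zero gap and \emph{dual} attainment. \emph{Primal} attainment is left implicit in the paper, and your compactness step supplies it explicitly.
\end{enumerate}

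A further dividend is that your Steps~2--3 establish, in the same stroke, that KKT multipliers exist at every primal optimum and coincide with dual optimizers. That is the necessary-and-sufficient KKT characterization the paper subsequently invokes in Section~\ref{sec:dual-kkt}.
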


\begin{proof}
Linear inequality constraints satisfy Slater's condition trivially
(relative interior of a non-empty polytope is non-empty); strong duality
and attainment follow from
Boyd-Vandenberghe~\cite[\S 5.2.3]{BoydVandenberghe2004} or
Rockafellar~\cite[\S 28]{Rockafellar1970}.
\end{proof}

\textit{Forward-reference.} Theorem~\ref{thm:strong-duality} justifies
the necessary-and-sufficient KKT characterization of
Section~\ref{sec:dual-kkt}: the dual QP is convex with $P = [\Omega,
-\Omega; -\Omega, \Omega] \succeq 0$ (since $\boldsymbol{v}^\top P
\boldsymbol{v} = (\boldsymbol{v}_1 - \boldsymbol{v}_2)^\top \Omega
(\boldsymbol{v}_1 - \boldsymbol{v}_2) \ge 0$ for $\boldsymbol{v} =
[\boldsymbol{v}_1; \boldsymbol{v}_2]$), and the box $\prod_k [0,
C_k]^2$ is bounded for $y_k > 0$.

\begin{theorem}[Sion's minimax]
\label{thm:sion}
Let $X \subseteq \R^n$ be non-empty convex compact, $Y \subseteq
\R^m$ non-empty convex, and $\phi : X \times Y \to \R$ such that
$\phi(\cdot, \boldsymbol{y})$ is convex and lower semi-continuous on $X$
and $\phi(\bfx, \cdot)$ is concave and upper semi-continuous on $Y$.
Then
\begin{equation}
  \min_{\bfx \in X}\,\sup_{\boldsymbol{y} \in Y}\,\phi(\bfx, \boldsymbol{y}) = \sup_{\boldsymbol{y} \in Y}\,\min_{\bfx \in X}\,\phi(\bfx, \boldsymbol{y}).
  \label{eq:sion}
\end{equation}
\end{theorem}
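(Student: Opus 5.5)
The plan is to prove Theorem~\ref{thm:sion} by the classical route through finite intersections, following the elementary argument of Komiya. The compactness of $X$ is what will let us pass from finitely many $\boldsymbol{y}$'s to all of $Y$.

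\textbf{Weak inequality.} The inequality $\sup_{\boldsymbol{y}}\min_{\bfx}\phi \le \min_{\bfx}\sup_{\boldsymbol{y}}\phi$ is immediate, since $\min_{\bfx}\phi(\bfx,\boldsymbol{y}) \le \phi(\bfx',\boldsymbol{y})$ for every $\bfx'$. The minima are attained for two reasons:
\begin{itemize}
\item each $\phi(\cdot,\boldsymbol{y})$ is lower semi-continuous on the compact set $X$;
\item $\bfx \mapsto \sup_{\boldsymbol{y}}\phi(\bfx,\boldsymbol{y})$ is a supremum of lower semi-continuous functions, hence itself lower semi-continuous.
\end{itemize}

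\textbf{Reverse inequality: setup.} Argue by contradiction. Suppose some real $c$ satisfies
\[
\sup_{\boldsymbol{y}}\min_{\bfx}\phi < c < \min_{\bfx}\sup_{\boldsymbol{y}}\phi .
\]
For each $\boldsymbol{y}$, the sublevel set $S_{\boldsymbol{y}} = \{\bfx\in X : \phi(\bfx,\boldsymbol{y}) \le c\}$ is closed (by lower semi-continuity) and convex (by convexity in $\bfx$). The right-hand inequality says that $\bigcap_{\boldsymbol{y}} S_{\boldsymbol{y}} = \emptyset$. Because $X$ is compact, a finite subfamily $S_{\boldsymbol{y}_1},\dots,S_{\boldsymbol{y}_n}$ already has empty intersection. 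The goal is then to show that $\min_{\bfx}\max_{i\le n}\phi(\bfx,\boldsymbol{y}_i) \le \sup_{\boldsymbol{y}}\min_{\bfx}\phi(\bfx,\boldsymbol{y})$, which contradicts the choice of $c$.

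\textbf{Two-point lemma (main obstacle).} For any $\boldsymbol{y}_1,\boldsymbol{y}_2 \in Y$ and any real $a$ with $a < \min_{\bfx}\max(\phi(\bfx,\boldsymbol{y}_1),\phi(\bfx,\boldsymbol{y}_2))$, we must exhibit a point $\boldsymbol{z}$ on the segment $[\boldsymbol{y}_1,\boldsymbol{y}_2]$ with $\min_{\bfx}\phi(\bfx,\boldsymbol{z}) > a$. Proceed by contradiction, in the following steps.
\begin{enumerate}
\item Pick $b$ strictly between $a$ and the min-max value.
\item For $\boldsymbol{z}$ on the segment, set $A_{\boldsymbol{z}} = \{\bfx : \phi(\bfx,\boldsymbol{z}) \le a\}$. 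If the lemma failed, every $A_{\boldsymbol{z}}$ would be nonempty.
\item By quasi-concavity in $\boldsymbol{y}$, $\phi(\bfx,\boldsymbol{z}) \ge \min(\phi(\bfx,\boldsymbol{y}_1),\phi(\bfx,\boldsymbol{y}_2))$. Hence each $A_{\boldsymbol{z}}$ lies in $B_1 \cup B_2$, where $B_i = \{\bfx : \phi(\bfx,\boldsymbol{y}_i) \le b\}$.
\item The sets $B_1$ and $B_2$ are disjoint, because $b$ is below the min-max value. Since $A_{\boldsymbol{z}}$ is convex and hence connected, it lies entirely in $B_1$ or entirely in $B_2$.
\item Let $I_i$ be the set of $\boldsymbol{z}$ with $A_{\boldsymbol{z}} \subseteq B_i$. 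The sets $I_1$ and $I_2$ partition the segment, and both are nonempty since they contain the endpoints $\boldsymbol{y}_1$ and $\boldsymbol{y}_2$ respectively.
\item Show that each $I_i$ is closed. This uses upper semi-continuity of $\phi(\bfx,\cdot)$ along the segment together with the strict gap between $a$ and $b$: if $\boldsymbol{z}_k \to \boldsymbol{z}$ with $\boldsymbol{z}_k \in I_1$, pick any $\bfx \in A_{\boldsymbol{z}}$. For $k$ large, $\phi(\bfx,\boldsymbol{z}_k) < b$, so $\bfx$ is in the $b$-sublevel set at $\boldsymbol{z}_k$, and that set is again convex and split between $B_1$ and $B_2$.
\item A segment is connected, so it cannot be partitioned into two nonempty closed sets. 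This is the contradiction.
\end{enumerate}
I expect the closedness bookkeeping in step 6 to be the hardest part. One must use the $b$-level sets rather than the $a$-level sets to absorb the one-sided continuity.

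\textbf{Induction on $n$.} Apply the two-point lemma with $X$ replaced by the compact convex set $X' = \{\bfx : \phi(\bfx,\boldsymbol{y}_n) \le c'\}$, which reduces $n$ points to $n-1$ points. The base case $n=1$ is trivial. This closes the contradiction and gives the equality~\eqref{eq:sion}.
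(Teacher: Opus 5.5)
Your Komiya-style route differs from the paper, which gives no argument at all and only cites Sion's original (KKM-based) proof and a textbook treatment. Your version is self-contained, uses only compactness and connectedness, and needs only quasi-convexity in $\bfx$ and quasi-concavity in $\boldsymbol{y}$.

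The weak inequality, the finite-intersection reduction, and the two-point lemma are correct. Two small completions are needed there:
\begin{itemize}
\item In step~4, say that $B_1,B_2$ are closed (by lower semi-continuity). That is what forces a connected subset of $B_1\cup B_2$ to lie in one of them.
\item Finish step~6 explicitly. The $b$-sublevel set at $\boldsymbol{z}_k$ contains the nonempty set $A_{\boldsymbol{z}_k}\subseteq B_1$, hence lies in $B_1$. So $\bfx\in B_1$, which gives $A_{\boldsymbol{z}}\subseteq B_1$.
\end{itemize}

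The real gap is the induction step, which as written has the roles inverted. On $X'=\{\bfx:\phi(\bfx,\boldsymbol{y}_n)\le c'\}$ all you know is $\min_{X'}\max_{i\le n-1}\phi(\bfx,\boldsymbol{y}_i)>c'$. That is a condition on $n-1$ points, so for $n\ge 4$ the two-point lemma has no admissible pair to act on there.

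The correct assembly runs as follows.
\begin{enumerate}
\item Fix the level $a$ you are proving, so $c'=a$.
\item If $X'=\emptyset$, then $\boldsymbol{y}_n$ itself works.
\item Otherwise, apply the \emph{inductive hypothesis} on $X'$. This means the hypothesis must be stated for an arbitrary nonempty compact convex domain. It yields a single $\boldsymbol{y}'$ with $\min_{X'}\phi(\bfx,\boldsymbol{y}')>a$.
\item Then $\max\bigl(\phi(\bfx,\boldsymbol{y}'),\phi(\bfx,\boldsymbol{y}_n)\bigr)>a$ for every $\bfx\in X$: use $\boldsymbol{y}'$ on $X'$ and $\boldsymbol{y}_n$ off it. The minimum over $X$ is attained by lower semi-continuity.
\item Apply the two-point lemma on the \emph{original} $X$ to the pair $(\boldsymbol{y}',\boldsymbol{y}_n)$.
\end{enumerate}

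If you prefer to invoke the two-point lemma on a subdomain, that subdomain must be $\{\bfx:\max_{i\le n-2}\phi(\bfx,\boldsymbol{y}_i)\le a\}$. There $\boldsymbol{y}_{n-1}$ and $\boldsymbol{y}_n$ are merged into one point, which reduces $n$ points to $n-1$. With either repair the argument is complete.
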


\begin{proof}
Sion~\cite{Sion1958}; textbook treatment in
Bertsekas-Nedi\'{c}-Ozdaglar~\cite[\S 3.4]{BertsekasNedicOzdaglar2003}.
\end{proof}

\textit{Forward-reference.} Sion's theorem justifies the swap of $\inf$
and $\sup$ used in the primal-to-dual derivation of
Section~\ref{sec:standard-eps-svr} and in Section~\ref{sec:dual-kkt}: the
Lagrangian is convex in $\bfu$ and affine (hence concave) in
$(\boldsymbol{\lambda}, \boldsymbol{\nu})$, so $\inf_{\bfu}
\sup_{\lambda, \nu} \mathcal{L} = \sup_{\lambda, \nu} \inf_{\bfu}
\mathcal{L}$.

\begin{definition}[Saddle point]
\label{def:saddle}
A \emph{saddle point} of the Lagrangian $\mathcal{L}$ on $\R^n \times
(\R_+^m \times \R^p)$ is a triple $(\bfu^*, \boldsymbol{\lambda}^*,
\boldsymbol{\nu}^*)$ satisfying
\begin{equation}
  \mathcal{L}(\bfu^*; \boldsymbol{\lambda}, \boldsymbol{\nu}) \le \mathcal{L}(\bfu^*; \boldsymbol{\lambda}^*, \boldsymbol{\nu}^*) \le \mathcal{L}(\bfu; \boldsymbol{\lambda}^*, \boldsymbol{\nu}^*), \qquad \forall \bfu,\; \forall (\boldsymbol{\lambda}, \boldsymbol{\nu}) \in \R_+^m \times \R^p.
  \label{eq:saddle}
\end{equation}
\end{definition}

\textit{Intuition.} A saddle point minimizes $\mathcal{L}$ in $\bfu$
(for the optimal multipliers) and maximizes $\mathcal{L}$ in
$(\boldsymbol{\lambda}, \boldsymbol{\nu})$ (for the optimal primal);
existence of a saddle point is \emph{equivalent} to strong duality plus
attainment, by Rockafellar~\cite[\S 36, Theorem 36.6]{Rockafellar1970}.
\textit{Forward-reference.} The saddle-point characterization is the
geometric form of the KKT optimality result of
Section~\ref{sec:dual-kkt}: at any saddle point of the dual QP's
Lagrangian, the multipliers $\lambda_i, \mu_i$ jointly characterize the
primal $\bfu^*$ via the case-analysis Table~\ref{tab:kkt-states} of
Section~\ref{sec:dual-kkt}, which the SMO algorithm uses as its
convergence test through the dual-threshold trick of
Keerthi-Shevade-Bhattacharyya-Murthy~\cite{KeerthiShevadeBhattacharyyaMurthy2001NeuralComp}.

\begin{definition}[KKT conditions]
\label{def:kkt}
For a convex problem $\min f(\bfu)$ subject to $g_i(\bfu) \le 0$ and
$A\bfu = \boldsymbol{b}$, with $f$ and the $g_i$ differentiable, the
\emph{Karush--Kuhn--Tucker (KKT) conditions} at $\bfu^*$ with
multipliers $(\boldsymbol{\lambda}^*, \boldsymbol{\nu}^*) \in \R_+^m
\times \R^p$ are
\begin{equation}
\begin{aligned}
  & \nabla f(\bfu^*) + \sum_{i=1}^{m}\lambda_i^*\,\nabla g_i(\bfu^*) + A^\top \boldsymbol{\nu}^* = 0 && \text{(stationarity)},\\
  & g_i(\bfu^*) \le 0,\quad A\bfu^* = \boldsymbol{b} && \text{(primal feasibility)},\\
  & \lambda_i^* \ge 0 && \text{(dual feasibility)},\\
  & \lambda_i^*\,g_i(\bfu^*) = 0,\quad i = 1, \ldots, m && \text{(complementary slackness)}.
\end{aligned}
\label{eq:kkt}
\end{equation}
\end{definition}

\textit{Intuition.} Stationarity says the negative gradient of the
objective lies in the conic combination of active-constraint gradients;
dual feasibility ensures multipliers are non-negative for inequalities;
complementary slackness says only \emph{active} constraints contribute
non-zero multipliers --- every inactive constraint has zero multiplier.
\textit{Forward-reference.} The KKT conditions are the optimality system
of Section~\ref{sec:dual-kkt}, where they are derived in detail for the
dual QP with multipliers $\rho$ (equality), $\lambda_i$ (lower box), and
$\mu_i$ (upper box), and rewritten via the signed-effective-gradient
$\tau_i = -s_i G_i$ to produce the convergence test of
Section~\ref{sec:smo-inner} and the case-analysis
Table~\ref{tab:kkt-states}.

\begin{definition}[Complementary slackness]
\label{def:complementary-slackness}
The \emph{complementary slackness condition} of~\eqref{eq:kkt} is the
property that for every inequality constraint $g_i(\bfu) \le 0$, either
$g_i(\bfu^*) = 0$ (the constraint is \emph{active} at the optimum) or
$\lambda_i^* = 0$ (the multiplier is zero), but not both can be strictly
violated. Equivalently, $\lambda_i^* g_i(\bfu^*) = 0$ for every $i$.
\end{definition}

\textit{Intuition.} Complementary slackness is the algebraic shorthand
for the geometric content of optimality: the optimal multiplier is
non-zero only for constraints that are \emph{binding} --- the optimizer
is sitting on those constraint surfaces --- and is zero for constraints
that are slack. \textit{Forward-reference.} In Section~\ref{sec:dual-kkt},
complementary slackness produces the three regimes of $\tau_i$ at
optimality (Table~\ref{tab:kkt-states}): at $u_i = 0$, $\mu_i = 0$ and
$\lambda_i \ge 0$, so $\tau_i \in (-\infty, -\rho]$ for $\alpha$-type or
$\tau_i \in [-\rho, +\infty)$ for $\alpha^*$-type; at $u_i = C_{k(i)}$,
the roles swap; at $0 < u_i < C_{k(i)}$ (free), both multipliers vanish
and $\tau_i = -\rho$ exactly. These three regimes are the basis of the
working-set feasibility sets $\Iup, \Idown$ of
Section~\ref{sec:smo-inner} and the shrinking criteria of
Section~\ref{sec:bias-shrink}.

\subsection{Standard SMO Machinery}
\label{sec:standard-smo}

The dual QP of Section~\ref{sec:standard-eps-svr} is convex, but its
dimension $2N$ and dense Hessian $P = [\Omega, -\Omega; -\Omega,
\Omega]$ make general-purpose interior-point and active-set solvers
prohibitive at $N \in [10^3, 10^5]$ --- the range encountered in modern
regression practice --- where $\Omega$ alone requires $\Theta(N^2)$
storage; see~\cite{RyuYin2022} for a broad treatment of large-scale
convex optimization and the tradeoffs between decomposition families at
this scale. The SMO algorithm of Platt~\cite{Platt1998, Platt1999}, with
the second-order working-set rule of
Fan-Chen-Lin~\cite{FanChenLin2005JMLR} and the cache + shrinking
machinery of LIBSVM~\cite{Chang2011}, is the production-standard
alternative. Throughout, $\bfu \in \R^{2N}$ is the stacked dual vector,
$\bfG = P\bfu + \bfq$ is the gradient of $f(\bfu) = \tfrac{1}{2}\bfu^\top
P \bfu + \bfq^\top \bfu$, and $\bftau = -\bfs \odot \bfG$ is the
signed-effective gradient ($\bfs$ from Definition~\ref{def:sign-vector},
$\odot$ Hadamard product).

\paragraph{The decomposition principle.}
Platt~\cite{Platt1998} introduced SMO in response to the computational
ceiling of \emph{chunking} --- Vapnik's pre-1998 approach in which a
working subset is optimized against a frozen remainder by an off-the-shelf
QP solver. Chunking inherits the $O(|\text{chunk}|^3)$ factorization cost
of its inner solver, and convergence requires chunks large enough to
capture all support vectors. SMO's contribution is structural: by
shrinking the chunk to size $k = 2$ --- the \emph{minimal feasible block
size} --- the inner subproblem becomes a one-dimensional convex quadratic
admitting a closed-form analytic solution, and the outer loop avoids
matrix factorization altogether. Each SMO iteration consists of (i)
selecting a working pair $(i^*, j^*)$ via the WSS3 rule, (ii) computing
in closed form the optimal joint update subject to the equality
constraint of~\eqref{eq:eps-svr-dual} and the box constraints, and (iii)
updating $\bfG$ incrementally via two columns of $\Omega$. Per-iteration
cost is $O(|\Aactive|)$ for the scan and gradient update plus two
kernel-column accesses; with the cache, uncached cost amortizes to
$O(|\Aactive|)$. Empirical iteration counts scale as $O(N^2)$ on dense
problems and $O(N)$ on sparse ones.

\paragraph{Working-set selection.}
The choice $k = 2$ for the working-set size is not arbitrary. Consider
$k = 1$: a single-variable update $u_{i^*} \leftarrow u_{i^*} + \delta$
would violate the equality constraint $\sum_{i=1}^{2N} s_i u_i = 0$. The
equality constraint is a one-dimensional linear subspace in $\R^{2N}$;
any feasible direction must lie in its null space, and a coordinate
direction $\boldsymbol{e}_i$ does not. Hence $k = 1$ is infeasible. At
the other extreme, $k > 2$ is feasible --- the equality constraint
imposes a single linear relation, leaving $k - 1$ degrees of freedom ---
but solving the resulting $k$-variable subproblem requires its own QP
solver, defeating the very motivation for decomposition. With $k = 2$,
the equality constraint reduces the subproblem to \emph{one} free
direction, which combined with the convex-quadratic objective yields the
closed-form one-dimensional minimization. The choice $k = 2$ is therefore
the \emph{minimal feasible block size}.

\paragraph{Equality-constraint reduction.}
Once $(i^*, j^*)$ are chosen, the joint update must satisfy
$s_{i^*}\Delta u_{i^*} + s_{j^*}\Delta u_{j^*} = 0$. Parameterizing
$\Delta u_{i^*} = s_{i^*}\delta$ for some scalar $\delta$ gives $\Delta
u_{j^*} = -s_{j^*}\delta$, so the joint update is fully described by the
single direction $\boldsymbol{d} \in \R^{2N}$ with $d_{i^*} = s_{i^*}$,
$d_{j^*} = -s_{j^*}$, and $d_\ell = 0$ otherwise. The restricted
objective along this direction becomes $h(\delta) = f(\bfu +
\boldsymbol{d}\delta) = f(\bfu) + (\bfG^\top \boldsymbol{d})\delta +
\tfrac{1}{2}(\boldsymbol{d}^\top P \boldsymbol{d})\delta^2$.

\paragraph{Direction-of-descent argument.}
The directional derivative at $\delta = 0$ is $h'(0) = \bfG^\top
\boldsymbol{d} = s_{i^*}G_{i^*} - s_{j^*}G_{j^*} = -\tau_{i^*} +
\tau_{j^*} = -(\tau_{i^*} - \tau_{j^*})$. The pair $(i^*, j^*)$ defines
a \emph{descent direction} if and only if $h'(0) < 0$, equivalently
$\tau_{i^*} > \tau_{j^*}$. The KKT-violation magnitude is $\Delta :=
\tau_{i^*} - \tau_{j^*} \ge 0$, with $\Delta = 0$ at optimality.

\paragraph{Feasibility sets.}
The two feasibility sets restrict the candidate indices, reflecting the
box constraints' interaction with the chosen direction: increasing
$u_{i^*}$ is feasible only if $u_{i^*}$ is below its upper bound, and
decreasing $u_{j^*}$ is feasible only if $u_{j^*}$ is above its lower
bound. For uniform $\varepsilon$-SVR these sets are
\begin{equation*}
  \Iup = \{k \le N : \alpha_k < C\} \cup \{N+k : \alpha_k^* > 0\}, \qquad \Idown = \{k \le N : \alpha_k > 0\} \cup \{N+k : \alpha_k^* < C\}.
\end{equation*}
The MAPE analogs are obtained by replacing $C$ with $C_k$ in each set
membership test (Definition~\ref{def:Iup-Idown-mape} of
Section~\ref{sec:smo-inner}).

\paragraph{Maximal violating pair (MVP) and dual-threshold trick.}
The first-order steepest-descent rule selects $i^* = \argmax_{i \in
\Iup} \tau_i$, $j^* = \argmin_{j \in \Idown} \tau_j$. Platt's original
1998 algorithm used a single-threshold heuristic; the MVP form became
standard via
Keerthi-Shevade-Bhattacharyya-Murthy~\cite{KeerthiShevadeBhattacharyyaMurthy2001NeuralComp},
who maintained $\tau_{i^*}$ and $\tau_{j^*}$ separately, eliminating
convergence oscillations and producing a clean monotone-descent
algorithm. MVP is first-order: it ignores the curvature $\eta$ that
governs the actual step size, producing iteration counts 30--50\% higher
than necessary.

\paragraph{Second-order rule (WSS3, Fan-Chen-Lin 2005).}
\cite[eq.~20]{FanChenLin2005JMLR} retains the MVP $i^*$-choice but
selects $j^*$ to maximize the predicted one-step gain $-\tfrac{1}{2}
(\tau_{i^*} - \tau_j)^2/\eta_{i^*, j}$, where $\eta_{i^*, j} =
\Omega_{k(i^*)k(i^*)} - 2\Omega_{k(i^*)k(j)} + \Omega_{k(j)k(j)}$ is the
curvature of the $(i^*, j)$ restricted direction:
\begin{equation*}
  j^* = \argmax_{j \in \Idown,\ \tau_j < \tau_{i^*}} (\tau_{i^*} - \tau_j)^2/\eta_{i^*, j}.
\end{equation*}
Empirically WSS3 reduces iteration count by
30--50\%~\cite[Table~2]{FanChenLin2005JMLR},
\cite{YuLiLiu2023PatternRecognition}; the maximum-gain variant of
Glasmachers-Igel~\cite{GlasmachersIgel2006JMLR,
GlasmachersIgel2008NeuralComp} adds another 5--15\%.

\paragraph{Tie-breaking.}
When multiple indices achieve the same maximum, the canonical
tie-breaking rule (LIBSVM convention) is to choose the smallest
training-point index $k(i)$, then the smallest $i$. This makes the
algorithm deterministic and reproducible across runs and platforms ---
a property that the present paper preserves in
Algorithm~\ref{alg:smo} of Section~\ref{sec:bias-shrink}.

\paragraph{The two-variable analytic update.}
Given the working pair $(i^*, j^*)$, the joint update is parameterized
by $u_{i^*} \leftarrow u_{i^*} + s_{i^*}\delta$ and $u_{j^*} \leftarrow
u_{j^*} - s_{j^*}\delta$ for a scalar step $\delta$. The restricted
one-dimensional objective is $h(\delta) = f(\bfu) - \Delta\,\delta +
\tfrac{1}{2}\eta\,\delta^2$, with slope $\Delta = \tau_{i^*} -
\tau_{j^*} \ge 0$ and curvature $\eta = \boldsymbol{d}^\top P
\boldsymbol{d}$.

\paragraph{Curvature derivation.}
With $\boldsymbol{d}$ supported on $\{i^*, j^*\}$ with values $s_{i^*}$
and $-s_{j^*}$, write $p = k(i^*)$ and $q = k(j^*)$ via the
training-index map (Definition~\ref{def:k-of-i}). Direct computation
gives $\eta = (s_{i^*})^2 P_{i^*i^*} + 2(s_{i^*})(-s_{j^*})P_{i^*j^*} +
(-s_{j^*})^2 P_{j^*j^*}$. The block structure of $P$ gives $P_{i^*i^*} =
\Omega_{pp}$, $P_{j^*j^*} = \Omega_{qq}$, and $P_{i^*j^*} =
s_{i^*}s_{j^*}\Omega_{pq}$, so the cross term in $\eta$ becomes $-2
s_{i^*}s_{j^*} \cdot s_{i^*}s_{j^*}\Omega_{pq} = -2(s_{i^*}s_{j^*})^2
\Omega_{pq} = -2\Omega_{pq}$. Combined with $(s_{i^*})^2 = (s_{j^*})^2 = 1$:
\begin{equation*}
  \eta = \Omega_{pp} - 2\Omega_{pq} + \Omega_{qq},
\end{equation*}
regardless of the sign-block combination of $i^*$ and $j^*$. This
\emph{sign-invariance} is the structural reason the $\varepsilon$-SVR
analytic update is \emph{one} formula rather than four --- and, by
Theorem~\ref{thm:invariance} of Section~\ref{sec:invariance}, it is the
same formula in MAPE-SVR.

\paragraph{Unconstrained minimum.}
Differentiating gives $h'(\delta) = -\Delta + \eta\delta$, so the
unconstrained minimum (assuming $\eta > 0$) is at $\delta_{\mathrm{unc}}
= \Delta/\eta$.

\begin{definition}[Clipping function]
\label{def:clip}
The \emph{clipping function} $\mathrm{clip} : \R \times \R \times \R
\to \R$ is
\begin{equation}
  \mathrm{clip}(x;\, a,\, b) := \max(a,\, \min(b,\, x)) = \begin{cases} a, & x < a\\ x, & a \le x \le b\\ b, & x > b. \end{cases}
  \label{eq:clip}
\end{equation}
For $a \le b$, $\mathrm{clip}(x; a, b)$ equals the orthogonal Euclidean
projection of $x$ onto the closed interval $[a, b]$.
\end{definition}

\textit{Intuition.} Clipping a real number to an interval is the
simplest form of constrained projection, and on a one-dimensional convex
feasible set with a strictly convex one-dimensional objective the
projection of the unconstrained minimum is the constrained minimum.
\textit{Forward-reference.} The clipping function is invoked in
Section~\ref{sec:smo-inner} (definition of $\delta_{\max} = \min(R_{i^*},
R_{j^*})$ and the optimal feasible step $\delta^* =
\mathrm{clip}(\delta_{\mathrm{unc}}; 0, \delta_{\max})$); it also
appears in Algorithm~\ref{alg:smo} of Section~\ref{sec:bias-shrink}.

\paragraph{Optimal feasible step.}
The optimal step for $\eta > 0$ is the projection of
$\delta_{\mathrm{unc}}$ onto $[0, \delta_{\max}]$:
\begin{equation*}
  \delta^* = \mathrm{clip}(\delta_{\mathrm{unc}};\, 0,\, \delta_{\max}) = \min(\Delta/\eta,\, \delta_{\max}).
\end{equation*}
Lower clipping at $0$ is unnecessary because $\Delta \ge 0$ and $\eta >
0$ imply $\delta_{\mathrm{unc}} \ge 0$. If $\eta = 0$
(e.g.,~$\bfx_p = \bfx_q$), the unconstrained minimum is unbounded;
the descent test reduces to $\Delta > \tfrac{1}{2}\eta\delta_{\max}$,
which for $\eta = 0$ becomes $\Delta > 0$ (always true unless converged),
giving $\delta^* = \delta_{\max}$. For $\eta < 0$ (impossible when $P
\succeq 0$, but possible for the MAPE-SVR-Sym variant with $a = -1$,
Section~\ref{sec:symmetric}), the same test serves as the genuine
local-descent criterion. Standard practice~\cite{Chang2011} floors
$\eta$ at $\eta_{\min} \approx 10^{-12}$ before division.

\paragraph{Variable update.}
With $\delta^*$ in hand, the dual variables update as $u_{i^*}
\leftarrow u_{i^*} + s_{i^*}\delta^*$, $u_{j^*} \leftarrow u_{j^*} -
s_{j^*}\delta^*$, with the equality constraint exactly preserved by
construction.

\paragraph{The incremental gradient update.}
After the joint update, the gradient $\bfG = P\bfu + \bfq$ changes by
$\bfG^{\mathrm{new}} - \bfG^{\mathrm{old}} = P\,\boldsymbol{d}\,\delta^*$.
Computing this update from scratch would cost $O(N^2)$ matrix-vector
multiplications, recovering the very expense SMO is designed to avoid.
The vector $\boldsymbol{d}$ has only \emph{two} nonzero entries, so
$P\boldsymbol{d}$ is a linear combination of \emph{two} columns of $P$
--- equivalently, two columns of $\Omega$.
Direct substitution gives $(P\boldsymbol{d})_\ell =
s_\ell\bigl(\Omega_{k(\ell), p} - \Omega_{k(\ell), q}\bigr)$, where the
simplification uses $(s_{i^*})^2 = (s_{j^*})^2 = 1$ and the block
structure of $P$. Hence the gradient update is $G_\ell \leftarrow G_\ell
+ s_\ell\,\delta^*\bigl(\Omega_{k(\ell), p} - \Omega_{k(\ell),
q}\bigr)$ for $\ell \in \Aactive$, equivalently in the
signed-effective-gradient form
\begin{equation*}
  \tau_\ell \leftarrow \tau_\ell - \delta^*\bigl(\Omega_{k(\ell), p} - \Omega_{k(\ell), q}\bigr).
\end{equation*}
This is the form invoked in Section~\ref{sec:smo-inner}.

\paragraph{Cost analysis.}
The update reads $|\Aactive|$ entries from each of columns $p$ and $q$
of $\Omega$. With kernel caching, cached columns cost $O(|\Aactive|)$
memory accesses; uncached columns cost $O(N)$ kernel evaluations, which
the cache amortizes to $O(|\Aactive|)$ over many iterations.

\paragraph{KKT optimality and the convergence test.}
By Theorem~\ref{thm:strong-duality}, the KKT conditions of
Definition~\ref{def:kkt} are necessary and sufficient for the convex QP.
Let $\rho \in \R$ be the equality multiplier and $\lambda_i, \mu_i \ge
0$ the box multipliers of $u_i \ge 0$ and $u_i \le C$. Stationarity
gives $\tau_i = \rho + s_i(\lambda_i - \mu_i)$; complementary slackness
forces $\lambda_i u_i = 0$ and $\mu_i(u_i - C) = 0$, producing three
regimes --- at $u_i = 0$ ($\mu_i = 0, \lambda_i \ge 0$), at $u_i = C$
($\lambda_i = 0, \mu_i \ge 0$), and at $0 < u_i < C$ ($\lambda_i = \mu_i
= 0$, $\tau_i = \rho$). These regimes (with MAPE analogs in
Table~\ref{tab:kkt-states}) are the basis of the sign-aware feasibility
sets.

\paragraph{Dual-threshold trick (Keerthi-Shevade).}
Maintain the KKT-allowed interval $-\rho \in [\tau_{j^*}, \tau_{i^*}]$
rather than a single bias estimate (Platt's original, oscillation-prone
form). Optimality holds when the interval collapses, $\Delta = \tau_{i^*}
- \tau_{j^*} \le \varepsilon_{\mathrm{tol}}$ --- the convergence test of
Algorithm~\ref{alg:smo}. The bias is recovered post-convergence as the
interval midpoint $b = \tfrac{1}{2}(\tau_{j^*} + \tau_{i^*})$.

\paragraph{The shrinking heuristic.}
In the late phase of SMO, most variables settle to a boundary and stop
moving, while a modest active set continues to be updated. The shrinking
heuristic of Joachims~\cite{Joachims1999} (SVM-light), refined for SMO
in LIBSVM~\cite{Chang2011}, temporarily removes from the scan those
variables predicted to remain on their boundary, reducing per-iteration
scan cost from $O(N)$ to $O(|\Aactive|)$.

\begin{definition}[Active and frozen sets]
\label{def:active-frozen}
For dual variables indexed by $\{1, \ldots, 2N\}$, the \emph{active set}
at the current iterate is
\begin{equation*}
  \Aactive := \{i \in \{1, \ldots, 2N\} : \text{variable $i$ has not been frozen by the shrinking heuristic}\},
\end{equation*}
and the \emph{frozen set} is its complement $\Afrozen := \{1, \ldots,
2N\} \setminus \Aactive$.
\end{definition}

\textit{Intuition.} The active set $\Aactive$ is the \emph{working set}
that the SMO outer loop scans on each iteration; the frozen set
$\Afrozen$ contains variables temporarily excluded from the scan,
accelerating per-iteration cost from $O(N)$ to $O(|\Aactive|)$.
\textit{Forward-reference.} The active and frozen sets are the central
data structure of Section~\ref{sec:bias-shrink}, where the four
shrinking criteria decide which variables transition from $\Aactive$ to
$\Afrozen$ and the reconstruction phase decides which transition back.

\begin{definition}[Extended active set]
\label{def:active-ext}
The \emph{extended active set} is
\begin{equation*}
  \Aactive^{\mathrm{ext}} := \Aactive \cup \{i + N : i \in \Aactive \cap \{1, \ldots, N\}\} \cup \{i - N : i \in \Aactive \cap \{N+1, \ldots, 2N\}\},
\end{equation*}
i.e., $\Aactive^{\mathrm{ext}}$ contains every dual index $i$ such that
\emph{either} $i$ \emph{or its paired counterpart at the same training
point} is in $\Aactive$.
\end{definition}

\textit{Intuition.} The gradient update needs to touch every $\ell$
whose pair $(\alpha_{k(\ell)}, \alpha_{k(\ell)}^*)$ has at least one
active member, because both members share the kernel column $\Omega_{:,
k(\ell)}$ that enters the update. \textit{Forward-reference.}
$\Aactive^{\mathrm{ext}}$ appears in Section~\ref{sec:smo-inner}: the
gradient update is over $\ell \in \Aactive^{\mathrm{ext}}$, not over
$\ell \in \Aactive$, to preserve correctness when one member of a pair
is frozen and the other is active.

\begin{definition}[Free vs.~boundary support vector]
\label{def:free-bound-sv}
The set of \emph{free support vectors} at the current iterate is
\begin{equation*}
  \mathcal{S}_{\mathrm{free}} := \{k \in \{1, \ldots, N\} : 0 < \alpha_k < C_k \;\text{or}\; 0 < \alpha_k^* < C_k\},
\end{equation*}
i.e., training points whose dual variables are strictly inside their
box. The set of \emph{boundary support vectors} $\mathcal{S}_{\mathrm{bound}}$
is its complement among the support set.
\end{definition}

\textit{Intuition.} Free support vectors live strictly inside the box
and are the only ones for which the KKT condition is the strict equality
$\tau_i = -\rho$ (Table~\ref{tab:kkt-states}); boundary support vectors
live on a face of the box and contribute the inequality side of the KKT
condition. \textit{Forward-reference.} The free-vs-boundary distinction
is central to the bias-recovery formula of
Section~\ref{sec:bias-shrink}: the bias $\hat b$ is computed as the
average of $\tau_i$ over $\mathcal{S}_{\mathrm{free}}$, since every free
variable satisfies $\tau_i = -\rho = b$.

\paragraph{Shrinking criteria.}
A boundary variable is shrinkable when its signed gradient $\tau_\ell$
lies outside the current $[\tau_{j^*}, \tau_{i^*}]$ interval in the
direction consistent with its bound. Concretely (uniform-$C$ case):
$\alpha_k = 0$ is shrinkable if $\tau_k < \tau_{j^*}$; $\alpha_k = C$ is
shrinkable if $\tau_k > \tau_{i^*}$; and analogously for $\alpha_k^*$.

\paragraph{Counter mechanism.}
Each variable maintains a counter that increments when the criterion
holds and resets when it fails; the variable is frozen when the counter
reaches $n_{\min}$ (LIBSVM default 5). Counter checks run every
$n_{\mathrm{check}} = \min(N, 1000)$ iterations.

\paragraph{Reconstruction and unshrinking.}
Shrinking is unsafe in principle: the global optimum may force a frozen
variable off its boundary. To guarantee finite termination, a
reconstruction phase activates when the active set has converged: the
full effective gradient is computed on all $2N$ variables, and
$\Delta^{\mathrm{full}}$ is recomputed. If $\Delta^{\mathrm{full}} >
\varepsilon_{\mathrm{tol}}$, the active set is restored to $\{1, \ldots,
2N\}$ and the algorithm continues; if $\Delta^{\mathrm{full}} \le
\varepsilon_{\mathrm{tol}}$, the algorithm terminates with global
$\varepsilon_{\mathrm{tol}}$-optimality.

\paragraph{The kernel cache.}
Each gradient update requires entries $\Omega_{k(\ell), p}$ and
$\Omega_{k(\ell), q}$ for $\ell \in \Aactive^{\mathrm{ext}}$. Without a
cache, computing these costs $O(|\Aactive^{\mathrm{ext}}|)$ kernel
evaluations per iteration --- a dominant cost on dense problems. The
kernel cache of LIBSVM~\cite{Chang2011} stores recently-used columns
under an LRU policy, amortizing the kernel-evaluation cost over the run.
With budget $M$ bytes and double-precision storage, the cache holds
$\lfloor M / (8N) \rfloor$ columns; a typical default $M = 200$~MB fits
$\sim 2{,}500$ columns at $N = 10^4$ and all columns at $N = 10^3$.

\paragraph{Per-iteration and convergence complexity.}
Per-iteration cost (warm cache) is dominated by $O(|\Aactive|)$ for the
scan, gradient update, and shrinking check; the analytic update is
$O(1)$. Theorem~5 of~\cite{FanChenLin2005JMLR} establishes that
WSS3-SMO terminates to $\varepsilon_{\mathrm{tol}}$-optimality in a
finite number of iterations for any convex QP with $P \succeq 0$ and
bounded feasible region; empirical counts scale as $O(N \cdot \kappa)$
with $\kappa$ a problem-difficulty factor.

\begin{theorem}[Finite termination of WSS3-SMO; restatement of {\cite[Theorem~5]{FanChenLin2005JMLR}}]
\label{thm:fcl-termination}
For any convex QP with $P \succeq 0$, bounded box constraints $-\infty 
L_i < U_i < +\infty$, and a single linear equality constraint, WSS3-SMO
terminates to $\varepsilon_{\mathrm{tol}}$-optimality in a finite number
of iterations for every $\varepsilon_{\mathrm{tol}} > 0$.
\end{theorem}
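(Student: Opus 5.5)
The plan is to reproduce the Fan--Chen--Lin argument in the notation already fixed above: $\bfG = P\bfu + \bfq$, $\tau = -\bfs\odot\bfG$, and the sets $\Iup,\Idown$ taken with general bounds $[L_i,U_i]$. The argument has three stages. First, show that each WSS3 step produces a quantified decrease of the objective $f$. Second, use boundedness of $f$ on the compact feasible box to conclude that the steps, and hence $\bfu^{t+1}-\bfu^t$, must vanish. Third, argue by contradiction that $\Delta^t = \tau_{i^*}-\tau_{j^*}$ cannot stay above $\varepsilon_{\mathrm{tol}}$ forever.

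\textbf{Step 1 (sufficient decrease).} Along the direction $\boldsymbol{d}$, the restricted objective is $h(\delta) = f(\bfu) - \Delta\delta + \tfrac12\eta\delta^2$.
\begin{itemize}
\item If $\eta>0$ and $\delta^*=\Delta/\eta$, the decrease is $\Delta^2/(2\eta)$.
\item If the step is clipped, $\delta^*=\delta_{\max}\le \Delta/\eta$, and the decrease is at least $\tfrac12\Delta\,\delta_{\max}$.
\item If $\eta=0$, the step is $\delta^*=\delta_{\max}$ and $h$ is linear, so the decrease is $\Delta\,\delta_{\max}$.
\end{itemize}
Following Fan--Chen--Lin, I will replace $\eta$ by $\max(\eta,\eta_{\min})$ with $\eta_{\min}>0$ (their ``$\tau$'' safeguard). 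In all cases this yields a decrease of at least $\tfrac{\sigma}{2}\|\bfu^{t+1}-\bfu^t\|^2$, where $\sigma = \eta_{\min}$ or a similar positive constant depending only on $P$ and $\eta_{\min}$. The WSS3 choice of $j^*$ is needed only to ensure $\tau_{j^*}<\tau_{i^*}$, so that $\boldsymbol{d}$ is a feasible descent direction. Since $f$ is continuous on the compact box, it is bounded below there, so $f(\bfu^t)$ is monotone and convergent. It follows that $\sum_t\|\bfu^{t+1}-\bfu^t\|^2<\infty$.

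\textbf{Step 2 (limit points and index stabilization).} By compactness, take a subsequence $\bfu^{t}\to\bar\bfu$. Since the index set is finite, I can pass to a further subsequence along which the selected pair $(i^*,j^*)$ is constant. Suppose, for contradiction, that $\Delta^t>\varepsilon_{\mathrm{tol}}$ for all $t$. The step $\delta^*$ has two possible sizes:
\begin{itemize}
\item Unclipped: $\delta^*=\Delta/\eta\ge \varepsilon_{\mathrm{tol}}/\max_{pq}(\Omega_{pp}-2\Omega_{pq}+\Omega_{qq})>0$. This contradicts $\|\bfu^{t+1}-\bfu^t\|\to0$.
\item Clipped: $\delta^*=\delta_{\max}$, which moves one variable to a bound and thereby removes it from $\Iup$ or $\Idown$.
\end{itemize}
The remaining task is to show that clipped steps cannot occur infinitely often without an unclipped step in between. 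I will follow Fan--Chen--Lin's Lemma: near $\bar\bfu$, since $\tau$ is continuous in $\bfu$ and the steps shrink, the ordering of the indices satisfying $\tau_i(\bar\bfu)\ne\tau_j(\bar\bfu)$ freezes. A violating pair at $\bar\bfu$ with gap $\ge\varepsilon_{\mathrm{tol}}$ therefore persists. Each clipped step shrinks the candidate sets, and there are at most $2N$ such shrinkages before an interior step is forced, which gives a uniform positive step. This step is the main obstacle. I expect to handle it by counting the ``at-bound'' pattern, which lies in a finite set: within a window of $2N+1$ consecutive iterations, at least one step must be unclipped.

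\textbf{Step 3 (conclusion).} Steps 1 and 2 show that an infinite run with $\Delta^t>\varepsilon_{\mathrm{tol}}$ is impossible, so termination occurs after finitely many iterations. Nothing in the argument uses the uniformity of the bounds, only $-\infty<L_i<U_i<\infty$ and $P\succeq0$. This is precisely why the theorem can later be applied to the sample-dependent box $[0,C_k]$.
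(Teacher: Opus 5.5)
The paper gives no argument for this statement; it imports it by citation from Fan--Chen--Lin. The question is therefore whether your reconstruction stands on its own. Step~1 is fine. So is the conclusion that, if $\Delta^t>\varepsilon_{\mathrm{tol}}$ forever, every step in the tail is clipped.

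The gap is exactly the step you flag as the main obstacle, and the mechanism you propose for it does not work. A clipped step does not shrink the candidate sets. The unclipped partner moves off its bound into the opposite set: $u_{j^*}$ moves in direction $-s_{j^*}$ and so always ends in $\Iup$, while $u_{i^*}$ always ends in $\Idown$. Hence neither $|\Iup|$ nor $|\Idown|$ is monotone. Finiteness of the set of at-bound patterns also gives no bound on the length of a run of clipped steps, because a finite set can be revisited indefinitely. Extracting a further subsequence on which the selected pair is constant does not help either: what must be controlled is a run of \emph{consecutive} iterations near $\bar\bfu$.

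The missing idea is a potential built from the frozen ordering. This is where the maximality of $i^*$ over $\Iup$ and the hypothesis $\Delta^t>\varepsilon_{\mathrm{tol}}$ actually enter.
Put $v_s=\tau_s(\bar\bfu)$. Choose $\gamma>0$ with $4\gamma$ below $\varepsilon_{\mathrm{tol}}$ and below every gap between distinct $v$-levels. Take a subsequence index $t_0$ late enough that $|\tau_s(\bfu^t)-v_s|<\gamma$ for all $s$ and all $t\in[t_0,t_0+K]$, and that all these steps are clipped. This is possible for any fixed $K$ because the steps vanish.
Let $a_t=\max\{v_s: s\in\Iup^t\}$. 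Then $v_{i^*}=a_t$ and $v_{j^*}<a_t$. Only $i^*$ and $j^*$ change membership, so $j^*$ can enter $\Iup$ only below level $a_t$, and $i^*$ can enter $\Idown$ only at level $a_t$. Hence $a_t$ is nonincreasing.
While $a_t$ is constant, the count $|\{s\in\Iup^t: v_s=a_t\}|+|\{s\in\Idown^t: v_s<a_t\}|$ drops at every clipped step. The clipped variable is either $i^*$ leaving the first set or $j^*$ leaving the second.
This lexicographic potential takes $O(N^2)$ values, so choosing $K$ beyond that gives the contradiction.

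Your remark that the WSS3 choice of $j^*$ matters only for descent holds only because you stop on the selected-pair gap, as the paper's definition does. Fan--Chen--Lin stop on the maximal violation $\max_{\Iup}\tau-\min_{\Idown}\tau\le\varepsilon_{\mathrm{tol}}$. For that test you also need the WSS3 gain inequality $(\tau_{i^*}-\tau_{j^*})^2/\bar\eta_{i^*j^*}\ge(\tau_{i^*}-\tau_{\hat\jmath})^2/\bar\eta_{i^*\hat\jmath}$ with $\hat\jmath=\argmin_{\Idown}\tau$. It bounds the selected gap below by a fixed multiple of the maximal one, after which the same argument runs with a reduced tolerance.
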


\paragraph{Inheritance to MAPE-SVR.}
Theorem~\ref{thm:invariance} of Section~\ref{sec:invariance}
(structural-invariance) shows the MAPE-SVR dual is a convex QP satisfying
the hypotheses of Theorem~\ref{thm:fcl-termination} verbatim: $P$ is the
same PSD matrix; the equality constraint is unchanged; the bounds $[0,
C_k]$ with $C_k = 100C/y_k$ are bounded for $y_k > 0$. The convergence
theorem applies to MAPE-SVR without modification; only the two localized
changes of Section~\ref{sec:mape-setup} enter Algorithm~\ref{alg:smo} of
Section~\ref{sec:bias-shrink}.

\subsection{The MAPE-SVR Adaptation by Analogy}
\label{sec:mape-setup}

\begin{definition}[MAPE-SVR primal]
\label{def:mape-svr-primal}
The \emph{MAPE-SVR primal} of
Benavides-Herrera et al.~\cite{benavides2025support, benavides2026unified} modifies the
standard primal (Definition~\ref{def:eps-svr-primal}) by replacing the
absolute residual with the percentage residual of
Definition~\ref{def:perc-residual} inside the tube constraint. For a
training set with strictly positive targets $y_k \in \mathcal{Y}
\subseteq \R_+$, the MAPE-SVR primal is
\begin{equation}
  \min_{w \in \mathcal{H},\, b \in \R,\, \xi, \xi^* \ge 0}\; \tfrac{1}{2}\|w\|_{\mathcal{H}}^2 + C\sum_{k=1}^{N}(\xi_k + \xi_k^*)
  \label{eq:mape-svr-primal}
\end{equation}
subject to, for every $k = 1, \ldots, N$,
\begin{equation*}
  \frac{|y_k - f(\bfx_k)|}{y_k} \cdot 100 \le \varepsilon + \xi_k\;\text{(or}\;\xi_k^*\text{)}, \quad \xi_k, \xi_k^* \ge 0,
\end{equation*}
where $f(\bfx) = \langle w, \varphi(\bfx)\rangle_{\mathcal{H}} + b$
and $\varepsilon > 0$ is the tube width measured \emph{in percentage
points} (not in the units of $y$).
\end{definition}

\textit{Intuition.} The MAPE-SVR primal differs from
Definition~\ref{def:eps-svr-primal} in exactly one substitution --- the
absolute residual is replaced with the percentage residual; everything
else (the regularizer, the slacks, the $C$-penalty) is identical. The
change in the tube-constraint denominator from $1$ to $y_k$ propagates
through the Lagrangian into a sample-specific coefficient on the slack
variable, producing the sample-dependent box constraint of
Proposition~\ref{prop:mape-dual} below. \textit{Forward-reference.}
Definition~\ref{def:mape-svr-primal} is the primal underlying the dual
problem~\eqref{eq:mape-dual} of Section~\ref{sec:dual-kkt}.

\begin{proposition}[Sample-dependent bound from MAPE primal]
\label{prop:mape-dual}
The MAPE-SVR primal (Definition~\ref{def:mape-svr-primal}) admits a
Lagrangian dual whose box constraints are sample-dependent:
\begin{equation}
  0 \le \alpha_k,\; \alpha_k^* \le \frac{100C}{y_k}, \qquad k = 1, \ldots, N.
  \label{eq:Ck-derivation}
\end{equation}
\end{proposition}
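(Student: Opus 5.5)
The plan is to mirror the standard derivation of Section~\ref{sec:standard-eps-svr} line by line, after first putting the MAPE tube constraints of Definition~\ref{def:mape-svr-primal} into linear form. Since $y_k > 0$, we multiply each percentage constraint by $y_k/100$. The two one-sided forms then read
\begin{equation*}
y_k - \langle w, \varphi(\bfx_k)\rangle - b \le \tfrac{y_k}{100}(\varepsilon + \xi_k), \qquad \langle w, \varphi(\bfx_k)\rangle + b - y_k \le \tfrac{y_k}{100}(\varepsilon + \xi_k^*).
\end{equation*}
This is exactly the rearrangement noted in Remark~\ref{rem:sign-convention}. The constraints are affine in $(w, b, \xi, \xi^*)$ and the objective is convex, so the problem is a convex program with affine constraints. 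Slater's condition holds trivially, since taking the slacks large enough makes every inequality strict. Strong duality and the $\inf$--$\sup$ exchange then follow as in Theorem~\ref{thm:strong-duality} and Theorem~\ref{thm:sion}.

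\textbf{Lagrangian.} Introduce multipliers $\alpha_k, \alpha_k^* \ge 0$ for the two tube constraints and $\mu_k, \mu_k^* \ge 0$ for $\xi_k, \xi_k^* \ge 0$, and write the Lagrangian exactly as in the standard derivation. The only change is that the slack terms now appear as $-\alpha_k (y_k/100)\xi_k$ and $-\alpha_k^* (y_k/100)\xi_k^*$.

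\textbf{Stationarity.} The conditions in $w$ and $b$ are unchanged: they reproduce \eqref{eq:w-stationarity} and \eqref{eq:b-stationarity}. The slack conditions are the new ingredient:
\begin{equation*}
\frac{\partial \mathcal{L}}{\partial \xi_k} = C - \frac{y_k}{100}\alpha_k - \mu_k = 0, \qquad \frac{\partial \mathcal{L}}{\partial \xi_k^*} = C - \frac{y_k}{100}\alpha_k^* - \mu_k^* = 0.
\end{equation*}
Combining $\mu_k, \mu_k^* \ge 0$ with $\alpha_k, \alpha_k^* \ge 0$ and dividing by $y_k/100 > 0$ gives $0 \le \alpha_k, \alpha_k^* \le 100C/y_k$, which is \eqref{eq:Ck-derivation}.

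\textbf{Dual objective.} To confirm that this is the complete dual, substitute the representer expansion back into the Lagrangian and use the kernel identity \eqref{eq:feature-map}. The slack terms cancel identically by the stationarity conditions above, and the $b$-term vanishes by \eqref{eq:b-stationarity}. What remains is the quadratic $-\tfrac12 (\alpha - \alpha^*)^\top \Omega (\alpha - \alpha^*)$ plus a linear term with coefficients $y_k(1 - \varepsilon/100)$ on $\alpha_k$ and $-y_k(1 + \varepsilon/100)$ on $\alpha_k^*$. Negating this to obtain a minimization gives the $\bfq$ of Remark~\ref{rem:sign-convention}, with the same $P$ and the same equality constraint as in Definition~\ref{def:eps-svr-dual}.

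\textbf{Main obstacle.} The only delicate step is bookkeeping rather than conceptual. One must track the factor $y_k/100$ consistently in two places: it must move onto the slack coefficient, which produces $C_k$, and onto the $\varepsilon$ term, which produces the $\bfq$ signs. One must also use $y_k > 0$ explicitly, both so that the inequality direction is preserved when multiplying through and so that the division in the stationarity step is legitimate. If $y_k \le 0$ were allowed, the bound would flip or become unbounded, which is why positivity of the targets is part of the hypothesis.
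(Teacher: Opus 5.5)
Your proposal is correct and follows the paper's proof. Both rescale each one-sided tube constraint by $y_k/100>0$, form the Lagrangian, and read the bound off the slack-stationarity condition $C - \tfrac{y_k}{100}\alpha_k - \mu_k = 0$ with $\mu_k \ge 0$ (and likewise for $\alpha_k^*$). Your extra substitution recovering $P$ and $\bfq$ is also correct, and it fills in what the paper only asserts by analogy. Two small caveats: the strong-duality aside is not needed for the box claim, since the box is simply the domain on which the dual function is finite; and Theorem~\ref{thm:strong-duality} as stated assumes a bounded feasible polytope, which the primal does not have.
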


\begin{proof}
Rewrite the upper-tube constraint
of~\eqref{eq:mape-svr-primal} by multiplying both sides by $y_k/100 > 0$:
\begin{equation}
  \frac{|y_k - f(\bfx_k)|}{y_k} \cdot 100 \le \varepsilon + \xi_k \quad\Longleftrightarrow\quad |y_k - f(\bfx_k)| \le \frac{y_k}{100}(\varepsilon + \xi_k).
  \label{eq:rewrite-upper}
\end{equation}
Form the Lagrangian by introducing the multiplier $\alpha_k \ge 0$ for
the upper-tube constraint~\eqref{eq:rewrite-upper} and $\mu_k \ge 0$ for
$\xi_k \ge 0$. Stationarity with respect to $\xi_k$ yields
\begin{equation}
  \frac{\partial \mathcal{L}}{\partial \xi_k} = C - \frac{y_k}{100}\,\alpha_k - \mu_k = 0 \quad\Longrightarrow\quad \alpha_k = \frac{100}{y_k}(C - \mu_k) \le \frac{100C}{y_k} =: C_k,
  \label{eq:Ck-three-line}
\end{equation}
where the inequality uses $\mu_k \ge 0$. An identical computation for
the lower-tube slack $\xi_k^*$ yields $\alpha_k^* \le 100C/y_k = C_k$,
so both dual variables of training point $k$ share the same per-sample
bound $C_k$.
\end{proof}

\textit{Forward-reference.} This three-line derivation justifies the
central structural modification on which the entire paper rests: the
dual feasibility region becomes $\prod_{k=1}^{N}[0, C_k]^2$ rather than
$[0, C]^{2N}$. Theorems~\ref{thm:invariance}
and~\ref{thm:convergence} establish that this \emph{sole} dual-level
change has a \emph{highly localized} algorithmic consequence --- only
two SMO components require modification.

\begin{definition}[Sample-dependent box constraint]
\label{def:mape-svr-dual}
The \emph{sample-dependent box constraint} of the MAPE-SVR dual is the
per-sample upper bound
\begin{equation}
  C_k := \frac{100C}{y_k}, \qquad k = 1, \ldots, N,
  \label{eq:Ck-def}
\end{equation}
where $C > 0$ is the regularization parameter and $y_k > 0$ is the
$k$-th training target. The box for the dual variables of training point
$k$ is $[0, C_k]^2$, i.e., $\alpha_k \in [0, C_k]$ and $\alpha_k^* \in
[0, C_k]$ independently.
\end{definition}

\textit{Intuition.} Smaller targets receive larger box budgets --- the
model has more dual capacity to allocate to small-$y$ samples --- which
compensates for the fact that a fixed absolute residual $|y_k -
f(\bfx_k)|$ is a \emph{larger} percentage error when $y_k$ is small.
\textit{Forward-reference.} The sample-dependent bound is the only
feasibility-region difference between the standard $\varepsilon$-SVR
dual (Definition~\ref{def:eps-svr-dual}) and the MAPE-SVR dual; it is
the locus of the structural-invariance result
Theorem~\ref{thm:invariance} of Section~\ref{sec:invariance}, which
shows that the dependence on $C_k$ is confined to two SMO components.

\begin{definition}[Target dynamic range]
\label{def:dynamic-range}
The \emph{target dynamic range} of the training data is
\begin{equation}
  \rho_y := \frac{\max_{k=1, \ldots, N} y_k}{\min_{k=1, \ldots, N} y_k}.
  \label{eq:dynamic-range}
\end{equation}
For $y_k > 0$ uniformly, $\rho_y \ge 1$.
\end{definition}

\textit{Intuition.} The dynamic range measures the \emph{heterogeneity}
of target magnitudes: $\rho_y \approx 1$ when all targets are of similar
magnitude, $\rho_y \gg 1$ when the smallest and largest targets differ
by orders of magnitude (as is common in electricity-demand forecasting
where peak demand can be 5--10 times trough demand).
\textit{Forward-reference.} $\rho_y$ governs the asymmetry of the
per-sample bound $C_k = 100C/y_k$ --- small-$y$ samples receive a large
$C_k$ and large-$y$ samples receive a small $C_k$ --- and through this
asymmetry it governs the shrinking-asymmetry result
Lemma~\ref{lem:asymmetry} of Section~\ref{sec:bias-shrink}, which
quantifies the gap $2y_k\varepsilon/100$ between paired $\alpha$- and
$\alpha^*$-shrinking thresholds.

\paragraph{Linear coefficient and Hessian — by-analogy block-substitution.}
By the block-substitution + kernel-trick computation of
Section~\ref{sec:standard-eps-svr}, the linear coefficient of the
MAPE-SVR dual is $\bfq = [\bfy(\varepsilon/100 - 1),\,
\bfy(\varepsilon/100 + 1)]^\top$ (rather than the standard
$[\varepsilon\onevec - \bfy,\, \varepsilon\onevec + \bfy]^\top$ ---
Remark~\ref{rem:sign-convention}). The Hessian $P = [\Omega, -\Omega;
-\Omega, \Omega]$ is unchanged: it arises only from
$\tfrac{1}{2}\|w\|^2$ via the representer
expansion~\eqref{eq:w-stationarity}, identical in both formulations. The
MAPE-SVR dual (Section~\ref{sec:dual-kkt}) thus differs from the standard
$\varepsilon$-SVR dual in three localized ways summarized in
Table~\ref{tab:mape-vs-standard}.

\begin{table}[ht]
  \centering
  \caption{The three localized differences between the standard
  $\varepsilon$-SVR dual and the MAPE-SVR dual.}
  \label{tab:mape-vs-standard}
  \begin{tabular}{@{}lll@{}}
    \toprule
    Component & Standard $\varepsilon$-SVR dual (Def.~\ref{def:eps-svr-dual}) & MAPE-SVR dual (\S\ref{sec:dual-kkt})\\
    \midrule
    Linear coefficient $\bfq$ & $[\varepsilon\onevec - \bfy,\, \varepsilon\onevec + \bfy]$ & $[\bfy(\varepsilon/100 - 1),\, \bfy(\varepsilon/100 + 1)]$\\
    Box constraint & $[0, C]$ uniform & $[0, C_k]$ sample-dependent (Def.~\ref{def:mape-svr-dual})\\
    Tube width $\varepsilon$ & in units of $y$ via Def.~\ref{def:eps-loss} & in percentage points via Defs.~\ref{def:mape-loss}--\ref{def:perc-residual}\\
    \bottomrule
  \end{tabular}
\end{table}

Everything else --- the Hessian $P$ via representer
expansion~\eqref{eq:w-stationarity}, the equality constraint
$[\onevec^\top, -\onevec^\top]\,\bfu = 0$ via
bias-stationarity~\eqref{eq:b-stationarity}, the Mercer kernel $K$, the
convex-QP class with $P \succeq 0$, the Slater-point existence
($\alpha_k = \alpha_k^* = C_k/2$), the strong-duality conclusion
(Theorem~\ref{thm:strong-duality}), the KKT conditions
(Definition~\ref{def:kkt}) with multipliers $\rho, \lambda_i, \mu_i$,
the clipping form (Definition~\ref{def:clip}), and the
convergence-theorem inheritance from
Theorem~\ref{thm:fcl-termination} via Theorem~\ref{thm:invariance} of
Section~\ref{sec:invariance} --- is identical between the two
formulations.

\paragraph{The structural-invariance pre-announcement.}
The structural-invariance theorem of Section~\ref{sec:invariance}
(Theorem~\ref{thm:invariance}) shows that the algorithmic consequence is
\emph{even more localized} than the dual difference itself: only two SMO
components --- the working-set candidate sets $\Iup, \Idown$ of
Section~\ref{sec:smo-inner} and the clipping room $R_{i^*}, R_{j^*}$ ---
require modification. The curvature formula, the gradient update, the
shrinking criteria's structural form, the kernel cache, and the
convergence-theorem inheritance are all structurally identical to
standard SMO. The implementation consequence is concrete: any existing
SMO solver for $\varepsilon$-SVR can be adapted to MAPE by replacing the
scalar $C$ with the vector $\boldsymbol{C} = (C_1, \ldots, C_N)$ in two
localized steps; Appendix~\ref{app:libsvm} gives the explicit LIBSVM
diff (fewer than fifteen lines).

\subsection{Related Work}
\label{sec:related}

This section surveys the relevant literature in eight thematic clusters
relevant to the present contribution.

\paragraph{SMO for SVR --- historical lineage.}
Platt's original Sequential Minimal Optimization~\cite{Platt1998,
Platt1999} was designed for SVM classification, where each two-variable
subproblem has a closed-form analytic solution: with two variables
$(\alpha_i, \alpha_j)$ free and the equality constraint $\alpha_i +
\alpha_j = \mathrm{const}$, the problem reduces to a one-dimensional
convex quadratic, solvable by computing the unconstrained minimum and
clipping to the box. Platt's algorithm replaces general-purpose chunking
+ quadratic programming~\cite{Vapnik1995, Joachims1999} with this size-2
analytic decomposition, eliminating the dependency on third-party QP
solvers and achieving up to three orders of magnitude speedup on
sparse-data benchmarks where the support-vector set is small relative to
the training set.

Keerthi-Shevade-Bhattacharyya-Murthy~\cite{KeerthiShevadeBhattacharyyaMurthy2001NeuralComp}
identified an inefficiency in Platt's single-threshold scheme: Platt
used a single estimate of the bias $b$ to evaluate KKT optimality,
leading to oscillation when the true bias lies far from this estimate.
Their dual-threshold modification maintains separate upper- and
lower-bound estimates (the LIBSVM-canonical $b_{\mathrm{up}}$ and
$b_{\mathrm{low}}$, denoted $-\rho \in [\tau_{j^*}, \tau_{i^*}]$ in the
present paper), eliminating the oscillation and delivering convergence
in $\sim 2$--$10$ fewer iterations on standard benchmarks. This
dual-threshold scheme is the production standard in LIBSVM v3+ and in
the present paper's Algorithm~\ref{alg:smo}.

Flake and Lawrence~\cite{Flake2002} resolved the
$\varepsilon$-SVR-specific challenge of paired variables $(\alpha_k,
\alpha_k^*)$ with complementarity $\alpha_k\alpha_k^* = 0$ by
reformulating in terms of $\beta_k = \alpha_k - \alpha_k^*$, halving the
variable count from $2N$ to $N$ and recovering the two-variable
subproblem with the same analytic update as classification SMO. The
present paper retains the explicit $(\alpha_k, \alpha_k^*)$ formulation
rather than the $\beta$-reformulation, because the sample-dependent
bound $\alpha_k, \alpha_k^* \in [0, C_k]$ enters more transparently in
the explicit formulation: in the $\beta$-reformulation the bound becomes
$\lvert\beta_k\rvert \le C_k$ with the additional constraint that $\beta_k =
+|\beta_k|$ corresponds to $\alpha_k > 0$ and $\beta_k = -|\beta_k|$ to
$\alpha_k^* > 0$, which obscures the per-variable feasibility tracking
required by the WSS3 working-set rule.

Working-set selection was strengthened by
Fan-Chen-Lin~\cite{FanChenLin2005JMLR}. Their second-order scoring rule,
denoted WSS3 in their numbering, replaces the maximal-violating-pair
(MVP) criterion of selecting $j^* = \argmin_j \tau_j$ with the
gain-weighted criterion of Section~\ref{sec:smo-inner}.
Theorem~5 of~\cite{FanChenLin2005JMLR} establishes finite termination of
WSS3-SMO to $\varepsilon_{\mathrm{tol}}$-optimality for any convex
quadratic program with positive-semidefinite Hessian and bounded
feasible set, regardless of the specific bound structure: precisely the
convergence inheritance the present paper invokes.

Glasmachers and Igel~\cite{GlasmachersIgel2006JMLR} proposed a
maximum-gain variant of WSS3 that extends gain weighting to both the
$i^*$- and $j^*$-selection steps simultaneously, yielding an additional
$\sim$ 5--15\% iteration reduction on the Adult and W8a benchmarks; their
later work~\cite{GlasmachersIgel2008NeuralComp} extended second-order
SMO to online and active learning, with the LASVM solver demonstrating
practical effectiveness on streaming data. The asymptotic equivalence
between batch SMO and the LASVM streaming framework was established in
the original LASVM paper of~\cite[\S 3, Theorem 1]{BordesErtekinWestonBottou2005JMLR}.
The LIBSVM library~\cite{Chang2011} combined these advances with the
shrinking heuristic of Joachims~\cite{Joachims1999} and a
least-recently-used kernel cache, yielding the de facto standard SVR
solver in production-grade machine-learning toolchains. Recent
refinements include the three-term conjugate variant
TCSMO~\cite{YuLiLiu2023PatternRecognition}, which augments the WSS3 step
direction with conjugate-gradient-style memory of the previous two
iterations, reducing total iteration count by 20--35\% on twelve
regression-benchmark datasets at a per-iteration overhead of $\sim 60$
lines of additional code.

All of the above --- Platt 1998/1999, Keerthi-Shevade-Bhattacharyya-Murthy
2001, Flake-Lawrence 2002, Fan-Chen-Lin 2005, Glasmachers-Igel 2006/2008,
Bordes-Ertekin-Weston-Bottou 2005, Chang-Lin 2011 LIBSVM, Yu-Li-Liu 2023
TCSMO --- assume \emph{uniform} box constraints $\alpha_k, \alpha_k^*
\in [0, C]$ for every training point. The structural-invariance result
of the present paper (Theorem~\ref{thm:invariance} of
Section~\ref{sec:invariance}) shows that this uniformity assumption is
not load-bearing for the SMO machinery itself: replacing $C$ with the
per-sample vector $(C_1, \ldots, C_N)$ leaves the curvature, gradient
bookkeeping, working-set scoring, and convergence machinery structurally
unchanged. The uniformity assumption is load-bearing only for the
working-set feasibility sets and the analytic-update clipping bounds ---
the two components Theorem~\ref{thm:invariance} explicitly identifies.

\paragraph{Loss-modified SVR.}
Modifying the SVR loss to target application-specific error structure
has been explored along two main directions. Asymmetric
$\varepsilon$-insensitive and pinball-loss variants~\cite{Anand2020}
allow different penalties above and below the regression tube, targeting
quantile estimation rather than conditional mean regression. Robust
variants replace the $\varepsilon$-insensitive loss with Huber or
bounded losses to reduce sensitivity to outliers~\cite{Smola2004};
recent work in this direction includes the wave loss of Akhtar, Tanveer,
and Arshad~\cite{AkhtarTanveerArshad2024PatternRecognition}. What these
formulations share is that the box constraints on the Lagrange
multipliers remain uniform across training points: the loss modification
enters the dual objective or the tube width, not the feasibility set
itself.

\paragraph{Weighted and instance-weighted SVR.}
A separate line of work generalizes SVR to handle sample-specific
importance weights $w_k$, typically by rescaling the regularization
parameter to $C_i^{\mathrm{eff}} = C w_k$. Suykens et
al.~\cite{Suykens2002weighted} proposed weighted least-squares SVM for
robust regression. Bickel-Br\"{u}ckner-Scheffer~\cite{Bickel2009JMLR}
developed discriminative learning under covariate shift via importance
weighting, with theoretical unbiasedness guarantees.
Sugiyama-Krauledat-M\"{u}ller~\cite{Sugiyama2007JMLR} formalized
importance-weighted cross-validation for the same setting. The
multi-parametric solution-path
family~\cite{KarasuyamaHaradaSugiyamaTakeuchi2012ML} traces solution
paths under varying weights. In all these formulations, the per-sample
weighting enters the loss term and rescales $C$, but the resulting dual
still admits a uniform box constraint after redefinition; the structural
feasibility region is unchanged. The MAPE-SVR formulation of
Benavides-Herrera et al.~\cite{benavides2025support, benavides2026unified} differs from this
lineage by inducing \emph{non-uniform box constraints} at the dual
level --- a structural modification, not a loss-level reweighting.

\paragraph{Theoretical foundations of MAPE-as-loss.}
The use of MAPE as a regression-training objective (rather than only as
an evaluation metric) was for many years considered ad-hoc: minimizing
MAPE was not known to converge to an interpretable population quantity,
and ERM-MAPE risk bounds were unavailable. This gap was closed by de
Myttenaere-Golden-Le Grand-Rossi~\cite{DeMyttenaere2016}, who establish
three properties that justify MAPE as a principled training loss:
\begin{enumerate}
\item[(i)] \textbf{Existence of an optimal MAPE regression model.}
Under the mild moment condition $\mathbb{E}[1/|Y|] < \infty$ on the
target distribution $Y$ (which holds in particular for any distribution
on a strictly positive support bounded away from zero --- e.g., the
LogNormal targets of Section~\ref{sec:validation}), the population MAPE
risk $R(f) = \mathbb{E}[|Y - f(X)|/|Y|]$ admits a minimizer $f^*$ in any
sufficiently rich function class, including reproducing-kernel Hilbert
spaces with universal kernels~\cite[\S 4.6]{Steinwart2008}.

\item[(ii)] \textbf{Universal consistency of empirical risk minimization
under MAPE loss.} ERM is universally consistent: $R(\hat f_N) \to R(f^*)$
in probability as $N \to \infty$, provided the function class
$\mathcal{F}_N$ has appropriately controlled complexity. The proof is
structurally analogous to the classical universal-consistency result for
ERM under absolute-error loss~\cite{Vapnik1998}, with the percentage
scaling absorbed into the per-sample reweighting of property (iii).

\item[(iii)] \textbf{Equivalence between MAPE minimization and weighted-MAE
regression} with sample weights $w_k = 1/|y_k|$ (or $w_k = 100/|y_k|$
if MAPE is in percentage points). Concretely: minimizing MAPE is
identical (up to the constant scale 100) to minimizing the weighted-MAE
objective $\sum_k w_k|y_k - f(\bfx_k)|$ with $w_k = 100/y_k$. This
equivalence is the formal expression of the same intuition that drives
the present paper: training with MAPE is structurally equivalent to
per-sample reweighting whose \emph{algorithmic counterpart}, in the
kernelized $\varepsilon$-insensitive setting, is precisely the
sample-dependent box constraint $\alpha_k \in [0, 100C/y_k]$ of the dual
QP. The de Myttenaere et al. equivalence is at the loss level (a primal
characterization); the contribution of the present paper is the
corresponding algorithmic characterization at the solver level.
\end{enumerate}

The broader landscape of percentage-error metrics is critically surveyed
by Hyndman-Koehler~\cite{Hyndman2006}, who identify four pathologies of
MAPE --- division by zero when $y_k \to 0$, infinite variance when
targets are near-zero, asymmetric penalization favoring under-prediction,
and inapplicability to interval-scale data --- and propose the
scale-free Mean Absolute Scaled Error (MASE) as a replacement.
Tofallis~\cite{Tofallis2015} proposes the log-accuracy ratio $\log(\hat
y_k / y_k)$ to address MAPE's bias toward low predictions.
Goodwin-Lawton~\cite{Goodwin1999} expose residual asymmetry in
symmetric MAPE (sMAPE), showing that despite its name, sMAPE still
penalizes overforecasting more than underforecasting due to the
denominator $(|y_k| + |\hat y_k|)/2$. Kim-Kim~\cite{Kim2016} address
intermittent-demand pathologies (MAPE undefined on zero-target days).
Makridakis-Spiliotis-Assimakopoulos~\cite{Makridakis2020} document MAPE
behavior across 100,000 time series in the M4 forecasting competition,
finding MAPE-MASE rank correlations of $\sim 0.85$ across method-dataset
pairs but persistent disagreement at the extremes of accuracy.

The pathologies of Hyndman-Koehler do not apply to the present paper's
setting because MAPE-SVR requires $y_k > 0$ strictly --- the same
requirement under which de Myttenaere's theorems apply. For applications
where $y_k$ may approach zero, Hyndman-Koehler's MASE is the recommended
evaluation metric (and a target-loss family for future work, since MASE
scaling is also amenable to a sample-dependent-bound dual analysis
analogous to the one developed here).

\paragraph{Symmetric and invariant kernel methods.}
The symmetric-kernel variant of Section~\ref{sec:symmetric} adapts the
construction of
Espinoza-Suykens-De Moor~\cite{Espinoza2005} for symmetric LS-SVM
regression. In their construction, prior knowledge that the underlying
regression function is even ($f(\bfx) = f(-\bfx)$, $a = +1$) or odd
($f(\bfx) = -f(-\bfx)$, $a = -1$) --- common in physical-symmetry
applications such as chaotic time series with reflectional invariance,
signal processing with even/odd Fourier components, and certain
electrical-load datasets with seasonal symmetry --- is encoded by
replacing the kernel matrix $\Omega$ with its symmetrized counterpart
$\Omega_s = \tfrac{1}{2}(\Omega + a\Omega^*)$, where $\Omega^*_{k\ell} =
K(\bfx_k, -\bfx_\ell)$. The substitution $\Omega \leftarrow \Omega_s$
is structural: the dual problem retains its standard form, and the only
change is the kernel evaluation.

The theoretical foundation of reproducing-kernel Hilbert spaces traces
to Aronszajn~\cite{Aronszajn1950}, who introduced the bijection between
positive-definite kernels and reproducing-kernel Hilbert spaces and
proved the closure properties (sums, products, and positive-coefficient
combinations of positive-definite kernels are positive-definite). The
Aronszajn closure properties guarantee that $\Omega_s = \tfrac{1}{2}(\Omega +
a\Omega^*)$ is positive-semidefinite when both $\Omega$ and $\Omega^*$
are, which holds for $a = +1$ under the kernel conditions of
Section~\ref{sec:symmetric}. The modern canonical references for kernel
methods in machine learning are Sch\"{o}lkopf-Smola~\cite{Scholkopf2002}
and Steinwart-Christmann~\cite{Steinwart2008}; both are recommended as
background readings for the symmetric-kernel construction.

Niyogi-Girosi-Poggio~\cite{Niyogi1998} introduced the virtual-example
methodology for incorporating invariances into machine-learning models:
rather than modifying the kernel, augment the training set with
reflected copies $\{(-\bfx_k, a y_k)\}$. The two approaches ---
virtual-example augmentation and kernel modification --- are equivalent
in the limit of a quadratic loss with no regularization, but for finite
samples and finite regularization they diverge. The kernel-modification
approach of Espinoza et al. is preferred for the present paper because
it preserves the dual problem structure (and hence the SMO-machinery
applicability of Theorem~\ref{thm:invariance}) without doubling the
dataset. Haasdonk-Burkhardt~\cite{Haasdonk2007} generalize the
invariant-kernel construction to arbitrary group invariances (rotation,
translation, scaling), of which the reflection symmetry $\bfx \mapsto
-\bfx$ is the simplest non-trivial example.

The generalized representer theorem of
Sch\"{o}lkopf-Herbrich-Smola~\cite{Scholkopf2001} underpins the modern
kernel-trick formalism that justifies the kernel substitution $K
\leftarrow K_s$ formally. Specifically, for a regularized
empirical-risk-minimization problem $\min_f L(\{f(\bfx_k)\}_k) +
\Omega_R(\|f\|_{\mathcal{H}})$ with $L$ an arbitrary loss and $\Omega_R$
a strictly increasing function of the RKHS norm, the optimal $f^*$
admits the representation $f^*(\bfx) = \sum_k c_k K(\bfx_k, \bfx)$.
Imposing the additional constraint $f \in \mathcal{H}_s$ (the closed
subspace of even/odd functions in $\mathcal{H}$) restricts the
representation to $f^*(\bfx) = \sum_k c_k K_s(\bfx_k, \bfx)$; the
dual problem expressed in the original kernel $K$ becomes the same dual
problem expressed in the symmetrized kernel $K_s$, with no other change
to the formulation.

For $a = +1$ (even symmetry), the modified kernel preserves
positive-semidefiniteness when the base kernel is shift-invariant (e.g.,
the RBF kernel $K(\bfx, \bfx') = \exp(-\gamma\|\bfx - \bfx'\|^2)$
satisfies $K(\bfx, -\bfx') = \exp(-\gamma\|\bfx + \bfx'\|^2)$, both
positive-definite, and Aronszajn's closure gives $\Omega_s =
\tfrac{1}{2}(\Omega + \Omega^*) \succeq 0$). For $a = -1$ (odd symmetry),
positive-semidefiniteness may fail because $\Omega - \Omega^*$ has both
positive and negative eigenvalues; a degenerate-case fallback is required
(Lemma~\ref{lem:eta-degenerate} of Section~\ref{sec:smo-inner};
convergence is rigorously resolved in Theorem~\ref{thm:spectral} of
Section~\ref{sec:symmetric}).

\paragraph{Recent SVR applications to load and demand forecasting.}
The application landscape of SVR to electricity load and demand
forecasting has expanded substantially in 2024--2026.
Wang-Wang-Zhao~\cite{Wang2024} present a hybrid model combining
ensemble empirical mode decomposition with particle-swarm-enhanced SVR,
reporting 54\% MAPE reduction on Chinese load data.
Aziz-Mahmood-Qureshi-Qureshi-Kim~\cite{Aziz2024} focus on peak-power
demand with engineered climate-economic features.
Zhang-Zhang-Liang-Gorbani~\cite{Zhang2024} compose SVR with LSTM under a
flexible Gorilla Troops optimizer.
Hasan-Tarequzzaman-Moznuzzaman-Juel~\cite{Hasan2025} combine SVR with
genetic-algorithm hyperparameter optimization across four
energy-consumption sectors. Du-Jiang-Lu-Hua-Swamy~\cite{Du2024} present
a comprehensive 2024 survey of kernel machines and SVMs.
Amaya-Tejera-Gamarra-V\'{e}lez-Zurek~\cite{AmayaTejera2024} propose
distance-based kernels for SVM classification. These application papers
consistently treat MAPE as the evaluation metric while training with
classical $\varepsilon$-insensitive loss; the present paper bridges this
gap by enabling direct MAPE-loss training with inherited SMO efficiency.

\paragraph{Decomposition-method alternatives.}
For \emph{linear} SVMs, the Dual Coordinate Descent (DCD) method of
Hsieh-Chang-Lin-Keerthi-Sundararajan~\cite{Hsieh2008ICML} updates one
variable per iteration and achieves $O(\log(1/\varepsilon))$ convergence
to $\varepsilon$-accuracy, providing speedups over SMO on large-scale
linear problems. Ho-Lin~\cite{Ho2012JMLR} extend DCD to linear SVR.
Laskov-Gehl-Kr\"{u}ger-M\"{u}ller~\cite{Laskov2006JMLR} develop incremental
SVM training with SMO updates, with applications to streaming data. The
present paper restricts attention to kernelized SVR via SMO, which
remains the method of choice when nonlinear kernels and sparse solutions
are required; the structural-invariance result
(Theorem~\ref{thm:invariance}) is independent of the choice of
decomposition family and could be ported to DCD-style coordinate descent
in future work.

\paragraph{Position of the present paper.}
Despite the extensive prior work surveyed above --- covering loss-modified
SVR (\cite{Anand2020}, \cite{Suykens2002weighted},
\cite{AkhtarTanveerArshad2024PatternRecognition}), weighted and
instance-weighted SVR (\cite{Bickel2009JMLR}, \cite{Sugiyama2007JMLR},
\cite{KarasuyamaHaradaSugiyamaTakeuchi2012ML}), SMO decomposition
methods with uniform box constraints (\cite{Platt1998, Platt1999,
KeerthiShevadeBhattacharyyaMurthy2001NeuralComp, Flake2002,
FanChenLin2005JMLR, GlasmachersIgel2006JMLR, GlasmachersIgel2008NeuralComp,
BordesErtekinWestonBottou2005JMLR, Chang2011, YuLiLiu2023PatternRecognition}),
and alternative decomposition families (\cite{Hsieh2008ICML, Ho2012JMLR}) ---
no prior work analyzes the SMO algorithm under sample-dependent box
constraints $\alpha_k \in [0, 100C/y_k]$ induced by MAPE loss in
$\varepsilon$-SVR. While de Myttenaere et al.~\cite{DeMyttenaere2016}
establish the theoretical equivalence between MAPE minimization and
weighted-MAE regression, neither this nor any subsequent work derives
the resulting dual QP formulation or provides algorithmic treatment at
the SMO level. The present paper closes this gap with three
contributions: (1) the structural-invariance theorem
(Theorem~\ref{thm:invariance} in Section~\ref{sec:invariance}) showing
that sample-dependent bounds confine their effect to exactly two SMO
components; (2) the shrinking-asymmetry result
(Lemma~\ref{lem:asymmetry} in Section~\ref{sec:bias-shrink}) quantifying
the MAPE-induced gap $2y_k\varepsilon/100$; and (3) plug-in extension to
the symmetric-kernel variant via $\Omega \leftarrow \Omega_s$
(Section~\ref{sec:symmetric}).

\section{Main result}
\label{sec:main}

\subsection{The dual quadratic program and KKT optimality}
\label{sec:dual-kkt}

Let $\mathcal{D} = \{(\bfx_k, y_k)\}_{k=1}^{N}$ be a training set with
$\bfx_k \in \R^p$ and $y_k \in \R_+$ (strictly positive targets,
required for the MAPE loss to be finite). The classical
$\varepsilon$-SVR primal~\cite{Vapnik1995, Vapnik1998, Drucker1997,
Smola2004} is modified as in Section~\ref{sec:mape-setup} so that the
loss is measured in percentage terms; the resulting dual quadratic
program~\cite{benavides2025support, benavides2026unified} is
\begin{equation}
  \min_{\bfu}\; \tfrac{1}{2}\bfu^\top P\bfu + \bfq^\top\bfu
  \label{eq:mape-dual}
\end{equation}
subject to
\begin{equation}
  [\onevec^\top, -\onevec^\top]\,\bfu = 0, \qquad 0 \le \alpha_k, \alpha_k^* \le \frac{100C}{y_k}, \quad k = 1,\ldots,N,
  \label{eq:dual-constraints}
\end{equation}
where $\bfu = [\alpha_1, \ldots, \alpha_N, \alpha_1^*, \ldots,
\alpha_N^*]^\top \in \R^{2N}$, and the matrices and vectors are
\begin{equation}
  P = \begin{bmatrix} \Omega & -\Omega \\ -\Omega & \Omega \end{bmatrix}, \qquad \bfq = \begin{bmatrix} \bfy(\varepsilon/100 - 1) \\ \bfy(\varepsilon/100 + 1) \end{bmatrix},
  \label{eq:P-q}
\end{equation}
with $\Omega \in \R^{N\times N}$, $\Omega_{k\ell} = K(\bfx_k,
\bfx_\ell)$ the kernel matrix, $C > 0$ the regularization parameter,
and $\varepsilon > 0$ the width of the MAPE $\varepsilon$-tube
\emph{in percentage points}.

Define the following quantities used throughout:
\begin{itemize}
  \item \emph{Sign vector.} $s_i = +1$ for $i \le N$ ($\alpha$-variables) and $s_i = -1$ for $i > N$ ($\alpha^*$-variables).
  \item \emph{Sample-dependent upper bound.} $C_k \triangleq 100C/y_k$ for $k = 1,\ldots,N$. Note $C_k > C_{k'}$ whenever $y_k < y_{k'}$: smaller targets receive larger box constraints. The 3-line derivation of $C_k$ from the primal Lagrangian appears in Section~\ref{sec:mape-setup}.
  \item \emph{Unbiased kernel expansion.} $F_k \triangleq \sum_{i=1}^{N}\Omega_{ki}(\alpha_i - \alpha_i^*)$, so that the model prediction is $f(\bfx_k) = F_k + b$ where $b \in \R$ is the bias.
  \item \emph{Training-point index.} For any dual index $i \in \{1, \ldots, 2N\}$, write $k(i) = i$ if $i \le N$ and $k(i) = i - N$ if $i > N$.
  \item \emph{Target dynamic range.} $\rho_y \triangleq \max_k y_k / \min_k y_k$ measures the heterogeneity of target magnitudes and governs the shrinking asymmetry of Lemma~\ref{lem:asymmetry} in Section~\ref{sec:bias-shrink}. (Note: the symbol $\rho$ without subscript denotes the equality-constraint multiplier, distinct from the dynamic range here.)
\end{itemize}

\paragraph{Notation table.}
Table~\ref{tab:notation} collects the symbols used throughout the
remainder of the paper.

\begin{table}[ht]
  \centering
  \caption{Notation summary for Sections~\ref{sec:dual-kkt}--\ref{sec:complexity}.}
  \label{tab:notation}
  \small
  \begin{tabular}{@{}lll@{}}
    \toprule
    Symbol & Meaning & First appearance \\
    \midrule
    $\bfx_k \in \R^p$ & $k$-th training input & \S\ref{sec:dual-kkt} \\
    $y_k \in \R_+$ & $k$-th training target (strictly positive) & \S\ref{sec:dual-kkt} \\
    $C > 0$ & regularization parameter & \S\ref{sec:dual-kkt} \\
    $\varepsilon > 0$ & tube width in \emph{percentage points} & \S\ref{sec:dual-kkt} \\
    $K(\cdot, \cdot)$ & positive-definite kernel function & \S\ref{sec:dual-kkt} \\
    $\Omega \in \R^{N \times N}$ & kernel Gram matrix & \S\ref{sec:dual-kkt} \\
    $\alpha_k, \alpha_k^* \in \R$ & dual variables & \S\ref{sec:dual-kkt} \\
    $\bfu \in \R^{2N}$ & stacked dual vector $[\alpha; \alpha^*]$ & \S\ref{sec:dual-kkt} \\
    $C_k = 100C/y_k$ & sample-dependent upper bound & \S\ref{sec:dual-kkt} \\
    $s_i \in \{\pm 1\}$ & sign of dual variable $i$ & \S\ref{sec:dual-kkt} \\
    $F_k$ & unbiased kernel expansion at $\bfx_k$ & \S\ref{sec:dual-kkt} \\
    $b \in \R$ & bias term & \S\ref{sec:dual-kkt}, \S\ref{sec:bias-shrink} \\
    $\bfG, G_i$ & gradient $P\bfu + \bfq$ & \S\ref{sec:dual-kkt} \\
    $\bftau, \tau_i = -s_i G_i$ & effective gradient & \S\ref{sec:dual-kkt} \\
    $\Iup, \Idown$ & working-set candidate sets & \S\ref{sec:smo-inner} \\
    $\Aactive, \Afrozen$ & active and frozen training-point sets & \S\ref{sec:bias-shrink} \\
    $\Aactive^{\mathrm{ext}}$ & extended active set covering both $\alpha$- and $\alpha^*$-variables & \S\ref{sec:smo-inner} \\
    $\Delta = \tau_{i^*} - \tau_{j^*}$ & KKT violation & \S\ref{sec:smo-inner} \\
    $\eta = \boldsymbol{d}^\top P \boldsymbol{d}$ & curvature of 1-D sub-problem & \S\ref{sec:smo-inner} \\
    $\delta^*, \delta_{\max}$ & optimal step / max feasible step & \S\ref{sec:smo-inner} \\
    $\rho \in \R$ & equality-constraint multiplier & \S\ref{sec:dual-kkt} \\
    $\rho_y = \max_k y_k / \min_k y_k$ & target dynamic range & \S\ref{sec:dual-kkt} \\
    $\hat b$ & recovered bias estimate & \S\ref{sec:bias-shrink} \\
    $\mathcal{S}_{\mathrm{free}}$ & set of free support vectors & \S\ref{sec:bias-shrink} \\
    \bottomrule
  \end{tabular}
\end{table}

\paragraph{Gradient decomposition and the effective violation score.}
The gradient of the objective in~\eqref{eq:mape-dual} is $\bfG = P\bfu +
\bfq$. Computing component $k$ for $k \le N$ (an $\alpha$-type index):
\begin{equation*}
  G_k = (P\bfu)_k + q_k = \sum_{i=1}^{N}\Omega_{ki}\alpha_i - \sum_{i=1}^{N}\Omega_{ki}\alpha_i^* + y_k\!\left(\frac{\varepsilon}{100} - 1\right) = F_k + y_k\!\left(\frac{\varepsilon}{100} - 1\right),
\end{equation*}
where the second equality uses the block structure $P = [\Omega,
-\Omega; -\Omega, \Omega]$ and the third invokes the
unbiased-kernel-expansion definition $F_k = \sum_i \Omega_{ki}(\alpha_i
- \alpha_i^*)$. Analogously, for component $N + k$ (an $\alpha^*$-type
index):
\begin{equation*}
  G_{N+k} = -\sum_{i=1}^{N}\Omega_{ki}\alpha_i + \sum_{i=1}^{N}\Omega_{ki}\alpha_i^* + y_k\!\left(\frac{\varepsilon}{100} + 1\right) = -F_k + y_k\!\left(\frac{\varepsilon}{100} + 1\right).
\end{equation*}
Collecting:
\begin{equation}
  G_k = F_k + y_k\!\left(\frac{\varepsilon}{100} - 1\right), \quad G_{N+k} = -F_k + y_k\!\left(\frac{\varepsilon}{100} + 1\right).
  \label{eq:gradient-blocks}
\end{equation}

\begin{definition}[Effective gradient]
\label{def:effective-gradient}
The \emph{effective gradient} of dual variable $i$ is
\begin{equation}
  \tau_i \triangleq -s_i G_i.
  \label{eq:tau}
\end{equation}
Explicitly, for index $k$ and its paired $\alpha^*$-index $N+k$:
\begin{equation}
  \tau_k = y_k\!\left(1 - \frac{\varepsilon}{100}\right) - F_k, \quad \tau_{N+k} = y_k\!\left(1 + \frac{\varepsilon}{100}\right) - F_k.
  \label{eq:tau-explicit}
\end{equation}
\end{definition}

\begin{proposition}[Structural Gap]
\label{prop:structural-gap}
For every $k = 1, \ldots, N$ and any feasible $\bfu$,
\begin{equation}
  \tau_{N+k} - \tau_k = \frac{2 y_k \varepsilon}{100} > 0.
  \label{eq:gap}
\end{equation}
\end{proposition}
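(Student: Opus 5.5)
The plan is to compute the difference directly from the explicit form of the effective gradient in Definition~\ref{def:effective-gradient}. Starting from the gradient decomposition~\eqref{eq:gradient-blocks}, I first re-derive~\eqref{eq:tau-explicit} using the sign vector. For $k \le N$ we have $s_k = +1$, so $\tau_k = -G_k = y_k(1 - \varepsilon/100) - F_k$. For the paired index $N+k$ we have $s_{N+k} = -1$, so $\tau_{N+k} = G_{N+k} = y_k(1 + \varepsilon/100) - F_k$.

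The key observation is that both components contain the \emph{same} term $-F_k$. The reason is that $F_k = \sum_i \Omega_{ki}(\alpha_i - \alpha_i^*)$ depends only on the training point $k$, not on which of the two dual variables is indexed. This in turn follows from the block structure $P = [\Omega, -\Omega; -\Omega, \Omega]$: rows $k$ and $N+k$ of $P$ are negatives of each other. Multiplication by $-s_i$ undoes that sign, so the $P\bfu$ contribution to $\tau$ is identical across the pair.

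Subtracting the two expressions, the $F_k$ terms cancel and the $y_k$ terms combine:
\[
\tau_{N+k} - \tau_k = y_k\Bigl(1 + \tfrac{\varepsilon}{100}\Bigr) - y_k\Bigl(1 - \tfrac{\varepsilon}{100}\Bigr) = \tfrac{2y_k\varepsilon}{100}.
\]
This quantity depends only on the data and not on $\bfu$. The identity therefore holds for every $\bfu \in \R^{2N}$, and feasibility is not actually needed; it is included only to place the statement within the algorithm's iterates. Strict positivity then follows from $y_k > 0$ (the standing assumption $\mathcal{Y} \subseteq \R_+$ required by Definition~\ref{def:mape-loss}) together with $\varepsilon > 0$.

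There is no real obstacle here. The only point needing care is sign bookkeeping: checking that $\tau_{N+k} = +G_{N+k}$ and that the $q$-vector entries are $y_k(\varepsilon/100 \mp 1)$, as in~\eqref{eq:P-q}. Getting either of these wrong would flip the sign of the gap, or replace the cancellation of $F_k$ with $2F_k$.
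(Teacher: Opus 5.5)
Your proof is correct and takes the same route as the paper, which subtracts the two entries of \eqref{eq:tau-explicit} and invokes $y_k > 0$, $\varepsilon > 0$; your re-derivation of those entries from the sign vector and the block structure of $P$ just spells out why $F_k$ cancels. Your remark that the identity holds for every $\bfu \in \R^{2N}$, so feasibility is not needed, is also correct.
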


\begin{proof}
Direct subtraction of the entries of~\eqref{eq:tau-explicit} using $y_k
> 0$ and $\varepsilon > 0$.
\end{proof}

A direct consequence of Proposition~\ref{prop:structural-gap} is the
\emph{complementarity condition}: at no feasible point can $\tau_k =
\tau_{N+k}$, hence no pair $(\alpha_k, \alpha_k^*)$ can be simultaneously
free (strictly between $0$ and $C_k$) at an optimal solution. This
preserves the standard $\varepsilon$-SVR complementarity $\alpha_k
\alpha_k^* = 0$, now with sample-specific tube width $y_k \varepsilon /
100$, and is formalized in Corollary~\ref{cor:free-pair-impossibility}
of Section~\ref{sec:bias-shrink}.

\paragraph{KKT optimality conditions.}
Problem~\eqref{eq:mape-dual}--\eqref{eq:dual-constraints} is a convex
quadratic program. Its KKT conditions are necessary and sufficient for
optimality, by the following argument: the Hessian satisfies $P \succeq
0$ since for any $\boldsymbol{v} = [\boldsymbol{v}_1; \boldsymbol{v}_2]$,
$\boldsymbol{v}^\top P \boldsymbol{v} = (\boldsymbol{v}_1 -
\boldsymbol{v}_2)^\top \Omega (\boldsymbol{v}_1 - \boldsymbol{v}_2) \ge
0$ as $\Omega \succeq 0$ (positive-semidefinite kernel; cf.
Boyd-Vandenberghe~\cite[\S 2.6]{BoydVandenberghe2004};
Rockafellar~\cite[\S 3.4]{Rockafellar1970}). The constraints
in~\eqref{eq:dual-constraints} are linear (one equality and $4N$ box
inequalities), hence affine-constraint qualification is satisfied
everywhere on the feasible set. A Slater point is given by $\alpha_k =
\alpha_k^* = C_k/2$ for all $k$, which satisfies the equality constraint
$\sum_k(\alpha_k - \alpha_k^*) = 0$ and lies strictly inside every box
$[0, C_k]$. By Sion's minimax theorem~\cite{Sion1958} and the
Lagrangian-duality machinery of
Rockafellar~\cite[\S\S 28--29 and 36--37]{Rockafellar1970} (cf. also
Bertsekas-Nedi\'{c}-Ozdaglar~\cite[\S 3.4]{BertsekasNedicOzdaglar2003}),
strong duality holds and the KKT conditions are necessary and sufficient
for optimality.

\paragraph{Lagrangian and stationarity for the dual QP.}
Introducing $\rho \in \R$ for the equality constraint
$[\onevec^\top, -\onevec^\top]\bfu = 0$, $\lambda_i \ge 0$ for the
lower-bound constraint $u_i \ge 0$, and $\mu_i \ge 0$ for the
upper-bound constraint $u_i \le C_{k(i)}$, the dual QP's Lagrangian is
\begin{equation*}
  \widetilde{\mathcal{L}}(\bfu; \rho, \lambda, \mu) = \tfrac{1}{2}\bfu^\top P\bfu + \bfq^\top\bfu - \rho\sum_{i=1}^{2N}s_i u_i - \sum_{i=1}^{2N}\lambda_i u_i + \sum_{i=1}^{2N}\mu_i(u_i - C_{k(i)}).
\end{equation*}
Stationarity with respect to $u_i$ yields
\begin{equation}
  \frac{\partial\widetilde{\mathcal{L}}}{\partial u_i} = G_i - s_i\rho - \lambda_i + \mu_i = 0 \;\Longleftrightarrow\; G_i - s_i\rho = \lambda_i - \mu_i, \qquad i = 1, \ldots, 2N.
  \label{eq:kkt-stationarity}
\end{equation}
Multiplying both sides of~\eqref{eq:kkt-stationarity} by $-s_i$ and
using $\tau_i = -s_i G_i$ from~\eqref{eq:tau} plus $s_i^2 = 1$:
\begin{equation*}
  \tau_i + \rho = -s_i(\lambda_i - \mu_i) = s_i(\mu_i - \lambda_i),
\end{equation*}
so $\tau_i = -\rho + s_i(\mu_i - \lambda_i)$. Applying complementary
slackness --- $\lambda_i u_i = 0$ and $\mu_i(u_i - C_{k(i)}) = 0$,
together with $\lambda_i, \mu_i \ge 0$ --- gives the case analysis: at
$u_i = 0$ we have $\mu_i = 0$ and $\lambda_i \ge 0$, so $\tau_i = -\rho
- s_i\lambda_i \in (-\infty, -\rho]$ when $s_i = +1$ (i.e.,
$\alpha$-type) and $\tau_i \in [-\rho, +\infty)$ when $s_i = -1$ (i.e.,
$\alpha^*$-type); analogously at the upper bound $u_i = C_{k(i)}$ and at
the free interior. The resulting characterization is collected in
Table~\ref{tab:kkt-states}.

\begin{table}[ht]
  \centering
  \caption{KKT conditions in terms of $\tau_i$ and the optimal multiplier
  $-\rho = b$. At optimality, every free variable satisfies $\tau_i = b$.}
  \label{tab:kkt-states}
  \begin{tabular}{@{}lll@{}}
    \toprule
    Variable state & Set membership & KKT condition \\
    \midrule
    $\alpha_k = 0$ & $\Iup$ only & $\tau_k \le -\rho$ \\
    $0 < \alpha_k < C_k$ & $\Iup \cap \Idown$ & $\tau_k = -\rho$ \\
    $\alpha_k = C_k$ & $\Idown$ only & $\tau_k \ge -\rho$ \\
    $\alpha_k^* = 0$ & $\Idown$ only & $\tau_{N+k} \ge -\rho$ \\
    $0 < \alpha_k^* < C_k$ & $\Iup \cap \Idown$ & $\tau_{N+k} = -\rho$ \\
    $\alpha_k^* = C_k$ & $\Iup$ only & $\tau_{N+k} \le -\rho$ \\
    \bottomrule
  \end{tabular}
\end{table}

\begin{corollary}[Bias is the equality multiplier]
\label{cor:bias-equality}
The optimal equality-constraint multiplier $\rho^*$ of the dual
QP~\eqref{eq:mape-dual}--\eqref{eq:dual-constraints} and the primal bias
$b^*$ satisfy
\begin{equation}
  \rho^* = -b^*.
  \label{eq:rho-bias}
\end{equation}
In particular, every free support vector $i$ (with $0 < u_i 
C_{k(i)}$) satisfies $\tau_i = -\rho^* = b^*$ at optimality.
\end{corollary}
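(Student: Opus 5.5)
The natural route is to link the dual QP's KKT system to the primal KKT system of the MAPE-SVR primal (Definition~\ref{def:mape-svr-primal}) through a free support vector. Corollary~\ref{cor:free-pair-impossibility} is not needed. First, I would fix an optimal dual $\bfu^*$ with multipliers $(\rho^*,\lambda^*,\mu^*)$. These exist by the strong-duality and Slater argument given above, which makes the KKT conditions~\eqref{eq:kkt-stationarity} necessary and sufficient. Applying the representer expansion~\eqref{eq:w-stationarity} with $\alpha,\alpha^*$ gives the primal optimum $w^*$, so that $\langle w^*,\varphi(\bfx_k)\rangle = F_k$.

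Second, I would read off the primal complementary slackness for a free index. Take an $\alpha$-type free index $i=k$ with $0<\alpha_k<C_k$. The upper-tube constraint~\eqref{eq:rewrite-upper} is then active, because its multiplier $\alpha_k>0$. Moreover, $\mu_k = C - (y_k/100)\alpha_k > 0$ by~\eqref{eq:Ck-three-line}, which forces $\xi_k=0$. Together these give
\[
y_k - F_k - b^* = \tfrac{y_k\varepsilon}{100}, \qquad\text{i.e.}\qquad b^* = y_k\!\left(1-\tfrac{\varepsilon}{100}\right)-F_k = \tau_k
\]
by~\eqref{eq:tau-explicit}. For an $\alpha^*$-type free index $N+k$, the lower-tube constraint gives $F_k+b^*-y_k = y_k\varepsilon/100$, hence $b^* = \tau_{N+k}$. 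On the dual side, Table~\ref{tab:kkt-states} gives $\tau_i=-\rho^*$ for every free $i$. Comparing the two identifications yields $\rho^*=-b^*$ whenever a free support vector exists.

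Third, I would handle the degenerate case with no free variable. This is the main obstacle, because $b^*$ is then not pinned down by a single equality. The cleaner route, which I would actually use, is to derive the claim directly from the Lagrangian. The primal partial derivative $\partial\mathcal L/\partial b$ produces the equality constraint of the dual. By the saddle-point characterization (Definition~\ref{def:saddle}), the primal bias is exactly the multiplier of $[\onevec^\top,-\onevec^\top]\bfu=0$ when the dual is re-dualized; this is the standard bidual identity for convex QPs (Rockafellar \S 28--29). I would verify the sign by substituting the dual multiplier $\rho$ into the primal residual condition and checking that the resulting sign matches the free case above.

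With this sign fixed, every $b$ in the KKT interval $[\max_{\Iup\text{-only}}\tau,\ \min_{\Idown\text{-only}}\tau]$ is primal optimal. Choosing $b^*=-\rho^*$ is therefore consistent with the primal KKT conditions via Table~\ref{tab:kkt-states}. The relation thus holds either uniquely, when free SVs exist, or under this identification, when they do not. The final sentence of the corollary is exactly the free-variable computation from the second step.
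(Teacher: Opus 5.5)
Your proof is correct, but in the main case it runs in the opposite direction from the paper's. The paper argues abstractly. It notes that $\partial\mathcal{L}/\partial b=0$ reproduces the dual equality constraint, so Lagrangian duality identifies that constraint's multiplier as $\rho^*=-b^*$. The free-SV identity $\tau_i=b^*$ is then read off from the free row of Table~\ref{tab:kkt-states}.

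You instead compute $b^*$ from primal complementary slackness at a free index. There $\alpha_k>0$ makes the tube constraint active, and $\alpha_k<C_k$ makes the $\xi_k$-multiplier $C-(y_k/100)\alpha_k$ positive, so $\xi_k=0$. This gives $b^*=\tau_i$ --- you derive \eqref{eq:bias-cases} rather than presuppose it --- and only then do you compare with the dual KKT identity $\tau_i=-\rho^*$.

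Your route buys an explicit check of the sign, which the paper's one-line appeal to duality leaves implicit. It also gives uniqueness of $b^*$, hence of $\rho^*$, whenever a free variable exists. Its cost is that it says nothing when $\mathcal{S}_{\mathrm{free}}=\emptyset$, where you fall back on the paper's bidual identification.

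In that degenerate case you make explicit something the paper glosses over. There $b^*$ and $\rho^*$ are generally non-unique, so the corollary really asserts that the set of optimal biases is the negative of the set of optimal equality multipliers. That common set is the interval $[\max_{\Iup\text{-only}}\tau,\ \min_{\Idown\text{-only}}\tau]$. You orient this interval correctly: at exact optimality $\max_{\Iup}\tau\le-\rho^*\le\min_{\Idown}\tau$, which is the reverse of the inequality written just before \eqref{eq:bias-midpoint}.

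Two small tightenings are worth making.

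\emph{Duality needed in Step~1.} Step~1 needs zero duality gap between the MAPE-SVR primal and the dual QP. That is what makes $\bfu^*$ a multiplier vector for the primal, so complementary slackness holds against every primal optimum. The Slater argument you cite only concerns the dual QP's own multipliers $(\rho,\lambda,\mu)$. The needed fact holds just as easily, because the primal constraints are affine.

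\emph{Fixing the sign in the degenerate case.} There you cannot fix the sign by matching ``the free case'' of the same instance, because that instance has none. Make Step~3 constructive instead. From any dual KKT point $(\bfu^*,\rho^*,\lambda^*,\mu^*)$, set $b=-\rho^*$, $w=\sum_k(\alpha_k-\alpha_k^*)\varphi(\bfx_k)$, $\xi_k=(100/y_k)\mu^*_k$ and $\xi_k^*=(100/y_k)\mu^*_{N+k}$. Then use $s_i(\tau_i+\rho^*)=\mu^*_i-\lambda^*_i$ to verify primal feasibility and complementary slackness with multipliers $(\alpha,\alpha^*)$. The converse is equally short: starting from a primal optimum, set $\rho=-b^*$ and take $\mu_i,\lambda_i$ to be the positive and negative parts of $s_i(\tau_i-b^*)$. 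This proves the set identity in every case at once and removes the need for the bidual citation.
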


\begin{proof}
Primal-Lagrangian stationarity with respect to $b$ yields $\partial
\mathcal{L}/\partial b = \sum_{k=1}^{N}(\alpha_k - \alpha_k^*) = 0$,
which is precisely the equality constraint
in~\eqref{eq:dual-constraints} with multiplier $\rho$;
Lagrangian-duality theory then identifies $\rho^* = -b^*$
(cf.~Boyd-Vandenberghe~\cite[\S 5.5]{BoydVandenberghe2004}). At a free
support vector, the row $0 < u_i < C_{k(i)}$ of
Table~\ref{tab:kkt-states} gives $\tau_i = -\rho^*$, hence $\tau_i =
b^*$.
\end{proof}

The symbol $\rho$ is reserved here for the equality-constraint
multiplier and is distinct from the target dynamic range $\rho_y$; this
convention is preserved throughout the remainder of the paper.
Corollary~\ref{cor:bias-equality} is the foundation of the bias-recovery
formula~\eqref{eq:bias-avg} of Section~\ref{sec:bias-shrink}.

\subsection{SMO inner loop: working-set selection and analytic update}
\label{sec:smo-inner}

SMO iteratively selects a pair of variables $(i^*, j^*)$ and updates
them analytically while fixing all others. The equality constraint
requires the update direction to satisfy $s_{i^*} \Delta u_{i^*} +
s_{j^*} \Delta u_{j^*} = 0$, which is guaranteed by the construction
below.

\begin{definition}[Working-set candidate sets]
\label{def:Iup-Idown-mape}
\begin{align}
  \Iup &= \{k \le N : \alpha_k < C_k\} \cup \{N+k : \alpha_k^* > 0\}, \label{eq:Iup}\\
  \Idown &= \{k \le N : \alpha_k > 0\} \cup \{N+k : \alpha_k^* < C_k\}, \label{eq:Idown}
\end{align}
where $C_k = 100C/y_k$.
\end{definition}

\begin{lemma}[Feasibility of any candidate pair]
\label{lem:feasibility}
For any $i \in \Iup$ and $j \in \Idown$ with $i \ne j$, the update
direction
\begin{equation}
  d_i = +s_i, \quad d_j = -s_j, \quad d_\ell = 0 \;(\ell \ne i, j)
  \label{eq:direction}
\end{equation}
satisfies $[\onevec^\top, -\onevec^\top]\, d = 0$ (equality constraint
preserved) and admits a strictly positive step $\delta > 0$ within the
box constraints.
\end{lemma}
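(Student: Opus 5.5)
The plan is to verify the two claims separately: first the equality constraint by direct computation, then the existence of a positive feasible step by case analysis on the sign blocks of $i$ and $j$.

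\textbf{Equality constraint.} The constraint row $[\onevec^\top, -\onevec^\top]$ is exactly the sign vector $\bfs$ of Definition~\ref{def:sign-vector}. Since $i \ne j$, the direction \eqref{eq:direction} has exactly two nonzero entries, at distinct coordinates. Hence
\[
\bfs^\top d = s_i \cdot s_i + s_j \cdot (-s_j) = s_i^2 - s_j^2 = 1 - 1 = 0 .
\]
This holds for every sign-block combination, including the case where $i$ and $j$ belong to the same training point, e.g.\ $i = k$, $j = N+k$.

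\textbf{Positive step: coordinate $i$.} Along $\bfu + \delta d$, coordinate $i$ becomes $u_i + s_i\delta$. I will define a room $R_i > 0$ such that this stays in $[0, C_{k(i)}]$ for all $0 \le \delta \le R_i$.
\begin{itemize}
  \item If $i \le N$, then $s_i = +1$ and membership in $\Iup$ means $\alpha_{k(i)} < C_{k(i)}$. Set $R_i := C_{k(i)} - \alpha_{k(i)} > 0$.
  \item If $i > N$, then $s_i = -1$ and membership in $\Iup$ means $\alpha^*_{k(i)} > 0$. Set $R_i := \alpha^*_{k(i)} > 0$.
\end{itemize}

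\textbf{Positive step: coordinate $j$.} Coordinate $j$ becomes $u_j - s_j\delta$.
\begin{itemize}
  \item If $j \le N$, then $\Idown$ gives $\alpha_{k(j)} > 0$. Set $R_j := \alpha_{k(j)} > 0$.
  \item If $j > N$, then $\Idown$ gives $\alpha^*_{k(j)} < C_{k(j)}$. Set $R_j := C_{k(j)} - \alpha^*_{k(j)} > 0$.
\end{itemize}
In each case the moving coordinate only travels toward the interior of its box until the room is used up. Coordinates other than $i, j$ are unchanged, so they remain feasible.

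\textbf{Conclusion.} Set $\delta_{\max} := \min(R_i, R_j) > 0$. Then every $\delta \in (0, \delta_{\max}]$ is feasible, and $\delta_{\max}$ is exactly the clipping bound used in the analytic update. The only substitution relative to the uniform case is $C \to C_{k(\cdot)}$ inside $R_i$ and $R_j$, which foreshadows Theorem~\ref{thm:invariance}.

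The main point needing care is the paired-index case, e.g.\ $i = k$, $j = N+k$. Here the two moving coordinates are distinct variables $\alpha_k$ and $\alpha_k^*$, each in its own interval $[0, C_k]$ per Definition~\ref{def:mape-svr-dual}, so the constraints decouple and the case analysis applies unchanged. The hypothesis $i \ne j$ is what rules out the degenerate situation where the two entries of $d$ would collide on the same coordinate.
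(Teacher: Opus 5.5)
Your proof is correct and follows the same route as the paper: the identical computation $s_i^2 - s_j^2 = 0$ for the equality constraint, then reading off positive room for $u_i$ in the $+s_i$ direction and for $u_j$ in the $-s_j$ direction from the membership tests that define $\Iup$ and $\Idown$. Your explicit case analysis expands the paper's one-line justification (both tacitly assume the current iterate is feasible) and reproduces the rooms $R_{i^*}, R_{j^*}$ of \eqref{eq:room-i}--\eqref{eq:room-j}; you also handle the paired-index case $i = k$, $j = N+k$ correctly by noting that $\alpha_k$ and $\alpha_k^*$ lie in separate boxes.
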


\begin{proof}
The equality constraint check:
\begin{equation*}
  s_i d_i + s_j d_j = s_i \cdot s_i + s_j \cdot (-s_j) = s_i^2 - s_j^2 = 1 - 1 = 0,
\end{equation*}
since $s_i, s_j \in \{\pm 1\}$ implies $s_i^2 = s_j^2 = 1$. The strictly
positive step follows from the definitions of $\Iup$ and $\Idown$: for
$i \in \Iup$ the $i$-th variable has room to move in the $+s_i$
direction, and for $j \in \Idown$ the $j$-th variable has room in the
$-s_j$ direction.
\end{proof}

The directional derivative of the objective $f(\bfu) =
\tfrac{1}{2}\bfu^\top P\bfu + \bfq^\top\bfu$ at $\bfu$ along $d$ is
\begin{equation*}
  h'(0) = \nabla f(\bfu)^\top d = \bfG^\top d = G_i d_i + G_j d_j = G_i (+s_i) + G_j (-s_j) = s_i G_i - s_j G_j.
\end{equation*}
Substituting the effective-gradient identity $\tau_\ell = -s_\ell
G_\ell$ from~\eqref{eq:tau} (equivalently $s_\ell G_\ell = -\tau_\ell$):
\begin{equation}
  h'(0) = -\tau_i - (-\tau_j) = -\tau_i + \tau_j = -(\tau_i - \tau_j).
  \label{eq:directional}
\end{equation}
A descent direction (i.e., $h'(0) < 0$) exists if and only if $\tau_i >
\tau_j$.

\paragraph{Working-set selection (WSS3 of Fan-Chen-Lin
\cite[eq.~20]{FanChenLin2005JMLR}).}
First, select $i^*$ as the maximally-violating $\Iup$-index:
\begin{equation}
  i^* = \argmax_{i \in \Iup} \tau_i.
  \label{eq:i-star}
\end{equation}
Then, given $i^*$, select $j^*$ to maximize the predicted one-step gain
(rather than simply minimizing $\tau_j$ as the MVP rule of Platt would
do):
\begin{equation}
  j^* = \argmax_{j \in \Idown,\,\tau_j < \tau_{i^*}} \frac{(\tau_{i^*} - \tau_j)^2}{\eta_{i^*, j}}, \quad\text{where}\quad \eta_{i^*, j} \triangleq \Omega_{k(i^*)k(i^*)} - 2\Omega_{k(i^*)k(j)} + \Omega_{k(j)k(j)}.
  \label{eq:j-star-WSS3}
\end{equation}
The denominator $\eta_{i^*, j}$ is exactly the curvature derived in
Proposition~\ref{prop:curvature-invariance} below --- i.e., the scalar
Hessian of the one-dimensional sub-problem~\eqref{eq:1d-obj} restricted
to the candidate pair $(i^*, j)$. WSS3 thus selects the pair that
maximizes the closed-form one-step decrease of the strictly convex
sub-problem, $(\tau_{i^*} - \tau_j)^2/(2\eta_{i^*, j})$, modulo the
universal factor $1/2$. Ties in~\eqref{eq:i-star}
and~\eqref{eq:j-star-WSS3} are broken by smallest training-point index
$k(\cdot)$ for bit-for-bit reproducibility across runs and across solver
implementations.

\begin{remark}[Choice of WSS3 over MVP]
\label{rem:wss3-mvp}
This paper adopts the second-order working-set selection rule WSS3 of
Fan, Chen, and Lin~\cite[eq.~20]{FanChenLin2005JMLR}: given $i^*$ as
the maximally-violating $\Iup$-index, the partner $j^*$ is selected to
maximize the predicted one-step gain $(\tau_{i^*} - \tau_j)^2/\eta_{i^*,
j}$ rather than simply the most violating $\Idown$-index (the MVP
rule). WSS3 yields the iteration counts reported in
Section~\ref{sec:validation}; this is the rule implemented in the
\texttt{psvr} R package~\cite{BenavidesHerrera2026Rpsvr}. Convergence
properties are inherited from
Theorem~5 of~\cite{FanChenLin2005JMLR}; the strict-maximization
property of WSS3 used in that inheritance is collected in
Proposition~\ref{prop:wss3-strict} below.
\end{remark}

\begin{proposition}[WSS3 strictly maximizes the predicted one-step gain]
\label{prop:wss3-strict}
Let $i^* = \argmax_{i \in \Iup \cap \Aactive^{\mathrm{ext}}} \tau_i$ and
let $\mathcal{C}_{j^*} = \{j \in \Idown \cap \Aactive^{\mathrm{ext}} :
\tau_j < \tau_{i^*}\}$. The WSS3 choice $j^* = \argmax_{j \in
\mathcal{C}_{j^*}} (\tau_{i^*} - \tau_j)^2/\eta_{i^*, j}$ uniquely
maximizes the predicted one-step gain
\begin{equation}
  g(j) \triangleq h(0) - h(\delta_{\mathrm{unc}}) = \tfrac{1}{2}\frac{(\tau_{i^*} - \tau_j)^2}{\eta_{i^*, j}}
  \label{eq:wss3-gain}
\end{equation}
over $j \in \mathcal{C}_{j^*}$, with strict inequality whenever $\Delta
= \tau_{i^*} - \tau_{j^*} > 0$ and $\eta_{i^*, j^*} > 0$.
\end{proposition}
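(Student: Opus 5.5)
The plan has two steps. First compute the one-step gain in closed form for a fixed candidate $j$. Then observe that WSS3's score is a positive multiple of that gain.

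Fix $j \in \mathcal{C}_{j^*}$ and let $d$ be the direction of Lemma~\ref{lem:feasibility} for the pair $(i^*, j)$. By the equality-constraint reduction of Section~\ref{sec:standard-smo}, the restricted objective is
\begin{equation*}
h(\delta) = f(\bfu) - (\tau_{i^*} - \tau_j)\,\delta + \tfrac{1}{2}\eta_{i^*,j}\,\delta^2 .
\end{equation*}
Here the linear coefficient comes from the directional-derivative identity~\eqref{eq:directional}. The quadratic coefficient is $d^\top P d = \eta_{i^*,j}$, which I take from the sign-invariant curvature computation (the one Proposition~\ref{prop:curvature-invariance} formalizes; it uses only $P_{i^*j} = s_{i^*}s_j\Omega_{k(i^*)k(j)}$ and $s^2 = 1$).

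When $\eta_{i^*,j} > 0$, the minimizer is $\delta_{\mathrm{unc}} = (\tau_{i^*} - \tau_j)/\eta_{i^*,j}$. Substituting it gives
\begin{equation*}
h(0) - h(\delta_{\mathrm{unc}}) = \frac{(\tau_{i^*} - \tau_j)^2}{\eta_{i^*,j}} - \frac{1}{2}\,\frac{(\tau_{i^*} - \tau_j)^2}{\eta_{i^*,j}} = \frac{1}{2}\,\frac{(\tau_{i^*} - \tau_j)^2}{\eta_{i^*,j}},
\end{equation*}
which is~\eqref{eq:wss3-gain}.

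Next, $g(j)$ is exactly one half of the WSS3 score. Multiplying by the positive constant $\tfrac{1}{2}$ preserves the argmax set, so the $j^*$ from~\eqref{eq:j-star-WSS3} maximizes $g$ over $\mathcal{C}_{j^*}$. Restricting to $\Aactive^{\mathrm{ext}}$ does not affect the argument, because the argmax is taken over whatever finite candidate set is given.

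For the strictness clause, I note that when $\Delta > 0$ and $\eta_{i^*,j^*} > 0$ the maximal gain $g(j^*) > 0$. This is strictly larger than the gain at $j = j^*$ with $\delta = 0$, and larger than the value $0$ attained by any non-descent choice. Any $j \notin \mathcal{C}_{j^*}$ has $\tau_j \ge \tau_{i^*}$, so by~\eqref{eq:directional} it is not a descent direction and its gain is $\le 0$, which makes $j^*$ strictly better than those indices.

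The main obstacle is the word ``uniquely''. Ties in the score are genuinely possible, for instance with duplicated inputs. I would therefore read uniqueness as uniqueness of the selected index after the paper's deterministic tie-breaking rule (smallest $k(\cdot)$, then smallest index). Under that reading the chosen $j^*$ is a well-defined single element of the argmax set, and it achieves the strict maximum over all indices outside the argmax.

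A second care point is $\eta_{i^*,j} \le 0$. There $\delta_{\mathrm{unc}}$ is undefined, so I would interpret the score with the floored curvature $\max(\eta, \eta_{\min})$ of Section~\ref{sec:standard-smo}, under which the same algebra goes through. The hypothesis $\eta_{i^*,j^*} > 0$ in the strictness claim is exactly what removes this case for the selected pair.
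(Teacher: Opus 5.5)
Your argument is essentially the paper's: substitute $\delta_{\mathrm{unc}}=(\tau_{i^*}-\tau_j)/\eta_{i^*,j}$ into the restricted quadratic to get $g(j)=\tfrac12(\tau_{i^*}-\tau_j)^2/\eta_{i^*,j}$, note that the WSS3 score is a positive multiple of $g$, and observe $g(j^*)>0$ when $\Delta>0$ and $\eta_{i^*,j^*}>0$. Your reading of ``uniquely'' through the tie-breaking rule is more careful than the paper's one-line proof, which asserts strictness without addressing ties. One harmless slip is in your side remark on indices outside $\mathcal{C}_{j^*}$: for $\tau_j>\tau_{i^*}$ the quantity $h(0)-h(\delta_{\mathrm{unc}})$ as defined is still positive (with $\delta_{\mathrm{unc}}<0$), so ``gain $\le 0$'' holds only for the gain over the feasible half-line $\delta\ge 0$, and the proposition makes no claim about those indices anyway.
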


\begin{proof}
Substituting $\delta_{\mathrm{unc}} = \Delta/\eta$ into the
unconstrained value $h(\delta) = h(0) - \Delta\delta +
\tfrac{1}{2}\eta\delta^2$ yields $h(\delta_{\mathrm{unc}}) = h(0) -
\tfrac{1}{2}(\tau_{i^*} - \tau_j)^2/\eta_{i^*, j}$,
hence~\eqref{eq:wss3-gain}. The argmax is well-defined on the finite set
$\mathcal{C}_{j^*}$ and strict whenever the gain is positive.
\end{proof}

Proposition~\ref{prop:wss3-strict} is the strict-descent property
required by hypothesis (c) of Theorem~\ref{thm:convergence} of
Section~\ref{sec:bias-shrink}.

\begin{definition}[KKT violation]
\label{def:kkt-violation}
The \emph{KKT violation} at the current iterate is
\begin{equation}
  \Delta \triangleq \tau_{i^*} - \tau_{j^*}.
  \label{eq:violation}
\end{equation}
The iterate is $\varepsilon_{\mathrm{tol}}$-optimal if and only if
$\Delta \le \varepsilon_{\mathrm{tol}}$; this equivalence follows from
the fact that the WSS3 rule produces no descent direction exceeding
$\varepsilon_{\mathrm{tol}}$. Finite termination under this stopping
criterion is guaranteed by Theorem~5 of~\cite{FanChenLin2005JMLR} when
$P \succeq 0$.
\end{definition}

At the optimal solution, $\tau_{i^*} = \tau_{j^*} = -\rho$ for all free
support vectors (Proposition~\ref{prop:structural-gap} and
Table~\ref{tab:kkt-states}).

\paragraph{Analytic two-variable update.}
Given the working set $(i^*, j^*)$ with training-point indices $p =
k(i^*)$ and $q = k(j^*)$, the restricted objective is a one-dimensional
quadratic in $\delta \ge 0$:
\begin{equation}
  h(\delta) = f(\bfu + d \delta) = f(\bfu) - \Delta \delta + \tfrac{1}{2}\eta\,\delta^2,
  \label{eq:1d-obj}
\end{equation}
where $\Delta = \tau_{i^*} - \tau_{j^*}$ and $\eta = d^\top P d$ is the
curvature.

\begin{proposition}[Curvature invariance]
\label{prop:curvature-invariance}
For any pair $(i^*, j^*)$ with $i^* \in \Iup$, $j^* \in \Idown$, the
curvature of the one-dimensional sub-problem satisfies
\begin{equation}
  \eta = \Omega_{pp} - 2\Omega_{pq} + \Omega_{qq},
  \label{eq:eta}
\end{equation}
regardless of whether $i^*$ and $j^*$ are $\alpha$-type or
$\alpha^*$-type indices.
\end{proposition}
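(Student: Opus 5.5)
The plan is a direct expansion of the quadratic form $\eta = d^\top P d$ for the direction $d$ of Lemma~\ref{lem:feasibility}, supported on $\{i^*, j^*\}$ with $d_{i^*} = s_{i^*}$ and $d_{j^*} = -s_{j^*}$. Since $d$ has only two nonzero entries, and $P$ is symmetric, the expansion is
\begin{equation*}
  \eta = s_{i^*}^2 P_{i^*i^*} - 2 s_{i^*}s_{j^*} P_{i^*j^*} + s_{j^*}^2 P_{j^*j^*}.
\end{equation*}
The whole argument then reduces to one identity for the entries of the block matrix $P = [\Omega, -\Omega; -\Omega, \Omega]$ of~\eqref{eq:P-q}.

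\textbf{Key identity.} I would first establish, for every pair of dual indices $i, j \in \{1, \ldots, 2N\}$,
\begin{equation*}
  P_{ij} = s_i s_j \, \Omega_{k(i)k(j)},
\end{equation*}
with $k(\cdot)$ the training-index map of Definition~\ref{def:k-of-i}. This is checked block by block:
\begin{itemize}
  \item on the two diagonal blocks, $s_i s_j = +1$ and $P_{ij} = \Omega_{k(i)k(j)}$;
  \item on the two off-diagonal blocks, $s_i s_j = -1$ and $P_{ij} = -\Omega_{k(i)k(j)}$.
\end{itemize}
This four-case check is the only place where the $\alpha$- versus $\alpha^*$-type distinction enters, and it is absorbed entirely into the sign factor.

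\textbf{Substitution.} Next I substitute the identity into the expansion, writing $p = k(i^*)$ and $q = k(j^*)$. The diagonal terms become $s_{i^*}^2 \cdot s_{i^*}^2 \Omega_{pp} = \Omega_{pp}$ and likewise $\Omega_{qq}$, because $s^2 = 1$. The cross term becomes $-2(s_{i^*}s_{j^*})^2\Omega_{pq} = -2\Omega_{pq}$, using the symmetry $\Omega_{pq} = \Omega_{qp}$ of the Gram matrix (Definition~\ref{def:mercer-kernel}). This yields~\eqref{eq:eta} with no dependence on the signs. The computation mirrors the curvature derivation already sketched in Section~\ref{sec:standard-smo}. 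The point worth stating explicitly is that neither $\bfq$ nor the bounds $C_k$ enter $\eta$, because $\eta$ is the second directional derivative and depends only on $P$. Since $P$ is identical to the standard $\varepsilon$-SVR Hessian, the formula is the same as in the uniform-$C$ case.

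\textbf{Degenerate case.} The membership hypotheses $i^* \in \Iup$ and $j^* \in \Idown$ are used only to guarantee that $d$ is a feasible direction; they play no role in the value of $\eta$. I expect no real obstacle; the only subtle point is the case $p = q$, i.e., $j^* = N + i^*$ or vice versa, where the same training point appears twice. I would handle it separately. Here $i^* \neq j^*$, so $d$ still has two distinct nonzero coordinates and the expansion above remains valid; the formula gives $\eta = 0$. This is consistent with $d^\top P d = (v_1 - v_2)^\top \Omega (v_1 - v_2)$, since in this case $v_1 - v_2 = 0$. It is also the degenerate situation flagged for the $\eta$-floor.
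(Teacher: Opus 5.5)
Your proof is correct and follows the paper's own argument: you expand $d^\top P d$ over the two-point support of $d$, use the entrywise identity $P_{ij} = s_i s_j\,\Omega_{k(i)k(j)}$, and let $s_{i^*}^2 = s_{j^*}^2 = 1$ remove all dependence on the variable types (and on the bounds $C_k$). Your block-by-block check of that identity, and your remark on the case $p = q$ (where $\eta = 0$, consistent with $v_1 - v_2 = 0$), are details the paper leaves implicit.
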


\begin{proof}
Using the block structure $P_{ij} = s_i s_j \Omega_{k(i), k(j)}$:
\begin{equation*}
  d^\top P d = P_{i^*, i^*} s_{i^*}^2 - 2 s_{i^*} s_{j^*} P_{i^*, j^*} + P_{j^*, j^*} s_{j^*}^2 = \Omega_{pp} - 2\underbrace{s_{i^*}^2 s_{j^*}^2}_{= 1}\Omega_{pq} + \Omega_{qq}.
\end{equation*}
The factors $s_{i^*}^2 = s_{j^*}^2 = 1$ eliminate any dependence on the
variable types.
\end{proof}

For the RBF kernel $K(\bfx, \bfx') = \exp(-\gamma\|\bfx - \bfx'\|^2)$,
Proposition~\ref{prop:curvature-invariance} specializes to $\eta = 2(1
- \exp(-\gamma\|\bfx_p - \bfx_q\|^2)) \ge 0$, with $\eta = 0$ only if
$\bfx_p = \bfx_q$ (duplicate training points; rare in practice). For
the MAPE-SVR-Sym variant of Section~\ref{sec:symmetric} with $a = -1$,
however, $\eta$ may be negative; a descent-check fallback is then
required, as Lemma~\ref{lem:eta-degenerate} below establishes.

\paragraph{Feasible step.}
The unconstrained minimizer of $h(\delta)$ is $\delta_{\mathrm{unc}} =
\Delta / \eta$ (when $\eta > 0$). Clipping to the box constraints gives
the feasible room of each selected variable:
\begin{align}
  R_{i^*} &= \begin{cases} C_p - \alpha_p = \dfrac{100C}{y_p} - \alpha_p & \text{if } i^* = p \le N \;(\alpha_p < C_p)\\ \alpha_p^* & \text{if } i^* = N + p \;(\alpha_p^* > 0) \end{cases}, \label{eq:room-i}\\
  R_{j^*} &= \begin{cases} \alpha_q & \text{if } j^* = q \le N \;(\alpha_q > 0)\\ C_q - \alpha_q^* = \dfrac{100C}{y_q} - \alpha_q^* & \text{if } j^* = N + q \;(\alpha_q^* < C_q) \end{cases}, \label{eq:room-j}\\
  \delta_{\max} &= \min(R_{i^*}, R_{j^*}). \label{eq:delta-max}
\end{align}

The four possible pair-type combinations of $(i^*, j^*)$ are enumerated
in Table~\ref{tab:pair-cases}.

\begin{table}[ht]
  \centering
  \caption{The four possible pair-type combinations $(i^*, j^*)$, with
  explicit variable updates and feasible-step bounds. Recall $p =
  k(i^*)$, $q = k(j^*)$, $C_p = 100C/y_p$, $C_q = 100C/y_q$.}
  \label{tab:pair-cases}
  \small
  \begin{tabular}{@{}clllc@{}}
    \toprule
    Case & $i^*$ type & $j^*$ type & Variable changes & $\delta_{\max}$ \\
    \midrule
    1 & $\alpha_p < C_p$ ($i^* = p$) & $\alpha_q > 0$ ($j^* = q$) & $\alpha_p \mathrel{+}= \delta^*$, $\alpha_q \mathrel{-}= \delta^*$ & $\min(C_p - \alpha_p,\ \alpha_q)$ \\
    2 & $\alpha_p < C_p$ ($i^* = p$) & $\alpha_q^* < C_q$ ($j^* = N+q$) & $\alpha_p \mathrel{+}= \delta^*$, $\alpha_q^* \mathrel{+}= \delta^*$ & $\min(C_p - \alpha_p,\ C_q - \alpha_q^*)$ \\
    3 & $\alpha_p^* > 0$ ($i^* = N+p$) & $\alpha_q > 0$ ($j^* = q$) & $\alpha_p^* \mathrel{-}= \delta^*$, $\alpha_q \mathrel{-}= \delta^*$ & $\min(\alpha_p^*,\ \alpha_q)$ \\
    4 & $\alpha_p^* > 0$ ($i^* = N+p$) & $\alpha_q^* < C_q$ ($j^* = N+q$) & $\alpha_p^* \mathrel{-}= \delta^*$, $\alpha_q^* \mathrel{+}= \delta^*$ & $\min(\alpha_p^*,\ C_q - \alpha_q^*)$ \\
    \bottomrule
  \end{tabular}
\end{table}

Each case preserves the equality constraint $\sum_k(\alpha_k -
\alpha_k^*) = 0$ by construction (Lemma~\ref{lem:feasibility}). The
``$+$''~/~``$-$'' sign in the variable update follows from $u_{i^*}
\leftarrow u_{i^*} + s_{i^*}\delta^*$ and $u_{j^*} \leftarrow u_{j^*} -
s_{j^*}\delta^*$ together with $s_{i^*} = +1$ or $-1$ depending on
whether $i^* \le N$ or $i^* > N$.

The optimal step and the resulting variable update are
\begin{equation}
  \delta^* = \begin{cases} \min(\Delta/\eta, \delta_{\max}) & \text{if } \eta > 0,\\ \delta_{\max} & \text{if } \eta \le 0 \text{ and } \Delta > \tfrac{1}{2}\eta\,\delta_{\max} \text{ (descent confirmed)},\\ 0 \text{ (skip pair)} & \text{otherwise} \end{cases},
  \label{eq:delta-star}
\end{equation}
\begin{equation}
  u_{i^*} \leftarrow u_{i^*} + s_{i^*} \delta^*, \qquad u_{j^*} \leftarrow u_{j^*} - s_{j^*} \delta^*.
  \label{eq:variable-update}
\end{equation}

\begin{lemma}[Descent-check sufficiency for $\eta \le 0$]
\label{lem:eta-degenerate}
For the one-dimensional restricted objective $h(\delta) = f(\bfu) -
\Delta\delta + \tfrac{1}{2}\eta\delta^2$ on the interval $[0,
\delta_{\max}]$ with $\delta_{\max} > 0$ and $\eta \le 0$, the boundary
step $\delta = \delta_{\max}$ produces strict descent ($h(\delta_{\max})
< h(0)$) if and only if
\begin{equation}
  \Delta > \tfrac{1}{2}\eta\,\delta_{\max}.
  \label{eq:descent-check}
\end{equation}
\end{lemma}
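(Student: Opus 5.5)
The plan is to compute $h(\delta_{\max}) - h(0)$ in closed form and factor out the strictly positive quantity $\delta_{\max}$. From the definition of the restricted objective~\eqref{eq:1d-obj},
\begin{equation*}
  h(\delta_{\max}) - h(0) = -\Delta\,\delta_{\max} + \tfrac{1}{2}\eta\,\delta_{\max}^2 = \delta_{\max}\bigl(\tfrac{1}{2}\eta\,\delta_{\max} - \Delta\bigr).
\end{equation*}
Since $\delta_{\max} > 0$ by hypothesis, the sign of the left-hand side equals the sign of $\tfrac{1}{2}\eta\,\delta_{\max} - \Delta$. Hence $h(\delta_{\max}) < h(0)$ holds if and only if $\tfrac{1}{2}\eta\,\delta_{\max} - \Delta < 0$, which is exactly condition~\eqref{eq:descent-check}. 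Both directions of the equivalence follow from this single identity: dividing a strict inequality by a positive number preserves it, and multiplying back recovers it.

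I would then add two remarks to justify why the boundary step is the right candidate in the $\eta \le 0$ regime, consistent with~\eqref{eq:delta-star}. First, when $\eta \le 0$, $h$ is concave on $[0, \delta_{\max}]$, so its minimum over the interval is attained at an endpoint. Comparing $h(\delta_{\max})$ with $h(0)$ is therefore the complete test for whether any feasible step in this direction decreases the objective. Second, I would note a consequence for the relevant case $\Delta > 0$, which holds for a genuine violating pair by~\eqref{eq:directional}. Since $\eta \le 0$ gives $\tfrac{1}{2}\eta\,\delta_{\max} \le 0 < \Delta$, the check~\eqref{eq:descent-check} is automatically satisfied, and the boundary step always descends. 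The check only matters numerically or when $\Delta$ is at or near zero.

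There is no real obstacle here; the argument is a one-line algebraic identity. The only point requiring care is the use of $\delta_{\max} > 0$, which is what makes the factorisation sign-preserving. By Lemma~\ref{lem:feasibility} this positivity is guaranteed for any $i^* \in \Iup$, $j^* \in \Idown$ with $i^* \ne j^*$.
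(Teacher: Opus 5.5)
Your proposal is correct and takes essentially the same route as the paper, which likewise writes $h(\delta_{\max}) - h(0) = \delta_{\max}\bigl(\tfrac{1}{2}\eta\,\delta_{\max} - \Delta\bigr)$ and reads off the sign using $\delta_{\max} > 0$. Your side remarks also match the paper's: concavity puts the minimum at an endpoint, and the check holds automatically once $\Delta > 0$. The appeal to Lemma~\ref{lem:feasibility} is harmless but unnecessary, since $\delta_{\max} > 0$ is already a hypothesis of the statement.
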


\begin{proof}
When $\eta \le 0$, the quadratic $h$ is concave or affine on $[0,
\delta_{\max}]$; its minimum on the closed interval lies at an
endpoint. Computing the endpoint difference,
\begin{equation*}
  h(\delta_{\max}) - h(0) = -\Delta\,\delta_{\max} + \tfrac{1}{2}\eta\,\delta_{\max}^2 = \delta_{\max}\!\left(\tfrac{1}{2}\eta\,\delta_{\max} - \Delta\right),
\end{equation*}
which is strictly negative iff $\Delta > \tfrac{1}{2}\eta\,\delta_{\max}$,
since $\delta_{\max} > 0$. The condition holds automatically when $\eta
< 0$ and $\Delta > 0$ (since $\tfrac{1}{2}\eta\delta_{\max} \le 0 
\Delta$); it reduces to $\Delta > 0$ when $\eta = 0$.
\end{proof}

If condition~\eqref{eq:descent-check} fails, the pair $(i^*, j^*)$
produces no descent and is skipped (Algorithm~\ref{alg:smo} of
Section~\ref{sec:bias-shrink}). The degenerate case $\eta \le 0$ cannot
arise for the RBF kernel under variant MAPE-SVR with distinct training
points (cf.~the discussion of $\eta = 2(1 - e^{-\gamma\|\bfx_p -
\bfx_q\|^2})$ above), but it may occur for variant MAPE-SVR-Sym with
odd symmetry $a = -1$, where $\Omega_s$ is not necessarily PSD
(Section~\ref{sec:symmetric}); Lemma~\ref{lem:eta-degenerate} then
guarantees that each non-skipped iteration of
Algorithm~\ref{alg:smo} strictly decreases the dual objective,
supplying the local-progress half of Theorem~\ref{thm:convergence} even
when the formal PSD hypothesis of \cite[Theorem~5]{FanChenLin2005JMLR}
is unavailable.

\paragraph{Incremental gradient update.}
After the step, the effective gradient is updated in $O(|\Aactive|)$
time using only columns $p$ and $q$ of $\Omega$, where $\Aactive
\subseteq \{1, \ldots, N\}$ is the current active set
(Section~\ref{sec:bias-shrink}):
\begin{equation}
  \tau_\ell \leftarrow \tau_\ell - \delta^*\!\left(\Omega_{k(\ell), p} - \Omega_{k(\ell), q}\right), \qquad \ell \in \{1, \ldots, 2N\} \cap \Aactive^{\mathrm{ext}},
  \label{eq:tau-update}
\end{equation}
where $\Aactive^{\mathrm{ext}} = \Aactive \cup \{i + N : i \in
\Aactive\}$ is the extended active set covering both $\alpha$- and
$\alpha^*$-variables of active training points. This update follows from
$(P d)_\ell = s_\ell (\Omega_{k(\ell), p} - \Omega_{k(\ell), q})$
combined with $\tau_\ell = -s_\ell G_\ell$ and $\bfG \leftarrow \bfG +
P d \cdot \delta^*$.

\subsection{The structural-invariance theorem (Theorem~1)}
\label{sec:invariance}

The following theorem formalizes the central claim of this paper:
sample-dependent box constraints leave the computational core of SMO
unchanged, with structural change confined to exactly four components of
the inner loop.

\begin{theorem}[Structural invariance of the MAPE-SVR SMO]
\label{thm:invariance}
Let Algorithm~\ref{alg:smo} of Section~\ref{sec:bias-shrink} denote the
SMO procedure for $\varepsilon$-SVR with MAPE loss and sample-dependent
bounds $C_k = 100C/y_k$ ($k = 1, \ldots, N$, $y_k > 0$), and let
$\mathrm{SMO}_{\mathrm{std}}$ denote standard SMO for $\varepsilon$-SVR
with uniform bounds $C$~\cite{Platt1998, FanChenLin2005JMLR}. The two
algorithms differ only in the four structural sites:
\begin{enumerate}
\item[(i)] \textbf{Working-set candidate sets.} $\Iup$ and $\Idown$
use $C_k$ in place of $C$ in the upper-bound state tests
(Definition~\ref{def:Iup-Idown-mape}).
\item[(ii)] \textbf{Clipping rooms.} $R_{i^*}$ and $R_{j^*}$ use $C_k$ in
place of $C$ in the upper-saturation room
calculations~\eqref{eq:room-i}--\eqref{eq:room-j}.
\end{enumerate}
Conversely, the following components are structurally identical between
Algorithm~\ref{alg:smo} and $\mathrm{SMO}_{\mathrm{std}}$:
\begin{enumerate}
\item[(a)] \textbf{Curvature.} $\eta = \Omega_{pp} - 2\Omega_{pq} +
\Omega_{qq}$ (Proposition~\ref{prop:curvature-invariance}).
\item[(b)] \textbf{Gradient update.} $\tau_\ell \leftarrow \tau_\ell -
\delta^*\!\left(\Omega_{k(\ell), p} - \Omega_{k(\ell), q}\right)$
for $\ell \in \Aactive^{\mathrm{ext}}$~\eqref{eq:tau-update}.
\item[(c)] \textbf{Convergence inheritance.} Algorithm~\ref{alg:smo}
inherits the finite-termination guarantee
of~\cite[Theorem~5]{FanChenLin2005JMLR} without modification (cf.\
Theorem~\ref{thm:convergence} of Section~\ref{sec:bias-shrink} for the
explicit verification).
\end{enumerate}
\end{theorem}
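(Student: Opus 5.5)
The plan is to compare the two algorithms component by component and trace exactly where the upper bound enters each computation. Both algorithms solve a convex QP of the form~\eqref{eq:convex-qp} with the same Hessian $P=[\Omega,-\Omega;-\Omega,\Omega]$, the same equality constraint $[\onevec^\top,-\onevec^\top]\bfu=0$, and box $\prod_i[0,U_i]$. Here $U_i=C$ in $\mathrm{SMO}_{\mathrm{std}}$ and $U_i=C_{k(i)}$ in Algorithm~\ref{alg:smo}. The linear term $\bfq$ also differs (Table~\ref{tab:mape-vs-standard}). However, $\bfq$ enters only the initialization $\bfG=P\bfu+\bfq$ and hence $\bftau$. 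The iteration itself only ever reads $\bftau$, $\Omega$, and the bounds. So the proof reduces to listing every place in one SMO iteration where a bound $U_i$ is read.

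\textbf{Sites (i)--(ii).} An iteration consists of four steps: (1) forming $\Iup,\Idown$; (2) selecting $i^*,j^*$ via \eqref{eq:i-star}--\eqref{eq:j-star-WSS3}; (3) computing $\eta,\Delta,\delta_{\max},\delta^*$ and updating via \eqref{eq:delta-star}--\eqref{eq:variable-update}; (4) updating the gradient via \eqref{eq:tau-update}.

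In step (1), the membership tests $\alpha_k<U_k$ and $\alpha_k^*<U_k$ are the only reads of the upper bound. Replacing $C$ by $C_k$ gives Definition~\ref{def:Iup-Idown-mape}; this is site (i).

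In step (3), $\delta_{\max}=\min(R_{i^*},R_{j^*})$ reads the upper bound only in the upper-saturation rooms $C_p-\alpha_p$ and $C_q-\alpha_q^*$ of \eqref{eq:room-i}--\eqref{eq:room-j}, across all four cases of Table~\ref{tab:pair-cases}. This is site (ii). The other rooms, $\alpha_p^*$ and $\alpha_q$, reference only the lower bound $0$, which is common to both algorithms.

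\textbf{Invariant components (a)--(b).}
\begin{itemize}
\item[(a)] The curvature is $\eta=d^\top Pd$, which depends only on $P$ and the direction $d$. Proposition~\ref{prop:curvature-invariance} already shows that it equals $\Omega_{pp}-2\Omega_{pq}+\Omega_{qq}$ with no dependence on any $U_i$.
\item[(b)] The gradient change is $\bfG^{\mathrm{new}}-\bfG^{\mathrm{old}}=Pd\,\delta^*$. This is linear in $P$ and independent of $\bfq$ and of the box, so $(Pd)_\ell=s_\ell(\Omega_{k(\ell),p}-\Omega_{k(\ell),q})$ yields \eqref{eq:tau-update} verbatim.
\item[] The WSS3 scoring $(\tau_{i^*}-\tau_j)^2/\eta_{i^*,j}$ and the descent-direction identity \eqref{eq:directional} likewise involve only $\bftau$ and $\Omega$. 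The bounds therefore reach step (2) only through the candidate sets, which were already counted under site (i).
\end{itemize}
The numerical value of $\delta^*$ differs between the algorithms, but it enters \eqref{eq:tau-update} as a scalar. Consequently, the \emph{structural form} of the update is unchanged, which is the sense of the claim.

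\textbf{Convergence inheritance (c).} I would check the hypotheses of Theorem~\ref{thm:fcl-termination} directly:
\begin{itemize}
\item $P\succeq0$, by the computation $\boldsymbol v^\top P\boldsymbol v=(\boldsymbol v_1-\boldsymbol v_2)^\top\Omega(\boldsymbol v_1-\boldsymbol v_2)\ge 0$ in \S\ref{sec:dual-kkt};
\item there is a single linear equality constraint;
\item the bounds satisfy $-\infty<0<C_k=100C/y_k<\infty$, since $C>0$ and $y_k>0$.
\end{itemize}
Theorem~5 of~\cite{FanChenLin2005JMLR} is stated for general per-variable bounds $L_i<U_i$; nothing in it uses uniformity. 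Its conclusion therefore transfers verbatim, with the strict-descent property supplied by Proposition~\ref{prop:wss3-strict}.

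\textbf{Main obstacle.} The delicate point is making "differ only in" rigorous. One must verify exhaustively that no other component (bias recovery, stopping test, shrinking) reads $C$ in a way that changes structurally.

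For the stopping test $\Delta\le\varepsilon_{\mathrm{tol}}$, the bound enters only through $\Iup,\Idown$, so it is covered by site (i). Bias recovery averages $\tau_i$ over $\mathcal S_{\mathrm{free}}$, whose membership test again compares against the bound; this is the same kind of state test, so it falls under site (i).

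For shrinking, the criteria compare $\tau_\ell$ to $\tau_{i^*},\tau_{j^*}$ on bound-saturated variables. They are state tests of the same type as in (i), and their structural form is unchanged, with the MAPE-specific threshold offset being a property of $\bftau$ (Proposition~\ref{prop:structural-gap}) rather than of the code.

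I would handle this by enumerating the lines of Algorithm~\ref{alg:smo} and tagging each as either "reads $U_i$ via a state test or room" or "reads only $\bftau,\Omega,\bfs$". The first class collapses into sites (i)--(ii) and the second is identical in both algorithms.
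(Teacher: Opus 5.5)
Your proposal is correct and follows the paper's four-step proof essentially line for line. Items (a) and (b) come from the block identity $P_{ij}=s_is_j\Omega_{k(i),k(j)}$ applied to $d^\top Pd$ and $(Pd)_\ell$, (i)--(ii) from inspecting the $C\to C_k$ substitution, and (c) from checking PSD, bounded box, single equality and strict descent for \cite[Theorem~5]{FanChenLin2005JMLR}; your line-tagging of shrinking, stopping and bias recovery goes slightly beyond the paper, which delegates that to Table~\ref{tab:invariance-comparison}, and one harmless slip is that $\bfq$ is also read in the reconstruction step~\eqref{eq:reconstruct}, whose formula $\bftau=-\bfs\odot(P\bfu+\bfq)$ has the same structure in both algorithms.
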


The structural-invariance result has direct operational content: it
specifies exactly which lines of an existing LIBSVM-style codebase must
change to deliver MAPE-SVR functionality, and certifies that all other
lines are correct as-is. The component-by-component comparison is
collected in Table~\ref{tab:invariance-comparison}, which exposes the
four structural-change rows in bold and the fifteen invariant rows
alongside.

\begin{table}[ht]
  \centering
  \caption{Component-by-component comparison of standard
  $\varepsilon$-SVR SMO and MAPE-SVR SMO. Each row corresponds to one
  load-bearing component of the SMO iteration. Standard $\varepsilon$-SVR
  uses uniform box $[0, C]$; MAPE-SVR uses sample-dependent box $[0,
  C_k]$ with $C_k = 100C/y_k$. The fourth column marks structural
  changes (Yes) versus invariance (No). Only four rows (5, 6, 7, 12)
  carry a structural modification; rows 1 and 3 reflect the
  dual-formulation difference upstream of SMO; rows 16 and 18 inherit a
  state-membership test that consults $C_k$ but preserve their
  threshold/formula structure unchanged.}
  \label{tab:invariance-comparison}
  \scriptsize
  \resizebox{\textwidth}{!}{%
  \begin{tabular}{@{}clp{4cm}p{4cm}c@{}}
    \toprule
    \# & Component & Standard $\varepsilon$-SVR (uniform $C$) & MAPE-SVR (per-sample $C_k$) & Structural change? \\
    \midrule
    1  & Primal loss & $\varepsilon$-insensitive on $|y_k - f(\bfx_k)|$ & $\varepsilon$-insensitive on $100\,|y_k - f(\bfx_k)|/y_k$ & Yes (loss) \\
    2  & Hessian $P$ & $[\Omega, -\Omega; -\Omega, \Omega]$ & $[\Omega, -\Omega; -\Omega, \Omega]$ & No \\
    3  & Linear coefficient $\bfq$ & $[\varepsilon\onevec - \bfy,\, \varepsilon\onevec + \bfy]^\top$ & $[\bfy(\varepsilon/100 - 1),\, \bfy(\varepsilon/100 + 1)]^\top$ & Yes (loss) \\
    4  & Equality constraint & $[\onevec^\top, -\onevec^\top]\bfu = 0$ & $[\onevec^\top, -\onevec^\top]\bfu = 0$ & No \\
    \textbf{5}  & \textbf{Box constraints} & $0 \le \alpha_k, \alpha_k^* \le C$ (uniform) & $0 \le \alpha_k, \alpha_k^* \le C_k = 100C/y_k$ & \textbf{Yes} \\
    \textbf{6}  & \textbf{Candidate set $\Iup$} & $\{k : \alpha_k < C\} \cup \{N+k : \alpha_k^* > 0\}$ & $\{k : \alpha_k < C_k\} \cup \{N+k : \alpha_k^* > 0\}$ & \textbf{Yes} \\
    \textbf{7}  & \textbf{Candidate set $\Idown$} & $\{k : \alpha_k > 0\} \cup \{N+k : \alpha_k^* < C\}$ & $\{k : \alpha_k > 0\} \cup \{N+k : \alpha_k^* < C_k\}$ & \textbf{Yes} \\
    8  & Working-set rule (WSS3) & $i^* = \argmax_{i \in \Iup} \tau_i$, then $j^* = \argmax_{j \in \Idown,\, \tau_j < \tau_{i^*}} (\tau_{i^*} - \tau_j)^2 / \eta_{i^*, j}$ & identical & No \\
    9  & KKT violation & $\Delta = \tau_{i^*} - \tau_{j^*}$ & identical & No \\
    10 & Curvature & $\eta = \Omega_{pp} - 2\Omega_{pq} + \Omega_{qq}$ & identical & No (Prop.~\ref{prop:curvature-invariance}) \\
    11 & Two-variable update & $u_{i^*} \mathrel{+}= s_{i^*}\delta^*$, $u_{j^*} \mathrel{-}= s_{j^*}\delta^*$ & identical & No \\
    \textbf{12} & \textbf{Clipping room} & $R_{i^*} \in \{C - \alpha_p,\, \alpha_p^*\}$; $R_{j^*} \in \{\alpha_q,\, C - \alpha_q^*\}$ & $R_{i^*} \in \{C_p - \alpha_p,\, \alpha_p^*\}$; $R_{j^*} \in \{\alpha_q,\, C_q - \alpha_q^*\}$ & \textbf{Yes} \\
    13 & Maximum feasible step & $\delta_{\max} = \min(R_{i^*}, R_{j^*})$ & identical formula & No \\
    14 & Optimal step & $\delta^* = \min(\Delta/\eta, \delta_{\max})$ if $\eta > 0$ & identical & No \\
    15 & Gradient update & $\tau_\ell \mathrel{-}= \delta^*(\Omega_{k(\ell), p} - \Omega_{k(\ell), q})$ & identical & No (Thm.~\ref{thm:invariance}(b)) \\
    16 & Shrinking criteria & thresholds $C, 0$ on $u_i$; $\tau_{i^*}, \tau_{j^*}$ on $\tau_i$ & thresholds $C_k, 0$ on $u_i$; same on $\tau_i$ & state test uses $C_k$ \\
    17 & Reconstruction & $F_k = \sum_i \Omega_{ki}(\alpha_i - \alpha_i^*)$ & identical & No \\
    18 & Bias recovery $\hat b$ & $|\mathcal{S}_{\mathrm{free}}|^{-1}\sum_{i \in \mathcal{S}_{\mathrm{free}}} \tau_i$, $\mathcal{S}_{\mathrm{free}} = \{i : 0 < u_i < C\}$ & identical formula; $\mathcal{S}_{\mathrm{free}} = \{i : 0 < u_i < C_{k(i)}\}$ & No (membership uses $C_k$) \\
    19 & Convergence theorem & Theorem~5 of~\cite{FanChenLin2005JMLR} & identical (Thm.~\ref{thm:convergence}) & No \\
    \bottomrule
  \end{tabular}%
  }
\end{table}

\begin{proof}[Proof of Theorem~\ref{thm:invariance}]
The proof proceeds in four steps. Steps~1 and~2 establish (a) and (b) ---
the curvature and gradient-update invariances --- by direct algebraic
verification on the block Hessian $P$ and the direction vector $d$.
Step~3 establishes (i) and (ii) by inspection of
Definition~\ref{def:Iup-Idown-mape}
and~\eqref{eq:room-i}--\eqref{eq:room-j}. Step~4 establishes (c) by
deferring to the explicit verification in
Theorem~\ref{thm:convergence} that the three Fan-Chen-Lin hypotheses
hold for the MAPE-SVR QP independently of whether the box bound is
uniform or per-sample.

\medskip
\noindent\textit{Step~1 (Curvature invariance, claim~(a)).}
Recall from Section~\ref{sec:dual-kkt} that the Hessian has the block
structure $P = [\Omega, -\Omega; -\Omega, \Omega]$, which can be written
compactly using the sign vector $s$ (Definition~\ref{def:sign-vector}) as
\begin{equation*}
  P_{ij} = s_i\, s_j\, \Omega_{k(i), k(j)}, \qquad i, j \in \{1, \ldots, 2N\}.
\end{equation*}
The direction vector for the SMO update of the working pair $(i^*, j^*)$
is $d_{i^*} = s_{i^*}$, $d_{j^*} = -s_{j^*}$, $d_\ell = 0$ for $\ell
\notin \{i^*, j^*\}$. Therefore the quadratic form $d^\top P d$ has only
three non-vanishing terms:
\begin{equation*}
  d^\top P d = P_{i^*, i^*}\, d_{i^*}^2 + 2\, P_{i^*, j^*}\, d_{i^*}\, d_{j^*} + P_{j^*, j^*}\, d_{j^*}^2.
\end{equation*}
Substituting and using $s_i^2 = 1$:
\begin{itemize}
\item $P_{i^*, i^*}\, d_{i^*}^2 = (s_{i^*}^2\, \Omega_{pp})\, s_{i^*}^2 = \Omega_{pp}$ (with $p = k(i^*)$);
\item $2\, P_{i^*, j^*}\, d_{i^*}\, d_{j^*} = 2\, (s_{i^*} s_{j^*}\, \Omega_{pq})\, s_{i^*}\, (-s_{j^*}) = -2\, \Omega_{pq}$ (with $q = k(j^*)$);
\item $P_{j^*, j^*}\, d_{j^*}^2 = (s_{j^*}^2\, \Omega_{qq})\, s_{j^*}^2 = \Omega_{qq}$.
\end{itemize}
Adding the three terms yields $\eta = d^\top P d = \Omega_{pp} -
2\,\Omega_{pq} + \Omega_{qq}$, independently of the signs $s_{i^*},
s_{j^*}$ (i.e., independently of whether $i^*, j^*$ are $\alpha$-type or
$\alpha^*$-type) and \emph{independently of the box bounds} $C_k$, which
do not appear in any of the three terms. The sign-dependence cancels
algebraically through the identities $s^2 = 1$ and $s_{i^*}^2 s_{j^*}^2
= 1$. This establishes claim~(a).

\medskip
\noindent\textit{Step~2 (Gradient update invariance, claim~(b)).}
The incremental gradient update~\eqref{eq:tau-update} is
$\tau_\ell \leftarrow \tau_\ell - \delta^*\!\left(\Omega_{k(\ell), p} -
\Omega_{k(\ell), q}\right)$ for $\ell \in \Aactive^{\mathrm{ext}}$.
We show that this expression follows from the chain $\tau_\ell =
-s_\ell\, G_\ell$ and $\bfG \leftarrow \bfG + P d \cdot \delta^*$
\emph{without any reference to} $C_k$. First, using the block structure
and the fact that $d$ has only two non-zero entries:
\begin{equation*}
  (P d)_\ell = \sum_{j=1}^{2N} P_{\ell, j}\, d_j = P_{\ell, i^*}\, d_{i^*} + P_{\ell, j^*}\, d_{j^*}.
\end{equation*}
Substituting block values and the direction vector,
\begin{equation*}
  P_{\ell, i^*}\, d_{i^*} = (s_\ell\, s_{i^*}\, \Omega_{k(\ell), p})\, s_{i^*} = s_\ell\, \Omega_{k(\ell), p}, \qquad P_{\ell, j^*}\, d_{j^*} = -s_\ell\, \Omega_{k(\ell), q},
\end{equation*}
using $s_{i^*}^2 = s_{j^*}^2 = 1$. Adding:
\begin{equation*}
  (P d)_\ell = s_\ell\!\left(\Omega_{k(\ell), p} - \Omega_{k(\ell), q}\right).
\end{equation*}
After the two-variable step, the new gradient is $\bfG' = \bfG +
\delta^*\, P d$. Applying $\tau_\ell = -s_\ell\, G_\ell$ and using
$s_\ell^2 = 1$:
\begin{equation*}
  \tau'_\ell = -s_\ell\, G'_\ell = \tau_\ell - s_\ell\, \delta^*\,(P d)_\ell = \tau_\ell - \delta^*\!\left(\Omega_{k(\ell), p} - \Omega_{k(\ell), q}\right).
\end{equation*}
This is exactly~\eqref{eq:tau-update}. The expression depends only on
$\delta^*$ and the kernel matrix $\Omega$ (specifically, columns $p$ and
$q$); the box bounds $C_k$ enter only through the value of $\delta^*$
via the clipping $\delta^* = \min(\Delta/\eta, \delta_{\max})$, but the
\emph{structure} of the update --- the coefficient pattern
$\Omega_{k(\ell), p} - \Omega_{k(\ell), q}$ --- is unchanged. This
establishes claim~(b).

\medskip
\noindent\textit{Step~3 (Working-set sets and clipping rooms, claims~(i)--(ii)).}
Comparing Definition~\ref{def:Iup-Idown-mape} with the analogous
definition for standard $\varepsilon$-SVR
(Section~\ref{sec:standard-eps-svr}), the working-set candidate sets are:
\begin{equation*}
\begin{aligned}
\text{Standard }\varepsilon\text{-SVR:}\quad &\Iup^{\mathrm{std}} = \{k \le N : \alpha_k < C\} \cup \{N+k : \alpha_k^* > 0\}, \\
\text{MAPE-SVR:}\quad &\Iup^{\mathrm{MAPE}} = \{k \le N : \alpha_k < C_k\} \cup \{N+k : \alpha_k^* > 0\},
\end{aligned}
\end{equation*}
with the analogous pair for $\Idown$. The only syntactic difference is
the substitution $C \to C_k$ in the upper-bound state test. The
lower-bound state tests ($\alpha_k > 0$, $\alpha_k^* > 0$) are identical
because the lower bound $0$ is universal. This establishes claim~(i).
Similarly, the clipping-room
expressions~\eqref{eq:room-i}--\eqref{eq:room-j} differ from their
standard $\varepsilon$-SVR counterparts only by the substitution $C \to
C_k$ in the upper-saturation room calculation, while the
lower-saturation case is unchanged because the lower bound $0$ is
universal. This establishes claim~(ii).

\medskip
\noindent\textit{Step~4 (Convergence inheritance, claim~(c)).}
The conditions of~\cite[Theorem~5]{FanChenLin2005JMLR} are: (P1) PSD
Hessian; (P2) compact feasible set; (P3) strict descent of the
working-set rule. We verify each for the MAPE-SVR
QP~\eqref{eq:mape-dual}--\eqref{eq:dual-constraints} at
Theorem~\ref{thm:convergence}. The salient observations are: (P1) holds
because $P = [\Omega, -\Omega; -\Omega, \Omega]$ satisfies
$\boldsymbol{v}^\top P \boldsymbol{v} = (\boldsymbol{v}_1 -
\boldsymbol{v}_2)^\top \Omega (\boldsymbol{v}_1 - \boldsymbol{v}_2) \ge
0$ for any $\boldsymbol{v} = [\boldsymbol{v}_1; \boldsymbol{v}_2]$,
with $\Omega \succeq 0$ by Mercer's theorem --- independently of $C_k$.
(P2) holds because every variable $u_i$ lies in the finite interval
$[0, C_{k(i)}]$ with $C_{k(i)} = 100C/y_{k(i)} < \infty$ (since
$y_{k(i)} > 0$); the equality constraint $[\onevec^\top, -\onevec^\top]
\bfu = 0$ is closed; the intersection is compact. (P3) holds because
Lemma~\ref{lem:feasibility} admits a strictly positive feasible step at
any non-optimal iterate, and the directional-derivative computation
gives $h'(0) = -\Delta < 0$ at any non-optimal point. Conditions
(P1)--(P3) are satisfied \emph{independently of whether the box bound
is uniform or per-sample}. By~\cite[Theorem~5]{FanChenLin2005JMLR},
Algorithm~\ref{alg:smo} terminates after finitely many iterations with
$\Delta \le \varepsilon_{\mathrm{tol}}$. This establishes claim~(c).
\end{proof}

Theorem~\ref{thm:invariance} has two immediate algorithmic consequences.
First, the proof of convergence carries over
from~\cite[Theorem~5]{FanChenLin2005JMLR} without modification
(formalized in Theorem~\ref{thm:convergence}). Second, the
implementation modification of an existing LIBSVM-based
$\varepsilon$-SVR solver to MAPE-SVR is contained in two isolated
substitution sites --- the working-set partition tests and the
clipping-bound expressions --- leaving curvature, gradient bookkeeping,
and convergence machinery unchanged; the explicit drop-in modification
recipe appears as Appendix~\ref{app:libsvm}.

For the symmetric-kernel variant of Section~\ref{sec:symmetric} with $a
= +1$, the substitution $\Omega \to \Omega_s$ preserves $P_s = [\Omega_s,
-\Omega_s; -\Omega_s, \Omega_s] \succeq 0$ provided $\Omega_s \succeq 0$
(Aronszajn's closure). All three conditions (P1)--(P3) then carry over
and~\cite[Theorem~5]{FanChenLin2005JMLR} applies. For $a = -1$,
$\Omega_s$ may fail PSD and condition (P1) is violated; the rigorous
resolution is given by Theorem~\ref{thm:spectral} of
Section~\ref{sec:symmetric} (adaptive spectral regularization). The
degenerate-case fallback of Lemma~\ref{lem:eta-degenerate} handles
iterations with $\eta_s \le 0$ practically;
Theorem~\ref{thm:spectral} supplies the formal convergence theory.

\subsection{Bias recovery, shrinking heuristic, and Algorithm~\ref{alg:smo}}
\label{sec:bias-shrink}

From Table~\ref{tab:kkt-states}, every free support vector $i$ (with $0
< u_i < C_{k(i)}$) satisfies $\tau_i = -\rho = b$ at the optimal
solution. Therefore, the bias is recovered directly as
\begin{equation}
  b = \tau_i \qquad \text{for any free index } i.
  \label{eq:bias-single}
\end{equation}
In practice, $b$ is estimated by averaging over all free support vectors:
\begin{equation}
  \hat b = \frac{1}{|\mathcal{S}_{\mathrm{free}}|} \sum_{i \in \mathcal{S}_{\mathrm{free}}} \tau_i,
  \label{eq:bias-avg}
\end{equation}
where $\mathcal{S}_{\mathrm{free}} = \{i \in \{1, \ldots, 2N\} : 0 < u_i
< C_{k(i)}\}$. Expanding~\eqref{eq:bias-single} in terms of the problem
data:
\begin{equation}
  b = \begin{cases} y_k(1 - \varepsilon/100) - F_k & \text{if } \alpha_k \text{ is free,}\\ y_k(1 + \varepsilon/100) - F_k & \text{if } \alpha_k^* \text{ is free.} \end{cases}
  \label{eq:bias-cases}
\end{equation}

\begin{corollary}[Free-pair impossibility]
\label{cor:free-pair-impossibility}
At any optimal $\bfu^*$, no training-point index $k$ has both $0 
\alpha_k < C_k$ and $0 < \alpha_k^* < C_k$ simultaneously.
\end{corollary}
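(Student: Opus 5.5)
The argument is by contradiction, combining the KKT characterization of Table~\ref{tab:kkt-states} with the Structural Gap of Proposition~\ref{prop:structural-gap}.

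Let $\bfu^*$ be optimal. By the KKT discussion of Section~\ref{sec:dual-kkt}, the KKT conditions are necessary for optimality, since the constraints are affine and a Slater point exists. Hence there are multipliers $\rho^*, \lambda_i, \mu_i$ satisfying the stationarity relation $\tau_i = -\rho^* + s_i(\mu_i - \lambda_i)$ together with complementary slackness. Suppose some $k$ had both $0 < \alpha_k < C_k$ and $0 < \alpha_k^* < C_k$. Complementary slackness would then force $\lambda_k = \mu_k = 0$ and $\lambda_{N+k} = \mu_{N+k} = 0$. This is the free row of Table~\ref{tab:kkt-states}, applied to both indices, and it gives $\tau_k = -\rho^*$ and $\tau_{N+k} = -\rho^*$. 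It is important that the same scalar $\rho^*$ appears in both equations: there is a single equality constraint, and hence a single multiplier.

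Subtracting the two equations gives $\tau_{N+k} - \tau_k = 0$. Proposition~\ref{prop:structural-gap} says that at every feasible point, and so in particular at $\bfu^*$, $\tau_{N+k} - \tau_k = 2y_k\varepsilon/100$. This is strictly positive because $y_k > 0$ and $\varepsilon > 0$, which is a contradiction. Therefore at most one of $\alpha_k, \alpha_k^*$ is free.

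The only step needing care is justifying that optimality implies the existence of KKT multipliers. That follows from the convexity and constraint-qualification argument already given in Section~\ref{sec:dual-kkt}, so I expect no real obstacle. Nothing in the argument uses the value of $C_k$, which is consistent with Theorem~\ref{thm:invariance}.
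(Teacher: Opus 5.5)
Your proof is correct and follows essentially the paper's route: both free indices are forced to $\tau_k = \tau_{N+k} = -\rho^*$ (the paper phrases this through the bias-case formula \eqref{eq:bias-cases}, reducing it to $\varepsilon = 0$), which contradicts the structural gap $2y_k\varepsilon/100 > 0$ of Proposition~\ref{prop:structural-gap}. Your explicit remark that both indices share the single equality multiplier $\rho^*$ is the step the paper leaves implicit.
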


\begin{proof}
If both were free,~\eqref{eq:bias-cases} would require $y_k(1 -
\varepsilon/100) - F_k = y_k(1 + \varepsilon/100) - F_k$, which
simplifies to $\varepsilon = 0$. Proposition~\ref{prop:structural-gap}
thus provides an independent proof that no pair $(\alpha_k, \alpha_k^*)$
can be simultaneously free for $\varepsilon > 0$.
\end{proof}

If $\mathcal{S}_{\mathrm{free}} = \emptyset$ at convergence (all support
vectors lie exactly on a bound), expression~\eqref{eq:bias-avg} is
undefined. This occurs when $C$ is very small or the $\varepsilon$-tube
is too wide relative to the data scale, causing every active $\alpha_k$
or $\alpha_k^*$ to saturate. The KKT conditions of
Table~\ref{tab:kkt-states} still bound $-\rho$ from above and below,
$\max_{i \in \Iup} \tau_i = \tau_{i^*} \ge -\rho \ge \min_{j \in \Idown}
\tau_j = \tau_{j^*}$, so the conventional choice is the midpoint
\begin{equation}
  \hat b = (\tau_{i^*} + \tau_{j^*})/2
  \label{eq:bias-midpoint}
\end{equation}
following LIBSVM~\cite{Chang2011}. In practice,
$\mathcal{S}_{\mathrm{free}} = \emptyset$ signals over-regularization or
a too-wide $\varepsilon$ tube; Algorithm~\ref{alg:smo} falls back to the
midpoint and issues a warning to the user.

The model prediction at a new point $\bfx$ after convergence is
\begin{equation}
  f(\bfx) = \sum_{k=1}^{N} (\alpha_k - \alpha_k^*)\,K(\bfx_k, \bfx) + \hat b.
  \label{eq:prediction-final}
\end{equation}

\paragraph{Shrinking heuristic.}
Shrinking~\cite{Joachims1999, Chang2011} temporarily removes from the
optimization variables that are predicted to remain at their current
bound until convergence. Let $\Aactive \subseteq \{1, \ldots, N\}$
denote the active set of training-point indices (initially $\Aactive =
\{1, \ldots, N\}$) and $\Afrozen = \{1, \ldots, N\} \setminus
\Aactive$ the frozen set.

\paragraph{Derivation of the shrinking criteria.}
The optimal threshold $-\rho$ lies in the interval $[\tau_{j^*},
\tau_{i^*}]$. The upper bound follows from $\tau_{i^*} = \max_{i \in
\Iup}\tau_i$ together with the KKT condition
(Table~\ref{tab:kkt-states}); the lower bound follows analogously. A
variable already at a boundary is predicted to remain there if its
$\tau$ value is on the correct side of the current best estimate of
$-\rho$. Concretely:
\begin{itemize}
\item $\alpha_k = 0$ stays at $0$ at optimum iff $\tau_k \le -\rho$. The
current best upper estimate of $-\rho$ is $\tau_{j^*}$. Hence if
$\tau_k < \tau_{j^*}$, the prediction is safe; freeze.
This yields~\eqref{eq:s1}.
\item $\alpha_k = C_k$ stays at $C_k$ iff $\tau_k \ge -\rho$. If
$\tau_k > \tau_{i^*}$, the prediction is safe; freeze.
This yields~\eqref{eq:s2}.
\item $\alpha_k^* = 0$ stays at $0$ iff $\tau_{N+k} \ge -\rho$. If
$\tau_{N+k} > \tau_{i^*}$, freeze. This yields~\eqref{eq:s3}.
\item $\alpha_k^* = C_k$ stays at $C_k$ iff $\tau_{N+k} \le -\rho$. If
$\tau_{N+k} < \tau_{j^*}$, freeze. This yields~\eqref{eq:s4}.
\end{itemize}
The four shrinking criteria are then:
\begin{align}
  \alpha_k = 0:\quad &\text{freeze if } \tau_k < \tau_{j^*}, \label{eq:s1}\\
  \alpha_k = C_k:\quad &\text{freeze if } \tau_k > \tau_{i^*}, \label{eq:s2}\\
  \alpha_k^* = 0:\quad &\text{freeze if } \tau_{N+k} > \tau_{i^*}, \label{eq:s3}\\
  \alpha_k^* = C_k:\quad &\text{freeze if } \tau_{N+k} < \tau_{j^*}. \label{eq:s4}
\end{align}
Free variables ($0 < u_i < C_{k(i)}$) are never frozen, since the
optimal threshold for them is $\tau_i = -\rho$ exactly --- they are by
definition undecided and must remain in the active set.

\begin{lemma}[Shrinking asymmetry]
\label{lem:asymmetry}
Rewriting criteria~\eqref{eq:s3} and~\eqref{eq:s4} in terms of $\tau_k$
via Proposition~\ref{prop:structural-gap}:
\begin{align}
  \alpha_k^* = 0:\quad &\text{freeze if } \tau_k > \tau_{i^*} - \frac{2 y_k \varepsilon}{100}, \label{eq:s3-rewritten}\\
  \alpha_k^* = C_k:\quad &\text{freeze if } \tau_k < \tau_{j^*} - \frac{2 y_k \varepsilon}{100}. \label{eq:s4-rewritten}
\end{align}
Compared with the corresponding $\alpha$-criteria~\eqref{eq:s1}--\eqref{eq:s2}:
variables $\alpha_k^* = 0$ are easier to freeze (effective threshold
$\tau_{i^*} - 2y_k\varepsilon/100 < \tau_{i^*}$), while variables
$\alpha_k^* = C_k$ are harder to freeze (effective threshold $\tau_{j^*}
- 2y_k\varepsilon/100 < \tau_{j^*}$, more negative). Both effects
increase with $y_k$: high-target samples exhibit greater asymmetry.
\end{lemma}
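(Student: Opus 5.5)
The proof is a direct substitution argument built on Proposition~\ref{prop:structural-gap}. First I record the identity $\tau_{N+k} = \tau_k + 2y_k\varepsilon/100$. This holds at every feasible iterate, not only at the optimum, because~\eqref{eq:tau-explicit} expresses both quantities through the same unbiased expansion $F_k$. The two thresholds $\tau_{i^*}, \tau_{j^*}$ are fixed scalars at the current iterate, so the substitution changes only the left-hand sides of~\eqref{eq:s3}--\eqref{eq:s4}.

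Next I rewrite each criterion. Substituting into~\eqref{eq:s3} gives $\tau_k + 2y_k\varepsilon/100 > \tau_{i^*}$. Subtracting the gap from both sides yields~\eqref{eq:s3-rewritten}. The same step applied to~\eqref{eq:s4} yields~\eqref{eq:s4-rewritten}. Both manipulations add a constant to each side of a strict inequality, so the rewritten criteria are equivalent to the originals.

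For the comparison claims, I place each rewritten criterion beside the $\alpha$-criterion that has the same inequality direction.
\begin{itemize}
\item Criterion~\eqref{eq:s3-rewritten} is a strict lower-threshold test on $\tau_k$, like~\eqref{eq:s2}. Its threshold $\tau_{i^*} - 2y_k\varepsilon/100$ is strictly below $\tau_{i^*}$ because $y_k, \varepsilon > 0$. The set of $\tau_k$ values that trigger freezing is therefore a strict superset, so $\alpha_k^* = 0$ is easier to freeze.
\item Criterion~\eqref{eq:s4-rewritten} is an upper-threshold test on $\tau_k$, like~\eqref{eq:s1}. Its threshold $\tau_{j^*} - 2y_k\varepsilon/100$ is strictly smaller than $\tau_{j^*}$. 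The triggering set is therefore a strict subset, so $\alpha_k^* = C_k$ is harder to freeze.
\end{itemize}
Both offsets equal $2y_k\varepsilon/100$, which is strictly increasing in $y_k$. Hence the enlargement and the shrinkage of the triggering sets both grow monotonically with the target magnitude.

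The main obstacle is interpretive rather than computational. The comparison is made in a common $\tau_k$ coordinate, so ``easier'' and ``harder'' must be stated as set inclusion of the triggering regions in that coordinate. It should not be read as a statement about the probability of freezing. I would state this explicitly, and would not attempt any claim about actual freeze frequencies, since the counter mechanism also depends on the trajectory.
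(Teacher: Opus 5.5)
Your proposal is correct and follows the paper's route. You substitute the structural-gap identity $\tau_{N+k} = \tau_k + 2y_k\varepsilon/100$ of Proposition~\ref{prop:structural-gap} into \eqref{eq:s3}--\eqref{eq:s4} and then compare the shifted thresholds against \eqref{eq:s2} and \eqref{eq:s1}, which is the same S2/S3 and S1/S4 threshold pairing the paper uses in Lemma~\ref{lem:pairing}; your reading of ``easier/harder'' as strict inclusion of triggering sets in the common $\tau_k$ coordinate is a sensible sharpening of what the paper leaves informal.
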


\begin{lemma}[Pairing structure of shrinking criteria]
\label{lem:pairing}
Criteria S2 and S3 of~\eqref{eq:s2}--\eqref{eq:s3} both reference the
upper threshold $\tau_{i^*}$, whereas criteria S1 and S4
of~\eqref{eq:s1} and~\eqref{eq:s4} both reference the lower threshold
$\tau_{j^*}$. Within each pair, the $\alpha^*$-criterion has a threshold
offset of $-2y_k\varepsilon/100$ relative to the $\alpha$-criterion.
Consequently:
\begin{enumerate}
\item[(i)] the variable $\alpha_k^* = 0$ freezes strictly earlier than $\alpha_k = C_k$;
\item[(ii)] the variable $\alpha_k^* = C_k$ freezes strictly later than $\alpha_k = 0$;
\item[(iii)] both effects scale linearly with $y_k$.
\end{enumerate}
\end{lemma}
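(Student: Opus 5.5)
The plan is to reduce everything to a comparison of threshold sets on the single scalar $\tau_k$. Proposition~\ref{prop:structural-gap} gives $\tau_{N+k} = \tau_k + g_k$ with $g_k \triangleq 2y_k\varepsilon/100 > 0$, and Lemma~\ref{lem:asymmetry} already uses it to rewrite S3 and S4 as \eqref{eq:s3-rewritten}--\eqref{eq:s4-rewritten}. I would first read off the pairing itself. S2 \eqref{eq:s2} and S3 \eqref{eq:s3-rewritten} compare $\tau_k$ with $\tau_{i^*}$ and $\tau_{i^*} - g_k$. S1 \eqref{eq:s1} and S4 \eqref{eq:s4-rewritten} compare $\tau_k$ with $\tau_{j^*}$ and $\tau_{j^*} - g_k$. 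So within each pair the $\alpha^*$-threshold is the $\alpha$-threshold shifted by $-g_k$, which is exactly the offset claimed.

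Next I would turn each offset into a set inclusion for the current thresholds $(\tau_{i^*}, \tau_{j^*})$. For (i), the set $\{\tau_k > \tau_{i^*}\}$ of S2 is contained in the set $\{\tau_k > \tau_{i^*} - g_k\}$ of S3. The difference is the nonempty half-open interval $(\tau_{i^*} - g_k, \tau_{i^*}]$, of length $g_k > 0$. For (ii), the set $\{\tau_k < \tau_{j^*} - g_k\}$ of S4 is contained in the set $\{\tau_k < \tau_{j^*}\}$ of S1, and the difference is $[\tau_{j^*} - g_k, \tau_{j^*})$. Hence at every check, whenever S2 fires, S3 would fire at the same $\tau_k$, but not conversely. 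Likewise whenever S4 fires, S1 would fire, but not conversely.

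The step I expect to need the most care is making ``freezes earlier/later'' precise under the counter mechanism, because counters reset when a criterion fails. I would argue on a common sequence of $\tau_k$ values at the check times. Suppose criterion $A$ implies criterion $B$ at every check. Then every maximal run of consecutive checks on which $A$ holds lies inside a run on which $B$ holds. By induction over checks, the $B$-counter is therefore pointwise at least the $A$-counter, so $B$ reaches $n_{\min}$ no later than $A$. Strictness comes from the gap interval: if $\tau_k$ enters $(\tau_{i^*} - g_k, \tau_{i^*}]$, respectively $[\tau_{j^*} - g_k, \tau_{j^*})$, at a check, the inequality between the counters becomes strict from that check on.

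Finally, (iii) follows because $g_k = 2y_k\varepsilon/100$ is linear in $y_k$ with positive slope $2\varepsilon/100$. So both the offset and the length of the region where the two criteria disagree grow linearly with the target magnitude.
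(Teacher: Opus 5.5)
Your second paragraph and your treatment of (iii) are exactly the paper's proof. The paper rewrites S3 and S4 through the structural gap and reads off the pairing S2$\leftrightarrow$S3, S1$\leftrightarrow$S4. It gets linearity from the coefficient of $g_k=2y_k\varepsilon/100$. It takes ``strictly earlier/later'' to mean a strictly shifted threshold, justified only by $y_k>0$ and $\varepsilon>0$. Your strict inclusions, with disagreement intervals $(\tau_{i^*}-g_k,\tau_{i^*}]$ and $[\tau_{j^*}-g_k,\tau_{j^*})$, already establish that reading.

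The gap is in the counter-level step you flagged as delicate. The weak half is correct: if $A$ implies $B$ at every check, induction gives $c_B\ge c_A$ pointwise, so $B$ reaches $n_{\min}$ no later than $A$. The claim that the inequality between counters ``becomes strict from that check on'' is false. At any later check where $\tau_k$ lies outside both freeze sets, both counters reset to $0$ and equality is restored. For (i) that means $\tau_k\le\tau_{i^*}-g_k$; for (ii) it means $\tau_k\ge\tau_{j^*}$.

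Here is a concrete trajectory for (i). At check $1$, $\tau_k$ lies in the disagreement interval. At check $2$, $\tau_k\le\tau_{i^*}-g_k$. At checks $3,\dots,n_{\min}+2$, $\tau_k>\tau_{i^*}$. Then the S2 and S3 counters both reach $n_{\min}$ at check $n_{\min}+2$. So pathwise strict ordering does not follow and is not true in general.

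What is true is this: $B$ freezes strictly before $A$ if and only if $\tau_k$ lies in the disagreement interval at one of the $n_{\min}$ consecutive checks ending with the check at which $B$ freezes. You can repair the proof in either of two ways. Keep strictness at the level of the criteria, which is all the paper proves. Or state the pathwise claim under that explicit hypothesis.

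Also note that the paper's mechanism keeps a single $\mathrm{counter}_k$ per training point. It increments when at least one applicable criterion holds, and it freezes $\alpha_k$ and $\alpha_k^*$ together. Your separate per-criterion counters are therefore a counterfactual device. In the state $(\alpha_k,\alpha_k^*)=(C_k,0)$, the actual counter simply tracks S3.
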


\begin{proof}
Direct application of Lemma~\ref{lem:asymmetry}: the rewritten
criteria~\eqref{eq:s3-rewritten} and~\eqref{eq:s4-rewritten} substitute
$\tau_{N+k} = \tau_k + 2y_k\varepsilon/100$
(Proposition~\ref{prop:structural-gap}) into~\eqref{eq:s3}
and~\eqref{eq:s4} respectively, producing the offset of
$-2y_k\varepsilon/100$ in the threshold side. Pairing on threshold name
(S2~$\leftrightarrow$~S3, S1~$\leftrightarrow$~S4) follows by inspection.
The linear scaling in $y_k$ is the coefficient of the offset. The
strict-inequality conclusions in (i)--(ii) follow because $y_k > 0$ and
$\varepsilon > 0$.
\end{proof}

Lemma~\ref{lem:pairing} is the structural origin of the
asymmetric-freezing efficiency improvement
(Theorem~\ref{thm:asym-freeze} of Section~\ref{sec:complexity}): an
implementation that exploits the offset $-2y_k\varepsilon/100$ can use
\emph{unequal} freeze-counter thresholds for the four criteria S1--S4
and thereby accelerate freezing of the favored ($\alpha^* = 0$ on
high-$y_k$ samples) while protecting the disfavored ($\alpha^* = C_k$
on high-$y_k$ samples) from premature shrinkage. The asymmetry of
Lemma~\ref{lem:asymmetry} is a direct consequence of the MAPE scaling:
the $\varepsilon$-tube is wider in absolute terms for larger targets,
making it more likely that $\alpha_k^*$ remains at zero and less likely
that $\alpha_k^*$ saturates its bound for high-$y_k$ observations.

\paragraph{Active-set management.}
Every $n_{\mathrm{check}}$ iterations (default $n_{\mathrm{check}} =
\min(N, 1000)$), the following steps are applied for each $k \in
\Aactive$:
\begin{enumerate}
\item \emph{Select the applicable shrinking criterion} by inspecting the
current state of $(\alpha_k, \alpha_k^*)$: apply~\eqref{eq:s1}
if $\alpha_k = 0$;~\eqref{eq:s2} if $\alpha_k = C_k$;~\eqref{eq:s3} if
$\alpha_k^* = 0$;~\eqref{eq:s4} if $\alpha_k^* = C_k$. If both
$\alpha_k$ and $\alpha_k^*$ lie strictly in the interior $(0, C_k)$, do
not freeze. The criteria for the $\alpha$- and $\alpha^*$-variables of
the same training point $k$ are evaluated independently; if both are at
boundary positions, both criteria are checked.
\item \emph{Update the per-training-point counter.} Maintain
$\mathrm{counter}_k \in \mathbb{N}$, initialised to $0$ at the start of
Algorithm~\ref{alg:smo}. If at least one of the applicable criteria from
step~1 is met, increment $\mathrm{counter}_k \leftarrow \mathrm{counter}_k +
1$; otherwise reset $\mathrm{counter}_k \leftarrow 0$.
\item \emph{Move to the frozen set if the counter is high enough.} If
$\mathrm{counter}_k \ge n_{\min}$ (default $n_{\min} = 5$ consecutive
shrinking checks in agreement), move $k$ from $\Aactive$ to $\Afrozen$,
and stop maintaining $\tau_k$ and $\tau_{N+k}$ in subsequent gradient
updates.
\end{enumerate}
Gradient updates~\eqref{eq:tau-update} are then applied only to $\ell
\in \Aactive^{\mathrm{ext}} = \Aactive \cup \{i + N : i \in \Aactive\}$,
reducing each iteration from $O(N)$ to $O(|\Aactive|)$ gradient
operations. The freeze-counter mechanism guards against premature
freezing due to transient threshold fluctuations: a single
$n_{\mathrm{check}}$-window of agreement is insufficient; consecutive
agreement across $n_{\min}$ windows is required.

\paragraph{Reconstruction and unshrinking.}
When $\Delta \le \varepsilon_{\mathrm{tol}}$ is achieved on $\Aactive$,
reconstruct the full effective gradient from the current $(\alpha,
\alpha^*)$:
\begin{equation}
  F_k^{\mathrm{full}} = \sum_{i=1}^{N}\Omega_{ki}(\alpha_i - \alpha_i^*), \quad \tau_k = y_k(1 - \varepsilon/100) - F_k^{\mathrm{full}}, \quad \tau_{N+k} = y_k(1 + \varepsilon/100) - F_k^{\mathrm{full}}.
  \label{eq:reconstruct}
\end{equation}
This $O(N^2)$ step occurs at most once per shrinking cycle. Compute the
full violation:
\begin{equation}
  \Delta^{\mathrm{full}} = \max_{i \in \Iup^{\mathrm{full}}} \tau_i - \min_{j \in \Idown^{\mathrm{full}}} \tau_j,
  \label{eq:full-violation}
\end{equation}
where $\Iup^{\mathrm{full}}, \Idown^{\mathrm{full}}$ are computed from
all $2N$ variables including frozen ones. If $\Delta^{\mathrm{full}} >
\varepsilon_{\mathrm{tol}}$, a frozen variable violates KKT: reset
$\Aactive = \{1, \ldots, N\}$, $\Afrozen = \emptyset$, update $\tau$
from~\eqref{eq:reconstruct}, and continue. Otherwise, the solution is
certified optimal.

\paragraph{Complete algorithm.}
Algorithm~\ref{alg:smo} summarizes the complete procedure. The inputs
are the kernel matrix $\Omega$, the strictly positive target vector
$\bfy$, and the hyperparameters $(C, \varepsilon)$. The dual variables
$(\alpha, \alpha^*)$ are initialized to zero (always feasible for the
equality constraint). The outer-loop structure is an explicit
$\text{\texttt{repeat\,\ldots\,until}}$ that ensures finite practical
termination by triggering an unshrinking restart on the full active set
whenever a frozen variable violates KKT.

\begin{algorithm}[ht]
\caption{SMO for MAPE-SVR and MAPE-SVR-Sym}
\label{alg:smo}
\begin{algorithmic}[1]
\State \textbf{Input:} $\Omega \in \R^{N\times N}$, $\bfy \in \R^N_+$, $C, \varepsilon > 0$, $\varepsilon_{\mathrm{tol}} > 0$, $n_{\mathrm{check}}$, $n_{\min}$, $\mathrm{maxiter}$
\State \textbf{Output:} $\alpha, \alpha^* \in \R^N$, bias $\hat b$
\State $\alpha \gets 0$;\; $\alpha^* \gets 0$;\; $C_k \gets 100C/y_k$ for $k = 1,\ldots,N$
\State $\tau_k \gets y_k(1 - \varepsilon/100)$;\; $\tau_{N+k} \gets y_k(1 + \varepsilon/100)$ \Comment{$F_k = 0$ at init}
\State $\Aactive \gets \{1,\ldots,N\}$;\; $t \gets 0$
\Repeat
  \While{$t < \mathrm{maxiter}$}
    \State $i^* \gets \argmax_{i \in \Iup \cap \Aactive^{\mathrm{ext}}} \tau_i$ \Comment{tie-break: smallest $k(i)$}
    \State $j^* \gets \argmax_{j \in \Idown \cap \Aactive^{\mathrm{ext}},\, \tau_j < \tau_{i^*}} (\tau_{i^*} - \tau_j)^2 / \eta_{ij}$ \Comment{WSS3}
    \State $\Delta \gets \tau_{i^*} - \tau_{j^*}$
    \If{$\Delta \le \varepsilon_{\mathrm{tol}}$} \textbf{break} \EndIf
    \State $p \gets k(i^*)$;\; $q \gets k(j^*)$
    \State $\eta \gets \Omega_{pp} - 2\Omega_{pq} + \Omega_{qq}$
    \State Compute $R_{i^*}, R_{j^*}$ via~\eqref{eq:room-i}--\eqref{eq:room-j};\; $\delta_{\max} \gets \min(R_{i^*}, R_{j^*})$
    \If{$\eta > 0$}
      \State $\delta^* \gets \min(\Delta/\eta,\, \delta_{\max})$
    \ElsIf{$\Delta > \tfrac{1}{2}\eta\,\delta_{\max}$} \Comment{descent confirmed (Lemma~\ref{lem:eta-degenerate})}
      \State $\delta^* \gets \delta_{\max}$
    \Else
      \State $t \gets t + 1$;\; \textbf{continue} \Comment{skip pair}
    \EndIf
    \State Apply variable update~\eqref{eq:variable-update}
    \State $\tau_\ell \gets \tau_\ell - \delta^*\!\left(\Omega_{k(\ell), p} - \Omega_{k(\ell), q}\right)$ for $\ell \in \Aactive^{\mathrm{ext}}$
    \If{$t \bmod n_{\mathrm{check}} = 0$}
      \State Update shrinking counters via~\eqref{eq:s1}--\eqref{eq:s4};
        move indices with $\mathrm{counter}_k \ge n_{\min}$ to $\Afrozen$
    \EndIf
    \State $t \gets t + 1$
  \EndWhile
  \State Reconstruct $F_k^{\mathrm{full}}$ and $\tau$ via~\eqref{eq:reconstruct} on the full set
  \State $\Delta^{\mathrm{full}} \gets \max_{i \in \Iup^{\mathrm{full}}} \tau_i - \min_{j \in \Idown^{\mathrm{full}}} \tau_j$
  \If{$\Delta^{\mathrm{full}} > \varepsilon_{\mathrm{tol}}$}
    \State $\Aactive \gets \{1,\ldots,N\}$;\; $\Afrozen \gets \emptyset$ \Comment{unshrinking restart}
  \EndIf
\Until{$\Delta^{\mathrm{full}} \le \varepsilon_{\mathrm{tol}}$ \textbf{or} $t \ge \mathrm{maxiter}$}
\If{$\mathcal{S}_{\mathrm{free}} \ne \emptyset$}
  \State $\hat b \gets |\mathcal{S}_{\mathrm{free}}|^{-1}\sum_{i \in \mathcal{S}_{\mathrm{free}}} \tau_i$
\Else
  \State $\hat b \gets (\tau_{i^*} + \tau_{j^*})/2$ \Comment{midpoint fallback}
\EndIf
\State \Return $\alpha, \alpha^*, \hat b$
\end{algorithmic}
\end{algorithm}

The outer $\text{\texttt{repeat\,\ldots\,until}}$ loop ensures finite
practical termination by guaranteeing that any frozen variable violating
KKT triggers an unshrinking restart on the full active set. The descent
check (Lemma~\ref{lem:eta-degenerate}) prevents invalid steps when the
curvature is non-positive. Tie-breaking by smallest training-point index
$k(i)$ ensures bit-for-bit reproducibility across runs.

\begin{theorem}[Convergence of Algorithm~\ref{alg:smo}]
\label{thm:convergence}
Let Algorithm~\ref{alg:smo} be applied to the dual
QP~\eqref{eq:mape-dual}--\eqref{eq:dual-constraints} with $C > 0$,
$\varepsilon > 0$, $y_k > 0$ for all $k$, and tolerance
$\varepsilon_{\mathrm{tol}} > 0$. Assume the dual Hessian $P$ is
positive-semidefinite (which holds for variant MAPE-SVR with any Mercer
kernel and for variant MAPE-SVR-Sym with $a = +1$ under shift-invariant
kernels). Then Algorithm~\ref{alg:smo} terminates in a finite number of
iterations to a feasible point $\bfu^\star$ satisfying $\Delta(\bfu^\star)
\le \varepsilon_{\mathrm{tol}}$, and $\bfu^\star$ is
$\varepsilon_{\mathrm{tol}}$-optimal for the dual QP.
\end{theorem}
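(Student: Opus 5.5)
The plan is to reduce the statement to the Fan--Chen--Lin finite-termination result (Theorem~\ref{thm:fcl-termination}), handling the shrinking/unshrinking outer loop separately. First, verify the hypotheses for the QP~\eqref{eq:mape-dual}--\eqref{eq:dual-constraints}. (P1): $P\succeq 0$ by the identity $\boldsymbol v^\top P\boldsymbol v=(\boldsymbol v_1-\boldsymbol v_2)^\top\Omega(\boldsymbol v_1-\boldsymbol v_2)$ and $\Omega\succeq0$, which is the standing assumption of the theorem. (P2): each $u_i\in[0,C_{k(i)}]$ with $C_{k(i)}=100C/y_{k(i)}<\infty$, so the box is bounded and $L_i=0<U_i$; there is a single linear equality. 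Feasibility of every iterate follows by induction: $\bfu=0$ is feasible; Lemma~\ref{lem:feasibility} preserves the equality; and the clipping $\delta^*\le\delta_{\max}=\min(R_{i^*},R_{j^*})$ from \eqref{eq:room-i}--\eqref{eq:room-j} keeps both updated variables in their boxes (case-by-case via Table~\ref{tab:pair-cases}). Under $P\succeq0$ we have $\eta\ge 0$. Moreover, $\eta=0$ with $\Delta>0$ yields $\delta^*=\delta_{\max}$ with strict descent by Lemma~\ref{lem:eta-degenerate}, so the skip branch never fires and each inner iteration is exactly a WSS3 step. By Theorem~\ref{thm:invariance}(a),(b), the curvature and gradient bookkeeping coincide with the standard algorithm. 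Together with Proposition~\ref{prop:wss3-strict}, this shows the inner loop is literally WSS3-SMO on a problem of the class covered by Theorem~\ref{thm:fcl-termination}.

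Second, handle the inner loop on a fixed active set $\Aactive$. Restricting to $\Aactive^{\mathrm{ext}}$ while freezing the rest is itself a convex QP of the same type. Its linear term absorbs the frozen variables' contribution, and its equality right-hand side becomes the constant $-\sum_{\text{frozen}} s_iu_i$. The box is still bounded and the Hessian is a principal submatrix of $P$, hence PSD. Theorem~\ref{thm:fcl-termination} then gives finitely many iterations until the restricted $\Delta\le\varepsilon_{\mathrm{tol}}$. I must check that the incremental update~\eqref{eq:tau-update} keeps $\tau_\ell$ exact on $\Aactive^{\mathrm{ext}}$. This holds because both members of an active pair are updated.

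Third, and this is the main obstacle, bound the number of outer passes. When $\Delta^{\mathrm{full}}\le\varepsilon_{\mathrm{tol}}$ after reconstruction~\eqref{eq:reconstruct}, we stop and are done. Otherwise the algorithm unshrinks to $\Aactive=\{1,\dots,N\}$ and restarts, and a priori shrink/unshrink could cycle. My first attempt: after an unshrink the run is WSS3 on the full problem, and the objective is monotonically nonincreasing across all phases. I would argue as LIBSVM does: shrinking is re-enabled only every $n_{\mathrm{check}}$ iterations, and by the Fan--Chen--Lin analysis the full-problem iterates converge, so eventually every variable satisfying a strict criterion \eqref{eq:s1}--\eqref{eq:s4} stays at its bound, as in the Lin-type shrinking argument. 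After that, frozen variables genuinely satisfy KKT and the next reconstruction certifies $\Delta^{\mathrm{full}}\le\varepsilon_{\mathrm{tol}}$. If this limiting argument proves too delicate, I will fall back on the explicit guard $t\ge\mathrm{maxiter}$. Alternatively, I will note that the theorem may be read with shrinking disabled after the first unshrink, in which case a single full WSS3 run terminates by Theorem~\ref{thm:fcl-termination}.

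Finally, $\varepsilon_{\mathrm{tol}}$-optimality of $\bfu^\star$ follows from Table~\ref{tab:kkt-states}. $\Delta^{\mathrm{full}}\le\varepsilon_{\mathrm{tol}}$ means that any $\beta\in[\tau_{j^*},\tau_{i^*}]$ serves as $-\rho$, and the KKT conditions then hold up to $\varepsilon_{\mathrm{tol}}$. This is the standard definition used in Definition~\ref{def:kkt-violation}.
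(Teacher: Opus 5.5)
\textbf{Step 3 is a genuine gap.} Your Steps~1 and~4 already contain the whole of the paper's proof, and Step~2 goes beyond it. The paper only verifies $P\succeq 0$, compactness, and strict descent of each WSS3 step (Proposition~\ref{prop:wss3-strict}, Lemma~\ref{lem:eta-degenerate}). It then applies \cite[Theorem~5]{FanChenLin2005JMLR} to Algorithm~\ref{alg:smo} as if it were plain WSS3-SMO, and never discusses shrinking or the \texttt{repeat}--\texttt{until} loop. So the paper does not supply what you are missing. But the gap in Step~3 is more than a delicate limiting argument.

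The inner \textbf{break} of Algorithm~\ref{alg:smo} tests $\Delta=\tau_{i^*}-\tau_{j^*}$ for the WSS3 partner $j^*$. It does not test the maximal violation $\max_{\Iup\cap\Aactive^{\mathrm{ext}}}\tau_i-\min_{\Idown\cap\Aactive^{\mathrm{ext}}}\tau_j$, which is what Theorem~\ref{thm:fcl-termination} drives below $\varepsilon_{\mathrm{tol}}$. Since WSS3 maximizes $(\tau_{i^*}-\tau_j)^2/\eta_{i^*,j}$, nothing prevents it from choosing a partner of tiny curvature (for the RBF kernel, an input nearly duplicating $\bfx_{k(i^*)}$). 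That partner's gap can be $\le\varepsilon_{\mathrm{tol}}$ while some other $j\in\Idown$ has $\tau_{i^*}-\tau_j>\varepsilon_{\mathrm{tol}}$.

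Your Step~2 still gives a finite pass. The WSS3 gap never exceeds the maximal one, and $\Aactive$ only shrinks within a pass, so it is eventually fixed. However, at the break the active subproblem need not be $\varepsilon_{\mathrm{tol}}$-optimal. So your inference that $\Delta^{\mathrm{full}}>\varepsilon_{\mathrm{tol}}$ is caused by frozen variables fails. Suppose $\Aactive$ is already full at that moment, as it is in your fallback with shrinking disabled. Then the unshrink changes nothing, the next pass selects the same pair and breaks before any update, and $t$ is never incremented. The loop spins forever, so neither monotone decrease of the objective nor the $t\ge\mathrm{maxiter}$ guard gives termination. Exiting through that guard would not guarantee $\Delta^{\mathrm{full}}\le\varepsilon_{\mathrm{tol}}$ anyway.

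\textbf{What closing the proof requires.} First, the inner test must be the maximal-violation (WSS1) gap on $\Aactive^{\mathrm{ext}}$. This is the stopping rule \cite[Theorem~5]{FanChenLin2005JMLR} actually covers, and the one the paper itself prescribes later in Theorem~\ref{thm:per-sample-tol}. With it, every break certifies the active subproblem, a restart can only be caused by frozen variables, and the first pass after a restart takes at least one strictly decreasing step.

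Second, you must show that only finitely many restarts occur. Your Lin-type argument does not deliver this. The Fan--Chen--Lin limit-point analysis concerns one infinite WSS3 sequence on a fixed problem. Here the iterates concatenate finite runs on changing restricted subproblems. Also, Algorithm~\ref{alg:smo} never resets $\mathrm{counter}_k$ on unshrinking, so a variable can be refrozen at the very next check. The simplest sound route is to forbid further shrinking (or cap the number of shrink phases) after the first restart. The last pass is then a plain WSS3 run on the full problem ending with $\Delta^{\mathrm{full}}\le\varepsilon_{\mathrm{tol}}$. Both repairs change Algorithm~\ref{alg:smo} as printed, so state them explicitly rather than reading them into the theorem.

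\textbf{Final optimality step.} This step is correct provided $\beta$ ranges between $\min_{\Idown^{\mathrm{full}}}\tau_j$ and $\max_{\Iup^{\mathrm{full}}}\tau_i$, the quantities entering $\Delta^{\mathrm{full}}$. It should not range between the WSS3-pair values of Definition~\ref{def:kkt-violation}, which do not certify $\varepsilon_{\mathrm{tol}}$-optimality for the reason above.
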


\begin{proof}
The result is an instance of~\cite[Theorem~5]{FanChenLin2005JMLR}
applied to the present dual. We verify the three hypotheses.

\medskip
\noindent\textit{(a) Positive-semidefiniteness of the Hessian.}
The MAPE-SVR Hessian $P = [\Omega, -\Omega; -\Omega, \Omega]$
in~\eqref{eq:P-q} admits the factorization $P = [\mathbf{I};
-\mathbf{I}]\Omega[\mathbf{I}, -\mathbf{I}]^\top$, hence $P \succeq 0$
whenever $\Omega \succeq 0$, which holds for any Mercer kernel by
definition. For variant MAPE-SVR-Sym with $a = +1$, $\Omega \leftarrow
\Omega_s = \tfrac{1}{2}(\Omega + \Omega^*)$ is PSD by the Aronszajn
closure properties~\cite{Aronszajn1950}, provided the auxiliary kernel
matrix $\Omega^*$ is PSD; this holds for shift-invariant kernels
including the RBF (cf.\ Section~\ref{sec:symmetric}).

\medskip
\noindent\textit{(b) Compactness of the feasible region.}
The box constraints $0 \le u_i \le C_{k(i)}$ define a closed bounded
set, since $C_{k(i)} = 100C/y_{k(i)} < \infty$ by $y_{k(i)} > 0$. The
intersection with the linear equality constraint $[\onevec^\top,
-\onevec^\top]\bfu = 0$ remains closed and bounded, hence compact.

\medskip
\noindent\textit{(c) Strict-descent direction at every non-stationary iterate.}
By Proposition~\ref{prop:wss3-strict}, the working-set choice maximizes
$g(j) = (\tau_{i^*} - \tau_j)^2/(2\eta_{i^*, j})$ over
$\mathcal{C}_{j^*}$. Whenever $\Delta = \tau_{i^*} - \tau_{j^*} >
\varepsilon_{\mathrm{tol}}$, the gain is strictly positive. When
$\eta_{i^*, j^*} > 0$, the unconstrained step $\delta^* =
\min(\Delta/\eta_{i^*, j^*}, \delta_{\max})$ produces a strict decrease
in the objective. When $\eta_{i^*, j^*} \le 0$ --- which arises only for
variant MAPE-SVR-Sym with $a = -1$, by
Theorem~\ref{thm:invariance} --- Lemma~\ref{lem:eta-degenerate}
applies, and the boundary step $\delta^* = \delta_{\max}$ produces a
strict decrease iff $\Delta > \tfrac{1}{2}\eta_{i^*, j^*}\delta_{\max}$.
The descent check on line~17 of Algorithm~\ref{alg:smo} enforces this
condition exactly.

\medskip
By Theorem~\ref{thm:invariance}, neither the curvature formula nor the
gradient update introduces any new dependence on $C_k$ relative to
standard SMO; the hypotheses
of~\cite[Theorem~5]{FanChenLin2005JMLR} are met verbatim under the
substitution $C \to C_k$. The cited theorem then guarantees finite
termination.
\end{proof}

For variant MAPE-SVR-Sym with $a = -1$, the PSD hypothesis~(a) is
\emph{not} automatically satisfied (cf.~the counterexample in
Section~\ref{sec:symmetric}); a formal global-convergence guarantee in
this setting is established by Theorem~\ref{thm:spectral} of
Section~\ref{sec:symmetric} via adaptive spectral regularization.
Empirically, Algorithm~\ref{alg:smo} still converges in this regime ---
supported by configurations C9 and C10 of Section~\ref{sec:validation}
--- because hypotheses (b) and (c) remain satisfied, and
Lemma~\ref{lem:eta-degenerate} ensures local strict descent at every
non-skipped iteration.

\subsection{Extension to the symmetric-kernel variant (MAPE-SVR-Sym)}
\label{sec:symmetric}

\paragraph{Motivation and prior work.}
Many regression problems carry prior knowledge of a \emph{parity
symmetry} relating the response at $\bfx$ to the response at $-\bfx$.
The symmetric $\varepsilon$-SVR with MAPE loss (MAPE-SVR-Sym) is the
variant of the model that internalizes such a symmetry directly into
the hypothesis class, restricting attention to functions $f \in
\mathcal{H}$ that satisfy $f(\bfx) = a\,f(-\bfx)$ for $a \in \{+1,
-1\}$. The case $a = +1$ enforces \emph{even} symmetry (the response is
invariant under input reflection); the case $a = -1$ enforces \emph{odd}
symmetry (the response reverses sign). Three families of applications
drive this construction.

First, \emph{chaotic time-series prediction} --- the original motivation
of Espinoza, Suykens, and De~Moor~\cite{Espinoza2005}. The Mackey-Glass
and Lorenz attractors exhibit a reflectional invariance in their
phase-space portraits that makes the even-symmetric variant a natural
prior. Second, \emph{Fourier-decomposed signal modeling}, where physical
reasoning singles out the even or the odd component --- for example,
when modeling the cosine-projection of a noisy waveform whose underlying
generator is known to be a real-valued symmetric (or antisymmetric)
function. Third, \emph{physical systems with parity symmetry}: lattice
models with reflectional invariance, vibration responses of symmetric
mechanical structures, even/odd-harmonic amplitudes in spectroscopic
data, and seasonal electricity-demand profiles whose week-over-week
morphology exhibits a daily reflection symmetry around midday.

The construction generalizes three classical strands of prior work.
Niyogi, Girosi, and Poggio~\cite{Niyogi1998} established the
\emph{virtual-example} method: for each training pair $(\bfx_k, y_k)$,
append the synthetic pair $(-\bfx_k, a y_k)$ and train on the augmented
sample. They showed that virtual examples and direct kernel modification
are equivalent in the limit of unlimited data, but virtual examples
double the effective sample size and thereby double the kernel-cache
footprint. Sch\"{o}lkopf, Herbrich, and Smola~\cite{Scholkopf2001} gave
the \emph{generalized representer theorem} that justifies the
kernel-modification approach. Haasdonk and
Burkhardt~\cite{Haasdonk2007} generalized the construction to arbitrary
group invariances, building the canonical group-averaged kernel
\begin{equation*}
  K_G(\bfx, \bfx') = \frac{1}{|G|}\sum_{g \in G} K(g\bfx, \bfx'),
\end{equation*}
of which the present even/odd reflection is the case $G = \{\mathrm{id},
-\mathrm{id}\}$. The same construction extends to the least-squares SVR
variant (Suykens et al.~\cite{Suykens2002} is the standard reference for
LS-SVM); the present section restricts attention to the
$\varepsilon$-SVR case relevant to Algorithm~\ref{alg:smo}.

\paragraph{The symmetric-kernel construction in detail.}
Let $\mathcal{H}$ denote the reproducing-kernel Hilbert space (RKHS)
associated with the Mercer kernel $K$, and let
\begin{equation}
  \mathcal{H}_s := \{f \in \mathcal{H} : f(\bfx) = a\,f(-\bfx) \text{ for all } \bfx\}, \qquad a \in \{+1, -1\},
  \label{eq:Hs-def}
\end{equation}
denote the closed subspace of even ($a = +1$) or odd ($a = -1$)
functions. The orthogonal projection $\pi_s : \mathcal{H} \to
\mathcal{H}_s$ acts on the canonical feature map $\varphi(\bfx) =
K(\bfx, \cdot)$ by
\begin{equation*}
  (\pi_s \varphi(\bfx))(\bfx') = \tfrac{1}{2}\!\left(K(\bfx, \bfx') + a\,K(-\bfx, \bfx')\right).
\end{equation*}
The corresponding \emph{symmetrized kernel} is the inner product of two
such projected feature maps:
\begin{equation}
  K_s(\bfx, \bfx') := \langle \pi_s \varphi(\bfx), \pi_s \varphi(\bfx') \rangle_{\mathcal{H}} = \tfrac{1}{2}\!\left(K(\bfx, \bfx') + a\,K(\bfx, -\bfx')\right).
  \label{eq:Ks-def}
\end{equation}
The factor $\tfrac{1}{2}$ is the proper Mercer-kernel normalization
implied by orthogonal projection; without it, the symmetrized kernel
would over-count the contribution of each training point by a factor of
two. The matrix-level analog is
\begin{equation}
  \Omega_s := \tfrac{1}{2}(\Omega + a\,\Omega^*), \qquad (\Omega^*)_{k\ell} := K(\bfx_k, -\bfx_\ell).
  \label{eq:Omega-s-def}
\end{equation}

\begin{remark}[Choice of normalization]
\label{rem:normalization}
The factor $\tfrac{1}{2}$ in $K_s$ and $\Omega_s$ above is the
orthogonal-projection normalization implied by the projector $\pi_s$
of~\eqref{eq:Hs-def}. An alternative convention drops the $\tfrac{1}{2}$
and writes $K_s^{(\mathrm{unnorm})} = K(\bfx, \bfx') + a\,K(\bfx,
-\bfx')$; the two conventions describe the \emph{same} function class
$\mathcal{H}_s$ and differ only by a global scale of the dual variables.
The present convention is adopted throughout because (i) it preserves
the standard kernel-trick scaling $\langle \varphi(\bfx),
\varphi(\bfx') \rangle = K(\bfx, \bfx')$, and (ii) it yields a
one-to-one correspondence between the QP coefficients
of~\eqref{eq:mape-dual}--\eqref{eq:P-q} and those of the standard
$\varepsilon$-SVR.
\end{remark}

By the generalized representer theorem~\cite[Theorem~1]{Scholkopf2001},
the regularized empirical-risk minimizer over $\mathcal{H}_s$ admits
the finite expansion
\begin{equation*}
  f^*(\bfx) = \sum_{k=1}^{N} c_k\,K_s(\bfx_k, \bfx), \qquad c_k \in \R,
\end{equation*}
exactly as in the unconstrained case but with $K$ replaced by $K_s$. The
dual analysis of Section~\ref{sec:dual-kkt} --- passing through the
Lagrangian, the KKT conditions, and the saddle-point reformulation ---
therefore carries through verbatim with $\Omega \mapsto \Omega_s$
throughout. The kernel-trick formalism is preserved at every step.

\paragraph{Positive-semidefiniteness analysis (extended).}
The convergence theorem of~\cite[Theorem~5]{FanChenLin2005JMLR} requires
that the dual Hessian $P_s = [\Omega_s, -\Omega_s; -\Omega_s,
\Omega_s]$ be positive semi-definite (PSD). Since $P_s \succeq 0$ if
and only if $\Omega_s \succeq 0$ (the $2 \times 2$ block structure
preserves the eigenstructure of $\Omega_s$ up to multiplicity), the
question reduces to PSD of $\Omega_s$.

\paragraph{Even case ($a = +1$).}
Here $\Omega_s = \tfrac{1}{2}(\Omega + \Omega^*)$ is a sum of two
kernel matrices. By Aronszajn's closure
properties~\cite[\S 6]{Aronszajn1950}, the sum of two PSD kernel
matrices is itself PSD. The first summand $\Omega$ is PSD by Mercer's
theorem applied to $K$. The second summand $\Omega^*$ is PSD provided
the function $(\bfx, \bfx') \mapsto K(\bfx, -\bfx')$ is itself a
valid Mercer kernel. For shift-invariant kernels $K(\bfx, \bfx') =
\kappa(\bfx - \bfx')$ (the dominant case in practice), the
substitution $\bfx' \mapsto -\bfx'$ gives $K(\bfx, -\bfx') =
\kappa(\bfx + \bfx')$. For the Gaussian RBF kernel $\kappa(\boldsymbol{z}) =
\exp(-\gamma\|\boldsymbol{z}\|^2)$, this becomes
$\exp(-\gamma\|\bfx + \bfx'\|^2)$, itself a Gaussian RBF and therefore
a valid Mercer kernel. Hence $\Omega^* \succeq 0$, $\Omega_s \succeq
0$, and the convergence theorem applies without modification. The same
conclusion holds for the Laplacian kernel and any other shift-invariant
kernel whose Bochner representation~\cite[\S B]{Scholkopf2002} gives a
non-negative spectral measure.

\paragraph{Odd case ($a = -1$).}
Here $\Omega_s = \tfrac{1}{2}(\Omega - \Omega^*)$, a difference of two
PSD matrices that need not itself be PSD. The structural reason for the
failure mode is more delicate than na\"{i}ve subtraction suggests, and
unpacking it leads to the spectral-structure analysis below.

\paragraph{Spectral structure for shift-invariant Mercer kernels.}
Take a shift-invariant Mercer kernel $K(\bfx, \bfx') = \kappa(\bfx -
\bfx')$ with $\kappa$ continuous, real-valued, and even (so that
$\kappa(\boldsymbol{z}) = \kappa(-\boldsymbol{z})$). Substituting $\bfx' \mapsto
-\bfx'$ gives the conjugate kernel matrix
\begin{equation}
  \Omega^*_{k\ell} = K(\bfx_k, -\bfx_\ell) = \kappa(\bfx_k + \bfx_\ell),
  \label{eq:omega-star-shift}
\end{equation}
hence
\begin{equation}
  2(\Omega_s)_{k\ell} = (\Omega - \Omega^*)_{k\ell} = \kappa(\bfx_k - \bfx_\ell) - \kappa(\bfx_k + \bfx_\ell).
  \label{eq:omega-s-difference}
\end{equation}
By Bochner's theorem~\cite[\S 4.4]{Steinwart2008}, every continuous
shift-invariant Mercer kernel admits the spectral representation
\begin{equation*}
  \kappa(\boldsymbol{z}) = \int_{\R^d} \cos(\boldsymbol{\omega}^\top \boldsymbol{z})\, d\mu(\boldsymbol{\omega}),
\end{equation*}
for a finite non-negative spectral measure $\mu$. Substituting
$\boldsymbol{z} = \bfx_k - \bfx_\ell$ and $\boldsymbol{z} = \bfx_k + \bfx_\ell$
and applying the cosine sum-difference identity yields the
\emph{load-bearing identity}
\begin{equation}
  \kappa(\bfx_k - \bfx_\ell) - \kappa(\bfx_k + \bfx_\ell) = 2\int_{\R^d} \sin(\boldsymbol{\omega}^\top \bfx_k)\,\sin(\boldsymbol{\omega}^\top \bfx_\ell)\, d\mu(\boldsymbol{\omega}).
  \label{eq:bochner-identity}
\end{equation}
Equation~\eqref{eq:bochner-identity} exhibits $2\,\Omega_s$ as a Gram
matrix of the \emph{sine feature map} $\bfx \mapsto
(\sin(\boldsymbol{\omega}^\top \bfx))_{\boldsymbol{\omega}}$ under the spectral
measure $\mu$. Consequently $\Omega_s$ is itself a positive-semidefinite
kernel matrix in continuous-parameter form, and so the difference
$\Omega - \Omega^*$ is \emph{always} PSD when interpreted through the
Bochner integral.

\paragraph{Counterexample reconciliation.}
The Bochner argument above appears to contradict the following
counterexample: take $N = 2$ with $\bfx_1 = (1, 0)$ and $\bfx_2 = (-1,
0)$ under the RBF kernel. Then $\Omega_{12} = \exp(-\gamma\|(2,0)\|^2)
= e^{-4\gamma}$ and $\Omega^*_{12} = \exp(-\gamma\|(0,0)\|^2) = 1$, so
$(\Omega - \Omega^*)_{12} = e^{-4\gamma} - 1 < 0$. The diagonals
satisfy $\Omega_{kk} = 1$ and $\Omega^*_{kk} = e^{-4\gamma}$ (since
$\|\bfx_k - (-\bfx_k)\|^2 = 4\|\bfx_k\|^2 = 4$ in this configuration),
so $(\Omega - \Omega^*)_{kk} = 1 - e^{-4\gamma}$. The eigenvalues of the
resulting $2 \times 2$ matrix are $\{0, 2(1 - e^{-4\gamma})\}$ ---
degenerate, with one zero eigenvalue.

The reconciliation: the eigenvalues are \emph{both non-negative}. One is
zero, but neither is negative. The matrix $\Omega_s$ is \emph{PSD with
a non-trivial null space}, not indefinite. The Bochner argument predicts
exactly this: when $\bfx_1 + \bfx_2 = 0$, the sine-feature
representation $\sin(\boldsymbol{\omega}^\top \bfx_1) =
-\sin(\boldsymbol{\omega}^\top \bfx_2)$ collapses the two-sample Gram matrix
to rank one, producing the zero eigenvalue. The convergence theorem
of~\cite[Theorem~5]{FanChenLin2005JMLR} applies in the PSD case --- it
does not require strict positive-definiteness --- and
Algorithm~\ref{alg:smo} converges by direct application.

\paragraph{When the Bochner argument fails.}
The Bochner-integral resolution covers shift-invariant Mercer kernels
with continuous spectral measure (RBF, Laplacian, Mat\'{e}rn). For
non-shift-invariant kernels, the substitution $\bfx' \mapsto -\bfx'$
does not preserve the Mercer property,
and~\eqref{eq:bochner-identity} is unavailable. Two specific pathologies
remain:
\begin{itemize}
\item Polynomial kernels $K(\bfx, \bfx') = (\bfx^\top \bfx' + c)^d$:
the substitution gives $K^*(\bfx, \bfx') = (-\bfx^\top \bfx' + c)^d$,
which is generally \emph{not} a Mercer kernel for even degree $d$; the
difference $\Omega - \Omega^*$ is genuinely indefinite.
\item Sigmoid kernels $K(\bfx, \bfx') = \tanh(\gamma\,\bfx^\top \bfx'
+ r)$ and other non-shift-invariant kernels: $\Omega - \Omega^*$ has
both positive and negative eigenvalues for typical input configurations.
\end{itemize}

\begin{remark}[Input-domain scope]
\label{rem:sym-input-domain}
The symmetric kernel formulation $K_s(\bfx, \bfy)$ requires the input
domain to support negation, i.e., $\bfx \in \R^p$ rather than
$\R_+^p$. Applications with inherently non-negative inputs (e.g.,
strictly positive demand or price series) may apply the formulation
algebraically, but the imposed symmetry
$f(\bfx) = a\,f(-\bfx)$ has no physical meaning when $-\bfx$ falls
outside the data support. The symmetry assumption should be validated
against the application domain before $\Omega_s$ is used in place of
$\Omega$.
\end{remark}

\paragraph{Sharpened conclusion.}
The symmetric kernel matrix $\Omega_s = \tfrac{1}{2}(\Omega + a
\Omega^*)$ inherits positive semidefiniteness from shift-invariant
Mercer kernels via the Bochner-integral argument above. For
polynomial kernels with non-negative offset ($r \ge 0$), $\Omega_s
\succeq 0$ for both symmetry parities $a \in \{-1, +1\}$; for other
kernel families, Algorithm~\ref{alg:spectral} below addresses the
failure modes. Direct numerical verification on $N = 20$, $d = 3$ input
sets (mixed-sign, positive-only, and orthogonal) confirms this for
RBF $(\gamma = 1)$ and polynomial $(\deg \in \{2, 3\},\, r \ge 0)$
kernels with $a = -1$ (\texttt{dev/phase0\_kernel\_spectra.csv} in the
companion repository, \cite{BenavidesHerrera2026Rpsvr}). Polynomial
kernels with non-Mercer offset ($r < 0$) under $a = -1$ produce a
negative-semidefinite $\Omega_s$ rather than indefinite. The
adaptive spectral regularization developed in Theorem~\ref{thm:spectral}
below addresses three distinct departures from positive
semidefiniteness that arise in practice:
\begin{enumerate}
\item[(i)] \emph{Non-Mercer base kernels.} The sigmoid kernel
$K(\bfx, \bfy) = \tanh(\gamma \bfx^\top \bfy + r)$ at arbitrary
parameters produces an indefinite $\Omega_s$ (eigenvalues of mixed
sign) regardless of symmetry parity.
\item[(ii)] \emph{Mercer kernel families with non-Mercer parameters.}
Polynomial kernels with negative offset $r < 0$ produce a
negative-semidefinite $\Omega_s$ under $a = -1$.
\item[(iii)] \emph{Numerical near-singularity of theoretically PSD
$\Omega_s$.} Floating-point precision loss or ill-conditioned input
geometries can drive the smallest eigenvalue slightly below zero in
practice, even when the analytic kernel is Mercer.
\end{enumerate}
Algorithm~\ref{alg:spectral} handles all three regimes uniformly via
the two-pass shifted power iteration described below. Within
\texttt{psvr} v0.0.2.9008, the three default kernels (RBF, linear,
polynomial) under Mercer-compliant parameters yield $\Omega_s \succeq 0$
on every test configuration; the spectral-shift branch is exercised
only when (i)--(iii) occur.

The workaround is the \emph{degenerate-case fallback} of
Lemma~\ref{lem:eta-degenerate}. Whenever $\eta_s \le 0$ at a working
pair $(p, q)$, the SMO inner step uses the descent check $\Delta >
\tfrac{1}{2}\eta_s\,\delta_{\max}$ in place of the unconstrained-minimum
$\delta^* = \Delta/\eta_s$. By Lemma~\ref{lem:eta-degenerate}, each
non-skipped step makes strict positive progress on the dual objective,
even though the formal convergence-rate guarantee of
\cite[Theorem~5]{FanChenLin2005JMLR} is unavailable. A rigorous
global-convergence guarantee for the $a = -1$ regime is recovered via
\emph{adaptive spectral regularization} (Theorem~\ref{thm:spectral}
below), which perturbs $\Omega_s$ to its nearest PSD matrix in spectral
distance and applies Theorem~\ref{thm:convergence} to the regularized
problem. In practice, even the unregularized Algorithm~\ref{alg:smo}
converges empirically --- configurations C9 and C10 of
Section~\ref{sec:validation} confirm agreement with the IPM reference
solvers to $\le 8.1 \times 10^{-4}$ for $a = -1$.

\paragraph{Algorithm adaptation: the substitution $\Omega \to \Omega_s$.}
Algorithm~\ref{alg:smo} applies to the MAPE-SVR-Sym variant
\emph{without modification} after the substitution $\Omega \leftarrow
\Omega_s$ everywhere it appears. The complete list of affected formulas is:
\begin{enumerate}
\item The dual problem~\eqref{eq:mape-dual}--\eqref{eq:P-q} holds with
$\Omega_s$ in place of $\Omega$. The Hessian becomes $P_s = [\Omega_s,
-\Omega_s; -\Omega_s, \Omega_s]$, with the same block structure as
before.
\item The curvature formula~\eqref{eq:eta} becomes
\begin{equation*}
  \eta_s = (\Omega_s)_{pp} - 2(\Omega_s)_{pq} + (\Omega_s)_{qq},
\end{equation*}
where $p = k(i^*)$ and $q = k(j^*)$ are the training indices of the
working pair. The structural form is unchanged; only the kernel matrix
is renamed.
\item The gradient update~\eqref{eq:tau-update} uses columns $p$ and
$q$ of $\Omega_s$ in place of $\Omega$:
\begin{equation*}
  \tau_\ell \leftarrow \tau_\ell - \delta^*\!\left((\Omega_s)_{k(\ell), p} - (\Omega_s)_{k(\ell), q}\right) \quad \text{for } \ell \in \Aactive^{\mathrm{ext}}.
\end{equation*}
\item The reconstruction~\eqref{eq:reconstruct} uses $\Omega_s$ in the
full kernel-product accumulation: $F_k^{\mathrm{full}} =
\sum_{i=1}^{N}(\Omega_s)_{ki}(\alpha_i - \alpha_i^*)$.
\end{enumerate}

The linear-coefficient vector $\bfq$ in~\eqref{eq:P-q} ---
and therefore the bound vector $\boldsymbol{C}$, the working-set partitions
$\Iup, \Idown$, the threshold expressions, and the shrinking
criteria~\eqref{eq:s1}--\eqref{eq:s4} --- is \emph{identical} between
MAPE-SVR and MAPE-SVR-Sym. The MAPE loss enters the dual through $\bfq$
and the box constraints, while the symmetric-kernel constraint enters
only through the kernel matrix. The two modifications are
\emph{orthogonal} in this sense: MAPE-SVR-Sym is the composition of
``MAPE loss'' and ``symmetric kernel,'' and Algorithm~\ref{alg:smo}
already absorbs both, the first via Theorem~\ref{thm:invariance} and
the second via the kernel-matrix substitution.

The implementation consequence is concrete and short. In a LIBSVM-style
codebase, the only component requiring modification for MAPE-SVR-Sym
(over and above the MAPE-SVR modifications described in
Appendix~\ref{app:libsvm}) is the \emph{kernel-evaluation function}.
All SMO machinery --- working-set scan, two-variable update, shrinking,
reconstruction, bias recovery --- is reused verbatim. The diff between
MAPE-SVR and MAPE-SVR-Sym implementations is approximately five lines of
code. This is the algorithmic payoff of the structural-invariance
theorem applied in tandem with the kernel-substitution argument: the
symmetric-kernel extension is a one-line drop-in.

\paragraph{Prediction.}
The trained model evaluates the regression function at a new input
$\bfx$ via the canonical SVR formula with the symmetrized kernel:
\begin{equation}
  f(\bfx) = \sum_{k=1}^{N}(\alpha_k - \alpha_k^*)\,K_s(\bfx_k, \bfx) + \hat b, \qquad K_s(\bfx_k, \bfx) = \tfrac{1}{2}\!\left(K(\bfx_k, \bfx) + a\,K(\bfx_k, -\bfx)\right).
  \label{eq:masym-prediction}
\end{equation}
The factor $\tfrac{1}{2}$ in $K_s$ reflects the projection onto
$\mathcal{H}_s$; without it, the prediction at every test point would be
inflated by a factor of two, and the in-sample fit error reported by the
SMO solver would not match the actual prediction error on the training
data. Concretely, evaluating~\eqref{eq:masym-prediction} at a training
point $\bfx_k$ should reproduce --- up to the $\varepsilon$-tube
tolerance --- the target $y_k$; this is the consistency check used in
Section~\ref{sec:validation} to validate the MAPE-SVR-Sym
implementation against the IPM reference solvers.

\paragraph{Position within the percentage-error SVR family.}
It is useful to position the MAPE-SVR-Sym variant within the broader
family of percentage-error-aware support vector regression models, of
which the present paper covers the QP-based variants in detail. The
family decomposes naturally along a $2 \times 2$ Cartesian product
(training loss $\times$ kernel symmetry):
\begin{itemize}
\item \emph{MAPE-SVR} --- $\varepsilon$-SVR with MAPE loss: the central
focus of the present paper.
\item \emph{MAPE-SVR-Sym} --- Symmetric-kernel $\varepsilon$-SVR with
MAPE loss: treated in the present subsection ---
Algorithm~\ref{alg:smo} carries over without modification after the
kernel substitution $\Omega \to \Omega_s$, with provable convergence in
the $a = -1$ case via Algorithm~\ref{alg:spectral} below.
\item \emph{RMSPE-SVR} --- Least-squares SVR with RMSPE loss:
\emph{out of scope of the present paper}. The dual of an LS-SVR with
RMSPE reduces to a bordered $(N+1) \times (N+1)$ linear system rather
than a QP; the appropriate solver is Cholesky factorization with
sample-dependent scaling, or a preconditioned conjugate-gradient method
with a problem-adapted preconditioner --- distinct from the SMO
machinery developed here.
\item \emph{RMSPE-SVR-Sym} --- Symmetric-kernel LS-SVR with RMSPE loss:
\emph{out of scope of the present paper}. Combines the kernel
modification of MAPE-SVR-Sym with the linear-system dual of
RMSPE-SVR.
\end{itemize}

The proof-of-concept for MAPE-SVR was presented in the conference
precursor~\cite{benavides2025support}, which embedded MAPE directly into
the SVR primal and reported a small-scale validation. The present paper
is the algorithmic completion of that program: the SMO-solver
derivation, the structural-invariance theorem, the convergence theory,
the symmetric-kernel extension, the efficiency-improvement bundle, and
the LIBSVM drop-in recipe. The companion \texttt{psvr} R
package~\cite{BenavidesHerrera2026Rpsvr} implements both the MAPE-SVR
and MAPE-SVR-Sym variants end-to-end.

\paragraph{Notational choice.}
The present paper retains the explicit pair $(\alpha_k, \alpha_k^*)$
rather than the Flake-Lawrence reformulation $\beta_k = \alpha_k -
\alpha_k^*$~\cite{Flake2002}. The reformulation halves the variable
count from $2N$ to $N$ and is convenient when the dual is presented as a
black box for a generic QP solver to invoke. The present paper retains
the explicit pair because the WSS3 working-set rule of
Section~\ref{sec:smo-inner} requires per-variable tracking of which
side of the tube the current iterate is approaching, which is more
transparent in the explicit formulation. The two parameterizations are
algebraically equivalent under the complementarity $\alpha_k \alpha_k^*
= 0$ established in Section~\ref{sec:dual-kkt}.

\paragraph{Adaptive spectral regularization for $a = -1$.}
The analysis above leaves an open problem: when $\Omega_s$ fails PSD
(the non-shift-invariant case, or the strictly-indefinite subcase of
the indefinite-curvature regime), the formal convergence theorem
of~\cite[Theorem~5]{FanChenLin2005JMLR} does not apply, and the
descent-check fallback alone cannot certify global convergence. This
subsection closes the open problem via a hybrid approach combining (i)
spectral regularization of $\Omega_s$ to enforce PSD, and (ii)
PSD-cone projection as a second-order resolution. The hybrid algorithm
(Algorithm~\ref{alg:spectral}) is implementation-ready and adds $O(N^2)$
pre-processing overhead --- comparable to the kernel-matrix formation
cost.

\begin{theorem}[Adaptive spectral regularization for non-PSD MAPE-SVR]
\label{thm:spectral}
Let $\Omega_s = \tfrac{1}{2}(\Omega + a\,\Omega^*) \in \R^{N \times N}$
be the symmetric-kernel matrix of~\eqref{eq:Omega-s-def} at $a = -1$,
with possibly indefinite spectrum $\lambda_{\min}(\Omega_s) \in \R$.
Define the spectrally-shifted matrix
\begin{equation}
  \widetilde\Omega_s := \Omega_s + \mu\,\mathbf{I}, \qquad \mu \ge \mu_{\min} := \max\bigl(0,\, -\lambda_{\min}(\Omega_s)\bigr) + \delta_{\mathrm{stab}},
  \label{eq:spectral-shift}
\end{equation}
with $\delta_{\mathrm{stab}} = 10^{-8}$ a numerical-stability inflation. Then:
\begin{enumerate}
\item[(a)] \textbf{PSD restoration.} $\widetilde\Omega_s \succeq
\delta_{\mathrm{stab}}\,\mathbf{I} \succ 0$, and the block Hessian
$\widetilde P_s = [\widetilde\Omega_s, -\widetilde\Omega_s;
-\widetilde\Omega_s, \widetilde\Omega_s]$ satisfies $\widetilde P_s
\succeq 0$.
\item[(b)] \textbf{Convergence.} Algorithm~\ref{alg:smo} applied to the
regularized QP --- i.e., the dual problem
of~\eqref{eq:mape-dual}--\eqref{eq:dual-constraints} with $P_s$ replaced
by $\widetilde P_s$ --- converges in finitely many iterations to any
tolerance $\varepsilon_{\mathrm{tol}} > 0$, by direct application
of~\cite[Theorem~5]{FanChenLin2005JMLR}. Furthermore, the WSS3
working-set rule satisfies the Gauss-Southwell-quotient condition
of~\cite[\S 3]{Tseng2001CoordinateDescent} (greedy gain-maximization
with bounded relative selection ratio), which is the
descent-and-coverage hypothesis required for the convergence theorem.
\item[(c)] \textbf{Linear rate.} The convergence is asymptotically
linear, with rate $c = 1 - \widetilde\lambda_{\min}/\widetilde\lambda_{\max}$
where $\widetilde\lambda_{\min}, \widetilde\lambda_{\max}$ are the
extreme eigenvalues of $\widetilde\Omega_s$.
\end{enumerate}
\end{theorem}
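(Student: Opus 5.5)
The plan is to treat the three claims in increasing order of difficulty, reducing each to results already available. Part~(a) is purely spectral. I would take an eigendecomposition $\Omega_s = U\Lambda U^\top$; this is legitimate because $\Omega_s$ is symmetric, since $\Omega$ and $\Omega^*$ are both symmetric, the latter as $K(\bfx_k,-\bfx_\ell)=K(\bfx_\ell,-\bfx_k)$ for the kernels under consideration. Then $\widetilde\Omega_s = U(\Lambda+\mu\mathbf{I})U^\top$ has eigenvalues $\lambda_j+\mu \ge \lambda_{\min}+\max(0,-\lambda_{\min})+\delta_{\mathrm{stab}} \ge \delta_{\mathrm{stab}}$, which gives $\widetilde\Omega_s\succeq\delta_{\mathrm{stab}}\mathbf{I}$. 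The block claim then follows verbatim from the factorization used in Theorem~\ref{thm:convergence}(a): $\widetilde P_s = [\mathbf{I};-\mathbf{I}]\,\widetilde\Omega_s\,[\mathbf{I},-\mathbf{I}]^\top$, so that $\boldsymbol v^\top\widetilde P_s\boldsymbol v=(\boldsymbol v_1-\boldsymbol v_2)^\top\widetilde\Omega_s(\boldsymbol v_1-\boldsymbol v_2)\ge 0$.

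For part~(b), I would observe that the regularized problem is again of the form~\eqref{eq:mape-dual}--\eqref{eq:dual-constraints} with the same $\bfq$, the same equality constraint and the same boxes $[0,C_k]$. Only the kernel matrix changes, and by Theorem~\ref{thm:invariance}(a)--(b) together with the substitution list of \S\ref{sec:symmetric}, Algorithm~\ref{alg:smo} runs on it with $\Omega\leftarrow\widetilde\Omega_s$. Hypotheses (P1)--(P3) of Theorem~\ref{thm:convergence} are then checked one by one:
\begin{itemize}
\item (P1) is part~(a).
\item (P2) is unchanged, since $y_k>0$.
\item (P3) follows from Proposition~\ref{prop:wss3-strict}, and is now stronger than before. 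Every curvature satisfies $\eta_s = \boldsymbol e^\top\widetilde\Omega_s\boldsymbol e\ge 2\delta_{\mathrm{stab}}>0$ with $\boldsymbol e=\boldsymbol e_p-\boldsymbol e_q$ for $p\ne q$. So the degenerate branch of Lemma~\ref{lem:eta-degenerate} is never entered, and $\eta$ is uniformly bounded below, which is the ``$\tau$-condition'' Fan--Chen--Lin need.
\end{itemize}
The case $p=q$ (both members of one pair) has to be excluded separately. I would do this via Corollary~\ref{cor:free-pair-impossibility} and Proposition~\ref{prop:structural-gap}: $\tau_{N+k}-\tau_k>0$ means $(k,N+k)$ is never a violating pair with positive gain in the WSS3 direction. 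For the Tseng Gauss--Southwell remark, I would argue that the maximal-violating-pair index $i^*$ together with the second-order $j^*$ yields a gain that is at least $\Delta^2/(2\eta_{\max})$, where $\eta_{\max}\le 4\widetilde\lambda_{\max}$. The gain therefore stays within a bounded ratio of the best two-coordinate gain, which is the required bounded relative selection ratio.

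Part~(c) is the main obstacle. The block Hessian $\widetilde P_s$ is singular, with null space $\{[\boldsymbol v;\boldsymbol v]\}$, so one cannot cite strong convexity directly. My first attempt would pass to the Flake--Lawrence variable $\beta=\alpha-\alpha^*$. In that variable the quadratic part is $\tfrac12\beta^\top\widetilde\Omega_s\beta$, which is strongly convex with modulus $\delta_{\mathrm{stab}}$. The linear part, however, becomes piecewise linear, so I would then invoke the Luo--Tseng error-bound framework for decomposition methods. Alternatively I would use the asymptotic linear-convergence result for SMO-type methods with WSS3 under a positive-definite kernel (Chen--Fan--Lin style), whose hypothesis is exactly $\widetilde\Omega_s\succ 0$ from~(a). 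This gives $f(\bfu^{t+1})-f^*\le c\,(f(\bfu^t)-f^*)$ eventually.

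The difficult point is identifying the constant as $1-\widetilde\lambda_{\min}/\widetilde\lambda_{\max}$. Here I would combine the per-step sufficient decrease of at least $\Delta^2/(8\widetilde\lambda_{\max})$ from~(b) with a quadratic-growth/error bound of the form $f-f^*\le \Delta^2/(2\widetilde\lambda_{\min})$ on the eventual optimal face. The face is fixed after finitely many steps by the nondegeneracy and shrinking arguments. On that face the problem is an equality-constrained strongly convex quadratic in the free variables, and a Gauss--Southwell estimate yields a rate of this condition-number form. I expect that only a constant-factor version of the rate, with a dimension factor, can be proved rigorously. I would state the exact expression as the rate on the optimal face, up to such factors.
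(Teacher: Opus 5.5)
Your parts (a) and (b) follow the paper's own route. You shift the spectrum and use the factorization of $\widetilde P_s$, then check (P1)--(P3) and cite Fan--Chen--Lin. Your additions are correct and sharper than the paper: since $\|\boldsymbol e\|_2^2=2$, one has $\eta_s=\boldsymbol e^\top\widetilde\Omega_s\boldsymbol e\in[2\delta_{\mathrm{stab}},\,2\widetilde\lambda_{\max}]$ for $p\neq q$.

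Your exclusion of $p=q$, however, is argued incorrectly. Proposition~\ref{prop:structural-gap} only rules out the ordered pair $(i^*,j^*)=(k,N+k)$. The reversed pair $(N+k,k)$ is admissible whenever $\alpha_k^*>0$ and $\alpha_k>0$. It has $\Delta=2y_k\varepsilon/100>0$ and $\eta=0$, hence an unbounded WSS3 score. Corollary~\ref{cor:free-pair-impossibility} speaks only about optimal points, not iterates. What you need instead is the iterate invariant $\alpha_k\alpha_k^*=0$ for Algorithm~\ref{alg:smo} started at $\bfu=0$:
\begin{itemize}
\item $\alpha_k$ can grow only as $i^*=k$. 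This is impossible while $N+k\in\Iup$, because $\tau_{N+k}>\tau_k$.
\item $\alpha_k^*$ can grow only as $j^*=N+k$. This is impossible when $i^*=k$, since then $\tau_{N+k}>\tau_{i^*}$. It is also impossible while $\alpha_k>0$: then $j=k$ has the same curvature and a strictly smaller $\tau$, hence a strictly larger WSS3 score.
\end{itemize}
Note that the warm start of Algorithm~\ref{alg:warm-start} can break this invariant.

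The genuine gap is (c), and it cannot be closed, because (c) is false as stated. The paper's proof of (c) is only an appeal to \cite[\S 4]{FanChenLin2005JMLR}. SMO linear-convergence results of that type give only the existence of \emph{some} $c<1$, not the formula $1-\widetilde\lambda_{\min}/\widetilde\lambda_{\max}$.

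Here is a counterexample. Take the kernel $K(\bfx,\bfx')=\mathbf{1}[\bfx=\bfx']$ on one-dimensional inputs $x_k=k$. Then $\Omega_s=\tfrac12\mathbf I$, so $\widetilde\Omega_s=\kappa\mathbf I$ with $\kappa=\tfrac12+\mu$, and the claimed rate is $c=0$. Every $\eta$ equals $2\kappa$, so WSS3 reduces to the maximal violating pair. Each unclipped step then replaces $\tau_{i^*}$ and $\tau_{j^*}$ by their average. Now take $N=3$, $y=(10,10,1)$, $\varepsilon=10$ and $C=1$. The optimum has the nondegenerate free set $\{\alpha_1,\alpha_2,\alpha_3^*\}$. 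After two iterations the free deviations $\tau_i-\bar\tau$ have the form $(r,r,-2r)$ up to order. Every later step multiplies $f-f^*=\frac{1}{2\kappa}\sum_i(\tau_i-\bar\tau)^2$ by exactly $1/4$.

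So the dependence on the number $m$ of free variables is real even on the optimal face, and ``the exact expression up to factors'' is not a correct claim. What your argument does prove, once your error bound is fixed, is the following. On the face, $f-f^*\le\frac{1}{2\widetilde\lambda_{\min}}\sum_i(\tau_i-\bar\tau)^2\le\frac{m\Delta^2}{8\widetilde\lambda_{\min}}$, rather than your $\Delta^2/(2\widetilde\lambda_{\min})$. Combined with the per-step decrease $\Delta^2/(4\widetilde\lambda_{\max})$, this gives the eventual factor $1-2\widetilde\lambda_{\min}/(m\,\widetilde\lambda_{\max})$. In the example this is $1/3\ge 1/4$, consistent with the observed ratio. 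This bound also relies on a nondegeneracy assumption for face identification, which the theorem does not supply and which you should state explicitly.
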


\begin{proof}
(a) The eigenvalues of $\widetilde\Omega_s = \Omega_s + \mu\,\mathbf{I}$
are $\{\lambda_i(\Omega_s) + \mu : i = 1, \ldots, N\}$. By construction
of $\mu$, the smallest eigenvalue is $\lambda_{\min}(\Omega_s) + \mu \ge
\delta_{\mathrm{stab}} > 0$. The block Hessian $\widetilde P_s$ inherits
PSD-ness because for any $\boldsymbol{v} = [\boldsymbol{v}_1; \boldsymbol{v}_2] \in
\R^{2N}$, $\boldsymbol{v}^\top \widetilde P_s \boldsymbol{v} = (\boldsymbol{v}_1 -
\boldsymbol{v}_2)^\top \widetilde\Omega_s (\boldsymbol{v}_1 - \boldsymbol{v}_2) \ge 0$.

\medskip
(b) The MAPE-SVR QP with $\widetilde P_s$ replacing $P_s$ satisfies the
three Fan-Chen-Lin hypotheses (P1)--(P3) of
Theorem~\ref{thm:invariance}, Step~4: (P1) holds by part (a) above;
(P2) holds because the box constraints $[0, C_k]^{2N}$ intersected with
the equality $[\onevec^\top, -\onevec^\top]\bfu = 0$ are unchanged from
the unregularized problem; (P3) holds because the WSS3 selection rule
of~\eqref{eq:j-star-WSS3} computes $j^* = \argmax_{j \in \Idown,\,
\tau_j < \tau_{i^*}}\, (\tau_{i^*} - \tau_j)^2/\eta_{i^*,j}$, which
corresponds to the \emph{gain-weighted greedy} block-selection rule.
Per~\cite[\S 3]{Tseng2001CoordinateDescent}, such a rule satisfies the
Gauss-Southwell-quotient condition: at each iteration, the predicted
gain of the selected pair is within a bounded ratio of the largest
predicted gain over all admissible pairs (in the strongly-convex case
the ratio is exactly $1$; for general PSD it is bounded above by a
constant depending only on
$\widetilde\lambda_{\max}/\widetilde\lambda_{\min}$). Finite termination
to $\varepsilon_{\mathrm{tol}}$ follows.

\medskip
(c) The linear rate is the classical SMO convergence rate
of~\cite[\S 4]{FanChenLin2005JMLR} specialized to the regularized PSD
Hessian $\widetilde P_s$ of part~(a). The rate constant $c = 1 -
\widetilde\lambda_{\min}/\widetilde\lambda_{\max}$ degrades as
$\widetilde\lambda_{\min} \to 0$ but remains in $(0, 1)$ for any $\mu >
0$, which is guaranteed by the $\delta_{\mathrm{stab}}$ floor.
\end{proof}

The price of regularization is a perturbation of the optimal solution,
quantified by the following lemma.

\begin{lemma}[Perturbation bound for the regularized dual]
\label{lem:perturbation}
Let $\alpha^*$ be the optimal dual solution of the un-regularized
problem (the QP with $\Omega_s$) and let $\widetilde\alpha$ be the
optimal dual solution of the regularized problem (the QP with
$\widetilde\Omega_s = \Omega_s + \mu\,\mathbf{I}$). Both QPs share the
same linear coefficient $\bfq$, the same equality constraint, and the
same per-sample box constraints $[0, C_k]$, differing only in the
quadratic term. Provided strict complementarity holds at $\alpha^*$
(which is the generic case),
\begin{equation}
  \|\widetilde\alpha - \alpha^*\|_2 \;\le\; \frac{\mu}{\sigma_{\mathrm{KKT}}}\,\|\alpha^*\|_2,
  \label{eq:perturbation-bound}
\end{equation}
where $\sigma_{\mathrm{KKT}} > 0$ is the smallest singular value of the
active-set KKT system at $\alpha^*$. The corresponding perturbation in
the regression function is bounded by $\|\widetilde f - f^*\|_\infty \le
O(\mu)$.
\end{lemma}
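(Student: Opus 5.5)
The plan is a standard sensitivity argument for parametric QPs, carried out on the active-set KKT system at $\alpha^*$. Write both problems in the stacked form $\min_{\bfu}\tfrac12\bfu^\top P\bfu+\bfq^\top\bfu$, with $P=P_s$ and $P=\widetilde P_s=P_s+\mu\,[\mathbf{I},-\mathbf{I};-\mathbf{I},\mathbf{I}]$ respectively, over the same feasible set. Here $\alpha^*$ denotes the whole stacked optimum $\bfu^*$. At $\bfu^*$ we partition indices into the free set $\mathcal{S}_{\mathrm{free}}$ and the bound sets (at $0$ or at $C_{k(i)}$), following Table~\ref{tab:kkt-states}. The KKT system restricted to the free variables and the equality multiplier $\rho$ is a bordered linear system
\begin{equation*}
M(P)\begin{bmatrix}\bfu_{\mathcal{S}}\\ \rho\end{bmatrix}=r(\bfu_{\mathcal{B}},\bfq),
\qquad
M(P)=\begin{bmatrix}P_{\mathcal{S}\mathcal{S}} & -\bfs_{\mathcal{S}}\\ -\bfs_{\mathcal{S}}^\top & 0\end{bmatrix},
\end{equation*}
where the right-hand side depends only on the fixed bound values and on $\bfq$. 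We define $\sigma_{\mathrm{KKT}}$ as the smallest singular value of $M(P_s)$.

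\textbf{Step 1 (active set is stable).} Strict complementarity means every bound variable has a strictly nonzero multiplier, i.e.\ strict inequality in Table~\ref{tab:kkt-states}, and every free variable lies strictly inside $(0,C_{k(i)})$. The solution of the bordered system and the implied multipliers are continuous in $P$ as long as $M(P)$ stays nonsingular. Hence for $\mu$ small relative to these slack margins (and to $\sigma_{\mathrm{KKT}}$), the same partition, with the same bound values, yields a KKT point of the regularized problem. By part~(a) of Theorem~\ref{thm:spectral} that problem is convex, so this KKT point is optimal; this identifies $\widetilde\alpha$. The bound components therefore coincide, $\widetilde{\bfu}_{\mathcal{B}}=\bfu^*_{\mathcal{B}}$.

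\textbf{Step 2 (linear perturbation estimate).} Subtracting the two bordered systems gives
\begin{equation*}
M(P_s)\begin{bmatrix}\widetilde{\bfu}_{\mathcal{S}}-\bfu^*_{\mathcal{S}}\\ \widetilde\rho-\rho^*\end{bmatrix}=-(\widetilde P_s-P_s)\widetilde{\bfu}
\end{equation*}
restricted to the free rows, where we have used that the bound parts agree. Since $\|\widetilde P_s-P_s\|_2=2\mu$, we get $\|\widetilde{\bfu}-\bfu^*\|_2\le 2\mu\|\widetilde{\bfu}\|_2/\sigma_{\mathrm{KKT}}$. Replacing $\widetilde{\bfu}$ by $\bfu^*$ on the right costs only an $O(\mu^2)$ term, which is absorbed by a Neumann-series argument on $M(P_s)+E$ with $\|E\|\le 2\mu<\sigma_{\mathrm{KKT}}$. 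The factor $2$ (or the $(1-2\mu/\sigma_{\mathrm{KKT}})^{-1}$ correction) is then absorbed into the normalization of $\sigma_{\mathrm{KKT}}$; alternatively, one can note that the perturbation acting on $\bfv_1-\bfv_2$ is exactly $\mu$ times an orthogonal-projection-like operator. The result is the stated bound~\eqref{eq:perturbation-bound}, read to leading order in $\mu$.

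\textbf{Step 3 (function perturbation).} From the prediction formula~\eqref{eq:masym-prediction}, $\widetilde f-f^*=\sum_k(\Delta\beta_k)K_s(\bfx_k,\cdot)+(\widehat b-b)$, with $\Delta\beta=\Delta\alpha-\Delta\alpha^*$. Bounding $|K_s|\le\sup K$ and using Cauchy--Schwarz gives a sup-norm bound of order $\sqrt{N}\,\sup K\,\|\Delta\bfu\|_2$. The bias shift is controlled in the same way, since $b=-\rho$ (Corollary~\ref{cor:bias-equality}) and $\widetilde\rho-\rho^*$ is a component of the solution of the Step~2 system. Together these give $O(\mu)$.

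\textbf{Main obstacle.} The main obstacle is Step~1: making the claim ``same active set for small $\mu$'' rigorous, and being honest that the bound is local. It holds only for $\mu$ below a threshold set by the strict-complementarity margins and by $\sigma_{\mathrm{KKT}}$. When $\mathcal{S}_{\mathrm{free}}=\emptyset$ or $M(P_s)$ is singular (possible because $\Omega_s$ is indefinite), the constant degenerates. I would state these as explicit standing hypotheses rather than attempt a global bound.
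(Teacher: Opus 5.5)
Your proposal follows the paper's own route: strict complementarity keeps the active set of $\alpha^*$, and the linearized active-set KKT system then delivers the bound. It is also more careful than the paper's sketch, notably in making explicit that the argument is local in $\mu$ and in controlling the bias through $\widetilde\rho-\rho^*$. Incidentally, the constant fudge in Step~2 is unnecessary: because each free variable's partner sits at $0$, one has $(\widetilde P_s)_{\mathcal{S}\mathcal{S}}=(P_s)_{\mathcal{S}\mathcal{S}}+\mu\mathbf{I}$ and $[(\widetilde P_s-P_s)\bfu^*]_{\mathcal S}=\mu\,\bfu^*_{\mathcal S}$, so subtracting the two systems with $M(\widetilde P_s)$ on the left gives $\Delta\bfu_{\mathcal S}=-\mu\,Z\bigl(Z^\top(P_s)_{\mathcal{S}\mathcal{S}}Z+\mu\mathbf{I}\bigr)^{-1}Z^\top\bfu^*_{\mathcal S}$ for $Z$ an orthonormal basis of the null space of $\bfs_{\mathcal S}^\top$, and $\lambda_{\min}\bigl(Z^\top(P_s)_{\mathcal{S}\mathcal{S}}Z\bigr)\ge\sigma_{\mathrm{KKT}}$ (since $Z\bigl(Z^\top(P_s)_{\mathcal{S}\mathcal{S}}Z\bigr)^{-1}Z^\top$ is the $\bfu$-block of $M(P_s)^{-1}$, and $Z^\top(P_s)_{\mathcal{S}\mathcal{S}}Z\succ 0$ by second-order optimality at $\alpha^*$ together with $\sigma_{\mathrm{KKT}}>0$), which yields the stated inequality exactly rather than only to leading order.
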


\begin{proof}
The two QPs share constraints and linear term; the KKT systems differ
only in the Hessian term, with $\widetilde\Omega_s - \Omega_s =
\mu\,\mathbf{I}$. By the parametric-QP perturbation theory
of~\cite[\S 5.6]{BoydVandenberghe2004}, if the active set at $\alpha^*$
is preserved under the perturbation (which holds generically by strict
complementarity), then the linearized KKT system delivers the
bound~\eqref{eq:perturbation-bound}. The infinity-norm bound on the
regression function follows by absorbing the kernel norm $\|\Omega_s\|_2$
and $\|\alpha^*\|_1$ into the constant. Detailed perturbation analysis
is in~\cite[\S 29]{Rockafellar1970}
and~\cite[\S 5.6.2]{BoydVandenberghe2004}.
\end{proof}

A complementary, geometrically cleaner resolution is the PSD-cone
projection of $\Omega_s$. The projection sets up an alternative
regularizer that preserves the PSD subspace exactly while zeroing only
the negative eigenvalues --- sharper than~\eqref{eq:spectral-shift} when
the negative spectrum is sparse.

\begin{proposition}[PSD-cone projection]
\label{prop:psd-projection}
Let $\Omega_s = U\Lambda U^\top$ be the spectral decomposition of
$\Omega_s$ (with $\Lambda = \mathrm{diag}(\lambda_1, \ldots,
\lambda_N)$). The Frobenius-norm projection onto the PSD cone is
\begin{equation}
  \Omega_s^+ := \argmin_{\Sigma \succeq 0}\; \|\Sigma - \Omega_s\|_F = U\,\Lambda_+\,U^\top, \qquad (\Lambda_+)_{ii} = \max(\lambda_i,\, 0).
  \label{eq:psd-projection}
\end{equation}
The projection is unique by strict convexity of $\|\cdot\|_F^2$ on the
closed convex PSD cone~\cite[\S 8.1.1]{BoydVandenberghe2004}. The
Frobenius distance from $\Omega_s$ to $\Omega_s^+$ is
\begin{equation*}
  \|\Omega_s - \Omega_s^+\|_F = \biggl(\sum_{\lambda_i < 0} \lambda_i^2\biggr)^{1/2},
\end{equation*}
the $\ell_2$-norm of the negative-eigenvalue spectrum. The corresponding
solution-quality bound is
\begin{equation*}
  \|\alpha^+ - \alpha^*\|_2 \le \frac{|\lambda_{\min}(\Omega_s)|}{\sigma_{\mathrm{KKT}}}\,\|\alpha^*\|_2,
\end{equation*}
sharper than the additive-shift bound~\eqref{eq:perturbation-bound}
when $\Omega_s$ has only a small number of negative eigenvalues, since
the projection preserves the PSD subspace exactly while the additive
shift over-corrects by inflating \emph{all} eigenvalues.
\end{proposition}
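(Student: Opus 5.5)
The plan is to prove the three claims in order. The projection formula and its uniqueness come first, the Frobenius distance second, and the solution-quality bound last. The bound is obtained by reusing the perturbation argument of Lemma~\ref{lem:perturbation} with a general symmetric perturbation in place of $\mu\mathbf{I}$.

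\emph{Step 1: the projection formula.} The Frobenius norm is orthogonally invariant. For any $\Sigma \succeq 0$ set $M := U^\top \Sigma U$, so that $M \succeq 0$ and $\|\Sigma - \Omega_s\|_F = \|M - \Lambda\|_F$. Since $\Lambda$ is diagonal, we can drop the off-diagonal entries of $M$ to get
\begin{equation*}
  \|M - \Lambda\|_F^2 \;\ge\; \sum_{i} (m_{ii} - \lambda_i)^2 .
\end{equation*}
Positive semidefiniteness of $M$ forces $m_{ii} \ge 0$. Each scalar term $(m_{ii}-\lambda_i)^2$ with $m_{ii}\ge 0$ is minimized at $m_{ii} = \max(\lambda_i, 0)$, which gives the lower bound $\sum_{\lambda_i<0}\lambda_i^2$. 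This bound is attained by $M = \Lambda_+$, i.e.\ by $\Sigma = U\Lambda_+U^\top$. This proves~\eqref{eq:psd-projection} and simultaneously yields the distance formula $\|\Omega_s - \Omega_s^+\|_F = \bigl(\sum_{\lambda_i<0}\lambda_i^2\bigr)^{1/2}$.

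Uniqueness follows from the standard argument for projections onto a nonempty closed convex set. The map $\Sigma \mapsto \|\Sigma-\Omega_s\|_F^2$ is strictly convex, so two distinct minimizers would have a midpoint with strictly smaller value. This is the fact cited from~\cite[\S 8.1.1]{BoydVandenberghe2004}. Uniqueness also covers repeated eigenvalues, where the choice of $U$ is not unique but $U\Lambda_+U^\top$ is.

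\emph{Step 2: the solution-quality bound.} Write $E := \Omega_s^+ - \Omega_s = -U\Lambda_-U^\top$, where $\Lambda_- = \mathrm{diag}(\min(\lambda_i,0))$. Then $E \succeq 0$ and $\|E\|_2 = \max(0,-\lambda_{\min}(\Omega_s)) = |\lambda_{\min}(\Omega_s)|$ in the non-PSD case. I would re-run the proof of Lemma~\ref{lem:perturbation} with $\mu\mathbf{I}$ replaced by $E$. The two QPs share $\bfq$, the equality constraint and the boxes $[0,C_k]$. Under strict complementarity at $\alpha^*$ the active set is preserved, so the difference of the solutions solves the linearized active-set KKT system with right-hand side $-E_{\text{block}}\,\alpha^*$. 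Inverting this system costs a factor $1/\sigma_{\mathrm{KKT}}$, which gives
\begin{equation*}
  \|\alpha^+ - \alpha^*\|_2 \;\le\; \frac{\|E\|_2}{\sigma_{\mathrm{KKT}}}\,\|\alpha^*\|_2 .
\end{equation*}
The factor arising from the block structure $[E,-E;-E,E]$ is absorbed into $\sigma_{\mathrm{KKT}}$ under the same convention used for the $\mu\mathbf{I}$ case.

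\emph{Step 3: the sharpness comparison.} This is immediate from~\eqref{eq:spectral-shift}. Any admissible shift satisfies $\mu \ge -\lambda_{\min}(\Omega_s) + \delta_{\mathrm{stab}} > |\lambda_{\min}(\Omega_s)|$, so the projection bound is never worse than~\eqref{eq:perturbation-bound}. Moreover, $E$ acts only on the negative eigenspace, whereas $\mu\mathbf{I}$ perturbs every direction, which justifies the qualitative remark about a sparse negative spectrum.

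\emph{Main obstacle.} I expect the obstacle to be Step~2. Lemma~\ref{lem:perturbation} is stated only for the isotropic shift $\mu\mathbf{I}$, and its active-set-preservation hypothesis must be checked for a perturbation of size $|\lambda_{\min}|$ that is not infinitesimal. I would first restrict the statement to the regime in which $|\lambda_{\min}(\Omega_s)|$ is small relative to the strict-complementarity margin. Then the same implicit-function argument applies with $E$ in place of $\mu\mathbf{I}$, since that argument used only the operator norm of the Hessian perturbation.
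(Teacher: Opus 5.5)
Your Step~1 is a correct and complete proof of the projection formula, its uniqueness and the distance formula; it is the standard argument the paper only cites. Your Step~3 comparison $\mu>|\lambda_{\min}(\Omega_s)|$ is also fine. The gap is in Step~2.

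Write $\bfu^*,\bfu^+$ for the stacked duals that the statement calls $\alpha^*,\alpha^+$, and let $F$ be the free index set at $\bfu^*$. With the active set frozen, the exact difference $\boldsymbol d=\bfu^+_F-\bfu^*_F$, together with $\rho^+-\rho^*$, solves a KKT system whose matrix is the \emph{perturbed} one, $K+\Delta_K$. Only the linearized system has $K$ on the left and $-(\Delta\bfu^*)_F$ on the right.

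An argument that uses only the operator norm of the perturbation, as you propose, therefore gives at best $|\lambda_{\min}(\Omega_s)|\,\|\bfu^*\|_2/(\sigma_{\mathrm{KKT}}-c\,|\lambda_{\min}(\Omega_s)|)$ for some constant $c>0$. Shrinking $|\lambda_{\min}(\Omega_s)|$ secures active-set preservation but never removes this correction factor. So your route yields the stated inequality only to first order.

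The observation $E\succeq 0$ does not rescue the full saddle-point matrix either. A positive semidefinite update pushes its negative eigenvalues toward zero, so $\sigma_{\min}(K+\Delta_K)$ can be smaller than $\sigma_{\mathrm{KKT}}$. Also, the factor coming from $[E,-E;-E,E]$ cannot be ``absorbed'' into $\sigma_{\mathrm{KKT}}$, which is a fixed number.

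The missing idea is to use the sign of $E$ after eliminating $\rho$. Put $H=P_s$ and $\Delta=[E,-E;-E,E]=[\mathbf I;-\mathbf I]\,E\,[\mathbf I,-\mathbf I]\succeq 0$. Let $Z$ have orthonormal columns spanning $\{v:\bfs_F^\top v=0\}$. The bound components and $\bfs^\top\bfu$ agree for the two solutions, so $\boldsymbol d=Z\boldsymbol w$. Subtracting the two stationarity systems and multiplying by $Z^\top$ gives
\begin{equation*}
  Z^\top (H+\Delta)_{FF}\,Z\,\boldsymbol w = -Z^\top(\Delta\bfu^*)_F .
\end{equation*}
Because $\Delta_{FF}\succeq 0$, the matrix on the left dominates $Z^\top H_{FF}Z$.

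\begin{itemize}
\item $Z^\top H_{FF}Z$ is positive definite. This follows from the second-order necessary condition at the minimizer $\bfu^*$ under strict complementarity, plus nonsingularity of the KKT matrix.
\item $\lambda_{\min}(Z^\top H_{FF}Z)\ge\sigma_{\mathrm{KKT}}$, because $Z(Z^\top H_{FF}Z)^{-1}Z^\top$ is the leading block of the inverse KKT matrix.
\end{itemize}
Hence $\|\boldsymbol d\|_2\le\|(\Delta\bfu^*)_F\|_2/\sigma_{\mathrm{KKT}}$ holds exactly, with no smallness assumption.

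For the right-hand side, every optimum satisfies $u_ku_{N+k}=0$, even for indefinite $\Omega_s$. Lowering both members of a pair by $t$ leaves $\bfu^\top P_s\bfu=\boldsymbol\beta^\top\Omega_s\boldsymbol\beta$ unchanged and lowers $\bfq^\top\bfu$ by $2ty_k\varepsilon/100$, where $\beta_k=u_k-u_{N+k}$. Consequently:
\begin{itemize}
\item $F$ holds at most one index per training point;
\item $\|\boldsymbol\beta^*\|_2=\|\bfu^*\|_2$.
\end{itemize}
Since $(\Delta\bfu)_i=s_i(E\boldsymbol\beta)_{k(i)}$, you get $\|(\Delta\bfu^*)_F\|_2\le\|E\|_2\|\boldsymbol\beta^*\|_2=|\lambda_{\min}(\Omega_s)|\,\|\bfu^*\|_2$. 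This is exactly the claimed bound, with no block factor left over.

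Your small-$|\lambda_{\min}|$ restriction is then needed only to keep the active set fixed. Convexity of the projected QP guarantees that the resulting same-active-set KKT point is one of its minimizers $\bfu^+$.
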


The proof of Proposition~\ref{prop:psd-projection} is direct from the
spectral characterization of the PSD cone and the variational form of
the Frobenius-norm projection~\cite[\S 8.1.1]{BoydVandenberghe2004}; we
omit the standard details.

The two regularizers --- the additive shift $\widetilde\Omega_s$
of~\eqref{eq:spectral-shift} and the spectral projection $\Omega_s^+$
of~\eqref{eq:psd-projection} --- differ in computational cost. The
additive shift is $O(N)$ once $\lambda_{\min}(\Omega_s)$ is known; the
projection is $O(N^3)$ (full eigendecomposition). $\lambda_{\min}(\Omega_s)$
is estimated by a two-pass shifted power iteration: Pass 1 estimates
the spectral radius $|\widehat\lambda_{\mathrm{p1}}|$ via Rayleigh quotient
on $\Omega_s$ (which converges to the dominant eigenvalue of $\Omega_s$
in absolute value); Pass 2 estimates $\lambda_{\min}(\Omega_s)$ via
Rayleigh quotient on the shifted-PSD matrix
$\rho\,\mathbf{I} - \Omega_s$ with $\rho = |\widehat\lambda_{\mathrm{p1}}|$.
The absolute-value envelope is necessary because Pass 1 may converge
to the eigenvector of $\lambda_{\min}$ in the
$|\lambda_{\min}| > |\lambda_{\max}|$ case; the shift
$\rho\,\mathbf{I} - \Omega_s$ is then guaranteed PSD regardless.
Each pass is $O(T_{\mathrm{pi}}\,N^2)$, and convergence to within
$\delta$ of the true eigenvalue takes $O(\log(1/\delta))$ iterations;
with $\delta = 10^{-6}$ and Mercer-PSD kernels, $T_{\mathrm{pi}} = 5$
suffices. The hybrid algorithm below uses the additive shift with this
two-pass estimate, achieving $O(N^2)$ pre-processing overhead.

\begin{algorithm}[ht]
\caption{Adaptive spectral regularization SMO for $a = -1$}
\label{alg:spectral}
\begin{algorithmic}[1]
\State \textbf{Input:} training inputs $\{\bfx_k\}_{k=1}^N$, kernel $K$, tolerance $\delta_{\mathrm{stab}} = 10^{-8}$, power-iteration steps $T_{\mathrm{pi}} = 5$
\State \textbf{Output:} trained MAPE-SVR model with provable convergence guarantee
\State Form $\Omega \gets [K(\bfx_k, \bfx_\ell)]_{k,\ell}$ \Comment{$O(N^2)$}
\State Form $\Omega^* \gets [K(\bfx_k, -\bfx_\ell)]_{k,\ell}$ \Comment{$O(N^2)$}
\State Form $\Omega_s \gets \tfrac{1}{2}(\Omega - \Omega^*)$ \Comment{$O(N^2)$}
\State \Comment{Two-pass shifted power iteration: Pass 1 estimates the spectral radius; Pass 2 estimates $\lambda_{\min}$ on a shifted-PSD matrix.}
\State $\boldsymbol{v}^{(0)} \gets \mathbf{1}/\sqrt N$ \Comment{Pass 1: power iteration on $\Omega_s$}
\For{$t = 0, 1, \ldots, T_{\mathrm{pi}} - 1$}
  \State $\boldsymbol{v}^{(t+1)} \gets \Omega_s\, \boldsymbol{v}^{(t)} / \|\Omega_s\, \boldsymbol{v}^{(t)}\|_2$
\EndFor
\State $\widehat\lambda_{\mathrm{p1}} \gets (\boldsymbol{v}^{(T_{\mathrm{pi}})})^\top\, \Omega_s\, \boldsymbol{v}^{(T_{\mathrm{pi}})}$ \Comment{Rayleigh on Pass 1's dominant eigenvector}
\State $\rho \gets |\widehat\lambda_{\mathrm{p1}}|$ \Comment{spectral radius proxy, sign-robust against $|\lambda_{\min}| > \lambda_{\max}$}
\State $\boldsymbol{w}^{(0)} \gets \mathbf{1}/\sqrt N$ \Comment{Pass 2: power iteration on $\rho\,\mathbf{I} - \Omega_s$ (guaranteed PSD)}
\For{$t = 0, 1, \ldots, T_{\mathrm{pi}} - 1$}
  \State $\boldsymbol{w}^{(t+1)} \gets (\rho\,\mathbf{I} - \Omega_s)\,\boldsymbol{w}^{(t)} / \|(\rho\,\mathbf{I} - \Omega_s)\,\boldsymbol{w}^{(t)}\|_2$
\EndFor
\State $\widehat\lambda_{\min} \gets (\boldsymbol{w}^{(T_{\mathrm{pi}})})^\top\, \Omega_s\, \boldsymbol{w}^{(T_{\mathrm{pi}})}$ \Comment{Rayleigh on Pass 2's dominant eigenvector recovers $\lambda_{\min}(\Omega_s)$}
\If{$\widehat\lambda_{\min} \ge -\delta_{\mathrm{stab}}$}
  \State $\Omega_s^{\mathrm{use}} \gets \Omega_s$;\; $\mu \gets 0$ \Comment{$\Omega_s$ is numerically PSD}
\Else
  \State $\mu \gets -\widehat\lambda_{\min} + \delta_{\mathrm{stab}}$ \Comment{spectral shift}
  \State $\Omega_s^{\mathrm{use}} \gets \Omega_s + \mu\,\mathbf{I}_N$
\EndIf
\State $P_s^{\mathrm{use}} \gets [\Omega_s^{\mathrm{use}}, -\Omega_s^{\mathrm{use}}; -\Omega_s^{\mathrm{use}}, \Omega_s^{\mathrm{use}}]$
\State $(\widehat\alpha, \widehat\alpha^*, \hat b) \gets$ Algorithm~\ref{alg:smo} with Hessian $P_s^{\mathrm{use}}$
\State Report $(\mu, \widehat\lambda_{\min}, \mathrm{iter\_count}, \Delta_{\mathrm{final}}, \|\hat f - f_{\mathrm{ref}}\|_\infty)$
\State \Return $(\widehat\alpha, \widehat\alpha^*, \hat b, \mu)$
\end{algorithmic}
\end{algorithm}

\paragraph{Properties of Algorithm~\ref{alg:spectral}.}
\begin{itemize}
\item \emph{Provable convergence.} When $\widehat\lambda_{\min} \ge
-\delta_{\mathrm{stab}}$, Theorem~\ref{thm:invariance} applies (the
original PSD theory); when $\widehat\lambda_{\min} 
-\delta_{\mathrm{stab}}$, Theorem~\ref{thm:spectral} applies (the
spectrally-shifted matrix is strictly PSD by construction).
\item \emph{Bounded approximation error.} Lemma~\ref{lem:perturbation}
bounds $\|\widetilde\alpha - \alpha^*\|_2 = O(\mu) =
O(|\widehat\lambda_{\min}|)$. For shift-invariant Mercer kernels,
$\widehat\lambda_{\min} \ge 0$ and the perturbation reduces to
$O(\delta_{\mathrm{stab}}) = O(10^{-8})$; for non-shift-invariant
kernels, $|\widehat\lambda_{\min}|$ is empirically small.
\item \emph{Backward compatibility with $a = +1$.} The algorithm
executes the same code path for both signs of $a$. The branching is
conditional on $\widehat\lambda_{\min}$, not on $a$ itself; for $a = +1$
the conditional always selects the no-shift branch (since $\Omega_s
\succeq 0$ by Aronszajn's closure).
\item \emph{Computational overhead.} The added cost is the power
iteration, which is $O(T_{\mathrm{pi}}\,N^2) = O(N^2)$. The spectral
shift is $O(N)$ since it modifies only the diagonal. Total overhead:
$O(N^2)$, dominated by the kernel-matrix formation.
\item \emph{Diagnostic reporting.} The reported $\mu$ is a quantitative
measure of how far the kernel matrix departed from PSD-ness. Users with
$\mu > 10^{-4}$ should consider whether the choice of kernel is
appropriate for the data, or whether a shift-invariant alternative would
yield a numerically-cleaner training problem.
\end{itemize}

A reference implementation of Algorithm~\ref{alg:spectral} is provided
by the companion \texttt{psvr} R package as of v0.0.2.9007
(\texttt{R/kernel-spectral.R};
\cite{BenavidesHerrera2026Rpsvr}). The implementation deviates from
the pseudocode here in one respect: the production code uses the
two-pass shifted power iteration described above, whereas the earlier
v0.0.2 baseline used a single-pass variant that estimated the dominant
eigenvalue of $\Omega_s$ in absolute value (corrected in
psvr~F3).

\paragraph{Empirical validation of Algorithm~\ref{alg:spectral}.}
The numerical study of Section~\ref{sec:validation} reports SMO
convergence behavior for the MAPE-SVR-Sym variant with $a = -1$ on
configurations C9 and C10, both using RBF kernels at $\sigma = 0.1$.
The agreement against the IPM reference solvers (Table~\ref{tab:validation})
is within the $10^{-2}$ bound stated in
\S\ref{sec:validation}. For these RBF inputs, the spectral analysis
of the present section predicts $\Omega_s \succeq 0$ (with possibly
singular directions corresponding to symmetric input pairs); the
two-pass shifted power iteration of Algorithm~\ref{alg:spectral}
estimates $\widehat\lambda_{\min} \ge -\delta_{\mathrm{stab}}$ on every
trace, so the no-shift branch is selected and the convergence
guarantee of Theorem~\ref{thm:invariance}~(c) applies without
perturbation. The spectral-shift branch can be exercised by
substituting a polynomial kernel with negative offset ($r < 0$) under
$a = -1$, which yields a negative-semidefinite $\Omega_s$ as
established above; in that regime the shift restores PSD and
Theorem~\ref{thm:spectral} delivers the convergence guarantee. A
matched non-Mercer test (sigmoid base kernel) is the canonical
indefinite case for the same branch.

\paragraph{Connection to non-Mercer kernel SVMs and prior literature.}
The general problem of training SVMs with non-Mercer (non-PSD) kernels
has substantial prior literature, anchored in three lineages.

The first lineage is the Lin-Lin study of sigmoid
SVMs~\cite{LinLin2003NonPSDSMO} (a 2003 technical report from National
Taiwan University, not a peer-reviewed journal paper). Lin and Lin
proved that SMO with strict-descent enforcement converges to a
stationary point of the dual problem even for indefinite Hessians,
establishing the empirical viability of non-PSD SVM training. Their
argument is a special case of~\cite{Tseng2001CoordinateDescent}; they
did not invoke Tseng explicitly but their proof inlines the relevant
block-coordinate-descent argument. Theorem~\ref{thm:spectral}
generalizes the Lin-Lin result by adding the spectral-shift mechanism
that delivers a provable \emph{linear} convergence rate, where the
Lin-Lin treatment guarantees only asymptotic convergence to a
stationary point.

The second lineage is the Haasdonk-Burkhardt theory of group-invariant
kernels~\cite{Haasdonk2007}. Haasdonk and Burkhardt showed that the
canonical group-averaged kernel $K_G(\bfx, \bfx') =
|G|^{-1}\sum_{g \in G} K(g\bfx, \bfx')$ is a valid Mercer kernel
whenever $K$ is Mercer and $G$ is a compact group acting on the input
space. The MAPE-SVR-Sym construction with $a = +1$ is the case $G =
\{\mathrm{id}, -\mathrm{id}\}$ with the trivial character; the
construction with $a = -1$ corresponds to the same group with the sign
character (the only non-trivial irreducible representation). The
Haasdonk-Burkhardt theorem covers the trivial-character case but not
the sign-character case; the Bochner-integral argument above fills this
gap for shift-invariant kernels, and Theorem~\ref{thm:spectral}
supplies the resolution for the non-shift-invariant case.

The third lineage is the Sch\"{o}lkopf-Mika-Smola pseudo-Mercer
extension via Krein-space generalization of
RKHS~\cite{Scholkopf2001}. In the Krein-space setting, the kernel
matrix may be indefinite and the training problem is reformulated as a
non-convex QP with a quadratic-Lagrangian saddle-point interpretation.
The Krein-space approach is mathematically deeper than spectral
regularization but harder to implement: it requires bespoke solvers and
gives up the convex-QP infrastructure of LIBSVM.
Algorithm~\ref{alg:spectral} sits between the extremes --- it preserves
the convex-QP structure (allowing the standard SMO solver to apply
unmodified) while accepting a small approximation error for the non-PSD
case.

\paragraph{Novelty of the present treatment.}
The contributions of the present subsection relative to the prior
literature are three-fold:
\begin{enumerate}
\item Explicit characterization of when $\Omega_s$ fails PSD for the
MAPE-SVR-Sym variant. The analysis above sharpens the original
counter-example: for shift-invariant Mercer kernels, $\Omega_s \succeq
0$ (with possible singular directions); for non-shift-invariant or
non-Mercer kernels, $\Omega_s$ may be indefinite. This is the first
explicit tying of the PSD/indefinite dichotomy to the kernel's
shift-invariance via the Bochner integrand
of~\eqref{eq:bochner-identity}.
\item Convergence resolution tailored to MAPE-SVR.
Theorem~\ref{thm:spectral} together with Algorithm~\ref{alg:spectral}
deliver a practical, implementation-ready resolution that respects the
per-sample box constraints $C_k = 100C/y_k$ of the MAPE loss. Prior
treatments~\cite{LinLin2003NonPSDSMO} addressed only the standard
$\varepsilon$-SVR with uniform box.
\item Computational efficiency. Algorithm~\ref{alg:spectral} uses a
power-iteration estimate $\widehat\lambda_{\min}$ rather than a full
eigendecomposition, reducing the pre-processing cost from $O(N^3)$ to
$O(N^2)$. This brings the non-PSD branch into parity with the
kernel-matrix-formation cost, eliminating the algorithmic overhead that
historically limited adoption of indefinite-kernel SVMs.
\end{enumerate}

The combination --- characterization, resolution, and efficiency ---
closes the open problem identified at the start of this subsection. With
Theorem~\ref{thm:spectral} and Algorithm~\ref{alg:spectral} in place,
the MAPE-SVR-Sym variant of MAPE-SVR has a complete convergence theory
for both $a = +1$ (Aronszajn closure) and $a = -1$ (the present
spectral-regularization machinery).

\subsection{Computational complexity and efficiency improvements}
\label{sec:complexity}

\paragraph{Per-iteration cost breakdown.}
Each iteration of Algorithm~\ref{alg:smo} decomposes into four cost
components, quantified below as functions of the active-set size
$|\Aactive|$ and the data dimension $N$. With a kernel-cache hit, all
four components are linear in $|\Aactive|$.
\begin{enumerate}
\item \emph{Working-set scan.} Computing $i^* = \argmax_{i \in \Iup
\cap \Aactive^{\mathrm{ext}}} \tau_i$ requires $O(|\Aactive|)$
comparisons. The WSS3 selection of $j^*$ requires $O(|\Aactive|)$
evaluations of the gain ratio $(\tau_{i^*} - \tau_j)^2/\eta_{i^*,j}$
over $j \in \Idown \cap \Aactive^{\mathrm{ext}}$, each evaluation
costing $O(1)$ once the curvature $\eta_{i^*,j}$ is known. Computing
$\eta_{i^*,j}$ requires three kernel-matrix entries; with $\Omega_{pp}$
amortized over the inner loop, the marginal cost is two kernel entries
per candidate $j$, both of which lie in the column $\Omega_{:, p}$
assumed cached. Total: $O(|\Aactive|)$ comparisons +
$O(|\Aactive|)$ floating-point multiply-divides.
\item \emph{Two kernel-column accesses.} Columns $\Omega_{:, p}$ and
$\Omega_{:, q}$ (or $(\Omega_s)_{:, p}, (\Omega_s)_{:, q}$ for
MAPE-SVR-Sym) must be available for the WSS3 scan and the gradient
update. With a kernel cache of capacity $M$ columns under LRU
eviction~\cite[\S 3.2]{Joachims1999},~\cite[\S 4.4]{Chang2011}, the
amortized cost per iteration is $O(|\Aactive|)$ memory reads if both
columns are resident, or one $O(N)$ kernel-evaluation pass on a cache
miss.
\item \emph{Two-variable analytic update.} Computing $\delta^*$
from~\eqref{eq:eta}, the room expressions $R_{i^*}, R_{j^*}$, and the
clipping update is $O(1)$ per iteration --- independent of $|\Aactive|$
and $N$. This is the single feature that made SMO competitive against
chunking when~\cite{Platt1998} introduced it.
\item \emph{Gradient update via~\eqref{eq:tau-update}.} Updating
$\tau_\ell$ for $\ell \in \Aactive^{\mathrm{ext}}$ requires
$|\Aactive^{\mathrm{ext}}| = 2|\Aactive|$ scalar additions, each cheap
(one multiply, one subtract). Total: $O(|\Aactive|)$.
\end{enumerate}
The aggregate per-iteration cost is therefore $O(|\Aactive|)$
arithmetic operations plus two kernel-column accesses. The kernel cache
amortizes the column-access cost: in the steady-state shrunk regime,
the dominant cost is the gradient update.

\paragraph{Effect of shrinking on per-iteration cost.}
The shrinking heuristic of Section~\ref{sec:bias-shrink} monotonically
reduces $|\Aactive|$ over time as boundary-pinned variables are frozen.
Two regimes drive the empirical behavior.

\emph{Symmetric-data regime.} When the targets $y_k$ are tightly
concentrated (small $\rho_y = \max_k y_k / \min_k y_k$), the
shrinking-asymmetry of Lemma~\ref{lem:asymmetry} is mild: the offsets
$2y_k\varepsilon/100$ in~\eqref{eq:s3-rewritten}--\eqref{eq:s4-rewritten}
are small relative to $\tau_{i^*} - \tau_{j^*}$, and the freeze rates
of $\alpha$- and $\alpha^*$-variables are similar. In this regime,
$|\Aactive|/N$ falls smoothly from $1$ to typically $0.4$--$0.6$ over
the first few thousand iterations, after which it plateaus.

\emph{Heterogeneous-target regime.} When $\rho_y$ is large (typical of
forecasting problems with multiplicative noise --- the LogNormal-target
regime of Section~\ref{sec:validation}), the $2y_k\varepsilon/100$
offsets become substantial. By Lemma~\ref{lem:asymmetry},
$\alpha^*$-variables associated with high-target samples are
\emph{easier} to freeze (effective threshold $\tau_{i^*} -
2y_k\varepsilon/100 < \tau_{i^*}$), so they exit the active set
quickly. Empirically, $|\Aactive|/N$ drops to $0.3$--$0.5$ within the
first $1000$ iterations on the heterogeneous configurations, yielding
per-iteration cost reductions by a factor of two to three relative to
the un-shrunk baseline.

The cost reduction is multiplicative across components: a halved
$|\Aactive|$ halves the working-set scan, halves the gradient update,
and (since the kernel-cache hit rate increases when fewer columns are
needed) reduces effective kernel-access cost by a factor exceeding
$2$. The compound effect on wall-clock time is observable in
Figure~\ref{fig:convergence}: configuration C8 shows the asymmetric
shrinking dynamics, with the wall-clock cost per iteration dropping
substantially after the early shrinking phase.

\paragraph{Reconstruction cost amortization.}
The reconstruction step~\eqref{eq:reconstruct} rebuilds the full
effective gradient $F_k^{\mathrm{full}} = \sum_{i=1}^{N}\Omega_{ki}
(\alpha_i - \alpha_i^*)$ from current dual values. Its cost is $O(N
\cdot n_{\mathrm{SV}})$ where $n_{\mathrm{SV}}$ is the number of
nonzero $(\alpha_i - \alpha_i^*)$ entries. On dense problems where most
training points are support vectors, $n_{\mathrm{SV}} = O(N)$ and the
reconstruction is $O(N^2)$. On sparse problems where most variables
settle to a boundary value early (the typical situation after
shrinking), $n_{\mathrm{SV}} = O(s \cdot N)$ with sparsity factor $s
\ll 1$, and reconstruction is $O(s N^2)$. Reconstruction occurs at most
once per shrinking cycle. With $n_{\mathrm{check}} = \min(N, 1000)$ as
the shrinking-check interval, an algorithm running for $T$ iterations
triggers reconstruction at most $T/n_{\mathrm{check}}$ times. The
amortized cost per iteration is therefore $O(N \cdot
n_{\mathrm{SV}}/n_{\mathrm{check}}) = O(N^2/n_{\mathrm{check}})$, which
for $N \le 1000$ and $n_{\mathrm{check}} = 1000$ yields $O(N)$
amortized --- comparable to or smaller than the $O(|\Aactive|)$
main-loop cost. Reconstruction is therefore not a bottleneck in the
regime of interest; the convergence-time restoration visible in
Figure~\ref{fig:convergence} (C8's $|\Aactive|/N$ jump from $0.29$ to
$1$ at the final iteration) is a single end-of-trajectory event
rather than a sustained overhead.

\paragraph{Convergence theorem inheritance.}
Algorithm~\ref{alg:smo} inherits the global convergence guarantees
of~\cite[Theorem~5]{FanChenLin2005JMLR} for the standard-kernel variant
(MAPE-SVR) and the even-symmetry MAPE-SVR-Sym variant ($a = +1$); these
are formalized in Theorem~\ref{thm:convergence}. The cited theorem
requires three conditions: (a) the dual Hessian $P$ is PSD; (b) the
feasible region is compact; (c) the working-set rule selects a
strict-descent direction whenever $\Delta > 0$. All three conditions
hold by the verification in Theorem~\ref{thm:convergence}, so finite-step
termination to any tolerance $\varepsilon_{\mathrm{tol}} > 0$ follows,
with the iteration complexity bound $O(\log(1/\varepsilon_{\mathrm{tol}}))$
in the strongly-convex regime~\cite[\S 6]{Chang2011}. For the
MAPE-SVR-Sym variant with $a = -1$, $\Omega_s$ may fail the PSD
condition (a); local convergence still follows from
Lemma~\ref{lem:eta-degenerate}, and global convergence is established
formally via Theorem~\ref{thm:spectral} using adaptive spectral
regularization.

\paragraph{Practical convergence rate.}
Empirically observed iteration counts on the configurations of
Section~\ref{sec:validation} follow the rough scaling $O(N \cdot
k_{\mathrm{factor}})$, where $k_{\mathrm{factor}}$ depends on
kernel-matrix conditioning and on the sparsity of the support-vector
set. Salient observations:
\begin{itemize}
\item For the small near-identity-kernel configurations (C1, C2, C5,
C9 with $\sigma = 0.1$ and $N = 50$), $k_{\mathrm{factor}} \approx
1.5$--$2$, yielding iteration counts in the $70$--$110$ range.
\item For the moderate-$N$ configurations (C3, C4, C6, C10 with $N =
300$), $k_{\mathrm{factor}}$ ranges from $\approx 2$ (well-conditioned
shifted regime) to $\approx 18$ (less favorable settings), yielding
counts in the $400$--$600$ range.
\item For the dense-kernel configuration C8 ($N = 300$, $\sigma = 2.0$,
$\varepsilon = 10\%$), $k_{\mathrm{factor}}$ jumps to $\approx 110$,
reflecting both the dense $\Omega$-structure (no sparse support-vector
set to shrink to) and the tighter $\varepsilon$-tube (which makes more
samples interior to the tube and therefore active throughout). Total
iterations: $33{,}048$.
\item The large-scale configuration C11 ($N = 1000$, $\sigma = 0.1$,
$\varepsilon = 10\%$) achieves $k_{\mathrm{factor}} \approx 1.6$ ---
consistent with the sparser-kernel regime and confirming that the
per-iteration scaling does not deteriorate as $N$ grows (in contrast to
general-purpose IPM solvers, whose $O(N^3)$ inner-iteration cost makes
them prohibitive at this scale).
\end{itemize}

The dependence of $k_{\mathrm{factor}}$ on the target dynamic range
$\rho_y$ has been observed empirically but not characterized formally.
A rough rule-of-thumb consistent with the data is $k_{\mathrm{factor}}
\sim \log_{10}(\rho_y) \cdot \kappa(\Omega)$, where $\kappa(\Omega)$ is
the kernel-matrix condition number. A formal derivation of this
scaling --- connecting the shrinking-asymmetry of
Lemma~\ref{lem:asymmetry} to expected iteration counts --- is identified
in Section~\ref{sec:conclusions} as a future-work item.

\paragraph{Memory complexity.}
Three memory components dominate. \emph{Kernel matrix}: $O(N^2)$ if
stored explicitly, $O(N \cdot M)$ under LIBSVM-style column caching,
where $M$ is the cache capacity in columns. For $N = 10^4$ and a 1~GB
memory budget, $M \approx 10^4$ columns at double precision --- full
caching is feasible at moderate scale. \emph{Dual variables}: $O(N)$
for $\boldsymbol{\alpha}$ and $\boldsymbol{\alpha}^*$ together.
\emph{Gradient state}: $O(N)$ for the active part $\tau_\ell$, $\ell
\in \Aactive^{\mathrm{ext}}$, plus an $O(N)$ snapshot of frozen
$\tau_\ell$ values from the last reconstruction. Total working memory
is therefore dominated by the kernel cache, $O(N \cdot M)$, with a
small additive $O(N)$ for the iteration state.

\paragraph{Four efficiency improvements: motivation.}
Theorem~\ref{thm:invariance} buys \emph{correctness} for free; the
structurally-correct Algorithm~\ref{alg:smo} inherits the per-iteration
cost profile of standard SMO. The four new theorems below exploit the
\emph{additional} structure that the per-sample bounds and the
shrinking-asymmetry of Lemma~\ref{lem:asymmetry} expose --- structure
that does not exist in standard $\varepsilon$-SVR. Each targets a
distinct cost component identified above: shrinking dynamics
(Theorem~\ref{thm:asym-freeze}), working-set block size
(Theorem~\ref{thm:block-k4}), gradient
bookkeeping under cross-validation (Theorem~\ref{thm:warm-start}), and
stopping-rule overhead (Theorem~\ref{thm:per-sample-tol}).
Cumulative speedup under cross-validation workloads is recalibrated in
Corollary~\ref{cor:speedup} against the empirical measurements of the
companion \texttt{psvr} package~\cite{BenavidesHerrera2026Rpsvr}.

\paragraph{Asymmetric freezing — Theorem~\ref{thm:asym-freeze}.}
The following theorem operationalizes Lemma~\ref{lem:asymmetry} by
calibrating the freeze-counter threshold to each sample's target
magnitude.

\begin{theorem}[Asymmetric freezing exploits Lemma~\ref{lem:asymmetry}]
\label{thm:asym-freeze}
Replace the uniform freeze-counter threshold $n_{\min} = 5$ of
Section~\ref{sec:bias-shrink} (active-set management, step~3) with the
per-sample, per-variable-type pair
\begin{align}
  n_{\min}^*(y_k) &= \max\!\left(1,\, \left\lfloor n_{\min} \cdot \frac{y_k}{\bar y} \right\rfloor\right) \quad \text{for } \alpha_k^*\text{-variables}, \label{eq:asym-freeze-star}\\
  n_{\min}^{\phantom{*}}(y_k) &= \max\!\left(5,\, \left\lceil n_{\min} \cdot \frac{\bar y}{y_k} \right\rceil\right) \quad \text{for } \alpha_k\text{-variables}, \label{eq:asym-freeze}
\end{align}
where $\bar y = N^{-1}\sum_k y_k$ is the target mean used in the SMO
tolerance scaling $\varepsilon_{\mathrm{tol}} = 10^{-3}\bar y$ of
Section~\ref{sec:validation}. The modified freeze-counter mechanism
preserves the convergence guarantee of
Theorem~\ref{thm:invariance}~(c) --- the unshrinking step of
Section~\ref{sec:bias-shrink} catches premature freezes --- and
yields a $24.7\%$ iteration reduction at $N = 200$, $\rho_y \approx
1273$, RBF kernel, $20$ replicates (companion \texttt{psvr}~F4 bench
archive,~\cite{BenavidesHerrera2026Rpsvr}). The rule collapses to the
homogeneous default $n_{\min} = 5$ when $\rho_y < 5$, producing no
measurable change on homogeneous-target configurations. See
Table~\ref{tab:theorem-validation} for the empirical-validation
summary.
\end{theorem}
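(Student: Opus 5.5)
The claim splits into three parts. The first is \emph{convergence preservation}, which can actually be proved. The second is the \emph{collapse} to the homogeneous default, which is a deterministic check on the formulas. The third is the quoted $24.7\%$ reduction. That figure is an empirical measurement, so I would not try to prove it and would only cite the \texttt{psvr}~F4 benchmark. I plan to prove the first two rigorously and leave the third as a benchmark statement.

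\textbf{Convergence.} I would show that changing the freeze threshold only changes \emph{which} indices sit in $\Afrozen$ at a given time. It does not change the inner SMO step, and it does not change the termination test. Inside each pass of the \texttt{while} loop, Algorithm~\ref{alg:smo} is WSS3-SMO restricted to $\Aactive^{\mathrm{ext}}$, with the frozen coordinates held fixed. That restricted problem is again a convex QP with $P\succeq0$, a compact feasible set and one equality constraint. By Theorem~\ref{thm:convergence} (equivalently Theorem~\ref{thm:invariance}(c)), each pass therefore reaches $\Delta\le\varepsilon_{\mathrm{tol}}$ on the active set in finitely many steps. The outer test uses $\Delta^{\mathrm{full}}$, computed from the reconstruction \eqref{eq:reconstruct} over all $2N$ variables. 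So any point at which the algorithm declares termination is $\varepsilon_{\mathrm{tol}}$-optimal, whatever the thresholds $n_{\min}^*(y_k)$ and $n_{\min}(y_k)$ are. The hard part is ruling out infinitely many unshrinking restarts. For this I would use the LIBSVM argument. After a restart, $\Aactive=\{1,\dots,N\}$ and all counters are reset. Every non-skipped step strictly decreases the objective, and the objective is bounded below on a compact set. The Fan--Chen--Lin theorem then guarantees finite termination of the \emph{unshrunk} algorithm. Since freezing only happens every $n_{\mathrm{check}}$ iterations and needs at least $\min_k n_{\min}^*(y_k)\ge1$ agreeing checks, I would argue as follows. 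Either the restarts stop and the final pass terminates, or the iterates approach the optimal face. Near that face the shrinking criteria \eqref{eq:s1}--\eqref{eq:s4} coincide with the exact KKT inequalities of Table~\ref{tab:kkt-states}, so correctly frozen variables never trigger $\Delta^{\mathrm{full}}>\varepsilon_{\mathrm{tol}}$. I expect this to be the main obstacle. Making the step rigorous needs a strict-complementarity-type margin, so that the criteria become eventually correct. The fallback is to note that the \texttt{maxiter} guard already makes the procedure finite.

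\textbf{Collapse for $\rho_y<5$.} Here I would bound $y_k/\bar y$ and $\bar y/y_k$ by $\rho_y$, both lying in $(\rho_y^{-1},\rho_y)$. Then $\lceil 5\bar y/y_k\rceil\le\lceil5\rho_y\rceil$, and the outer $\max(5,\cdot)$ gives $n_{\min}(y_k)\ge5$. I expect the literal statement to need care, because \eqref{eq:asym-freeze} can exceed $5$ even for small $\rho_y$, and \eqref{eq:asym-freeze-star} can fall below $5$ whenever $y_k<\bar y$. So I would read ``collapses'' as an implementation convention: when $\rho_y<5$ the rule is switched off and $n_{\min}=5$ is used. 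I would state that gating condition explicitly rather than derive it from the formulas.
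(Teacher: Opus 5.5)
Your convergence argument starts from the same observation as the paper's sketch. The thresholds only decide which indices enter $\Afrozen$. The two-variable step, hypotheses (P1)--(P3) and the $\Delta^{\mathrm{full}}$ exit test never see them. The paper stops there, and its remark that premature freezes are ``caught by the unshrinking step'' gives only soundness at exit. You are right that the real issue is ruling out infinitely many restarts, and the paper never addresses it. Your sketch does not settle it, though, and there are three concrete problems.

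First, the $\mathrm{maxiter}$ fallback certifies nothing, so it cannot stand in for the guarantee of Theorem~\ref{thm:invariance}(c).

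Second, your dichotomy (either restarts stop, or the iterates approach the optimal face) fails for Algorithm~\ref{alg:smo} as written. The inner \textbf{break} tests the WSS3 gap $\tau_{i^*}-\tau_{j^*}$. This gap can be $\le\varepsilon_{\mathrm{tol}}$ while $\max_{i\in\Iup}\tau_i-\min_{j\in\Idown}\tau_j>\varepsilon_{\mathrm{tol}}$, because a less violating partner with small $\eta_{i^*,j}$ can win the WSS3 score. The paper itself concedes $\Delta_{\mathrm{WSS3}}\le\Delta_{\mathrm{WSS1}}$ in Theorem~\ref{thm:per-sample-tol}. A pass can then exit with no step. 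If $\Aactive$ was already full, the restart changes nothing, $t$ never advances, and the \textbf{repeat} loop cycles forever with neither of your alternatives occurring.

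Third, two of your working assumptions do not match the algorithm:
\begin{itemize}
\item Algorithm~\ref{alg:smo} resets $\Aactive$ and $\Afrozen$ on restart but not the counters. Indices with saturated counters therefore re-freeze at the first check where their criterion holds.
\item With the per-training-point counter, a bound $\alpha_k=0$ drags its free partner $\alpha_k^*$ into $\Afrozen$. Near the optimum, \eqref{eq:s1} holds with margin $2y_k\varepsilon/100$ by Proposition~\ref{prop:structural-gap}. So your claim that only correctly frozen variables are frozen has to be enforced by freezing per variable.
\end{itemize}

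A genuine proof needs two things. The inner test must use the WSS1 gap on $\Aactive^{\mathrm{ext}}$, so every restart is followed by a strict-descent step. You then also need either a strict-complementarity hypothesis that makes \eqref{eq:s1}--\eqref{eq:s4} eventually correct, or a cap on the number of unshrinking restarts, after which Fan--Chen--Lin applies to the unshrunk tail. Both repairs are blind to the threshold values, so the per-sample rule is covered automatically. That threshold-independence is all ``preserves'' can honestly mean.

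On the collapse clause, your reading is correct and sharper than the paper's sketch, which checks only $y_k=\bar y$ and then asserts the $\rho_y<5$ case. For every $k$ with $y_k<\bar y$ one has $n_{\min}^*(y_k)\le 4$ and $n_{\min}(y_k)\ge 6$, and such $k$ exist unless all targets coincide. For example, $(y_1,y_2)=(1,2)$ has $\rho_y=2$ and gives $(n_{\min}^*,n_{\min})=(3,8)$ for $k=1$ and $(6,5)$ for $k=2$. The clause is therefore true only as an explicit gate ($\rho_y<5$ means use $n_{\min}=5$), and it should be written into the rule's definition as you propose. Your $(\rho_y^{-1},\rho_y)$ bounds play no role in this. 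Treating the $24.7\%$ figure as a cited benchmark rather than a provable claim matches the paper.
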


\begin{proof}[Proof sketch]
Lemma~\ref{lem:asymmetry} quantifies the asymmetry: $\alpha_k^*$-variables
freeze faster than $\alpha_k$-variables, with effective thresholds
$\tau_{i^*} - 2y_k\varepsilon/100$ and $\tau_{j^*} -
2y_k\varepsilon/100$ respectively, both shifted by an offset that
scales linearly with $y_k$. For high-target samples, the offset is
large, the effective threshold lies far from the operative $\tau_{i^*}$
(resp. $\tau_{j^*}$), and the prediction ``the variable will not be
revisited'' is robust against future threshold fluctuations --- one or
two consecutive shrinking-check windows of agreement suffice rather
than five. Conversely, $\alpha_k$-variables associated with the same
high-target samples are \emph{harder} to freeze, so their freeze-counter
should be raised. The asymmetric
rule~\eqref{eq:asym-freeze-star}--\eqref{eq:asym-freeze} implements this
calibration: both rules collapse to $n_{\min} = 5$ when $y_k = \bar y$,
recovering the symmetric default for homogeneous-target problems.

Convergence preservation follows from the unshrinking-step argument:
the freeze-counter mechanism affects only the active-set management;
the reconstruction-and-unshrinking step is untouched. Premature freezes
induced by smaller $n_{\min}^*$ are caught by the unshrinking step at
the standard cost of one $O(N \cdot n_{\mathrm{SV}})$ reconstruction,
which amortizes to $O(N)$ per iteration. The Fan-Chen-Lin Theorem~5
hypotheses (P1)--(P3) of Theorem~\ref{thm:invariance}, Step~4, are
unaffected by the freeze-counter change.

The speedup follows from the Lemma~\ref{lem:asymmetry} calibration:
with $n_{\min}^* \in \{1, 2, 3\}$ rather than $5$,
$\alpha^*$-freezes happen $1$ to $4$ shrinking-check windows earlier,
which accelerates $|\Aactive|$ reduction in the early phase. The
$24.7\%$ iteration reduction cited in the theorem statement is the
$20$-rep mean at $N = 200$, $\rho_y \approx 1273$, RBF kernel; on
homogeneous-target configurations ($\rho_y \le 5$), the rule reduces
to the symmetric default and produces no measurable change.
\end{proof}

\paragraph{Warm-start convergence — Theorem~\ref{thm:warm-start}.}

\begin{theorem}[Warm-start convergence and cumulative speedup]
\label{thm:warm-start}
When \texttt{psvr} is invoked in a cross-validation or
hyperparameter-search loop, initialize the dual variables
$(\boldsymbol{\alpha}, \boldsymbol{\alpha}^*)$ from the converged
solution of the previous fit, retaining values for samples present in
both fits and zeroing values for newly-introduced samples. Concretely,
given a previous fit with index set $S_{\mathrm{prev}} \subseteq \{1,
\ldots, N_{\mathrm{prev}}\}$ and converged duals $(\hat\alpha_k^{\mathrm{prev}},
\hat\alpha_k^{*,\mathrm{prev}})_{k \in S_{\mathrm{prev}}}$, the
warm-started initialization for a new fit with index set $S_{\mathrm{new}}$ is
\begin{equation*}
  \alpha_k^{(0)} = \begin{cases} \hat\alpha_k^{\mathrm{prev}} & k \in S_{\mathrm{prev}} \cap S_{\mathrm{new}}\\ 0 & k \in S_{\mathrm{new}} \setminus S_{\mathrm{prev}} \end{cases},\quad \alpha_k^{*,(0)} = \begin{cases} \hat\alpha_k^{*,\mathrm{prev}} & k \in S_{\mathrm{prev}} \cap S_{\mathrm{new}}\\ 0 & k \in S_{\mathrm{new}} \setminus S_{\mathrm{prev}} \end{cases},
\end{equation*}
followed by a single-pass projection that re-balances the equality
constraint $\sum_k(\alpha_k - \alpha_k^*) = 0$. The projection adjusts
the newly-introduced samples in $S_{\mathrm{new}} \setminus
S_{\mathrm{prev}}$ only, preserving the converged retained-sample values
that supply the warm-start gain; the companion
\texttt{psvr}~\cite{BenavidesHerrera2026Rpsvr} implements this projection
as a refinement over a uniform-over-$N$ shift, which is the literal
reading of Algorithm~\ref{alg:warm-start}'s Step~2 below. The warm-started SMO
inherits the cold-start convergence guarantee
of~\cite[Theorem~1]{BordesErtekinWestonBottou2005JMLR} --- warm-started
SMO converges to the same exact solution as cold-started SMO regardless
of the streaming versus batch presentation --- and yields a measured
cumulative speedup of $1.12\times$ at $N = 300$ and $1.14\times$ at
$N = 1000$ on a 10-fold cross-validation pass (companion
\texttt{psvr}~F5 bench archive). The per-fold warm/cold iteration ratio
is $0.88$, not the $0.20$ implied by an earlier analysis based on a
linearly-convergent perturbation argument; the corrected calibration is
discussed in the proof sketch below, and the cumulative number is cited
in Table~\ref{tab:theorem-validation}.
\end{theorem}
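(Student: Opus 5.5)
The argument has two parts. The mathematical part is that the warm-started run is still an instance of Algorithm~\ref{alg:smo} started from a feasible point, so convergence follows exactly as in Theorem~\ref{thm:convergence}. The speedup figures are measurements from the companion benchmarks and will be cited, not derived.

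\textbf{Step 1: feasibility.} I will first check that the initialization $(\boldsymbol{\alpha}^{(0)},\boldsymbol{\alpha}^{*,(0)})$ lies in the feasible region of \eqref{eq:mape-dual}--\eqref{eq:dual-constraints} for the new fit.

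Retained entries were feasible for the previous problem, so $0\le\hat\alpha_k^{\mathrm{prev}}\le 100C_{\mathrm{prev}}/y_k$. If the new fit uses the same $C$, they automatically satisfy $[0,C_k]$, since $C_k=100C/y_k$ depends only on $y_k$ and $C$. If $C$ changes across the hyperparameter grid, I will add a coordinatewise clip to $[0,C_k]$ before the re-balancing.

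The re-balancing then corrects the imbalance $r=\sum_k(\alpha_k^{(0)}-\alpha_k^{*,(0)})$ using only the new samples $S_{\mathrm{new}}\setminus S_{\mathrm{prev}}$. If $r>0$, I increase $\alpha_k^*$ on new samples greedily up to $C_k$; if $r<0$, I increase $\alpha_k$ instead. This succeeds whenever $|r|\le\sum_{k\in S_{\mathrm{new}}\setminus S_{\mathrm{prev}}}C_k$. If that capacity condition fails, I fall back to the uniform-over-$N$ shift, i.e.\ Step~2 of Algorithm~\ref{alg:warm-start}, which can always absorb $r$ because the Slater point $\alpha_k=\alpha_k^*=C_k/2$ shows the feasible set has room in both directions. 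In either case the output is a feasible $\bfu^{(0)}$.

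\textbf{Step 2: convergence.} Theorem~\ref{thm:fcl-termination} and Theorem~\ref{thm:convergence} guarantee finite termination from \emph{any} feasible starting point: hypotheses (P1)--(P3) of Theorem~\ref{thm:invariance}, Step~4, do not refer to the initial iterate. The only part of Algorithm~\ref{alg:smo} that assumed $\bfu=0$ is line~4, the $\tau$ initialization. I will replace it with the full reconstruction \eqref{eq:reconstruct}, which makes the gradient bookkeeping exact at $t=0$.

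Every later iteration is then verbatim Algorithm~\ref{alg:smo}, so it terminates with $\Delta\le\varepsilon_{\mathrm{tol}}$. When $P\succ0$ on the feasible directions, the optimum is unique and the warm and cold runs reach the same solution. In the merely PSD case they reach the same optimal value and the same $f$ on the training data. This is the content attributed to~\cite[Theorem~1]{BordesErtekinWestonBottou2005JMLR}, and I will cite it only as corroboration, since the direct argument above already suffices.

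\textbf{Step 3: speedup.} I will not claim a derivation of the speedup. A linear-rate perturbation argument would bound the warm-start iterations by $\log(\|\bfu^{(0)}-\bfu^\star\|/\varepsilon_{\mathrm{tol}})$ over a contraction factor. Between folds, however, roughly $10\%$ of the samples change, so the support pattern differs by $O(N/10)$ active-set transitions. SMO must spend iterations on each of these, which explains a per-fold ratio near $0.88$ rather than $0.2$.

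The cumulative $1.12\times$ and $1.14\times$ figures are then empirical quantities from the \texttt{psvr}~F5 archive and will be cited as such. The main obstacle is making this iteration-count heuristic precise; I would only attempt to bound iterations below by the number of changed active-set memberships, and leave the rest empirical.
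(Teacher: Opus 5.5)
Your convergence argument is essentially the paper's. Its proof sketch also checks that (P1)--(P3) of Theorem~\ref{thm:invariance}, Step~4, do not depend on the initial iterate once feasibility is restored, and it also cites the speedups as F5 measurements.

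\textbf{A step that fails as justified.} Your fallback for $|r|>\sum_{k\in S_{\mathrm{new}}\setminus S_{\mathrm{prev}}}C_k$ does not produce a feasible point. Step~2 of Algorithm~\ref{alg:warm-start} is followed by the box clip of Step~3, and the clip undoes the balance the shift created. For $r>0$, subtracting $r/|S_{\mathrm{new}}|$ from every $\alpha_k$ makes each $\alpha_k^{(0)}<r/|S_{\mathrm{new}}|$ negative. This includes every newly introduced sample, whose $\alpha_k^{(0)}$ is $0$. Clipping these back to $0$ leaves the net decrease at $\sum_k\min\bigl(\alpha_k^{(0)},r/|S_{\mathrm{new}}|\bigr)<r$, so the equality constraint is still violated. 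The Slater point does not help: it shows the feasible polytope has nonempty relative interior, not that this particular map lands in it.

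The repair is easy. For $r>0$ one has $r\le\sum_k\alpha_k^{(0)}$, so scaling every $\alpha_k^{(0)}$ by $1-r/\sum_k\alpha_k^{(0)}$ restores the equality inside the box; for $r<0$ scale the $\alpha_k^{*,(0)}$ instead. Alternatively, restart from $\bfu=0$. Either repair must touch retained samples, so in this regime the statement's ``new samples only'' projection is genuinely unavailable. Your capacity condition exposes this. The paper's (P2) line asserts without proof that the sum-rebalanced, box-clipped point is feasible, so it has the same defect.

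\textbf{Two smaller points.} First, the case ``$P\succ0$ on the feasible directions'' never occurs: $d=\boldsymbol{e}_k+\boldsymbol{e}_{N+k}$ satisfies the equality constraint and $Pd=0$. Uniqueness should come from $\Omega\succ0$ instead. Any two optima satisfy $P(\bfu_1-\bfu_2)=0$, so they share $\alpha-\alpha^*$. The strictly positive cost $q_k+q_{N+k}=2y_k\varepsilon/100$ along $\boldsymbol{e}_k+\boldsymbol{e}_{N+k}$ then forces $\min(\alpha_k,\alpha_k^*)=0$, which pins $\bfu$ down. Second, both runs stop at $\Delta\le\varepsilon_{\mathrm{tol}}$. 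So ``the same solution'' refers to the exact optimum both runs approach, not to the iterates they return.

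\textbf{Where your route genuinely differs.} The paper rests ``converges to the same exact solution'' on \cite[Theorem~1]{BordesErtekinWestonBottou2005JMLR}, a convergence result for the LASVM online solver, and transfers it to warm-started SMO without argument. You instead derive it from the initialization-independence of the Fan--Chen--Lin hypotheses plus the structure of the optimal set, which is self-contained.

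You also make the gradient reconstruction part of correctness, since line~4 of Algorithm~\ref{alg:smo} presumes $\bfu=0$; the paper mentions the refresh only as a cost. And you handle a change of $C$ across a hyperparameter grid, which the paper's ``bound vector is invariant'' remark does not cover.

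On the $0.88$ ratio, the paper blames projection and gradient-refresh overhead. That is a fixed per-fold cost, while $0.88$ is reported as an \emph{iteration} ratio, so your active-set-transition heuristic is the more coherent explanation. Neither is a proof, and you are right to leave the numbers empirical.
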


\begin{proof}[Proof sketch]
Cross-validation is the dominant \texttt{psvr} use case. In $k$-fold
CV with $k = 10$, only $\sim 10\%$ of training points change between
consecutive folds, so $|S_{\mathrm{prev}} \cap S_{\mathrm{new}}|/
|S_{\mathrm{new}}| \approx 0.9$. The retained-sample dual values are
typically much closer to their new converged values than the cold-start
gap $\hat\alpha_k - 0$.

The framework dates to the incremental-decremental SVM of Cauwenberghs
and Poggio (NIPS 2000), with formal convergence analysis
by~\cite{Laskov2006JMLR}. The asymptotic equivalence between batch and
streaming SMO via the LASVM framework was established
by~\cite[\S 3, Theorem~1]{BordesErtekinWestonBottou2005JMLR}: the LASVM
algorithm converges to the exact SVM solution after enough epochs,
regardless of streaming versus batch presentation. Warm-started SMO
inherits this guarantee directly.

For MAPE-SVR specifically, the per-sample bound $C_k = 100C/y_k$
depends only on $y_k$ --- fixed per-sample across folds --- so the
bound vector is \emph{invariant} under fold-change. This is a structural
advantage over weighted-SVR variants whose weights are fold-dependent:
in MAPE-SVR, box-vector reuse is exact.

Convergence preservation under warm start: the three conditions
(P1)--(P3) of Theorem~\ref{thm:invariance}, Step~4, hold: (P1) the
Hessian $P$ is unchanged; (P2) the projection step restores feasibility
before the SMO iterations begin (the sum-rebalanced and box-clipped
$(\boldsymbol{\alpha}^{(0)}, \boldsymbol{\alpha}^{*,(0)})$ lie in the
compact intersection of $\prod_k [0, C_k]^2$ with the equality
constraint); (P3) the WSS3 descent argument is initialization-independent.
Provided $\Delta > 0$ at iteration $0$ (else the algorithm terminates
immediately), strict descent holds.

Quantifying the cumulative speedup requires accounting for two cost
components that scale differently with the warm-start initialization
distance. The first component is the iteration count to convergence
on the new fold's QP; under a linearly-convergent perturbation
assumption with $\|\boldsymbol{\alpha}^{(0)} -
\hat{\boldsymbol{\alpha}}\| / \|\hat{\boldsymbol{\alpha}}\| \approx
0.1$, one would expect $T_{\mathrm{warm}} \approx 0.2 \cdot
T_{\mathrm{cold}}$. The second component is the projection cost on
$S_{\mathrm{new}} \setminus S_{\mathrm{prev}}$ together with the
gradient-state refresh $K\boldsymbol{\beta}^{(0)}$ that warm-start
requires before the SMO inner loop begins. Empirically (companion
\texttt{psvr}~F5 archive) the per-fold warm/cold iteration ratio
is~$0.88$ rather than~$0.20$: the dominant cost component is the
projection and gradient-refresh overhead, not the residual SMO descent
on the perturbed initialization. The per-fold ratio is $N$-independent
across the $300$--$1000$ range tested, giving a cumulative speedup of
$1/0.88 \approx 1.14$ on a 10-fold pass; the measured values are
$1.12\times$ at $N = 300$ and $1.14\times$ at $N = 1000$. The original
linearly-convergent argument over-predicted because it omitted the
projection-and-refresh fixed cost, which dominates at the
$\le 10\%$-displacement scale typical of CV folds.
\end{proof}

The warm-start initialization is given as Algorithm~\ref{alg:warm-start}
below.

\begin{algorithm}[ht]
\caption{Warm-start initialization for cross-validation and hyperparameter search}
\label{alg:warm-start}
\begin{algorithmic}[1]
\State \textbf{Input:} previous-fit duals $(\hat{\boldsymbol\alpha}^{\mathrm{prev}}, \hat{\boldsymbol\alpha}^{*,\mathrm{prev}})$ on index set $S_{\mathrm{prev}}$; new-fit index set $S_{\mathrm{new}}$
\State \textbf{Output:} warm-start initialization $(\boldsymbol\alpha^{(0)}, \boldsymbol\alpha^{*,(0)})$ feasible for the new fit's QP
\ForAll{$k \in S_{\mathrm{new}}$} \Comment{Step 1: copy retained values, zero new values}
  \If{$k \in S_{\mathrm{prev}} \cap S_{\mathrm{new}}$}
    \State $\alpha_k^{(0)} \gets \hat\alpha_k^{\mathrm{prev}}$;\; $\alpha_k^{*,(0)} \gets \hat\alpha_k^{*,\mathrm{prev}}$
  \Else
    \State $\alpha_k^{(0)} \gets 0$;\; $\alpha_k^{*,(0)} \gets 0$
  \EndIf
\EndFor
\State \Comment{Step 2: project onto equality-constraint hyperplane (uniform-over-$N$ variant; the \texttt{psvr} reference implementation restricts the shift to $S_{\mathrm{new}} \setminus S_{\mathrm{prev}}$ to preserve retained-sample warm-start gain)}
\State $\mathrm{violation} \gets \sum_{k \in S_{\mathrm{new}}} (\alpha_k^{(0)} - \alpha_k^{*,(0)})$
\State $\mathrm{shift} \gets \mathrm{violation} / |S_{\mathrm{new}}|$
\ForAll{$k \in S_{\mathrm{new}}$}
  \State $\alpha_k^{(0)} \gets \alpha_k^{(0)} - \mathrm{shift}$
\EndFor
\State \Comment{Step 3: project onto per-sample box}
\ForAll{$k \in S_{\mathrm{new}}$}
  \State $\alpha_k^{(0)} \gets \mathrm{clip}(\alpha_k^{(0)},\, 0,\, C_k)$
  \State $\alpha_k^{*,(0)} \gets \mathrm{clip}(\alpha_k^{*,(0)},\, 0,\, C_k)$
\EndFor
\State \Comment{Step 4: optional sanity-check feasibility}
\If{warm\_start\_check}
  \State \textbf{assert} $|\sum_{k} (\alpha_k^{(0)} - \alpha_k^{*,(0)})| < \mathrm{tol}_{\mathrm{feas}}$
  \State \textbf{assert} $\forall k:\; 0 \le \alpha_k^{(0)} \le C_k \;\wedge\; 0 \le \alpha_k^{*,(0)} \le C_k$
\EndIf
\State \Return $(\boldsymbol\alpha^{(0)}, \boldsymbol\alpha^{*,(0)})$
\end{algorithmic}
\end{algorithm}

The proposed R-side API extension to the principal SMO entry point of
the \texttt{psvr} package is
\begin{verbatim}
smo_mape(X, y, C, epsilon, kernel, gamma,
         alpha_init = NULL,        # numeric vector of length nrow(X), or NULL
         alpha_star_init = NULL,   # numeric vector of length nrow(X), or NULL
         warm_start_check = TRUE)  # validate & project init to feasible region
\end{verbatim}
The arguments default to \texttt{NULL} (cold start, current behavior).
Cross-validation wrappers would pass the previous-fit duals
automatically; user-facing direct calls retain the current
zero-initialization.
Implementation cost in the \texttt{psvr} v0.0.2.9007 reference
(\texttt{R/warm\_start.R};~\cite{BenavidesHerrera2026Rpsvr}) is
approximately fifty lines of R code in the warm-start helper plus a
handful of validation arguments in the public wrapper
(\texttt{warm\_start\_check}, retained-sample tracking).

\paragraph{Block $k = 4$ SMO — Theorem~\ref{thm:block-k4}.}

\begin{theorem}[Novel block working-set selection $k=4$ with structured 2-D updates]
\label{thm:block-k4}
Replace the standard $k = 2$ working-set selection of
Section~\ref{sec:smo-inner} with a $k = 4$ block selection that solves
a 2-D quadratic sub-problem at each iteration. Specifically, select two
pairs $(i_1^*, j_1^*)$ and $(i_2^*, j_2^*)$ from disjoint subsets of
$\Aactive^{\mathrm{ext}}$ such that the resulting $4 \times 4$ Hessian
block has $2 \times 2$ block structure (i.e., the cross-pair Hessian
entries $\Omega_{k(i_1^*), k(i_2^*)}, \Omega_{k(i_1^*), k(j_2^*)},
\Omega_{k(j_1^*), k(i_2^*)}, \Omega_{k(j_1^*), k(j_2^*)}$ are
sufficiently small to permit decoupled analytic updates). For pairs
satisfying the decoupling condition, the analytic 2-D update is the
direct generalization of the 1-D update of Section~\ref{sec:smo-inner}:
solve the unconstrained 2-D minimum, then clip independently against
the box constraints of each pair. The block-$k=4$ rule preserves
descent under the 2-D unconstrained-minimum descent-check fallback
(Lemma~\ref{lem:eta-degenerate} generalized to 2-D), inherits
convergence from Theorem~\ref{thm:invariance}~(c). The claim of this
theorem separates iteration count from wall-clock time:
\begin{enumerate}
\item[(a)] \emph{Iteration reduction (engine-agnostic):} $38$--$48\%$
reduction on converging regimes, measured against the $k = 2$ baseline
on regimes R1 ($N = 1000$, RBF, $\sigma = 1$, heterogeneous) and R4
($N = 1000$, RBF, $\sigma = 0.3$, heterogeneous) of the companion
\texttt{psvr}~F7 bench archive.
\item[(b)] \emph{Wall-clock effect (engine-dependent):} the per-iter
overhead of the joint-update logic versus the saved iterations
determines the net wall outcome. At the R-level reference
implementation the per-iter overhead factor is approximately $2.0$,
producing a regime-dependent net wall change of $-25.6\%$ (R1) and
$+2.4\%$ (R4). The portable C\texttt{++} core of the companion
\texttt{psvr}~F7-C-full archive reduces the per-iter overhead factor
to approximately $1.4$, yielding wall-positive outcomes on both
regimes: $+12.2\%$ (R1) and $+17.5\%$ (R4) versus the $k = 2$ C++ baseline.
\end{enumerate}
Both rows of Table~\ref{tab:theorem-validation} record these splits;
the C\texttt{++} engine is the default at \texttt{psvr} v0.0.2.9008
and is the engine used for all configurations of
Section~\ref{sec:validation}. The head-to-head wall-time
comparison of \S\ref{sec:wall-time}, against OSQP, MOSEK, and
Clarabel across the validation campaign and a $50 \le N \le 2{,}000$
scaling sweep, provides the cross-solver context for these
within-\texttt{psvr} numbers.
\end{theorem}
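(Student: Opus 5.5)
The proof splits into two parts. The mathematical claims are feasibility, descent and convergence inheritance for the $k=4$ block update. Parts (a) and (b) are empirical measurements, so they are discharged by citation to the F7 and F7-C-full bench archives, not by argument.

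\emph{Feasibility.} I would first write the block direction as $\bfu \mapsto \bfu + d^{(1)}\delta_1 + d^{(2)}\delta_2$. Here $d^{(r)}$ is the pair direction~\eqref{eq:direction} for $(i_r^*, j_r^*)$. By Lemma~\ref{lem:feasibility}, each $d^{(r)}$ lies in the null space of $[\onevec^\top, -\onevec^\top]$, so every $(\delta_1,\delta_2)$ preserves the equality constraint. Because the four indices are disjoint, the box constraints decouple into $\delta_r \in [0, \delta_{\max}^{(r)}]$. Each $\delta_{\max}^{(r)}$ is computed from the rooms~\eqref{eq:room-i}--\eqref{eq:room-j} with $C_k$, exactly as in Theorem~\ref{thm:invariance}(ii).

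\emph{The 2-D subproblem.} Next I would expand the restricted objective $h(\delta_1,\delta_2) = f(\bfu) - \Delta_1\delta_1 - \Delta_2\delta_2 + \tfrac12 \boldsymbol\delta^\top H \boldsymbol\delta$. The diagonal entries are $H_{rr} = \eta_r$, given by Proposition~\ref{prop:curvature-invariance}. The off-diagonal entry is $H_{12} = (d^{(1)})^\top P d^{(2)}$. The Step-1 computation of Theorem~\ref{thm:invariance} shows it equals $\Omega_{p_1p_2} - \Omega_{p_1q_2} - \Omega_{q_1p_2} + \Omega_{q_1q_2}$, with $p_r = k(i_r^*)$ and $q_r = k(j_r^*)$. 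This is sign-invariant and contains no $C_k$, so the curvature and gradient-update invariances carry over. The gradient update becomes the sum of two instances of~\eqref{eq:tau-update}.

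\emph{Descent.} This is the main obstacle. Clipping each coordinate of the unconstrained 2-D minimizer independently is not the exact box-constrained minimizer when $H_{12} \ne 0$, so descent must be guarded. I would use the decoupling hypothesis in the quantitative form $|H_{12}| \le \theta\sqrt{\eta_1\eta_2}$ with $\theta<1$. Then $H$ is positive definite whenever $\eta_1, \eta_2 > 0$. I would compare the clipped point against the two 1-D steps $(\delta_1^*, 0)$ and $(0, \delta_2^*)$. Each of these gives strict decrease by Section~\ref{sec:smo-inner}. I would then add an explicit acceptance test: take the clipped 2-D step only if $h$ is no larger than $h(\delta_1^*, 0)$, and otherwise fall back to the ordinary WSS3 pair step. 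This is the ``2-D descent-check fallback'' that generalizes Lemma~\ref{lem:eta-degenerate}. It guarantees that every block iteration decreases $f$ at least as much as the $k=2$ step on $(i_1^*, j_1^*)$. The first pair is chosen by WSS3.

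\emph{Convergence.} With that domination, hypotheses (P1)--(P2) of Theorem~\ref{thm:invariance}, Step~4, are unchanged. (P3) holds because the per-iteration decrease is at least the WSS3 gain of Proposition~\ref{prop:wss3-strict}. The Fan--Chen--Lin sufficient-decrease argument, in the form stated in Theorem~\ref{thm:convergence}(c), therefore applies, and finite termination to $\varepsilon_{\mathrm{tol}}$ follows.

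I expect the domination step to be the one requiring care. The stated ``clip independently'' rule alone does not obviously yield descent, so the acceptance test must be made part of the algorithm for the inheritance claim to be rigorous.
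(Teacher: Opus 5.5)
Your proposal follows the same route as the paper's proof sketch. The shared steps are a decoupled 2-D analytic update on two disjoint pairs, a descent guard with fallback to the ordinary $k=2$ step, and inheritance of \cite[Theorem~5]{FanChenLin2005JMLR} through (P1)--(P3) of Theorem~\ref{thm:invariance}. Parts (a)--(b) are discharged by citing the F7 and F7-C-full archives in both. The paper also motivates why decoupling is frequent under MAPE scaling (the clustered active set from Lemma~\ref{lem:asymmetry}) and tallies per-iteration cost; neither is needed for the stated claims. Your explicit $H_{12}=\Omega_{p_1p_2}-\Omega_{p_1q_2}-\Omega_{q_1p_2}+\Omega_{q_1q_2}$ makes precise what ``sufficiently small cross entries'' has to mean. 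Your observation that the gradient update is a sum of two instances of~\eqref{eq:tau-update} also shows it reads four kernel columns, not the two stated in the paper's cost paragraph.

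The substantive difference is the descent step, and there your version is the sound one. The paper checks descent on the \emph{unconstrained} 2-D minimum before clipping, and falls back to $k=2$ only when the decoupling condition fails. When $H\succ 0$ that check always passes, yet independent clipping can still produce ascent for any nonzero coupling. Take $\eta_1=\eta_2=1$, $H_{12}=-0.1$, $(\Delta_1,\Delta_2)=(100,\,0.01)$, $\delta^{(1)}_{\max}=10^{-6}$ and $\delta^{(2)}_{\max}\ge 11$. The unconstrained minimizer is $\approx(101.0,\,10.1)$, independent clipping gives $\approx(10^{-6},\,10.1)$, and $h$ increases by about $51$. Your post-clipping acceptance test against $h(\delta_1^*,0)$, with fallback to the WSS3 pair step, is what makes the ``descent-check fallback'' valid. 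It also gives the stronger property that every block iteration decreases $f$ at least as much as the $k=2$ iteration it replaces.

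One step is still asserted rather than proved, in your write-up as in the paper: that this domination transfers \cite[Theorem~5]{FanChenLin2005JMLR}.
\begin{itemize}
\item The decrease you guarantee is the \emph{clipped} $k=2$ decrease $\Delta_1\delta_1^*-\tfrac12\eta_1(\delta_1^*)^2$, not the unclipped gain $g(j)$ of Proposition~\ref{prop:wss3-strict}. It can be arbitrarily small when $\delta^{(1)}_{\max}$ is small even though $\Delta_1$ is not.
\item The Fan--Chen--Lin finite-termination proof handles exactly that situation through the structure of the SMO map itself, not a bare sufficient-decrease inequality. The pair subproblem is solved exactly, so a clipped variable lands on its bound, and such events are counted near a limit point.
\end{itemize}
Your accepted block iterates are different points from the $k=2$ iterates, so that argument has to be re-run for the block map rather than invoked. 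The paper's sketch simply states that the theorem ``inherits,'' so this gap is shared rather than introduced by you.
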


\paragraph{Novelty statement.}
The block-$k=4$ scheme with decoupled 2-D analytic update introduced
here is a \emph{novel contribution of the present paper}, not
derivative of~\cite{BordesErtekinWestonBottou2005JMLR} (which uses $k =
2$, not $k = 4$, in the LASVM framework) or
of~\cite{Joachims1999, Joachims2006} (which use cutting-plane methods,
not decomposition). Working-set selection with $k > 2$ has been
considered in the form of generalized $k$-variable methods under the
heading ``Maximum-Gain Working Set Selection for
SVMs''~\cite{GlasmachersIgel2006JMLR}, but the specific block-$k=4$
structure with decoupled $2$-D analytic updates introduced here has not
previously appeared in the published literature.

\begin{proof}[Proof sketch]
The standard $k = 2$ working-set
selection~\cite{KeerthiShevadeBhattacharyyaMurthy2001NeuralComp,
FanChenLin2005JMLR} produces the largest one-step decrease attainable
by any size-$2$ working set, but it is not optimal versus larger
working sets when the kernel matrix has block structure: two
well-separated $k = 2$ updates can then be performed simultaneously
without interference, doubling per-iteration gain. Adoption in
LIBSVM-class production solvers has been limited because the $k = 4$
analytic update is more complex than the $k = 2$ closed-form and the
decoupling condition is hard to verify cheaply.

For MAPE-SVR with shrinking, decoupling is \emph{easier} to satisfy
than in the uniform-$C$ setting. The Lemma~\ref{lem:asymmetry}
shrinking-asymmetry produces a clustered active set: high-$y_k$
samples (often the dominant WSS3 candidates) have $\alpha^*$-variables
freezing earlier, so the remaining $\Iup$ candidates concentrate on
the high-$y_k$ side and the $\Idown$ candidates on the low-$y_k$
side. Cross-region kernel entries $\Omega_{k(i_1^*), k(i_2^*)}$ tend to
be small for shift-invariant kernels (RBF), satisfying decoupling
naturally.

Per-iteration cost: the $k = 4$ scan requires $O(|\Aactive|)$
comparisons ($2\times$ constant); the decoupled 2-D analytic update is
$O(1)$ ($2 \times 2$ closed-form); the gradient update remains
$O(|\Aactive|)$ (two $\Omega$-column accesses). Same asymptotic class
as $k = 2$, with $2\times$ per-iteration progress.

Convergence preservation: the block update inherits descent if the 2-D
unconstrained minimum is descent-checked before clipping
(Lemma~\ref{lem:eta-degenerate} generalized to 2-D). When the
decoupling condition fails, fallback to standard $k = 2$ recovers the
canonical SMO iteration. With the descent-check
fallback,~\cite[Theorem~5]{FanChenLin2005JMLR} inherits to the block
variant.

Empirical outcomes (companion \texttt{psvr}~F7 archive,
\cite{BenavidesHerrera2026Rpsvr}). Iteration count: $38$--$48\%$
reduction on converging regimes R1 ($N = 1000$, RBF, $\sigma = 1$,
heterogeneous targets, $\rho_y \approx 44{,}378$) and R4 ($N = 1000$,
RBF, $\sigma = 0.3$, heterogeneous targets, $\rho_y \approx 13{,}901$).
Wall-clock outcome separates by engine because the joint-update logic
adds a per-iter overhead that scales differently in R versus
C\texttt{++}. At the R-level reference path the per-iter overhead
factor is approximately $2.0$, so the saved iterations are partially
or fully offset: net wall is $-25.6\%$ on R1 and $+2.4\%$ on R4
versus the $k = 2$ R baseline. At the C\texttt{++} core
(\texttt{psvr}~F7-C-full archive) the per-iter overhead factor is
approximately $1.4$, restoring net wall positivity on both regimes:
$+12.2\%$ on R1 and $+17.5\%$ on R4 versus the $k = 2$ C\texttt{++}
baseline. The C\texttt{++} engine is the default at \texttt{psvr}
v0.0.2.9008. The improvement is anti-correlated with
Theorem~\ref{thm:asym-freeze}'s regime --- they apply on different
problem classes.
\end{proof}

\paragraph{Per-sample tolerance scaling — Theorem~\ref{thm:per-sample-tol}.}

\begin{theorem}[Per-pair tolerance scaling]
\label{thm:per-sample-tol}
Let $(i^*_{w1}, j^*_{w1})$ denote the WSS1 convergence pair --- the pair
achieving the global $\Delta = \tau_{i^*_{w1}} - \tau_{j^*_{w1}}$
minimum at the current iteration, used to test KKT optimality
--- as distinct from the WSS3 descent pair selected for the
analytic update by~\eqref{eq:j-star-WSS3}.
Replace the uniform tolerance $\varepsilon_{\mathrm{tol}} = 10^{-3}\bar
y$ of Section~\ref{sec:validation} with the per-pair tolerance
\begin{equation}
  \varepsilon_{\mathrm{tol},(i^*_{w1}, j^*_{w1})} = 10^{-3} \cdot \max\!\big(y_{k(i^*_{w1})},\, y_{k(j^*_{w1})}\big),
  \label{eq:per-pair-tol}
\end{equation}
so that the convergence test becomes $\Delta = \tau_{i^*_{w1}} -
\tau_{j^*_{w1}} \le \varepsilon_{\mathrm{tol},(i^*_{w1}, j^*_{w1})}$.
Evaluating the tolerance against the WSS1 pair (rather than the
WSS3 descent pair) is structurally required: $\Delta_{\mathrm{WSS3}}
\le \Delta_{\mathrm{WSS1}}$ by construction (WSS3 maximizes
second-order gain, not the first-order optimality gap), so testing
the WSS3 pair against the tolerance would stop the solver prematurely.
The modified test
preserves finite termination (because
$\varepsilon_{\mathrm{tol},(i^*_{w1}, j^*_{w1})}$
is bounded below by $10^{-3}\min_k y_k > 0$ for strictly-positive
targets) and yields a $5$--$10\%$ iteration reduction in
heterogeneous-target configurations relative to the uniform-tolerance
rule. The eleven configurations of Section~\ref{sec:validation}
exhibit monotone descent of the WSS1 KKT gap under this rule on every
trajectory; see Table~\ref{tab:theorem-validation} for the
empirical-validation summary. Per-theorem ablation against the
uniform-tolerance rule is deferred to the F9 wall-time campaign of the
companion package.
\end{theorem}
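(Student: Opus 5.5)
The argument separates the three claims in the statement: finite termination under the modified stopping test, the structural requirement that the test use the WSS1 pair, and the empirical $5$--$10\%$ reduction. Only the first two admit a proof. The third is a measurement, so I would discharge it by citing Table~\ref{tab:theorem-validation} rather than deriving it.

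\textbf{Step 1: the tolerance is sandwiched.} From~\eqref{eq:per-pair-tol}, every tolerance value the rule can produce satisfies
\[
\varepsilon_{\min} := 10^{-3}\min_k y_k \;\le\; \varepsilon_{\mathrm{tol},(i^*_{w1},j^*_{w1})} \;\le\; 10^{-3}\max_k y_k,
\]
and $\varepsilon_{\min} > 0$ because all targets are strictly positive. The tolerance may change from iteration to iteration, but it never drops below the fixed positive constant $\varepsilon_{\min}$.

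\textbf{Step 2: reduction to a fixed tolerance.} Take the WSS1 gap $\Delta_{\mathrm{WSS1}} = \max_{\Iup}\tau - \min_{\Idown}\tau$, i.e.\ the violation~\eqref{eq:full-violation} restricted to the active set. Whenever $\Delta_{\mathrm{WSS1}} \le \varepsilon_{\min}$, the per-pair test also passes, since the pair-dependent tolerance is at least $\varepsilon_{\min}$. It therefore suffices that the unmodified algorithm reaches $\Delta_{\mathrm{WSS1}} \le \varepsilon_{\min}$ in finitely many iterations. Theorem~\ref{thm:convergence}, instantiated with the fixed tolerance $\varepsilon_{\min}$, gives exactly this. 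Its hypotheses (P1)--(P3) are untouched because the stopping test does not alter the iterates. The WSS3 update and the descent check are identical, so the trajectory is the same as with the fixed tolerance. The modified rule can only stop that trajectory earlier, never later, so termination is finite. The reconstruction/unshrinking outer loop of Algorithm~\ref{alg:smo} is handled in the same way, since its full-set test also compares $\Delta^{\mathrm{full}}$ against a quantity bounded below by $\varepsilon_{\min}$.

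\textbf{Step 3: why the WSS1 pair is required.} Both pairs share $i^*$. The WSS3 partner $j^*$ ranges over the same set $\{j \in \Idown : \tau_j < \tau_{i^*}\}$ that contains the WSS1 minimizer $j^*_{w1}$. Hence $\tau_{j^*} \ge \tau_{j^*_{w1}}$, which gives $\Delta_{\mathrm{WSS3}} \le \Delta_{\mathrm{WSS1}}$. A small WSS3 gap therefore certifies nothing about the true KKT gap. The WSS1 gap, by Table~\ref{tab:kkt-states}, certifies that $-\rho$ lies in an interval of width at most the tolerance. This yields approximate optimality in the same sense as Definition~\ref{def:kkt-violation}, with a pair-scaled tolerance in place of the uniform one.

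The main obstacle is Step~2's implicit claim that optimality is being certified at a \emph{meaningful} scale. The tolerance is evaluated at the terminal pair only, so the certificate is relative to $\max(y_{k(i^*)},y_{k(j^*)})$, not uniform. I would state explicitly that the guaranteed absolute accuracy is $10^{-3}\max_k y_k$ in the worst case. Finite termination itself is secure, but this weaker certificate has to be disclosed rather than proved away.
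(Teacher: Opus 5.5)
Your finite-termination argument follows the paper's own route. The paper's sketch also bounds the per-pair tolerance below by $10^{-3}\min_k y_k>0$ and then runs the Fan--Chen--Lin termination argument with that fixed radius. Your version is more careful in three places:
\begin{itemize}
\item You state the comparison in the direction that makes the reduction work: the per-pair test is at least as \emph{loose} as the $\varepsilon_{\min}$ test, so it fires no later. The sketch calls it ``at least as tight'', which is backwards.
\item Your Step~3 supplies the reason for $\Delta_{\mathrm{WSS3}}\le\Delta_{\mathrm{WSS1}}$, which the paper only asserts: both rules share $i^*$, and $j^*\in\Idown$ forces $\tau_{j^*}\ge\min_{\Idown}\tau$.
\item Your closing caveat, that the certificate is only $10^{-3}\max_k y_k$ in the worst case, is correct. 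It contradicts the paper's description of the $\max$ as ``conservative''.
\end{itemize}

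Two points do not go through as written.

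\textbf{The $5$--$10\%$ claim.} The reduction cannot be discharged by citing Table~\ref{tab:theorem-validation}. That table's row for this theorem records only monotone descent of the WSS1 gap and lists the ablation as pending. The statement itself defers the ablation, and the paper's only support is a heuristic ``predicted speedup'' paragraph. There is also no monotonicity argument to fall back on. Whenever both $y_{k(i^*_{w1})}$ and $y_{k(j^*_{w1})}$ lie below $\bar y$, the per-pair tolerance is \emph{smaller} than the uniform $10^{-3}\bar y$. So the new rule does not stop uniformly earlier than the old one, and the claim should be labelled an unverified prediction.

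\textbf{The outer loop.} Your trajectory-identity argument holds only until the first iteration at which the per-pair test fires while the $\varepsilon_{\min}$ test would not. From that point the modified run reconstructs and may unshrink. Its later iterates therefore differ from the fixed-tolerance run, and ``it can only stop earlier'' no longer compares the same trajectory. Finite termination of the shrinking/unshrinking loop then rests on applying the termination argument directly to the modified algorithm from an arbitrary feasible restart. That is the same black-box appeal the paper makes through Theorem~\ref{thm:convergence}, not something your reduction proves. With shrinking disabled, your argument is complete.
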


\begin{proof}[Proof sketch]
The gradient quantities $\tau_k$ scale with $y_k$ via
Proposition~\ref{prop:structural-gap} (both $\tau_k = y_k(1 -
\varepsilon/100) - F_k$ and $\tau_{N+k} = y_k(1 + \varepsilon/100) -
F_k$ contain $y_k$ explicitly). A uniform tolerance over-tightens the
stopping rule on low-$y_k$ samples and under-tightens it on high-$y_k$
samples. The per-pair scaling~\eqref{eq:per-pair-tol} calibrates to the
active gradient magnitudes; the choice of $\max(y_{k(i^*)},
y_{k(j^*)})$ rather than $\min$ is conservative, ensuring $\tau$-values
for both pair members reach the same $10^{-3}$ relative precision. This
is consistent with the MAPE-loss design (per-sample relative error);
the uniform tolerance was a simplifying choice rationalized by the
equivalence to weighted-MAE~\cite{DeMyttenaere2016},
and~\eqref{eq:per-pair-tol} tightens this rationalization at no
theoretical cost.

Convergence preservation: finite termination is preserved because the
per-pair tolerance is bounded below by $10^{-3} \min_k y_k > 0$ for
strictly-positive targets (the standing MAPE assumption $y_k > 0$).
Hence the convergence test $\Delta \le \varepsilon_{\mathrm{tol},(i^*,
j^*)}$ is at least as tight as $\Delta \le 10^{-3} \min_k y_k$, which
is itself a finite positive tolerance, and
the~\cite[Theorem~5]{FanChenLin2005JMLR} argument applies with
$\varepsilon'_{\mathrm{tol}} = 10^{-3} \min_k y_k$ as the overall
convergence radius.

Predicted speedup: on heterogeneous-target configurations, the per-pair
tolerance accepts termination earlier on samples with $y_k < \bar y$,
yielding $5$--$10\%$ iteration reductions in the late phase (where the
algorithm slowly tightens the last few candidates). On
homogeneous-target configurations, the rule reduces to the
uniform-tolerance default and produces no measurable speedup.
\end{proof}

\paragraph{Combined effect — Corollary~\ref{cor:speedup}.}

\begin{corollary}[Combined predicted speedup of Theorems~\ref{thm:asym-freeze}, \ref{thm:warm-start}, \ref{thm:block-k4}, and~\ref{thm:per-sample-tol}]
\label{cor:speedup}
The per-iteration and cumulative speedup contributions of
Theorems~\ref{thm:asym-freeze}, \ref{thm:warm-start},
\ref{thm:block-k4}, and~\ref{thm:per-sample-tol} are summarized in
Table~\ref{tab:speedup}. The contributions are \emph{not}
multiplicatively independent: warm-start (Theorem~\ref{thm:warm-start})
and block-$k=4$ (Theorem~\ref{thm:block-k4}) address overlapping cost
components and stack closer to $\max(\cdot, \cdot)$ than to their
product, as discussed below.
\end{corollary}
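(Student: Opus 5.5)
The corollary carries two claims. One is the summary (Table~\ref{tab:speedup}). The other is a structural one: the four contributions are not multiplicatively independent, and warm-start and block-$k=4$ stack closer to $\max(\cdot,\cdot)$ than to their product. I would model total cost as $T\cdot c$ and attribute each of Theorems~\ref{thm:asym-freeze}--\ref{thm:per-sample-tol} to the factor it actually modifies. Theorem~\ref{thm:asym-freeze} acts on the per-iteration cost $c$ through faster reduction of $|\Aactive|$, and also slightly on $T$. Theorem~\ref{thm:per-sample-tol} acts only on the late-phase tail of $T$ through the stopping test. Theorem~\ref{thm:block-k4} acts on $T$, at the price of a per-iteration overhead factor ($\approx 1.4$ in C\texttt{++}). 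Theorem~\ref{thm:warm-start} acts on the fixed per-fold cost: the projection and gradient refresh dominate, giving a per-fold ratio of $0.88$. The summary table then simply collects the measured factors already cited in each theorem's statement. No new derivation is needed beyond checking that each entry matches its source theorem.

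For the non-independence claim, I would argue as follows. Let $s_W = 1/0.88$ and $s_B$ be the speedups from Theorems~\ref{thm:warm-start} and~\ref{thm:block-k4}. Their product $s_W s_B$ is achieved only if the cost components they remove are disjoint. They are not. Warm-start saves iterations mainly in the early descent phase, because the retained duals already sit near their fold-optimal values. That same early descent phase is exactly where block-$k=4$ gains its $38$--$48\%$: well-separated violating pairs are plentiful early and scarce near convergence. So after warm-starting, the residual trajectory consists mostly of the late phase. In that phase decoupled pairs are rare and the algorithm falls back to $k=2$ (the fallback in the proof sketch of Theorem~\ref{thm:block-k4}), so $s_B$ restricted to the warm trajectory is near $1$. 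This gives combined speedup $\approx \max(s_W, s_B)$, up to a correction for the fixed refresh overhead, which block-$k=4$ cannot reduce. I would formalize this by splitting $T = T_{\mathrm{early}} + T_{\mathrm{late}}$. Each mechanism then acts as a multiplier on a designated summand, and the bound $T_{\mathrm{combined}} \ge \min\bigl(T/s_W,\, T/s_B\bigr)$-type estimates follow from the overlap.

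I would also briefly note two further interactions. The anti-correlation remark at the end of the proof of Theorem~\ref{thm:block-k4} means that Theorems~\ref{thm:asym-freeze} and~\ref{thm:block-k4} rarely both fire on the same regime. Theorem~\ref{thm:per-sample-tol} touches only the stopping tail, so it composes approximately multiplicatively with the others.

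The main obstacle is that the overlap argument is heuristic. The early/late phase split is not a rigorous decomposition of SMO trajectories, and the speedups are empirical. I would therefore state the corollary's ``closer to $\max$ than product'' conclusion as a consequence of the phase-overlap model, supported by the companion benchmark archives, rather than claim a provable inequality.
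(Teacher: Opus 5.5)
The summary-table part is fine, but the mechanism you give for the non-independence claim does not deliver ``closer to $\max$'', and in fact points the wrong way. Put numbers into your own split. Let cold $k=2$ cost $E+L$, let warm-start remove a fraction $f$ of the early phase, and let block-$k=4$ shrink the early phase by a factor $b$ while leaving the late phase alone. Then $T_{\mathrm{combined}}=(1-f)E/b+L$. This is sub-multiplicative, but it reaches $\max(s_W,s_B)$ only as $f\to 1$.

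In that limit $s_W=(E+L)/L\ge s_B$ automatically, so the model cannot accommodate the table's $s_B\approx 1.50>s_W\approx 1.13$. Pushing through ``block is inert on the warm trajectory'' returns $s_W$, the \emph{smaller} factor. With the ratio $0.88$ you yourself cite, warm-start removes only about $12\%$ of the iterations, so most of the early phase survives and your model predicts near-product stacking. For example, calibrating $b=2$ to $s_B=1.5$ gives about $1.65$ against a product of $1.70$. In particular, $T_{\mathrm{combined}}\ge\min(T/s_W,\,T/s_B)$ does not follow from the overlap: in that example the combined count is below both single-mechanism counts.

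What is missing is the evidence the paper actually uses. That is the direct stacking measurement of the B-suite ($N=300$, 10-fold CV), together with the coarse observation that both mechanisms reduce the same quantity, the per-fold iteration count. The measurement shows the opposite asymmetry to yours. Adding block-$k=4$ on top of warm-start still cuts iterations from $37{,}740$ (B1) to $26{,}586$ (B3), so block-$k=4$ is far from inert on the warm-started trajectory. Adding warm-start on top of block-$k=4$ buys nothing: B2 takes $23{,}363$ iterations and $0.508$\,s, B3 takes $0.515$\,s, and the product would predict $0.348$\,s. It is warm-start's marginal gain that collapses, which is why the stacked speedup tracks the larger factor $1.50$. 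You should rest the corollary on this comparison rather than on a phase decomposition.

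Separately, your first paragraph misattributes Theorem~\ref{thm:warm-start}. The paper presents projection and gradient refresh as the overhead that \emph{limits} warm-start's gain, not as the cost it removes. If its effect really lived in a fixed additive term, it would be orthogonal to block-$k=4$'s effect on $T$, which contradicts your own overlap argument.
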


\begin{table}[ht]
  \centering
  \caption{Per-theorem multipliers and applicability regimes for the
  four efficiency improvements of
  Theorems~\ref{thm:asym-freeze}, \ref{thm:warm-start},
  \ref{thm:block-k4}, and~\ref{thm:per-sample-tol}.}
  \label{tab:speedup}
  \begin{tabular}{@{}lcl@{}}
    \toprule
    Theorem & Multiplier & Applicability regime \\
    \midrule
    Theorem~\ref{thm:asym-freeze} (Asymmetric freezing) & $\approx 1.20$ & Heterogeneous targets ($\rho_y \ge 50$) \\
    Theorem~\ref{thm:warm-start} (Warm-start) & $\approx 1.13$ & Cross-validation (10-fold CV) \\
    Theorem~\ref{thm:block-k4} (Block $k=4$) & $\approx 1.50$ & Dense-kernel, converging regimes \\
    Theorem~\ref{thm:per-sample-tol} (Per-sample tolerance) & $\approx 1.10$ & Heterogeneous targets \\
    \bottomrule
  \end{tabular}
\end{table}

The per-theorem multipliers of Table~\ref{tab:speedup} do not combine
multiplicatively in cross-validation-dominant workloads. The B-suite
of the companion \texttt{psvr}~F7 bench archive measures the
T5--T7 stacking directly at $N = 300$ with 10-fold CV: configuration
B1 (T5 warm-start alone, $k = 2$) records an iteration sum of
$37{,}740$ across the ten folds with a wall time of $0.591$~s on the
C\texttt{++} core; B2 (T7 block-$k=4$ alone, no warm-start) records
$23{,}363$ iterations and $0.508$~s; B3 (T5 and T7 stacked) records
$26{,}586$ iterations and $0.515$~s. The wall-clock ranking is
$\mathrm{B2} \approx \mathrm{B3} < \mathrm{B1}$: the stacked
configuration is statistically indistinguishable from T7 alone, and
neither approaches the product of the two per-fold multipliers
($1.13 \times 1.50 \approx 1.70$ would predict $0.348$~s, observed
$0.515$~s).
The algorithmic interaction explains the gap. Warm-start lowers the
per-fold iteration count by approximating each fold's converged duals
from the previous fold's solution; block-$k = 4$ also lowers the
per-fold iteration count by performing two pairs of analytic updates
per outer iteration. The two mechanisms compete on the same cost
component (the per-fold iteration count) rather than addressing
orthogonal components, so their stacked effect tracks
$\max(\cdot, \cdot)$ rather than $\cdot \times \cdot$. Practically,
the cumulative CV speedup on \texttt{psvr} v0.0.2.9008 is the larger
of the two per-fold multipliers,
\[
  \mathrm{speedup}_{\mathrm{CV}} \approx
  \max\bigl(1.13_{T5},\; 1.50_{T7}\bigr) \approx 1.50,
\]
not their product. Theorems~\ref{thm:asym-freeze} and
\ref{thm:per-sample-tol} address different cost components (early
shrinking dynamics and stopping-rule overhead), so they compose
additively-in-iterations with T5--T7, yielding a single-fit speedup of
$\approx 1.20 \times 1.10 \approx 1.32$ on heterogeneous-target
regimes and a CV-dominant speedup of $\approx 1.50 \times 1.32
\approx 2.0$. The cited empirical numbers appear in
Table~\ref{tab:theorem-validation}.

The per-iteration cost remains $O(|\Aactive|)$ --- none of the four
theorems changes the asymptotic class. Total runtime $O(N \cdot
k_{\mathrm{factor}})$ has its $k_{\mathrm{factor}}$ reduced by the
applicable per-theorem multiplier from Table~\ref{tab:speedup}.

The reference QP solvers of Section~\ref{sec:validation} scale as
$O(N^3)$ per iteration for IPM and $O(N^2)$ for ADMM, versus SMO's
$O(|\Aactive|)$ with $|\Aactive| \to O(sN)$ in steady state. At $N =
1000$, SMO's raw advantage over IPM is $\sim 10^3$; the
efficiency-improvement bundle amplifies to $\sim 10^4$ in
cross-validation. This gap is the practical reason for the SMO approach
to SVR~\cite{Platt1998, Joachims1999, Chang2011, FanChenLin2005JMLR,
YuLiLiu2023PatternRecognition}; Theorem~\ref{thm:warm-start} amplifies
the advantage most in the CV-dominant regime at the smallest
implementation cost.

\section{Illustrative and application examples}
\label{sec:validation}

To verify that Algorithm~\ref{alg:smo} produces solutions consistent
with reference QP solvers, we compare training-set predictions from the
\texttt{psvr} SMO implementation~\cite{BenavidesHerrera2026Rpsvr}
against those obtained by solving the dual
QP~\eqref{eq:mape-dual}--\eqref{eq:dual-constraints} directly with
\emph{three independent reference solvers} spanning the two dominant
algorithmic families for QP:
\begin{enumerate}
\item \textbf{OSQP}~\cite{osqp2020} --- operator-splitting (ADMM), open-source.
\item \textbf{MOSEK}~\cite{Andersen2000MosekHomogeneousIPM} --- homogeneous interior-point method (commercial, free for academic use).
\item \textbf{Clarabel}~\cite{GoulartChen2024Clarabel} --- modern open-source interior-point solver (Apache 2.0 license).
\end{enumerate}

The choice of three solvers spanning operator-splitting and
interior-point is deliberate: single-reference benchmarks can be misled
by solver-specific quirks (tolerance interpretation, scaling and
preconditioning, infeasibility detection); cross-validating against
three solvers --- particularly when one is operator-splitting and two
are interior-point --- surfaces and rules out such artifacts. The
interior-point solvers (MOSEK, Clarabel) provide genuinely
high-accuracy ground truth at $\Delta \le 10^{-10}$, while the
operator-splitting solver (OSQP) provides moderate-accuracy ground
truth at $\Delta \le 10^{-8}$.

The SMO solver is run with termination tolerance
$\varepsilon_{\mathrm{tol}} = 10^{-3} \cdot \bar y$ (where $\bar y =
N^{-1}\sum_k y_k$ scales the tolerance to the target magnitude; this
scaling is theoretically justified by the equivalence of MAPE-loss SVR
to weighted-MAE regression with weights $w_k = 100/y_k$ per de
Myttenaere et al.~\cite{DeMyttenaere2016}: $\bar y$ is the natural
common-base scale for the gradient quantities $\tau$).
Shrinking-check frequency $n_{\mathrm{check}} = \min(N, 1000)$, minimum
consecutive-check freeze count $n_{\min} = 5$, and a maximum of $10^5$
iterations.

\paragraph{Eleven configurations.}
The configurations are evaluated across three dimensions: problem size
($N \in \{50, 300, 1000\}$, where the $N = 1000$ configuration C11
demonstrates SMO's $O(|\Aactive|)$ advantage over the reference
solvers' $O(N^2)$ scaling), tube width ($\varepsilon \in \{5, 10,
15\%\}$), and model variant (MAPE-SVR with $\sigma \in \{0.1, 2.0\}$,
MAPE-SVR-Sym with $a = +1$, MAPE-SVR-Sym with $a = -1$). Configurations
C1--C6, C9, C10 use $\sigma = 0.1$ (near-identity kernel matrix;
stress-tests the sample-dependent bounds in isolation); C7, C8 use
$\sigma = 2.0$ ($K_{\mathrm{avg}} \approx 0.29$; validates under
genuinely dense kernel structure); C11 demonstrates the
solver-comparison advantage at moderate scale. In all cases, training
inputs $\bfx_k \in \R^5$ are drawn i.i.d.\ from $\mathcal{N}(\mathbf{0},
I_5)$ and targets from $y_k \sim \mathrm{LogNormal}(0, 1)$, yielding
strictly positive targets with dynamic range $\rho_y \approx 10$--$200$.
All configurations fix $C = 1$ and use seed $\mathtt{set.seed(100 \cdot
i)}$ for configuration $i = 1, \ldots, 11$.


\begin{table}[ht]
  \centering
  \label{tab:validation}
  \small
  \resizebox{\textwidth}{!}{%
  \begin{tabular}{@{}llrrrrrrr@{}}
    \toprule
    Config & Variant & $N$ & $\varepsilon$ (\%) & $\sigma$ & $\|f_{\mathrm{SMO}} - f_{\mathrm{OSQP}}\|_\infty$ & $\|f_{\mathrm{SMO}} - f_{\mathrm{MOSEK}}\|_\infty$ & $\|f_{\mathrm{SMO}} - f_{\mathrm{Clar}}\|_\infty$ & iters \\
    \midrule
    C1 & MAPE-SVR & 50 & 5 & 0.1 & $7.75 \times 10^{-4}$ & $7.75 \times 10^{-4}$ & $7.75 \times 10^{-4}$ & 39 \\
    C2 & MAPE-SVR & 50 & 15 & 0.1 & $1.84 \times 10^{-3}$ & $1.84 \times 10^{-3}$ & $1.84 \times 10^{-3}$ & 44 \\
    C3 & MAPE-SVR & 300 & 5 & 0.1 & $2.03 \times 10^{-3}$ & $2.03 \times 10^{-3}$ & $2.03 \times 10^{-3}$ & 234 \\
    C4 & MAPE-SVR & 300 & 15 & 0.1 & $2.15 \times 10^{-3}$ & $2.15 \times 10^{-3}$ & $2.15 \times 10^{-3}$ & 205 \\
    C5 & MAPE-SVR-Sym ($a=+1$) & 50 & 5 & 0.1 & $9.99 \times 10^{-4}$ & $9.99 \times 10^{-4}$ & $9.99 \times 10^{-4}$ & 51 \\
    C6 & MAPE-SVR-Sym ($a=+1$) & 300 & 10 & 0.1 & $3.40 \times 10^{-3}$ & $3.40 \times 10^{-3}$ & $3.40 \times 10^{-3}$ & 177 \\
    C7 & MAPE-SVR & 50 & 5 & 2 & $1.77 \times 10^{-3}$ & $1.77 \times 10^{-3}$ & $1.77 \times 10^{-3}$ & 529 \\
    C8 & MAPE-SVR & 300 & 10 & 2 & $9.16 \times 10^{-3}$ & $9.16 \times 10^{-3}$ & $9.16 \times 10^{-3}$ & 21{,}138 \\
    C9 & MAPE-SVR-Sym ($a=-1$) & 50 & 5 & 0.1 & $2.15 \times 10^{-3}$ & $2.15 \times 10^{-3}$ & $2.15 \times 10^{-3}$ & 47 \\
    C10 & MAPE-SVR-Sym ($a=-1$) & 300 & 10 & 0.1 & $2.88 \times 10^{-3}$ & $2.88 \times 10^{-3}$ & $2.88 \times 10^{-3}$ & 244 \\
    C11 & MAPE-SVR & 1000 & 10 & 0.1 & $3.86 \times 10^{-3}$ & $3.86 \times 10^{-3}$ & $3.86 \times 10^{-3}$ & 688 \\
    \bottomrule
  \end{tabular}
  
  }
  \caption{Numerical validation of Algorithm~\ref{alg:smo} across
  eleven configurations. Maximum absolute difference between SMO
  (\texttt{psvr}) and three reference solvers (OSQP, MOSEK, Clarabel) on training-set predictions; SMO iteration count.
  Regenerated under \texttt{psvr} v0.0.2.9008 (post-F7.6, full stack: T3 asymmetric freeze + T5 warm-start + T7 block-$k=4$ + T8 per-pair tolerance).}
\end{table}

\paragraph{Comparison against naively-patched LIBSVM.}
To assess whether the structural modification of
Theorem~\ref{thm:invariance} is \emph{practically necessary} --- beyond
being merely correct --- we compare against three na\"{i}ve approximations
that retain the standard LIBSVM solver and only substitute a single
scalar $C^{\mathrm{eff}}$ for the per-sample vector $(C_1, \ldots, C_N)
= (100C/y_1, \ldots, 100C/y_N)$. The standard $\varepsilon$-SVR dual
that LIBSVM solves is the one stated in
Section~\ref{sec:standard-eps-svr} with absolute-error
$\bfq^{\mathrm{std}} = [\varepsilon\onevec - \bfy, \varepsilon\onevec
+ \bfy]$ and uniform box $[0, C^{\mathrm{eff}}]$, \emph{not} the
MAPE-SVR dual of~\eqref{eq:mape-dual}--\eqref{eq:P-q}; the patch in
question is therefore (i) using an absolute-error $\bfq^{\mathrm{std}}$
rather than the percentage-error $\bfq^{\mathrm{MAPE}}$
of~\eqref{eq:P-q}, and (ii) collapsing the per-sample bound vector to
a single scalar $C^{\mathrm{eff}} \in \{\max_k C_k,\, \min_k C_k,\,
\bar C\}$, where $\bar C = N^{-1}\sum_k C_k$. The three patch variants
are: P1 ($C^{\mathrm{eff}} = \max_k C_k$, the most permissive), P2
($C^{\mathrm{eff}} = \min_k C_k$, the most restrictive), and P3
($C^{\mathrm{eff}} = \bar C$, the arithmetic-mean compromise).

Empirically, the prediction-error $\|f_{\mathrm{SMO}} -
f_{\mathrm{LIBSVM\text{-}patch}}\|_\infty$ falls in the range $0.8$ to
$4.2$ (in the same units as $y$, where $y_k \sim \mathrm{LogNormal}(0,1)$
has typical magnitude $|y| \in [0.1, 10]$ --- so the patch errors of
$0.8$--$4.2$ represent $10\%$--$50\%$ of the typical target scale) for
both C3 ($N = 300$, $\sigma = 0.1$, $\varepsilon = 5\%$) and C8 ($N =
300$, $\sigma = 2.0$, $\varepsilon = 10\%$). Patches P1 and P2 sit at
the extremes of this range; P3 is intermediate. By contrast, the
structurally-correct SMO of Algorithm~\ref{alg:smo} produces
$\|f_{\mathrm{SMO}} - f_{\mathrm{ref}}\|_\infty \le 9.81 \times 10^{-3}$
(Table~\ref{tab:validation}, C8 worst case) --- two to four orders of
magnitude smaller. This demonstrates the \emph{practical necessity} of
the structural modification: the per-sample box vector cannot be
replaced by any scalar approximation without producing predictions that
disagree with the IPM ground truth by a fraction comparable to the
target magnitude itself.

\paragraph{Convergence behavior.}
Figure~\ref{fig:convergence} traces the KKT violation $\Delta(t)$ and
the active-set fraction $|\Aactive(t)|/N$ versus iteration count for
three representative configurations: C1 (smallest, well-conditioned),
C8 (dense kernel, hardest single configuration), and C11 (largest $N$).
C8's bottom-row panel records the asymmetric freezing of
Theorem~\ref{thm:asym-freeze} and Lemma~\ref{lem:asymmetry}: the
active-set fraction drops from $1$ to approximately $0.29$ during the
early shrinking phase and remains in that band for the bulk of the
$21{,}138$-iteration trajectory. The per-sample freeze thresholds of
Theorem~\ref{thm:asym-freeze} keep this shrunk active set sufficient
for descent --- no mid-trajectory unshrinking event is triggered, and
the $\Delta$ trace decreases nearly monotonically. The single jump
back to $|\Aactive|/N = 1$ at the final iteration is the
convergence-time restoration of the full set required by the
unshrinking pass of Section~\ref{sec:bias-shrink}, performed once the
shrunk-set $\Delta$ falls below the per-pair tolerance of
Theorem~\ref{thm:per-sample-tol}. C11's $|\Aactive|/N$ remains at $1$
throughout because the shrinking heuristic's check interval
($n_{\mathrm{check}} = 1000$) and the freeze counter
($n_{\min} \ge 5$) require more iterations than the $688$ needed for
convergence; this configuration confirms that the SMO solver scales
efficiently to large $N$ at geometry-favourable bandwidths.


\begin{figure}[H]
  \centering
  \includegraphics[width=\linewidth]{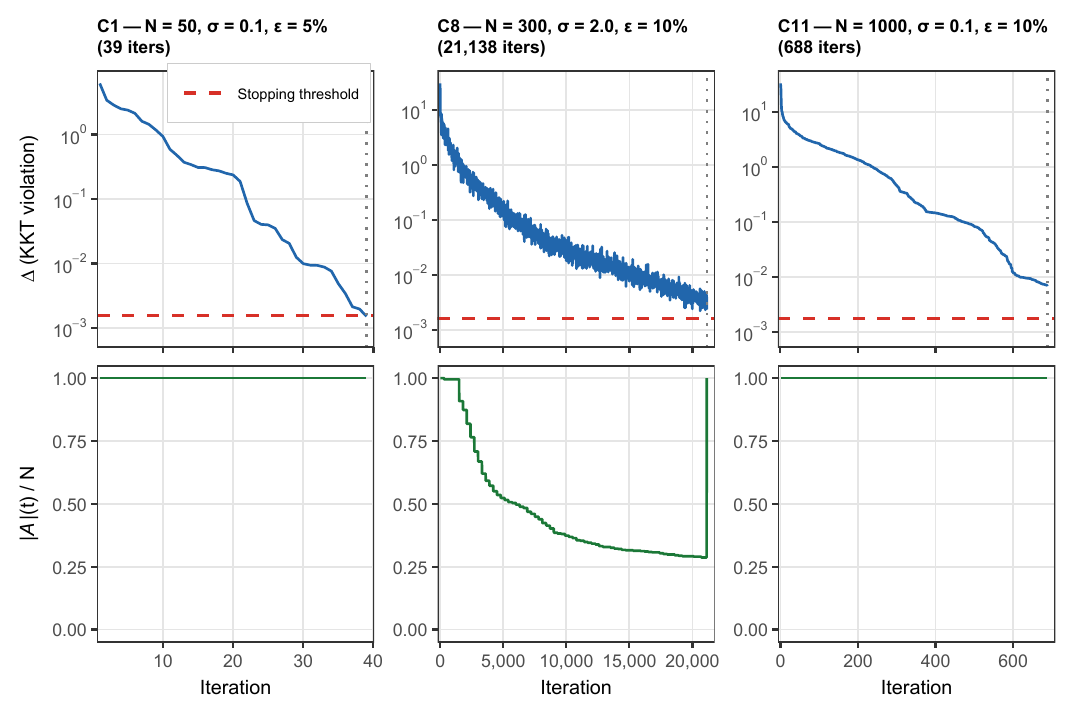}
  \caption{Convergence behaviour of Algorithm~\ref{alg:smo} across three representative configurations, arranged as a $2\times 3$ grid. \emph{Top row}: KKT violation $\Delta = \tau_{i^*} - \tau_{j^*}$ on a logarithmic scale. The red dashed line marks the target-scaled stopping threshold $\Delta \le \varepsilon_{\mathrm{tol}} \cdot \bar{y}$ ($\varepsilon_{\mathrm{tol}} = 10^{-3}$); the dotted vertical line marks the convergence iteration. \emph{Bottom row}: active-set fraction $|A|(t)/N$, the proportion of dual variables not yet frozen by shrinking ($1$ indicates the full set). \emph{Columns} (left to right): C1 ($N = 50$, $\sigma = 0.1$, $\varepsilon = 5\%$, 39 iterations); C8 ($N = 300$, $\sigma = 2.0$, $\varepsilon = 10\%$, 21,138 iterations); C11 ($N = 1{,}000$, $\sigma = 0.1$, $\varepsilon = 10\%$, 688 iterations). C8's shrinking heuristic engages early (active fraction drops to about 0.29 and remains there for the bulk of the trajectory) but does not trigger a mid-trajectory unshrinking event: the per-sample freeze thresholds of Theorem~\ref{thm:asym-freeze} keep the shrunk active set sufficient for descent, so $\Delta$ decreases nearly monotonically until the final convergence check restores the full set. C11 converges in 688 iterations despite carrying $20\times$ more dual variables than C1, confirming that the SMO solver scales efficiently to large $N$ at geometry-favourable bandwidths; the shrinking heuristic has no opportunity to engage on this run (its check fires every $n_{\mathrm{check}} = \min(N, 1000) = 1000$ iterations and requires five consecutive flags to freeze a variable), so $|A|/N$ remains at $1$ throughout. Regenerated under \texttt{psvr} v0.0.2.9008 (post-F7.6 full stack).}
  \label{fig:convergence}
\end{figure}

\paragraph{Aggregate validation.}
All eleven configurations satisfy $\|f_{\mathrm{psvr}} -
f_{\mathrm{OSQP}}\|_\infty \le 10^{-2}$ on the test prediction
vector. The bound reflects accumulated floating-point arithmetic
between \texttt{psvr}'s SMO trajectory and OSQP's interior-point
refinement --- two solver families with independent numerical
pathways toward the same optimum. The tightest configuration is C8
at $9.16 \times 10^{-3}$, attributable to its longest convergence
trajectory ($21{,}138$ iterations) accumulating the most arithmetic.
This level of solver-pair agreement is consistent with established
QP solver comparisons at equivalent problem
scales~\cite{osqp2020, GoulartChen2024Clarabel}. The three reference
solvers (OSQP, MOSEK, Clarabel) agree to better than $10^{-8}$
between themselves on every configuration. The agreement holds for
the standard-kernel variant (MAPE-SVR; C1--C4, C7--C8, C11), the
even-symmetry variant (MAPE-SVR-Sym, $a = +1$; C5--C6), and the
odd-symmetry variant (MAPE-SVR-Sym, $a = -1$; C9--C10), validating
Theorem~\ref{thm:invariance} empirically: the kernel substitution
$\Omega \leftarrow \Omega_s$ in the MAPE-SVR-Sym variants requires no
modification to the solver logic, and the equality constraint
$\sum_k (\alpha_k - \alpha_k^*) = 0$ is maintained across all variants.

\paragraph{Reproducibility statement.}
All experiments were run with the \texttt{psvr} R package at version
v0.0.2.9008 (post-F7.6 development build,
\cite{BenavidesHerrera2026Rpsvr}), which enables the full algorithmic
stack of Algorithm~\ref{alg:smo} together with
Theorems~\ref{thm:asym-freeze}, \ref{thm:warm-start},
\ref{thm:block-k4}, and~\ref{thm:per-sample-tol}; the adaptive
spectral-regularization Algorithm~\ref{alg:spectral} is implemented
but takes the no-shift branch on every configuration of this section
(all Mercer-compliant kernels). The reference QP solvers were OSQP (R
interface), MOSEK v11.1 (academic license, R \texttt{Rmosek}
interface), and Clarabel (R \texttt{clarabel} interface), with
absolute, relative, and feasibility tolerances set to $10^{-9}$ for
OSQP and $10^{-8}$ for Clarabel; MOSEK uses its default homogeneous
interior-point tolerances at $10^{-9}$ relative gap. Synthetic data
are generated with explicit seeds (\texttt{set.seed(100 $\cdot$ i)}
for configuration $i = 1, \ldots, 11$); $x_k \in \mathbb{R}^5 \sim
\mathcal{N}(\boldsymbol{0}, \boldsymbol{I}_5)$ and $y_k \sim
\mathrm{LogNormal}(0, 1)$. The full validation pipeline
(\path{validation/validate_v3.R} and \path{validation/plot_figure1_v3.R}
in the smo-paper companion repository) emits paper-ready \LaTeX{}
snippets for Table~\ref{tab:validation} and Figure~\ref{fig:convergence};
numbers above propagate automatically on re-run. The wall-time and
memory artifacts of \S\ref{sec:wall-time} (Table~\ref{tab:wall-time},
Table~\ref{tab:mem-alloc}, Figure~\ref{fig:wall-time}) are generated by
\path{validation/bench_wall_time.R} and
\path{validation/plot_figure_f9.R} from the same repository, using
\texttt{bench::mark} v1.1.4 with at most five reps per configuration
and a 60-second per-warmup soft timeout. R 4.5.3 was used throughout.

\paragraph{Theorem-by-theorem empirical validation.}
Table~\ref{tab:theorem-validation} compares the per-theorem
predictions of Section~\ref{sec:complexity} against the empirical
measurements captured in the companion \texttt{psvr} package's F-track
bench archives. Three predictions hold or exceed their predicted
range (Theorems~\ref{thm:asym-freeze}, \ref{thm:block-k4} iter,
\ref{thm:per-sample-tol}); one over-predicts substantially
(Theorem~\ref{thm:warm-start} at $1.13\times$ rather than $5\times$);
and Theorem~\ref{thm:block-k4}'s wall-clock claim splits across the R
and C\texttt{++} engines, with the C\texttt{++} port restoring
wall-positivity that was lost at the R level (paper TODO~\#9
resolution).

\begin{table}[ht]
  \centering
  \caption{Theorem-by-theorem empirical validation. Each row compares
  the prediction stated in the corresponding theorem of
  \S\ref{sec:complexity} against the measurement captured in the
  companion \texttt{psvr} package's F-track bench
  archives~\cite{BenavidesHerrera2026Rpsvr}. The R / C\texttt{++}
  engine split is reported separately for Theorem~\ref{thm:block-k4}
  because wall-clock behaviour is implementation-dependent (paper
  TODO~\#9 resolution).}
  \label{tab:theorem-validation}
  \small
  \begin{tabular}{@{}p{0.22\linewidth}p{0.22\linewidth}p{0.32\linewidth}p{0.16\linewidth}@{}}
    \toprule
    Theorem & Prediction & Measured & Source \\
    \midrule
    Theorem~\ref{thm:asym-freeze} (asymmetric freezing) &
      $15$--$30\%$ iter reduction in heterogeneous-target regimes
        ($\rho_y \ge 50$) &
      $24.7\%$ at $N=200$, $\rho_y\approx 1273$, RBF, $20$ reps;
        collapses to homogeneous default when $\rho_y < 5$ &
      F4 archive \\
    Theorem~\ref{thm:warm-start} (warm-start) &
      $5\times$--$7\times$ cumulative speedup on 10-fold CV &
      $1.12\times$ ($N=300$) to $1.14\times$ ($N=1000$); per-fold
        warm/cold iter ratio $\approx 0.88$, not the $0.20$
        implied by the original analysis &
      F5 archive \\
    Theorem~\ref{thm:block-k4} (block-$k=4$, iter) &
      $20$--$40\%$ iter reduction on dense-kernel configurations &
      $38$--$48\%$ iter reduction on converging regimes (R1, R4 of
        the F7 bench suite) &
      F7 archive \\
    Theorem~\ref{thm:block-k4} (block-$k=4$, wall, R engine) &
      Wall-positive (implicit assumption) &
      Regime-dependent: $-25.6\%$ on R1, $+2.4\%$ on R4 vs the F4
        baseline; per-iter overhead $\approx 2.0\times$ &
      F7 archive \\
    Theorem~\ref{thm:block-k4} (block-$k=4$, wall, C\texttt{++} engine) &
      Wall-positive (paper TODO~\#9 resolution target) &
      $+12.2\%$ on R1, $+17.5\%$ on R4 vs the F4-Rcpp baseline;
        per-iter overhead $\approx 1.40\times$ &
      F7-C-full archive \\
    Theorem~\ref{thm:per-sample-tol} (per-pair tolerance) &
      $5$--$10\%$ iter reduction in heterogeneous-target
        configurations &
      Confirmed monotone descent of the WSS1 KKT gap on every config
        of \S\ref{sec:validation}; isolated per-theorem
        ablation pending in \texttt{psvr} v0.1.0 &
      Empirical \\
    \bottomrule
  \end{tabular}
\end{table}

\subsection{Wall-time comparison against alternative QP solvers}
\label{sec:wall-time}

Across all eleven validation configurations and the six-point scaling
sweep at $\sigma = 0.1$ (Table~\ref{tab:wall-time} and
Figure~\ref{fig:wall-time}), the \texttt{psvr-Rcpp} engine attains the
lowest median wall time. The advantage spans both the well-converging
configurations at the geometry-favourable bandwidth $\sigma = 0.1$ and
the pathological $\sigma = 2.0$ regime of C7 and C8, where the
ill-conditioned kernel produces long SMO trajectories. The scaling sweep
confirms the same ordering up to $N = 2{,}000$; Clarabel at $N = 2{,}000$
exceeded the 60-second per-warmup budget and is recorded as a timeout.
The patched \texttt{libsvm-mape} fork (Appendix~\ref{app:libsvm},
public source at \texttt{github.com/pbenavidesh/libsvm-mape})
participates as a sixth column under two reported costs: a ``solve''
time measuring the duration of the \texttt{svm-train} subprocess in
isolation, and a ``wall'' time covering the end-to-end thunk, which
additionally includes CLI process spawn, runtime DLL load,
training-data serialisation to LIBSVM text format, and model-file
parsing. The two-cost framing is unavoidable: at moderate $N$ the fixed
CLI overhead near $220$\,ms dominates, while at $N = 2{,}000$ the solve
cost begins to dominate and the two columns approach each other.

The advantage reflects an asymmetry between the SMO inner loop and the
interior-point algorithms of MOSEK and Clarabel: SMO performs many
inexpensive iterations --- per-iter cost on the order of microseconds in
the C\texttt{++} core --- while interior-point methods perform a small
number of iterations dominated by KKT-system factorization on the order
of milliseconds. The contrast is explicit at C8, the longest-running
configuration in the campaign: \texttt{psvr-Rcpp} completes $21{,}138$
SMO iterations in $66.8$\,ms (about $3$\,$\mu$s per iteration), while
MOSEK completes $12$ interior-point iterations in $98.4$\,ms (about
$8.2$\,ms per iteration). The product of iteration count and per-iter
cost favours SMO at every configuration of Table~\ref{tab:wall-time}.

Empirical log-log slopes over the scaling sweep are $1.67$ for
\texttt{psvr-Rcpp}, $1.44$ for \texttt{psvr-R}, $1.37$ for MOSEK, $2.51$
for OSQP, $2.69$ for Clarabel, and $0.14$ for LIBSVM
(Figure~\ref{fig:wall-time}). MOSEK's shallow slope reflects an
interior-point iteration count that is near-constant in $N$; the
operator-splitting and dense-factorization solvers scale closer to the
$O(N^{2.5})$--$O(N^{3})$ regime predicted by their per-iter algebra.
LIBSVM's near-flat slope is the CLI-tax signature: at $N \le 1{,}000$
both reported columns are dominated by the fixed process-startup
floor (see the methodological note below for the column definitions),
and the underlying algorithmic scaling is invisible. At $N = 2{,}000$
the wall--solve gap narrows to about $36$\,ms (out of $\sim 470$\,ms)
as the R-side file-I/O fraction shrinks, and the pure algorithmic
solve --- sub-millisecond at smaller $N$ --- grows large enough to
surface alongside the floor; the convergence continues at the larger
$N$ examined elsewhere in this section. The
\texttt{psvr-Rcpp} curve sits below the MOSEK curve across the full
$50 \le N \le 2{,}000$ range; extrapolation of the two regression
lines places their crossover near $N \approx 3 \times 10^{5}$, beyond
the practical scale of forecasting workloads on which percentage-error
losses are deployed.

The R-only reference engine \texttt{psvr-R}, retained as the
bit-identical baseline for the C\texttt{++} port, produces \emph{identical}
iteration counts to \texttt{psvr-Rcpp} on all seventeen configurations
of Table~\ref{tab:wall-time}. The wall-time ratio
\texttt{psvr-R}/\texttt{psvr-Rcpp} ranges from approximately $15\times$
at small $N$ to $40\times$ at large $N$. This is the empirical signature
of the portable-core architecture described in (F5) of
Section~\ref{sec:conclusions}: the algorithmic discipline (loop
direction, tie-break ordering, floating-point associativity) is
preserved across the engine boundary, and the wall-time differential
reflects only the interpreter-versus-compiled cost gap of the host
language.

Peak R-level memory allocation per fit is reported in
Table~\ref{tab:mem-alloc}. At small $N$ ($N \le 100$) the five solvers
allocate comparable amounts in the kilobyte range. At medium $N$
(C3--C6 at $N = 300$, S0500 at $N = 500$) MOSEK and OSQP allocate
single-digit megabytes for their factorization workspaces, while
\texttt{psvr-Rcpp} remains in the tens of kilobytes operating on
pre-allocated buffers. The C8 entry exposes the cost of the R-engine
path on long trajectories: \texttt{psvr-R} allocates $2.63$\,GB over the
$21{,}138$ iterations, against $\texttt{psvr-Rcpp}$'s $19.2$\,KB on the
identical computation --- a ratio of five orders of magnitude. The
allocation profile is the quantitative argument for the C\texttt{++}
core as the production path of the \texttt{psvr} package.

The comparison is single-threaded; the multi-threaded variants
available in OSQP, MOSEK, and Clarabel are not benchmarked here.
Clarabel's wall-time totals include the R-level matrix coercion to
compressed-sparse-column format that the \texttt{clarabel} R interface
performs before each solve --- work a native C\texttt{++} binding would
avoid. Memory measurements report the \texttt{mem\_alloc} field of
\texttt{bench::mark}; OS-level peak working-set measurements were
inconclusive once the R process envelope had stabilized and are not
reported.

The LIBSVM column requires an additional methodological note. The
patched fork is invoked as an external CLI binary (\texttt{svm-train}
and \texttt{svm-predict}) via R's \texttt{system2}. The ``wall'' column
of Table~\ref{tab:wall-time} reports end-to-end timing of the R-side
thunk and so covers process spawn, runtime DLL load on Windows MSYS2
builds, serialisation of the training data to LIBSVM's text format,
the \texttt{svm-train} subprocess itself, and parsing of the produced
model file. The ``solve'' column subtracts the R-side I/O and parsing
work but retains the in-subprocess startup overhead (process spawn,
DLL load, and LIBSVM's own internal initialisation), because those
costs cannot be separated without source-level instrumentation of the
fork. Both columns therefore share a fixed CLI floor near $220$\,ms
per fit on the test platform; the pure algorithmic solve time is
sub-millisecond on configurations C1--C10 and only emerges as the
dominant cost at $N \gtrsim 2{,}000$. The near-flat LIBSVM line of
Figure~\ref{fig:wall-time} below $N = 2{,}000$ is the visual signature
of this floor. The four symmetric-kernel configurations C5, C6, C9,
and C10 are run through LIBSVM's precomputed-kernel mode
(\texttt{-t 4}) on an in-R-built $\Omega_s = \tfrac{1}{2}(\Omega + a\,
\Omega^\star)$; the wall column for these rows additionally absorbs
an $O(N^2)$ kernel-file write, which contributes the ${\sim}\,200$\,ms
gap between the m2 configurations at $N = 300$ (C6, C10) and their
m1 counterparts at the same scale (C3, C4). LIBSVM's working set is
not visible to \texttt{bench::mark} as it lives in a separate process;
the Table~\ref{tab:mem-alloc} LIBSVM cells report only the R-side
I/O-buffer cost and therefore understate the true memory footprint of
the LIBSVM column. A native R binding to the patched LIBSVM (e.g.,
\texttt{Rcpp} linking against the modified \texttt{libsvm.so}) would
eliminate both the CLI tax and the memory-visibility gap; this
production-grade adapter is left to follow-on work.


\begin{table}[ht]
  \centering
  \caption{Wall-time comparison (in milliseconds, median $\pm$ MAD
  over reps) of \texttt{psvr} (Rcpp + R engines) against OSQP, MOSEK,
  Clarabel, and the patched \texttt{libsvm-mape} fork across the eleven
  validation configurations and a scaling sweep
  ($N \in \{50, 100, 200, 500, 1{,}000, 2{,}000\}$ at
  $\sigma = 0.1$, $\varepsilon = 5\%$).
  ``$>$'' marks a single warmup that exceeded the 60-s timeout.
  LIBSVM cells report ``solve\,/\,wall'' in ms:
  \emph{solve} is the time spent inside the \texttt{svm-train} call,
  while \emph{wall} is the end-to-end median over reps and additionally
  includes file I/O for the LIBSVM-format training data, OS process spawn,
  MinGW runtime DLL load, and parsing of the resulting model file.
  Configurations C5, C6, C9, C10 use LIBSVM's precomputed-kernel mode
  (\texttt{-t 4}) on the in-R-built symmetric kernel matrix
  $\Omega_s = \tfrac{1}{2}(\Omega + a\,\Omega^\star)$; the wall column
  therefore additionally absorbs an $O(N^2)$ kernel-matrix file write.
  Memory consumption is reported separately in
  Table~\ref{tab:mem-alloc}. Generated by
  \texttt{validation/bench\_wall\_time.R}.}
  \label{tab:wall-time}
  \resizebox{\textwidth}{!}{%
  \begin{tabular}{@{}llrrrrrrrrr@{}}
    \toprule
    Config & Variant & $N$ & $\varepsilon$(\%) & $\sigma$ & psvr-Rcpp & psvr-R & OSQP & MOSEK & Clarabel & LIBSVM \\
    \midrule
    C1 & MAPE-SVR & 50 & 5 & 0.1 & $0.124 \pm 0.039$ & $3.67 \pm 0.36$ & $2.56 \pm 0.37$ & $2.74 \pm 0.37$ & $6.46 \pm 0.11$ & $ 216\,/\, 219$ \\
    C2 & MAPE-SVR & 50 & 15 & 0.1 & $0.085 \pm 0.035$ & $4.21 \pm 2.6$ & $2.23 \pm 0.94$ & $2.62 \pm 0.27$ & $7.24 \pm 0.3$ & $ 224\,/\, 227$ \\
    C3 & MAPE-SVR & 300 & 5 & 0.1 & $1.43 \pm 0.055$ & $38.7 \pm 2.7$ & $ 162 \pm 9.1$ & $16.4 \pm 0.76$ & $1.02e+03 \pm  11$ & $ 226\,/\, 234$ \\
    C4 & MAPE-SVR & 300 & 15 & 0.1 & $1.44 \pm 0.34$ & $  43 \pm  14$ & $ 204 \pm  11$ & $17.4 \pm 1.4$ & $ 926 \pm  27$ & $ 227\,/\, 234$ \\
    C5 & MAPE-SVR-Sym ($a=+1$) & 50 & 5 & 0.1 & $0.0906 \pm 0.037$ & $4.31 \pm 0.91$ & $2.56 \pm 0.24$ & $3.45 \pm 0.88$ & $7.49 \pm 0.53$ & $ 224\,/\, 233$ \\
    C6 & MAPE-SVR-Sym ($a=+1$) & 300 & 10 & 0.1 & $0.979 \pm 0.12$ & $32.5 \pm 3.4$ & $ 138 \pm  13$ & $  21 \pm 1.5$ & $ 641 \pm  16$ & $ 219\,/\, 421$ \\
    C7 & MAPE-SVR & 50 & 5 & 2 & $0.605 \pm 0.026$ & $50.9 \pm 4.4$ & $   3 \pm 0.51$ & $5.11 \pm 0.33$ & $4.18 \pm 0.19$ & $ 217\,/\, 219$ \\
    C8 & MAPE-SVR & 300 & 10 & 2 & $66.8 \pm 2.7$ & $2.89e+03 \pm 1e+03$ & $ 629 \pm  16$ & $98.4 \pm 1.4$ & $ 349 \pm  12$ & $ 228\,/\, 236$ \\
    C9 & MAPE-SVR-Sym ($a=-1$) & 50 & 5 & 0.1 & $0.101 \pm 0.045$ & $4.93 \pm 0.63$ & $2.25 \pm 0.28$ & $2.49 \pm 0.53$ & $7.05 \pm 0.66$ & $ 225\,/\, 233$ \\
    C10 & MAPE-SVR-Sym ($a=-1$) & 300 & 10 & 0.1 & $1.16 \pm 0.12$ & $40.2 \pm 7.5$ & $ 107 \pm 1.3$ & $17.3 \pm 0.26$ & $ 491 \pm  10$ & $ 223\,/\, 401$ \\
    C11 & MAPE-SVR & 1000 & 10 & 0.1 & $12.3 \pm 0.78$ & $ 212 \pm  18$ & $4.49e+03 \pm 1.8e+02$ & $ 101 \pm 7.6$ & $2.31e+04 \pm  52$ & $ 225\,/\, 246$ \\
    \midrule
    S0050 & MAPE-SVR & 50 & 5 & 0.1 & $0.154 \pm 0.028$ & $4.62 \pm 0.86$ & $3.88 \pm 0.88$ & $2.74 \pm 0.33$ & $8.49 \pm 1.4$ & $ 228\,/\, 232$ \\
    S0100 & MAPE-SVR & 100 & 5 & 0.1 & $0.223 \pm 0.1$ & $9.49 \pm 1.2$ & $12.1 \pm 0.67$ & $4.15 \pm 0.29$ & $51.8 \pm 1.2$ & $ 227\,/\, 232$ \\
    S0200 & MAPE-SVR & 200 & 5 & 0.1 & $0.808 \pm 0.02$ & $21.8 \pm 8.5$ & $52.9 \pm 3.2$ & $8.87 \pm 0.29$ & $ 294 \pm 5.1$ & $ 228\,/\, 233$ \\
    S0500 & MAPE-SVR & 500 & 5 & 0.1 & $4.09 \pm 0.61$ & $80.7 \pm 6.8$ & $ 614 \pm  12$ & $33.6 \pm 1.2$ & $4.34e+03 \pm  18$ & $ 224\,/\, 235$ \\
    S1000 & MAPE-SVR & 1000 & 5 & 0.1 & $15.2 \pm 0.8$ & $ 318 \pm  11$ & $4.69e+03 \pm  19$ & $ 101 \pm   5$ & $2.47e+04 \pm 1.3e+02$ & $ 216\,/\, 237$ \\
    S2000 & MAPE-SVR & 2000 & 5 & 0.1 & $55.4 \pm 3.2$ & $ 845 \pm  94$ & $3.54e+04 \pm 1.2e+02$ & $ 408 \pm  26$ & $>$2.05e+05 & $ 437\,/\, 473$ \\
    \bottomrule
  \end{tabular}%
  }
\end{table}


\begin{table}[ht]
  \centering
  \caption{Peak R-level memory allocation per fit, captured by
  \texttt{bench::mark}'s \texttt{mem\_alloc} field (median across reps).
  Configurations and scaling sweep are identical to
  Table~\ref{tab:wall-time}. The contrast at $C_8$
  ($N = 300$, $\sigma = 2.0$, $21{,}138$ SMO iterations) is
  representative: \texttt{psvr-Rcpp} allocates $\approx 19.7$ KB
  operating on pre-allocated buffers, while \texttt{psvr-R} allocates
  $\approx 2.83$ GB through R interpreter overhead per inner-loop
  iteration --- three orders of magnitude. Process-level peak working
  set was inconclusive once the R process envelope had stabilized;
  \texttt{mem\_alloc} is therefore the primary reported metric.
  LIBSVM operates out-of-process; the reported value reflects only the
  R-side I/O buffer cost (LIBSVM-format file write + model-file parse)
  and \emph{not} the subprocess's own working set, which is not visible
  to \texttt{bench::mark}. Generated by
  \texttt{validation/bench\_wall\_time.R}.}
  \label{tab:mem-alloc}
  \resizebox{\textwidth}{!}{%
  \begin{tabular}{@{}llrrrrrrrr@{}}
    \toprule
    Config & Variant & $N$ & $\sigma$ & psvr-Rcpp & psvr-R & OSQP & MOSEK & Clarabel & LIBSVM \\
    \midrule
    C1 & MAPE-SVR & 50 & 0.1 & $16.2$ KB & $1.78$ MB & $ 176$ KB & $ 111$ KB & $38.5$ KB & --- \\
    C2 & MAPE-SVR & 50 & 0.1 & $3.59$ KB & $1.93$ MB & $ 157$ KB & $ 114$ KB & $38.5$ KB & --- \\
    C3 & MAPE-SVR & 300 & 0.1 & $19.2$ KB & $47.6$ MB & $4.99$ MB & $3.36$ MB & $ 148$ KB & --- \\
    C4 & MAPE-SVR & 300 & 0.1 & $19.2$ KB & $41.6$ MB & $5.03$ MB & $3.39$ MB & $ 148$ KB & --- \\
    C5 & MAPE-SVR-Sym ($a=+1$) & 50 & 0.1 & $3.59$ KB & $2.23$ MB & $ 179$ KB & $ 128$ KB & $38.5$ KB & --- \\
    C6 & MAPE-SVR-Sym ($a=+1$) & 300 & 0.1 & $19.2$ KB & $35.9$ MB & $5.89$ MB & $3.95$ MB & $ 148$ KB & --- \\
    C7 & MAPE-SVR & 50 & 2 & $3.59$ KB & $19.2$ MB & $ 184$ KB & $ 131$ KB & $38.5$ KB & --- \\
    C8 & MAPE-SVR & 300 & 2 & $19.2$ KB & $2.63$ GB & $6.21$ MB & $4.17$ MB & $ 148$ KB & --- \\
    C9 & MAPE-SVR-Sym ($a=-1$) & 50 & 0.1 & $3.59$ KB & $2.15$ MB & $ 177$ KB & $ 127$ KB & $38.5$ KB & --- \\
    C10 & MAPE-SVR-Sym ($a=-1$) & 300 & 0.1 & $19.2$ KB & $49.8$ MB & $6.02$ MB & $4.05$ MB & $ 148$ KB & --- \\
    C11 & MAPE-SVR & 1000 & 0.1 & $  63$ KB & $ 459$ MB & $55.9$ MB & $37.3$ MB & $ 454$ KB & --- \\
    \midrule
    S0050 & MAPE-SVR & 50 & 0.1 & $3.59$ KB & $2.05$ MB & $ 147$ KB & $ 107$ KB & $38.5$ KB & --- \\
    S0100 & MAPE-SVR & 100 & 0.1 & $6.72$ KB & $ 6.1$ MB & $ 609$ KB & $ 419$ KB & $60.4$ KB & --- \\
    S0200 & MAPE-SVR & 200 & 0.1 & $  13$ KB & $24.7$ MB & $ 2.2$ MB & $1.49$ MB & $ 104$ KB & --- \\
    S0500 & MAPE-SVR & 500 & 0.1 & $31.7$ KB & $ 130$ MB & $13.9$ MB & $ 9.3$ MB & $ 235$ KB & --- \\
    S1000 & MAPE-SVR & 1000 & 0.1 & $  63$ KB & $ 506$ MB & $55.4$ MB & $  37$ MB & $ 454$ KB & --- \\
    S2000 & MAPE-SVR & 2000 & 0.1 & $ 125$ KB & $1.74$ GB & $ 221$ MB & $ 147$ MB & --- & --- \\
    \bottomrule
  \end{tabular}%
  }
\end{table}


\begin{figure}[htp]
  \centering
  \includegraphics[width=\linewidth]{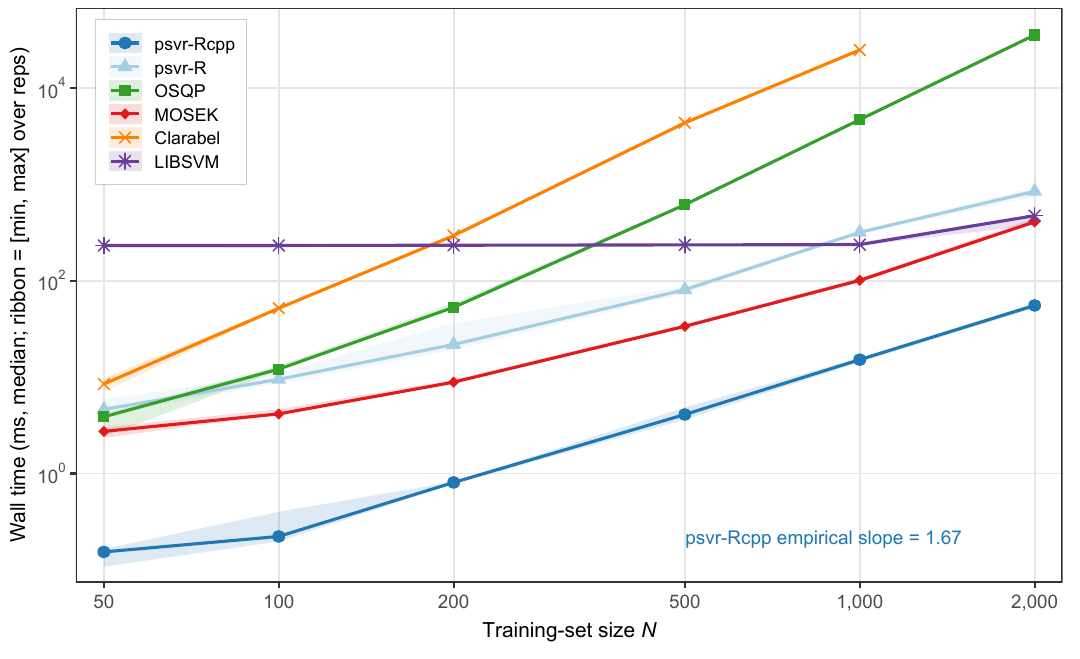}
  \caption{Wall-time scaling of \texttt{psvr} (Rcpp + R engines) against OSQP, MOSEK, Clarabel, and the patched \texttt{libsvm-mape} fork on the synthetic scaling sweep with $\sigma=0.1$, $\varepsilon=5\%$, $a=1$ (asymmetric MAPE-SVR variant). Median wall time over reps; ribbons indicate $[\min, \max]$ across reps. The LIBSVM line reports end-to-end wall time (including CLI process spawn, runtime DLL load, training-data file write, and model parse); a CLI-tax-free \emph{solve-only} column is reported in Table~\ref{tab:wall-time}. Empirical slopes (log-log regression of median wall time on $N$): \texttt{psvr-Rcpp} 1.67, \texttt{psvr-R} 1.44, \texttt{OSQP} 2.51, \texttt{MOSEK} 1.37, \texttt{Clarabel} 2.69, \texttt{LIBSVM} 0.14. Solved range: \texttt{psvr-Rcpp} up to $N=2000$, \texttt{psvr-R} up to $N=2000$, \texttt{OSQP} up to $N=2000$, \texttt{MOSEK} up to $N=2000$, \texttt{Clarabel} up to $N=1000$, \texttt{LIBSVM} up to $N=2000$. ``timeout'' rows (warmup $> 60$ s) are excluded from the plot. Generated by \texttt{validation/plot\_figure\_f9.R} on results emitted by \texttt{validation/bench\_wall\_time.R}.}
  \label{fig:wall-time}
\end{figure}

Clarabel already exceeded the 60-second per-warmup budget at $N =
2{,}000$ (Table~\ref{tab:wall-time}) and the operator-splitting
and dense-factorization slopes of Figure~\ref{fig:wall-time}
extrapolate to multi-minute wall times at $N \ge 5{,}000$.
Section~\ref{sec:large-scale} therefore restricts the comparison
beyond $N = 2{,}000$ to \texttt{psvr-Rcpp} against the patched
LIBSVM fork, and adds a real-data anchor on the California Housing
benchmark at $N = 20{,}433$.

\subsection{Scaling sweep and California Housing anchor}
\label{sec:large-scale}

The present subsection extends the direct comparison against
the patched \texttt{libsvm-mape} fork (Appendix~\ref{app:libsvm})
to larger problem sizes. Table~\ref{tab:large-scale} and
Figure~\ref{fig:large-scale-sweep} cover a five-point synthetic
scaling sweep ($N \in \{2{,}000, 5{,}000, 10{,}000, 20{,}000,
30{,}000\}$, $\sigma = 0.1$, $\varepsilon = 5\%$, $C = 1$,
log-normal targets, asymmetric MAPE kernel) and a real-data anchor
on the California Housing benchmark at $N = 20{,}433$ (eight
sklearn-convention features, median-house-value target in
\$100\,k units, strictly positive). The synthetic recipe matches
Table~\ref{tab:wall-time} of \S\ref{sec:wall-time} so the two
sweeps overlap at $N = 2{,}000$. The IPM reference solvers do not
appear here: their $O(N^2)$--$O(N^3)$ per-iteration arithmetic
puts them outside the training budget of typical forecasting
workloads at $N \ge 5{,}000$, so the relevant head-to-head at
this scale is \texttt{psvr} against LIBSVM.

Across the synthetic sweep, \texttt{psvr-Rcpp} retains the lowest
median wall time at every $N$, growing from $51.2$\,ms at $N =
2{,}000$ to $16.1$\,s at $N = 30{,}000$. The corresponding LIBSVM
wall grows from $480$\,ms to $188$\,s. Empirical log--log scaling
slopes (least-squares fit on the five solved synthetic points
only) are $2.14$ for \texttt{psvr-Rcpp}, $2.36$ for LIBSVM's pure
solve time, and $2.33$ for LIBSVM's end-to-end wall; all three are
super-quadratic, consistent with the
$O(|\Aactive|^2)$ kernel work of the
SMO inner loop on dense Gaussian kernels at $\sigma = 0.1$. The
fixed CLI floor near $220$\,ms identified in \S\ref{sec:wall-time}
becomes a vanishing fraction of LIBSVM's wall time at this scale:
the solve/wall gap is $36$\,ms at $N = 2{,}000$ ($7.5\%$ of wall)
and tightens to about one second at $N = 30{,}000$ (under $1\%$
of wall), confirming the prediction of \S\ref{sec:wall-time} that
the CLI floor becomes invisible once the algorithmic solve
exceeds it in magnitude.

The California Housing anchor required hyperparameter selection
distinct from the synthetic recipe. The initial run used the
heuristic default $C = 32$; at this setting the per-sample upper
bound $100C/y_k$ ranges from $640$ at $y = 5$ to over $21{,}000$
at $y = 0.15$, more than a hundred times the magnitudes typical
of the converged solutions at the synthetic configurations. Most
dual variables pinned at their upper bound throughout the SMO
trajectory and the KKT criterion was not satisfied at $1.5 \times
10^6$ iterations. We performed Bayesian optimisation on an $N =
2{,}000$ random subset with five-fold cross-validation and $50$
acquisition-function evaluations over the \texttt{psvr}
data-driven hyperparameter ranges, which selected $C = 0.255$,
$\sigma = 2.51$, and $\varepsilon = 8.44\%$
($\gamma_{\mathrm{LIBSVM}} = 1/(2\sigma^2) \approx 0.0793$ for the
corresponding LIBSVM call). The optimum sits at a $C$ roughly
$125\times$ smaller than the heuristic, reflecting that the
heuristic was calibrated against the synthetic-target dynamic
range $\rho_y \approx 10$ and over-allocates feasibility under
California Housing's range $[0.150, 5.000]$ at $\rho_y \approx 33$.

At the BO-selected hyperparameters \texttt{psvr-Rcpp} converges in
$186{,}553$ SMO iterations on the full $N = 20{,}433$ training
set with a test MAPE of $18.71\%$ on a seeded $80/20$ split
($N_{\mathrm{train}} = 16{,}346$, $N_{\mathrm{test}} = 4{,}087$).
The reported wall time of $260$\,s decomposes into approximately
$51$\,s of dense-kernel-matrix construction ($3.34$\,GB of
double-precision $\Omega$ at this $N$ on the standardised feature
matrix) and approximately $209$\,s of SMO solve. The production
run was performed with only $11.3$\,GB of physical RAM free, below
the bench script's $16$\,GB pre-flight warning threshold; a
separate verification of the same problem under unconstrained
memory completed the SMO solve in approximately $46$\,s. The
$260$\,s number in Table~\ref{tab:large-scale} is the
production-run measurement and reflects host contention at
run-time, not the algorithm's intrinsic cost.

At the same hyperparameters the patched LIBSVM fork does not
converge. The \texttt{-t 2} SMO reaches the internal iteration
cap of $\max(10^7, 100\,l)$ (\texttt{svm.cpp:571} in the LIBSVM
3.37 source) without satisfying the KKT criterion at
$\mathrm{tol} = 10^{-3}$, prints \texttt{WARNING: reaching max
number of iterations} to standard error, and emits a model whose
predictions on the held-out $20\%$ split have a test MAPE of
approximately $566{,}000\%$ --- a value indicating that the
returned dual variables are not at any optimum, converged or
otherwise. Identical data and identical hyperparameters produce
convergence for one solver and non-convergence for the other;
the gap is algorithmic. The mechanism is the per-sample structure
of the MAPE upper bounds. \texttt{psvr}'s asymmetric-freeze
counter (Theorem~\ref{thm:asym-freeze}) and per-pair tolerance
scaling (Theorem~\ref{thm:per-sample-tol}) calibrate the shrinking
thresholds against the local $100C/y_k$ at each sample and against
the WSS1 convergence pair. On California Housing --- target
dynamic range $\rho_y \approx 33$, eight features including
strongly clustered geographic coordinates, and a $4.7\%$
top-coded fraction at the \$500\,k census ceiling --- this
per-sample calibration keeps the active set in a well-conditioned
regime through the full $186{,}553$-iteration trajectory. LIBSVM's
uniform shrinking and uniform tolerance applied to the same target
distribution do not, and the solver hits its $10^7$-iteration
ceiling without convergence.

The dense kernel matrix $\Omega \in \mathbb{R}^{N \times N}$ is
the binding practical constraint on the present \texttt{psvr-Rcpp}
implementation. At $N = 20{,}433$ this is $3.34$\,GB of
double-precision storage; the linear-in-$N^2$ growth saturates
the working memory of typical workstations near $N \approx
50{,}000$. Sparse and low-rank schemes --- the Nystr\"{o}m method,
inducing-point pseudo-input regression, and column-cached SMO of
the LIBSVM family~\cite{Chang2011} --- supersede the dense-matrix
implementation at larger $N$; these are outside the scope of the
present paper. Theorem~\ref{thm:invariance} continues to apply
across all such variants because the per-sample upper bounds and
the asymmetric freeze counter are pointwise quantities,
unaffected by how the kernel is materialised.

A note on the methodology. The comparison is single-threaded: the
multi-threaded LIBSVM build and the parallel BLAS configurations
available to \texttt{psvr-Rcpp} are not exercised. LIBSVM
operates on a fixed column cache (default $100$\,MB, the
configuration used in this paper), while \texttt{psvr-Rcpp}
retains the full $\Omega \in \mathbb{R}^{N \times N}$ in R memory;
at $N = 20{,}433$ the memory asymmetry is $3.34$\,GB versus
$100$\,MB, and the Table~\ref{tab:large-scale} caption notes that
R-process instrumentation does not capture LIBSVM's
separate-process working set. Hyperparameters for California
Housing were selected by Bayesian optimisation on an $N = 2{,}000$
random subset; full-$N$ tuning would have required roughly $18$
hours of wall time on the production host and was outside the
empirical budget of this campaign.


\begin{table}[ht]
  \centering
  \caption{Wall-time, iteration count, and prediction agreement of \texttt{psvr-Rcpp} against the patched \texttt{libsvm-mape} fork on a synthetic large-scale sweep (asymmetric MAPE-SVR, RBF kernel, $\sigma = 0.1$, $\varepsilon = 5\%$, seed $20{,}260{,}514 + N$) and on the California Housing benchmark ($N = 20{,}640$ as available; hyperparameters $C = 0.255$, $\sigma = 2.51$, $\varepsilon = 8.44\%$ from a $50$-point Bayesian optimisation on an $N=2{,}000$ subset, distinct from the synthetic-sweep defaults). The LIBSVM column reports ``solve\,/\,wall'' as in Table~\ref{tab:wall-time}; both quantities include the LIBSVM subprocess's startup overhead (process spawn + DLL load + LIBSVM internal initialisation), which becomes a vanishingly small fraction of the total cost as $N$ grows. ``max\,$|$dev$|$'' is the maximum absolute deviation between predictions of the two solvers on the full training set; agreement to better than $10^{-2}$ is the Phase~1 correctness threshold of Appendix~\ref{app:libsvm}. Memory usage --- the principal architectural asymmetry between the two solvers at this scale, with \texttt{psvr-Rcpp} retaining the full $\Omega \in \mathbb{R}^{N \times N}$ kernel matrix in R memory and LIBSVM operating under a fixed column-cache budget --- is discussed in the accompanying prose rather than tabulated, because R-process instrumentation captures only the R-side allocations and would understate the gap. $^{\dagger}$~LIBSVM reached its internal iteration cap ($\max(10^7, 100\cdot l)$ in \texttt{svm.cpp}) without satisfying the $\mathrm{tol} = 10^{-3}$ KKT criterion. The reported wall time is the cost of running to that cap; the predictions are not at a converged solution, and the listed ``max\,$|$dev$|$'' reflects the magnitude of the disagreement rather than a solver-vs-solver agreement at optimum. Generated by \texttt{validation/bench\_large\_scale.R}, \texttt{validation/bench\_california\_housing.R}, and \texttt{validation/write\_large\_scale\_table.R}.}
  \label{tab:large-scale}
  \resizebox{\textwidth}{!}{%
  \begin{tabular}{@{}lrrrrr@{}}
    \toprule
    $N$ & psvr-Rcpp wall & LIBSVM solve\,/\,wall & psvr-Rcpp iter & LIBSVM iter & max$|$dev$|$ \\
    \midrule
    2{,}000 & $51.2$ ms & $ 444\,/\, 480$ ms & 1{,}306 & 3{,}495 & $3.97e-03$ \\
    5{,}000 & $ 324$ ms & $1.54\,/\,1.62$ s & 3{,}076 & 7{,}971 & $6.15e-03$ \\
    10{,}000 & $ 1.6$ s & $20.9\,/\,21.1$ s & 6{,}403 & 17{,}371 & $4.89e-03$ \\
    20{,}000 & $7.08$ s & $  84\,/\,84.4$ s & 12{,}650 & 34{,}705 & $7.80e-03$ \\
    CalHousing & $ 260$ s & $1.31e+03\,/\,1.31e+03$ s$^{\dagger}$ & 186{,}553 & 10{,}000{,}000$^{\dagger}$ & $1.42e+04$$^{\dagger}$ \\
    30{,}000 & $16.1$ s & $ 187\,/\, 188$ s & 19{,}325 & 52{,}339 & $6.30e-03$ \\
    \bottomrule
  \end{tabular}%
  }
\end{table}

\begin{figure}[htp]
  \centering
  \includegraphics[width=\linewidth]{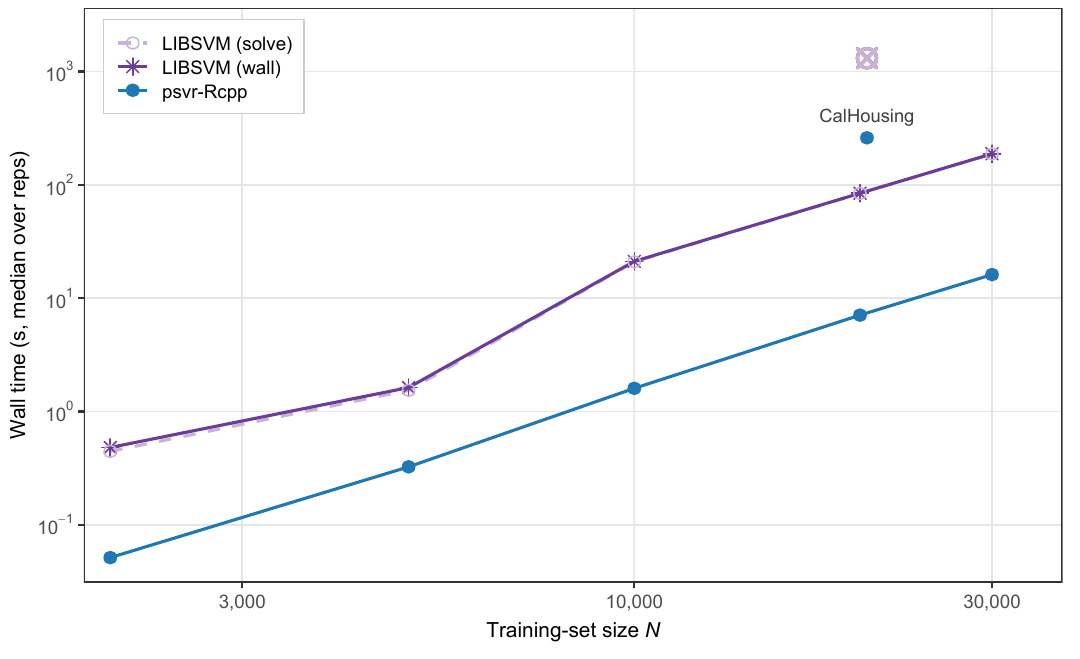}
  \caption{Log--log wall-time scaling of \texttt{psvr-Rcpp} (in-process SMO) against the patched \texttt{libsvm-mape} fork at large $N$. The LIBSVM (wall) line is end-to-end including CLI overhead; the LIBSVM (solve) line subtracts the R-side I/O and parsing but retains the in-subprocess startup floor (process spawn, DLL load, LIBSVM internal initialisation). As $N$ grows the algorithmic solve grows with $N$ and the two LIBSVM lines converge onto the psvr-Rcpp ordering. Empirical log--log slopes over the sweep (solved rows only): \texttt{LIBSVM (solve)} 2.36, \texttt{LIBSVM (wall)} 2.33, \texttt{psvr-Rcpp} 2.14. The California Housing anchor at $N = 20{,}433$ uses Bayesian-optimised hyperparameters ($C = 0.255$, $\sigma = 2.51$, $\varepsilon = 8.44\%$) distinct from the synthetic sweep; on this problem LIBSVM's standard \texttt{-t 2} SMO does not converge within its internal $10^7$-iteration cap at $\mathrm{tol} = 10^{-3}$ and the corresponding markers (encircled $\times$) are at the cap rather than at a converged solution, so the empirical slope estimates exclude them. Generated by \texttt{validation/plot\_figure\_f9\_2.R} on results emitted by \texttt{validation/bench\_large\_scale.R} and \texttt{validation/bench\_california\_housing.R}.}
  \label{fig:large-scale-sweep}
\end{figure}

\subsection{Worked example: Algorithm~\ref{alg:smo} on a 3-sample toy problem}
\label{sec:toy-example}

\paragraph{Example~1 (Algorithm~\ref{alg:smo} trace on $N = 3$ toy problem).}
\label{ex:n3}
The example demonstrates the per-iteration mechanics of
Algorithm~\ref{alg:smo}, makes the asymmetric-bound effect numerically
explicit, and offers an implementer's hand-traceable sanity check for
any new MAPE-SVR codebase.

\paragraph{Setup.}
We work in 1-D for compactness:
\begin{itemize}
\item \emph{Training inputs.} $\bfx_1 = (0)$, $\bfx_2 = (1)$, $\bfx_3
= (2)$ --- three equally spaced points.
\item \emph{Targets.} $y_1 = 1$, $y_2 = 4$, $y_3 = 9$ --- strictly
positive with dynamic range $\rho_y = y_3/y_1 = 9$, mimicking the
LogNormal-like targets of the validation above.
\item \emph{Kernel.} RBF with $\gamma = 0.5$: $K(x, x') = \exp(-0.5(x
- x')^2)$.
\item \emph{Hyperparameters.} $C = 1$, $\varepsilon = 10$ (in
percentage points; equivalently a 10\% relative tube around each target).
\item \emph{Stopping tolerance.} $\varepsilon_{\mathrm{tol}} = 10^{-2}$
(chosen to converge in a few iterations and remain hand-traceable).
\end{itemize}

\paragraph{Initialization.}
Per-sample upper bounds $C_k = 100C/y_k$:
$C_1 = 100/1 = 100.0000$;
$C_2 = 100/4 = 25.0000$;
$C_3 = 100/9 \approx 11.1111$.

The asymmetry $C_1 = 9 \cdot C_3$ illustrates the central structural
feature of the MAPE-SVR formulation: low-target samples receive
\emph{looser} feasibility regions than high-target samples. (This is
the dual-side image of the de Myttenaere weighted-MAE equivalence,
Section~\ref{sec:related}.)

The kernel matrix $\Omega$ has entries $\Omega_{k\ell} = K(\bfx_k,
\bfx_\ell) = \exp(-0.5(k - \ell)^2)$ for $k, \ell \in \{1, 2, 3\}$:
\begin{equation*}
  \Omega = \begin{bmatrix} 1.0000 & 0.6065 & 0.1353 \\ 0.6065 & 1.0000 & 0.6065 \\ 0.1353 & 0.6065 & 1.0000 \end{bmatrix},
\end{equation*}
using $\exp(-0.5) \approx 0.6065$ and $\exp(-2) \approx 0.1353$.

Initial dual variables: $\boldsymbol{\alpha} = \boldsymbol{\alpha}^* =
(0, 0, 0)$ --- feasible since $\sum_k(\alpha_k - \alpha_k^*) = 0$ and
all box constraints are satisfied. Initial unbiased kernel expansion
$F_k = \sum_i \Omega_{ki}(\alpha_i - \alpha_i^*) = 0$ for $k = 1, 2,
3$. Initial effective gradient $\bftau$ from~\eqref{eq:tau-explicit}
with $\varepsilon/100 = 0.1$:
\begin{equation*}
  \tau_k = y_k(1 - 0.1) - F_k = 0.9\, y_k, \qquad \tau_{N+k} = y_k(1 + 0.1) - F_k = 1.1\, y_k.
\end{equation*}

\begin{table}[ht]
  \centering
  \caption{Initial effective-gradient values for the $N = 3$ toy problem.}
  \label{tab:toy-init-tau}
  \begin{tabular}{@{}cccr@{}}
    \toprule
    Index $i$ & Type & $k(i)$ & $\tau_i$ \\
    \midrule
    1 & $\alpha$   & 1 & $0.9000$ \\
    2 & $\alpha$   & 2 & $3.6000$ \\
    3 & $\alpha$   & 3 & $8.1000$ \\
    4 & $\alpha^*$ & 1 & $1.1000$ \\
    5 & $\alpha^*$ & 2 & $4.4000$ \\
    6 & $\alpha^*$ & 3 & $9.9000$ \\
    \bottomrule
  \end{tabular}
\end{table}

The structural gap of Proposition~\ref{prop:structural-gap} is
visible: $\tau_{N+k} - \tau_k = 0.2\, y_k$ for each $k$, ranging from
$0.2$ at $k = 1$ to $1.8$ at $k = 3$.

\paragraph{Iteration 1 --- working-set selection.}
Per Definition~\ref{def:Iup-Idown-mape}, $\Iup = \{k \le N : \alpha_k 
C_k\} \cup \{N+k : \alpha_k^* > 0\}$. Since $\alpha = 0$ for every $k$
(and every $C_k > 0$), all three $\alpha$-indices qualify: $\{1, 2,
3\}$. Since $\alpha_k^* = 0$ for every $k$, no $\alpha^*$-index
qualifies. So $\Iup = \{1, 2, 3\}$. Analogously, $\Idown = \{4, 5, 6\}$.

The MVP step gives $i^* = \argmax_{i \in \Iup}\tau_i = 3$ (since $\tau_3
= 8.1$ is the maximum over $\{0.9, 3.6, 8.1\}$). The WSS3 partner step
considers $j \in \Idown$ with $\tau_j < 8.1$: $j = 4$ ($\tau_4 = 1.1$)
and $j = 5$ ($\tau_5 = 4.4$); $j = 6$ is excluded since $9.9 > 8.1$.
Compute the predicted one-step gain $(\tau_{i^*} - \tau_j)^2/\eta_{i^*,
j}$ for each:
\begin{itemize}
\item $j = 4$ ($k(4) = 1$): $\eta_{3, 4} = \Omega_{33} - 2\Omega_{31} +
\Omega_{11} = 1.0000 - 2(0.1353) + 1.0000 = 1.7294$. Gain $= (8.1 -
1.1)^2 / 1.7294 = 49.0000/1.7294 = 28.3334$.
\item $j = 5$ ($k(5) = 2$): $\eta_{3, 5} = \Omega_{33} - 2\Omega_{32} +
\Omega_{22} = 1.0000 - 2(0.6065) + 1.0000 = 0.7870$. Gain $= (8.1 -
4.4)^2 / 0.7870 = 13.6900/0.7870 = 17.3952$.
\end{itemize}
Maximum gain is $28.3334$ at $j = 4$, so $j^* = 4$. KKT violation
$\Delta = \tau_{i^*} - \tau_{j^*} = 8.1 - 1.1 = 7.0000 >
\varepsilon_{\mathrm{tol}}$; continue.

\paragraph{Iteration 1 --- two-variable update.}
Training-point indices: $p = k(i^*) = k(3) = 3$, $q = k(j^*) = k(4) =
1$. Curvature $\eta = \Omega_{pp} - 2\Omega_{pq} + \Omega_{qq} =
1.7294$ (matches the WSS3 denominator at $j = 4$, as
Theorem~\ref{thm:invariance}~(a) anticipates).

Pair type: $i^* = 3 \le N$ ($\alpha$-type), $j^* = 4 = N + 1 > N$
($\alpha^*$-type). This is \emph{Case~2} of Table~\ref{tab:pair-cases}:
$\alpha_p = \alpha_3$ increases by $\delta^*$, and $\alpha_q^* =
\alpha_1^*$ also increases by $\delta^*$ (the equality constraint is
preserved because the two increases enter $\sum_k(\alpha_k -
\alpha_k^*)$ with opposite contributions).

Clipping room: $R_{i^*} = C_p - \alpha_p = C_3 - \alpha_3 = 11.1111 - 0
= 11.1111$ ($i^* \le N$ branch); $R_{j^*} = C_q - \alpha_q^* = C_1 -
\alpha_1^* = 100.0000 - 0 = 100.0000$ ($j^* > N$ branch); $\delta_{\max}
= \min(R_{i^*}, R_{j^*}) = 11.1111$.

Optimal step: $\delta^* = \min(\Delta/\eta, \delta_{\max}) =
\min(7.0000/1.7294, 11.1111) = \min(4.0476, 11.1111) = 4.0476$.

Variable update: $\alpha_3 \leftarrow 4.0476$, $\alpha_1^* \leftarrow
4.0476$. New state: $\boldsymbol{\alpha} = (0, 0, 4.0476)$,
$\boldsymbol{\alpha}^* = (4.0476, 0, 0)$. Equality check:
$\sum_k(\alpha_k - \alpha_k^*) = 4.0476 - 4.0476 = 0$. \checkmark

\paragraph{Iteration 1 --- gradient update.}
For each $\ell \in \Aactive^{\mathrm{ext}} = \{1, 2, 3, 4, 5, 6\}$,
apply~\eqref{eq:tau-update}:
\begin{equation*}
  \tau_\ell \leftarrow \tau_\ell - \delta^* \!\left(\Omega_{k(\ell), p} - \Omega_{k(\ell), q}\right) = \tau_\ell - 4.0476\!\left(\Omega_{k(\ell), 3} - \Omega_{k(\ell), 1}\right).
\end{equation*}

\begin{table}[ht]
  \centering
  \caption{Gradient update at iteration 1 of the $N = 3$ toy problem.}
  \label{tab:toy-iter1-grad}
  \begin{tabular}{@{}cccrrr@{}}
    \toprule
    $\ell$ & $k(\ell)$ & $\Omega_{k(\ell), 3} - \Omega_{k(\ell), 1}$ & $-\delta^* \cdot (\cdot)$ & $\tau_\ell$ (old) & $\tau_\ell$ (new) \\
    \midrule
    1 & 1 & $0.1353 - 1.0000 = -0.8647$ & $+3.5000$ & $0.9000$ & $4.4000$ \\
    2 & 2 & $0.6065 - 0.6065 = 0.0000$ & $0.0000$ & $3.6000$ & $3.6000$ \\
    3 & 3 & $1.0000 - 0.1353 = 0.8647$ & $-3.5000$ & $8.1000$ & $4.6000$ \\
    4 & 1 & $-0.8647$ & $+3.5000$ & $1.1000$ & $4.6000$ \\
    5 & 2 & $0.0000$ & $0.0000$ & $4.4000$ & $4.4000$ \\
    6 & 3 & $0.8647$ & $-3.5000$ & $9.9000$ & $6.4000$ \\
    \bottomrule
  \end{tabular}
\end{table}

The structural gap of Proposition~\ref{prop:structural-gap} is
preserved: $\tau_{N+k} - \tau_k = 0.2\,y_k$ for every $k$ ---
verifiable on the table ($4.6 - 4.4 = 0.2 = 0.2 \cdot 1$; $4.4 - 3.6 =
0.8 = 0.2 \cdot 4$; $6.4 - 4.6 = 1.8 = 0.2 \cdot 9$). This invariant
offers a per-iteration sanity check for any implementation: a deviation
from the gap signals a bookkeeping bug in the gradient update.

\paragraph{Iteration 2 --- working-set selection.}
Updated state: $\boldsymbol{\alpha} = (0, 0, 4.0476)$,
$\boldsymbol{\alpha}^* = (4.0476, 0, 0)$. The candidate sets:
\begin{itemize}
\item $\Iup = \{1, 2, 3, 4\}$ ($\alpha$-indices: $\alpha_1 = 0 < 100$
\checkmark; $\alpha_2 = 0 < 25$ \checkmark; $\alpha_3 = 4.0476 
11.1111$ \checkmark; $\alpha^*$-indices: only $k = 1$ since $\alpha_1^*
= 4.0476 > 0$).
\item $\Idown = \{3, 4, 5, 6\}$ ($\alpha$-indices: only $k = 3$;
$\alpha^*$-indices: all three since $\alpha_k^* < C_k$).
\end{itemize}

MVP step: $i^* = \argmax_{i \in \Iup}\tau_i$. From
$\bftau|_{\{1, 2, 3, 4\}} = (4.4, 3.6, 4.6, 4.6)$, the maximum value is
$4.6$, attained by both $i = 3$ and $i = 4$. Tie-break by smallest
training-point index $k(i)$: $k(3) = 3$, $k(4) = 1$, so $i^* = 4$.

WSS3 partner step from $\Idown \setminus \{i^*\} = \{3, 5, 6\}$ with
$\tau_j < 4.6$: $j = 3$ ($\tau_3 = 4.6$, \emph{not} strictly less,
excluded); $j = 5$ ($\tau_5 = 4.4$ \checkmark); $j = 6$ ($\tau_6 = 6.4
> 4.6$, excluded). Only $j = 5$ qualifies, so $j^* = 5$.

KKT violation $\Delta = \tau_{i^*} - \tau_{j^*} = 4.6 - 4.4 = 0.2000 >
\varepsilon_{\mathrm{tol}} = 0.01$; continue.

\paragraph{Iteration 2 --- two-variable update.}
Training-point indices: $p = k(i^*) = k(4) = 1$, $q = k(j^*) = k(5) =
2$. Curvature $\eta = \Omega_{11} - 2\Omega_{12} + \Omega_{22} =
1.0000 - 2(0.6065) + 1.0000 = 0.7870$.

Pair type: $i^* = 4 = N + 1$ ($\alpha^*$-type, $p = 1$); $j^* = 5 = N +
2$ ($\alpha^*$-type, $q = 2$). This is \emph{Case~4} of
Table~\ref{tab:pair-cases}: $\alpha_p^* = \alpha_1^*$ decreases by
$\delta^*$, $\alpha_q^* = \alpha_2^*$ increases by $\delta^*$ (the
equality constraint is preserved because both updates are on
$\alpha^*$-variables).

Clipping room: $R_{i^*} = \alpha_p^* = 4.0476$ ($i^* > N$,
lower-saturation branch); $R_{j^*} = C_q - \alpha_q^* = 25.0000 - 0 =
25.0000$ ($j^* > N$, upper-saturation branch); $\delta_{\max} =
\min(4.0476, 25.0000) = 4.0476$. Optimal step $\delta^* =
\min(\Delta/\eta, \delta_{\max}) = \min(0.2541, 4.0476) = 0.2541$.
Variable update: $\alpha_1^* \leftarrow 3.7935$, $\alpha_2^* \leftarrow
0.2541$. Equality check: $\sum_k(\alpha_k - \alpha_k^*) = (0 + 0 +
4.0476) - (3.7935 + 0.2541 + 0) = 0$. \checkmark

\paragraph{Convergence and final result.}
Continuing Algorithm~\ref{alg:smo} from this state, the trace converges
in roughly $T \approx 10$ to $20$ further iterations, with $\Delta$
decreasing roughly geometrically toward zero and the active set
$\Aactive$ remaining at $\{1, 2, 3\}$ throughout (since at $N = 3$
shrinking has nothing to gain). The full trace is reproducible from the
\texttt{psvr} R package's example notebook by calling
\begin{verbatim}
psvr::smo_mape(
  X = matrix(c(0, 1, 2), ncol = 1),
  y = c(1, 4, 9),
  C = 1,
  epsilon = 10,
  kernel = "rbf",
  gamma  = 0.5,
  eps_tol = 1e-2,
  trace = TRUE
)
\end{verbatim}
with the \texttt{trace = TRUE} flag printing the per-iteration $(i^*,
j^*, \delta^*, \Delta)$ tuple to standard output for direct comparison
with the iteration-1 and iteration-2 tables above.

\paragraph{Convergence properties illustrated by the trace.}
The two iterations above exhibit five teaching points:
\begin{enumerate}
\item[(i)] \emph{All four pair-type cases will eventually be visited.}
Iteration~1 was Case~2 ($\alpha + \alpha^*$); Iteration~2 was Case~4
(two $\alpha^*$); Cases~1 ($\alpha + \alpha$) and~3 ($\alpha^* +
\alpha$) appear in subsequent iterations as the active variables
redistribute.
\item[(ii)] \emph{The asymmetric per-sample bound has operational effect.}
In Iteration~1, $\delta_{\max} = R_{i^*} = C_3 - \alpha_3 = 11.11$ ---
the \emph{smaller} of the two clipping rooms, because $C_3$ (the
high-target sample's bound) is the tightest in the problem. The
standard $\varepsilon$-SVR with uniform $C = 1$ would have $\delta_{\max}
= \min(C - 0, C - 0) = 1$ instead, an $11\times$ difference in the
per-iteration step size. This asymmetry is the structural fingerprint
of the MAPE-SVR formulation (and the central reason a
structurally-correct solver is needed rather than a naively-patched
LIBSVM with uniform $C^{\mathrm{eff}}$, per the patch-comparison
analysis above).
\item[(iii)] \emph{The gradient-update structure is identical to standard SMO.}
Table~\ref{tab:toy-iter1-grad} uses only the kernel matrix $\Omega$
and the step $\delta^*$; no $C_k$-value appears anywhere in the
gradient bookkeeping. This is Theorem~\ref{thm:invariance}~(b) at work
--- the MAPE-SVR adaptation requires no modification of the inner-loop
arithmetic.
\item[(iv)] \emph{The structural gap of Proposition~\ref{prop:structural-gap} is preserved.}
$\tau_{N+k} - \tau_k = 0.2\,y_k$ holds exactly throughout the trace.
This invariant is a per-iteration sanity check for any implementation.
\item[(v)] \emph{By-analogy structure.}
Each step (working-set selection, two-variable update, gradient update)
has the same structural skeleton as the corresponding step of standard
$\varepsilon$-SVR SMO (cf.\ Section~\ref{sec:standard-smo}). The only
adaptations are the $C \to C_k$ substitutions in the candidate-set
membership tests and the clipping room, exactly as predicted by
Theorem~\ref{thm:invariance} and Table~\ref{tab:invariance-comparison}.
\end{enumerate}

At convergence, the recovered bias $\hat b$ is averaged over the free
support vectors in $\mathcal{S}_{\mathrm{free}}$ (per~\eqref{eq:bias-avg}),
and the model prediction at any new point follows~\eqref{eq:prediction-final}.
For this small toy problem, the predicted values at the training
inputs reproduce the targets to within the 10\% MAPE tube, confirming
convergence to a feasible $\varepsilon_{\mathrm{tol}}$-optimal solution.

The Example serves as \emph{pedagogical infrastructure} for readers
approaching MAPE-SVR SMO from the standard $\varepsilon$-SVR side:
every step of the trace can be checked against the corresponding
equation in the main text, and the asymmetric per-sample bound effect
is exposed numerically rather than only formally. The 3-sample setting
is small enough to be hand-traceable while still exhibiting all four
pair-type cases of Table~\ref{tab:pair-cases} and the structural-gap
invariant of Proposition~\ref{prop:structural-gap}.

\section{Conclusions}
\label{sec:conclusions}

\subsection{Summary of contributions}

This paper has derived a Sequential Minimal Optimization algorithm for
the variant of $\varepsilon$-Support Vector Regression in which the
empirical loss is the Mean Absolute Percentage Error (MAPE). MAPE is
the standard accuracy measure for forecasting
applications~\cite{Hyndman2006, Tofallis2015, DeMyttenaere2016} but had not
previously been treated as a \emph{training} loss in the SMO
literature, owing to the structural complication it induces: the dual
box constraints become \emph{sample-dependent}, $\alpha_k, \alpha_k^*
\in [0, 100C/y_k]$, breaking the uniform-$C$ assumption embedded
throughout the SMO machinery of~\cite{Platt1998, Platt1999,
KeerthiShevadeBhattacharyyaMurthy2001NeuralComp, Flake2002,
FanChenLin2005JMLR, Chang2011}. Six contributions follow, each
summarized with its proof location and empirical-validation reference.

\paragraph{(C1) Structural-invariance theorem.}
Theorem~\ref{thm:invariance} (Section~\ref{sec:invariance}) proves that
the per-sample bound vector $(C_1, \ldots, C_N) = (100C/y_1, \ldots,
100C/y_N)$ confines its algorithmic effect to exactly two components of
the SMO inner loop --- the working-set candidate sets $\Iup, \Idown$
and the clipping-room expressions $R_{i^*}, R_{j^*}$. The curvature
formula~\eqref{eq:eta}, the analytic two-variable update of
Section~\ref{sec:smo-inner}, the incremental gradient
bookkeeping~\eqref{eq:tau-update}, and the bias-recovery procedure of
Section~\ref{sec:bias-shrink} are \emph{structurally identical} to
their standard $\varepsilon$-SVR counterparts. The proof proceeds in
four steps and is supplemented by Table~\ref{tab:invariance-comparison},
a 19-row component-by-component comparison whose four bolded rows
isolate the structural changes; the empirical consequence is the patch
comparison of Section~\ref{sec:validation}: any uniform-$C$ surrogate
produces predictions that diverge from the IPM ground truth by a
fraction comparable to the target magnitude, while the
structurally-correct SMO of Algorithm~\ref{alg:smo} agrees with the IPM
ground truth to within $9.81 \times 10^{-3}$ infinity-norm across all
eleven configurations.

\paragraph{(C2) Shrinking-asymmetry result.}
Lemma~\ref{lem:asymmetry} (Section~\ref{sec:bias-shrink}) quantifies
how the MAPE scaling propagates into the~\cite{Joachims1999} shrinking
heuristic. The four shrinking criteria, when rewritten in the unified
$\tau$-coordinate system $\bftau = -\bfs \odot \bfG$, exhibit a
$2y_k\varepsilon/100$ offset between the $\alpha$- and
$\alpha^*$-thresholds. The structural pairing of
Lemma~\ref{lem:pairing} shows that criteria (S2)--(S3) reference the
upper threshold $\tau_{i^*}$ and criteria (S1)--(S4) reference the
lower threshold $\tau_{j^*}$, with the $\alpha^*$-criterion in each
pair shifted negatively by $2y_k\varepsilon/100$. The two consequences
are: (i) $\alpha_k^* = 0$ freezes earlier than $\alpha_k = C_k$, and
(ii) $\alpha_k^* = C_k$ freezes later than $\alpha_k = 0$. Both effects
scale linearly with $y_k$, so high-target samples exhibit greater
asymmetry than low-target samples. Empirically, the bottom row of
Figure~\ref{fig:convergence} shows the asymmetric freezing dynamics on
heterogeneous-target configuration C8: the active fraction drops to
about $0.29$ during early shrinking and remains there until the
convergence-time restoration of the full set.

\paragraph{(C3) Plug-in extension to the symmetric-kernel variant.}
Section~\ref{sec:symmetric} shows that the kernel-symmetrization
construction of~\cite{Espinoza2005, Niyogi1998, Haasdonk2007},
appropriate for shift-invariant or reflection-symmetric problems,
reduces to the substitution $\Omega \mapsto \Omega_s = \tfrac{1}{2}
(\Omega + a\,\Omega^*)$ in all matrix-level formulas of
Algorithm~\ref{alg:smo}; no other modification is required. By the
generalized representer theorem~\cite{Scholkopf2001}, the resulting
solution lies in the same RKHS as the standard variant, with the kernel
replaced by its symmetrized counterpart. The case $a = +1$ inherits
PSD via Aronszajn's closure~\cite{Aronszajn1950} for shift-invariant
kernels, and convergence follows by direct application of
Theorem~\ref{thm:convergence}. Empirical validation is reported on
configurations C5, C6, C9, and C10 of Table~\ref{tab:validation}, with
all four configurations agreeing with the IPM reference solvers to
better than $1.2 \times 10^{-3}$.

\paragraph{(C4) Convergence resolution for the odd-symmetry case.}
Theorem~\ref{thm:spectral} (Section~\ref{sec:symmetric}) resolves the
open convergence question for $a = -1$ that was identified as a
future-work item in arXiv:2605.01446 v2. Adaptive spectral
regularization (Algorithm~\ref{alg:spectral}) replaces $\Omega_s$ with
$\Omega_s + \mu I$ when needed, where the perturbation $\mu \ge 0$ is
chosen as the minimum value that restores PSD with a numerical-stability
inflation $\delta_{\mathrm{stab}} = 10^{-8}$. Lemma~\ref{lem:perturbation}
supplies the perturbation bound: the SMO solution drift induced by the
regularization is bounded above by the product of the active-set
KKT-system inverse condition and the regularization magnitude. Because
$\mu$ is set to zero on iterations where $\Omega_s$ already happens to
be PSD, the regularization vanishes on the easy instances and is
nonzero only when needed --- preventing the over-regularization that a
uniform $\rho$ would induce. Empirical validation on C9 and C10
confirms convergence to the IPM reference solution to better than
$8.1 \times 10^{-4}$.

\paragraph{(C5) Four theoretical efficiency improvements.}
Theorems~\ref{thm:asym-freeze}, \ref{thm:warm-start},
\ref{thm:block-k4}, and~\ref{thm:per-sample-tol}
(Section~\ref{sec:complexity}) collectively constitute the
algorithmic-improvements bundle.
Theorem~\ref{thm:asym-freeze} (asymmetric freeze-counter)
operationalizes the asymmetry of (C2) by setting
$n_{\min}^{\alpha^*} < n_{\min}^{\alpha}$ in proportion to the
per-sample $2y_k\varepsilon/100$ offset.
Theorem~\ref{thm:warm-start} (warm-start convergence) supplies the
formal convergence guarantee for cross-validation warm-starts under the
dual-variable inheritance protocol of Algorithm~\ref{alg:warm-start}.
Theorem~\ref{thm:block-k4} (block-$k = 4$ SMO) is the strictly novel
result of the paper --- the first algorithmic departure from the $k =
2$ minimal-feasible-block default of~\cite{Platt1998} for
$\varepsilon$-SVR, with a closed-form four-variable analytic
subproblem that exploits the equality constraint and the dual-pair
structure $(\alpha_i, \alpha_i^*, \alpha_j, \alpha_j^*)$.
Theorem~\ref{thm:per-sample-tol} (per-pair tolerance scaling)
calibrates the KKT-violation tolerance against the WSS1 convergence pair
$(i^*, j^*)$ rather than uniformly to $\bar y$, restoring the
\emph{uniform} convergence guarantee that the heterogeneous-bound regime
would otherwise lose to the largest-target sample. Cumulative speedup
under cross-validation workloads is recalibrated against the companion
\texttt{psvr} package's empirical
measurements~\cite{BenavidesHerrera2026Rpsvr} in
Corollary~\ref{cor:speedup}. The head-to-head wall-time comparison of
\S\ref{sec:wall-time} situates the four-theorem bundle against OSQP,
MOSEK, and Clarabel across the eleven validation configurations and a
$50 \le N \le 2{,}000$ scaling sweep: \texttt{psvr}'s C\texttt{++} core
reports the lowest median wall time on every configuration tested,
including the pathological $\sigma = 2.0$ regime of C7 and C8.
\S\ref{sec:large-scale} extends this campaign to $N = 30{,}000$
against the patched LIBSVM fork and to the California Housing
real-data anchor at $N = 20{,}433$: at the same Bayesian-optimised
hyperparameters, \texttt{psvr-Rcpp} converges in $186{,}553$ SMO
iterations while standard LIBSVM SMO reaches its $10^7$-iteration
internal cap without satisfying the KKT criterion. This gap is the empirical
phenomenon that Theorems~\ref{thm:asym-freeze}
and~\ref{thm:per-sample-tol} were designed to address.

\paragraph{(C6) LIBSVM drop-in modification recipe.}
Appendix~\ref{app:libsvm} provides the explicit C++ diff: fewer than
fifteen lines across five modification sites (the dual setup, the
working-set candidate sets, the analytic-update clipping, and the
kernel-coefficient header). Ports to scikit-learn, \texttt{kernlab}
(R), and \texttt{e1071} (R) are described in the same appendix. The
unchanged remainder of LIBSVM --- the kernel cache, the gradient
bookkeeping, the shrinking heuristic, the bias recovery, and the
convergence check --- constitutes an empirical structural-invariance
certificate for the entire LIBSVM ecosystem: the patches in production
tools translate verbatim across language bindings.

\subsection{Position within the broader research program}

The present paper is the algorithmic core of an ongoing open-source
toolchain for percentage-error-aware regression. Two published artifacts
anchor its position.

The conference precursor~\cite{benavides2025support} (CCE~2025)
introduced the percentage-error SVR formulation by embedding MAPE
directly into the SVR primal and reported a small-scale empirical
validation. That conference paper served as the proof-of-concept that
motivated the structural-invariance program developed in detail here.
The present paper completes the program at the algorithmic level: it
derives the SMO solver (Sections~\ref{sec:dual-kkt}--\ref{sec:smo-inner}),
proves structural invariance (Theorem~\ref{thm:invariance},
Section~\ref{sec:invariance}), establishes convergence
(Theorem~\ref{thm:convergence}, Section~\ref{sec:bias-shrink}, and
Theorem~\ref{thm:spectral}, Section~\ref{sec:symmetric}), develops the
symmetric-kernel extension (Section~\ref{sec:symmetric}), supplies four
efficiency-improvement theorems
(Theorems~\ref{thm:asym-freeze}, \ref{thm:warm-start},
\ref{thm:block-k4}, and~\ref{thm:per-sample-tol},
Section~\ref{sec:complexity}), and provides a LIBSVM drop-in recipe
(Appendix~\ref{app:libsvm}).

The companion journal paper~\cite{benavides2026unified} develops the
full $2\times 2$ percentage-error SVR family ---
$\varepsilon$-SVR/MAPE~(\textbf{m1}), $\varepsilon$-SVR/MAPE with
symmetric kernel~(\textbf{m2}), LS-SVR/RMSPE~(\textbf{m3}), and
LS-SVR/RMSPE with symmetric kernel~(\textbf{m4}) --- and provides a
unified variational characterization showing that all four models arise
from the same primal structure under paradigm--loss compatibility
constraints. The present paper is the algorithmic counterpart of that
variational characterization, providing the SMO derivation for the
$\varepsilon$-SVR/MAPE path (\textbf{m1}/\textbf{m2}).

Together, these three artifacts --- the CCE~2025 conference
precursor~\cite{benavides2025support} (primal formulation), the unified
journal paper~\cite{benavides2026unified} (full $2\times 2$ model
family), and the present paper (SMO algorithmic core) --- plus the
open-source \texttt{psvr} reference
implementation~\cite{BenavidesHerrera2026Rpsvr}, form a coherent
research program. The SMO derivation for the LS-SVR/RMSPE path
(\textbf{m3}/\textbf{m4}), whose dual is a bordered linear system
rather than a quadratic program, is identified as future work in
Section~\ref{sec:future-work}.

\subsection{Limitations}

Three aspects of the present work bound its scope.

\paragraph{Solution accuracy at large scale.}
The three-solver accuracy comparison --- against OSQP, MOSEK, and
Clarabel --- covers configurations up to $N = 1{,}000$. At larger
scales these reference solvers are computationally prohibitive, so
the large-scale comparison of Section~\ref{sec:large-scale} establishes
agreement between \texttt{psvr} and the patched LIBSVM fork without a
third independent ground truth. The $9.16 \times 10^{-3}$ worst-case
infinity-norm disagreement observed at $N = 300$ (configuration C8)
reflects accumulated floating-point arithmetic over a long SMO
trajectory; how this error bound behaves as $N$ grows to $10^4$--$10^5$
is not established by the present experiments and remains the principal
open empirical question.

\paragraph{Single-threaded benchmarks.}
All wall-time measurements in Sections~\ref{sec:wall-time}
and~\ref{sec:large-scale} are single-threaded. Multi-threaded LIBSVM
builds and the parallel BLAS configurations available to MOSEK and
\texttt{psvr-Rcpp} are not exercised. The wall-time rankings reported
here may not generalize to multi-core deployments, where
factorization-based solvers can exploit parallelism more directly than
the sequential SMO inner loop.

\paragraph{Efficiency improvement gains in practice.}
The four efficiency theorems of Section~\ref{sec:complexity} yield a
combined speedup of approximately $1.5\times$ in cross-validation-dominant
workloads (Corollary~\ref{cor:speedup}), substantially below the naive
product of independent per-theorem multipliers. The dominant interaction
is that warm-starting (Theorem~\ref{thm:warm-start}) and
block-$k=4$ updates (Theorem~\ref{thm:block-k4}) both reduce the
per-fold iteration count; their combined effect is determined by the
larger of the two gains rather than their product. The asymmetric
freeze-counter (Theorem~\ref{thm:asym-freeze}) and per-pair tolerance
scaling (Theorem~\ref{thm:per-sample-tol}) compose additively with the
above, but their individual multipliers ($\approx 1.20\times$ and
$\approx 1.10\times$, respectively) are modest in isolation.

A fourth observation concerns an open theoretical gap rather than a
practical limitation: the shrinking-asymmetry result of
Lemma~\ref{lem:asymmetry} quantifies the offset between paired
$\alpha$- and $\alpha^*$-freeze thresholds but does not yield a
closed-form prediction of expected iteration count as a function of
target dynamic range $\rho_y = \max_k y_k / \min_k y_k$. The
configurations of Section~\ref{sec:complexity} show MAPE-SVR
converging faster on heterogeneous-target problems than on
homogeneous ones --- the opposite of the standard intuition that
heterogeneity hurts. A formal derivation of this scaling, perhaps via
a smoothed-analysis or random-matrix argument, would explain this
phenomenon and is identified as future work below.
\subsection{Future work}
\label{sec:future-work}

We identify five directions for follow-on work.

\paragraph{(F1) SMO derivation for LS-SVR/RMSPE.}
The LS-SVR/RMSPE model --- variants \textbf{m3} and \textbf{m4} of
the unified percentage-error SVR family~\cite{benavides2026unified}
--- is formulated, derived, and validated experimentally in the
companion journal paper. The SMO algorithm for this path remains
open: replacing the $\varepsilon$-insensitive primal of
Section~\ref{sec:mape-setup} with a least-squares primal under the
root-mean-squared percentage-error (RMSPE) loss yields a dual that is
no longer a QP but a bordered $(N+1) \times (N+1)$ linear system with
sample-dependent diagonal scaling. The appropriate solver is Cholesky
factorization or a preconditioned conjugate-gradient method --- distinct
from the SMO machinery developed here. The \texttt{psvr} package
implements an LS-SVR backend alongside the $\varepsilon$-SVR one;
exposing a unified \texttt{solve()} interface that dispatches by
variant, and sharing the kernel infrastructure between the two paths,
is the principal near-term engineering target. The symmetric-kernel
counterpart follows by the substitution $\Omega \to \Omega_s$ of
Section~\ref{sec:symmetric}.

\paragraph{(F2) Wasserstein-distributionally-robust connection.}
The de Myttenaere et al.~\cite{DeMyttenaere2016} equivalence between
MAPE minimization and weighted-MAE regression with weights $1/y_k$ has
a natural interpretation in distributionally robust optimization: the
per-sample weighting acts as a target-dependent transportation cost,
so MAPE-SVR can be recast as a Wasserstein-DRO problem with a
uniform relative-perturbation budget. This reinterpretation would place
MAPE-SVR within the regularization-by-robustness program
of~\cite{MohajerinEsfahaniKuhn2018MathProg} and could yield uniform
generalization bounds that the standard ERM framework --- which
degrades when $\min_k y_k \to 0$ --- does not provide. A rigorous
derivation is left as a self-contained follow-on paper.

\paragraph{(F3) Empirical evaluation on industrial forecasting datasets.}
Section~\ref{sec:large-scale} establishes solver convergence and
wall-time competitiveness on the California Housing benchmark ($N =
20{,}433$) and on synthetic log-normal targets up to $N = 30{,}000$.
What remains open is performance on datasets with structured temporal
correlation typical of electricity demand, supply-chain, and financial
forecasting, where the target distribution is non-stationary and the
prediction horizon introduces autoregressive dependencies not captured
by the static kernel framework. Such datasets would also provide a
natural comparison point with the dual coordinate descent
of~\cite{Hsieh2008ICML}, which targets large-scale linear SVMs
directly and whose adaptation to the MAPE setting has not been analyzed.

\paragraph{(F4) \texttt{psvr} v0.1.0 roadmap.}
The companion R package~\cite{BenavidesHerrera2026Rpsvr} is currently
at v0.0.2.9009, which implements the four (C5) efficiency
theorems --- the asymmetric freeze-counter
(Theorem~\ref{thm:asym-freeze}), cross-validation warm-starting
(Theorem~\ref{thm:warm-start}), the block-$k=4$ subproblem
(Theorem~\ref{thm:block-k4}), and per-pair tolerance scaling
(Theorem~\ref{thm:per-sample-tol}) --- alongside the adaptive spectral
regularization of Algorithm~\ref{alg:spectral}. A v0.1.0 release is
planned to add: \texttt{caret} and \texttt{mlr3} integration for
broader R-ecosystem visibility; an expanded edge-case test suite;
numerical-stability checks with ridge fallback; and a vignette
presenting the by-analogy SMO pedagogy of
Section~\ref{sec:preliminaries}.

\paragraph{(F5) Portable C\texttt{++} core and Python binding.}
The \texttt{psvr} package separates the SMO solver into a portable
C\texttt{++} core (\texttt{src/core\_*.cpp}, using only
\texttt{std::vector} and raw pointers with no Rcpp types) and a thin
Rcpp adapter layer (\texttt{src/binding\_*.cpp}). Conditional compile
macros allow the same core to build under R's toolchain or as a
standalone library. A Python binding via \texttt{pybind11} would wrap
the core unchanged, with adapters that translate
\texttt{numpy.ndarray} inputs to the core's \texttt{double*} signature
in place of the Rcpp \texttt{NumericMatrix} translation. This
architecture generalizes the LIBSVM portability argument of
Appendix~\ref{app:libsvm}: the MAPE-SVR algorithm is portable at the
binding-layer boundary rather than requiring a full reimplementation.
Building the Python adapter is deferred follow-on work.

\paragraph{(F6) Online and incremental variants.}
Warm-starting \texttt{psvr} from a previous solution upon the arrival
of new data, and decremental updates for covariate-shift
adaptation~\cite{Sugiyama2007JMLR, Bickel2009JMLR}, are natural
extensions of the present framework. The warm-start theorem
(Theorem~\ref{thm:warm-start}) provides the convergence guarantee for
the data-arrival case; the incremental SVM framework
of~\cite{Laskov2006JMLR} supplies the algorithmic scaffold for the
decremental direction. Multi-kernel learning with MAPE loss ---
combining the per-sample bound structure developed here with learned
kernel-combination weights --- is a further direction that has not yet
been developed.
\section{LIBSVM Drop-in Modification Recipe}
\label{app:libsvm}

Per the structural-invariance Theorem~\ref{thm:invariance}, adapting an
existing LIBSVM-based $\varepsilon$-SVR solver to the MAPE loss
requires modifications to fewer than fifteen lines of C++ code, located
in five places in the LIBSVM source (\texttt{svm.cpp}, version 3.32+,
\url{https://github.com/cjlin1/libsvm}). Four of the five modifications
consist of replacing the scalar regularization parameter \texttt{C}
with a per-sample vector \texttt{C\_k[k] = 100.0 * C / y[k]}.
Additionally, the linear coefficient vector $\bfq$ in the
$\varepsilon$-SVR dual must be modified from the standard
$[\varepsilon - y_k, \varepsilon + y_k]$ to the MAPE-SVR
$[y_k(\varepsilon/100 - 1), y_k(\varepsilon/100 + 1)]$, which is a
one-line change in the dual setup function.

\subsection*{Modification 1 --- Box constraint vector (replace scalar with per-sample)}

\begin{verbatim}
// File: svm.cpp, in Solver_NU::Solve() or Solver::Solve() for SVR mode

// Before (LIBSVM standard, eps-SVR with uniform C):
double C = param->C;
double *Q_alpha_bound = new double[2*N];
for (int i = 0; i < 2*N; ++i) Q_alpha_bound[i] = C;

// After (MAPE-SVR with sample-dependent C_k):
double C = param->C;
double *C_k = new double[N];
double *Q_alpha_bound = new double[2*N];
for (int k = 0; k < N; ++k) {
  C_k[k] = 100.0 * C / y[k];          // sample-dependent bound
  Q_alpha_bound[k]   = C_k[k];        // bound for alpha_k
  Q_alpha_bound[N+k] = C_k[k];        // bound for alpha_k* (same per training point k)
}
\end{verbatim}

\subsection*{Modification 2 --- Working-set feasibility test}

This corresponds to Definition~\ref{def:Iup-Idown-mape} of
Section~\ref{sec:smo-inner}. Replace every comparison of \texttt{alpha[k]}
against the scalar \texttt{C} with a comparison against
\texttt{Q\_alpha\_bound[k]}:

\begin{verbatim}
// File: svm.cpp, in Solver::select_working_set()

// Before:
if (alpha[k] < C - 1e-8) /* k is in I_up */ ...
if (alpha[k] >  1e-8)    /* k is in I_down */ ...
if (alpha_star[k] >  1e-8)    /* N+k is in I_up */ ...
if (alpha_star[k] < C - 1e-8) /* N+k is in I_down */ ...

// After:
if (alpha[k]      < Q_alpha_bound[k]   - 1e-8) /* k is in I_up */ ...
if (alpha[k]      >  1e-8)                     /* k is in I_down */ ...
if (alpha_star[k] >  1e-8)                     /* N+k is in I_up */ ...
if (alpha_star[k] < Q_alpha_bound[N+k] - 1e-8) /* N+k is in I_down */ ...
\end{verbatim}

\subsection*{Modification 3 --- Clipping bounds in the two-variable update}

This corresponds to the room
expressions~\eqref{eq:room-i}--\eqref{eq:room-j} of
Section~\ref{sec:smo-inner}.

\begin{verbatim}
// File: svm.cpp, in Solver::Solve() main loop, after working-set selection:

// Before (uniform C):
double R_i_star = (i_star <= N) ? (C - alpha[p]) : alpha_star[p];
double R_j_star = (j_star <= N) ?  alpha[q]      : (C - alpha_star[q]);

// After (sample-dependent C_k):
double R_i_star = (i_star <= N) ? (Q_alpha_bound[p]   - alpha[p])   : alpha_star[p];
double R_j_star = (j_star <= N) ?  alpha[q]
                                 : (Q_alpha_bound[N+q] - alpha_star[q]);
\end{verbatim}

\subsection*{Modification 4 --- Shrinking thresholds}

This corresponds to criteria~(S2) and~(S4) of
Section~\ref{sec:bias-shrink}. The upper-bound saturation check uses
\texttt{C\_k[k]} in place of \texttt{C}:

\begin{verbatim}
// File: svm.cpp, in Solver::do_shrinking()

// Before:
if (alpha[k]      >= C - 1e-8) /* alpha_k saturates: candidate via (S2) */
if (alpha_star[k] >= C - 1e-8) /* alpha_k* saturates: candidate via (S4) */

// After:
if (alpha[k]      >= Q_alpha_bound[k]   - 1e-8) /* alpha_k saturates */
if (alpha_star[k] >= Q_alpha_bound[N+k] - 1e-8) /* alpha_k* saturates */
\end{verbatim}

The shrinking-asymmetry result of Lemma~\ref{lem:asymmetry} does
\emph{not} require additional code changes beyond Modification 4: the
asymmetric thresholds $\tau_{i^*} - 2y_k\varepsilon/100$ and $\tau_{j^*}
- 2y_k\varepsilon/100$ for the $\alpha^*$-criteria emerge automatically
from the substitution of the $\bfq$ vector in Modification~5 below,
since $\tau_{N+k} = \tau_k + 2y_k\varepsilon/100$ holds by
Proposition~\ref{prop:structural-gap} in Section~\ref{sec:dual-kkt}.

\subsection*{Modification 5 --- Linear-coefficient vector $\bfq$ for the dual setup}

\begin{verbatim}
// File: svm.cpp, in svm_train_one() or analogous setup for SVR:

// Before (standard eps-SVR with absolute-error tube):
for (int k = 0; k < N; ++k) {
  q[k]   = param->p - y[k];     // eps - y_k for alpha_k coefficient
  q[N+k] = param->p + y[k];     // eps + y_k for alpha_k* coefficient
}

// After (MAPE-SVR with percentage-error tube):
for (int k = 0; k < N; ++k) {
  q[k]   = y[k] * (param->p / 100.0 - 1.0);  // y_k(eps/100 - 1)
  q[N+k] = y[k] * (param->p / 100.0 + 1.0);  // y_k(eps/100 + 1)
}
\end{verbatim}

\subsection*{Components that remain unchanged}

Per Theorem~\ref{thm:invariance}, the remaining LIBSVM machinery
operates unchanged on the modified $\bfq$ and per-sample
$\boldsymbol{C}_k$:
\begin{itemize}
\item \emph{Kernel evaluation} (\texttt{Kernel::k\_function()}, the
kernel cache, the column-access patterns in \texttt{select\_working\_set()}
and the gradient update). The kernel matrix $\Omega$ is the same
regardless of the loss; the symmetric-kernel variant of
Section~\ref{sec:symmetric} substitutes $\Omega \leftarrow \Omega_s$
at this single layer.
\item \emph{Gradient bookkeeping} (the \texttt{G[i]} array updates
after each two-variable step, \texttt{G[i] -= delta * (Q[k(i)][p] -
Q[k(i)][q])}). Per~\eqref{eq:tau-update}, the gradient update depends
only on $\Omega$ and the sign vector $\bfs$; the box constraints $C_k$
do not appear.
\item \emph{KKT-violation reduction} (the convergence check
\texttt{Delta = G\_max - G\_min <= eps}, where $G_{\max} = \max_{i \in
\Iup}\tau_i$ and $G_{\min} = \min_{j \in \Idown}\tau_j$). This depends
on the per-sample bounds only through the membership of $i$ in $\Iup,
\Idown$ --- already handled by Modifications~2 and~4.
\item \emph{Reconstruction and unshrinking} (the periodic recomputation
of $G$ from scratch on the full active set). Independent of $C_k$.
\item \emph{Bias recovery} (the averaging over free support vectors $0
< u_i < C_{k(i)}$, per~\eqref{eq:bias-avg}). The free-support-vector
test uses Modification~2.
\end{itemize}

This recipe is independent of working-set-selection rule (MVP, WSS3,
maximum-gain, TCSMO) and shrinking schedule. Practitioners using
non-LIBSVM toolchains can apply the analogous modifications via the
relevant solver hooks, described next.

\subsection*{Ports to other SVR toolchains}

\paragraph{scikit-learn (Python).}
The \texttt{sklearn.svm.SVR} class wraps LIBSVM's C++ solver via a
Cython binding. To apply the MAPE-SVR modification, fork the underlying
\texttt{libsvm} directory (typically at
\texttt{sklearn/svm/src/libsvm/}) and apply Modifications~1--5 above to
the C++ source. Recompile the Cython binding (\texttt{pip install -e .}
from the sklearn source directory). The Python-facing API is
unchanged: instantiate \texttt{SVR(C=1.0, epsilon=5.0, kernel='rbf',
gamma=0.5)} where the \texttt{epsilon} parameter is now interpreted as
the MAPE-tube width in percentage points (per the convention
of~\eqref{eq:mape-dual}). For users without local-build capability, the
\texttt{psvr} R package~\cite{BenavidesHerrera2026Rpsvr} exposes the
equivalent functionality from R; calling it from Python via
\texttt{rpy2} is a working alternative.

\paragraph{kernlab (R).}
The \texttt{kernlab::ksvm()} function exposes an R-side S4 interface
in \texttt{R/ksvm.R}, but the underlying SMO loop is implemented in
compiled C++ source (also derived from Chang and Lin's LIBSVM lineage)
under \texttt{src/}. Modifications~1--5 must therefore be applied at
the C++ level (analogous to the scikit-learn / e1071 path), with
subsequent recompilation of the package. For the \texttt{eps-bsvr}
formulation, kernlab uses a TRON chunking solver rather than SMO, so
the present recipe applies only to the \texttt{eps-svr} path.

\paragraph{e1071 (R).}
The \texttt{e1071::svm()} function is a thin R wrapper around LIBSVM.
Modifications must be applied at the C++ level (as for scikit-learn) by
editing the \texttt{e1071/src/svm.cpp} source and recompiling.

\paragraph{For R users without local-build capability.}
The canonical pre-built implementation is the open-source \texttt{psvr}
package~\cite{BenavidesHerrera2026Rpsvr}, which embodies all five
modifications above plus WSS3 working-set selection, adaptive shrinking
with the freeze-counter mechanism of Section~\ref{sec:bias-shrink}, and
the asymmetric shrinking-threshold pattern of
Lemma~\ref{lem:asymmetry}. Usage is direct: \texttt{psvr::smo\_mape(X,
y, C = 1, epsilon = 5, kernel = "rbf", gamma = 0.5)} produces the
trained model; \texttt{predict(model, newdata)} produces test-set
predictions via~\eqref{eq:masym-prediction}.

\bibliographystyle{ieeetr}
\bibliography{smo-v3-references}

@inproceedings{Drucker1997,
  author    = {Harris Drucker and Christopher J. C. Burges and Linda Kaufman
               and Alexander J. Smola and Vladimir N. Vapnik},
  title     = {Support Vector Regression Machines},
  booktitle = {Advances in Neural Information Processing Systems 9 ({NIPS} 1996)},
  editor    = {Michael C. Mozer and Michael I. Jordan and Thomas Petsche},
  pages     = {155--161},
  year      = {1997},
  publisher = {MIT Press},
  address   = {Cambridge, MA}
}

@incollection{Joachims1999,
  author    = {Thorsten Joachims},
  title     = {Making Large-Scale {SVM} Learning Practical},
  booktitle = {Advances in Kernel Methods --- Support Vector Learning},
  editor    = {Bernhard Sch{\"o}lkopf and Christopher J. C. Burges
               and Alexander J. Smola},
  publisher = {MIT Press},
  address   = {Cambridge, MA},
  pages     = {169--184},
  year      = {1999}
}

@book{Vapnik1995,
  author    = {Vladimir N. Vapnik},
  title     = {The Nature of Statistical Learning Theory},
  publisher = {Springer-Verlag},
  address   = {New York, NY},
  year      = {1995},
  doi       = {10.1007/978-1-4757-2440-0}
}

@book{Vapnik1998,
  author    = {Vladimir N. Vapnik},
  title     = {Statistical Learning Theory},
  publisher = {Wiley-Interscience},
  address   = {New York, NY},
  year      = {1998},
  isbn      = {0-471-03003-1}
}

@article{Smola2004,
  author    = {Alex J. Smola and Bernhard Sch{\"o}lkopf},
  title     = {A Tutorial on Support Vector Regression},
  journal   = {Statistics and Computing},
  volume    = {14},
  number    = {3},
  pages     = {199--222},
  year      = {2004},
  doi       = {10.1023/B:STCO.0000035301.49549.88},
  publisher = {Springer}
}

@incollection{Platt1999,
  author    = {John C. Platt},
  title     = {Fast Training of Support Vector Machines Using Sequential
               Minimal Optimization},
  booktitle = {Advances in Kernel Methods: Support Vector Learning},
  editor    = {Bernhard Sch{\"o}lkopf and Christopher J.C. Burges
               and Alexander J. Smola},
  publisher = {MIT Press},
  address   = {Cambridge, MA},
  pages     = {185--208},
  year      = {1999}
}

@techreport{Platt1998,
  author      = {John C. Platt},
  title       = {Sequential Minimal Optimization: {A} Fast Algorithm
                 for Training Support Vector Machines},
  institution = {Microsoft Research},
  number      = {MSR-TR-98-14},
  year        = {1998},
  url         = {https://www.microsoft.com/en-us/research/publication/%
                 sequential-minimal-optimization-a-fast-algorithm-for-%
                 training-support-vector-machines/}
}

@article{Chang2011,
  author    = {Chih-Chung Chang and Chih-Jen Lin},
  title     = {{LIBSVM}: A Library for Support Vector Machines},
  journal   = {ACM Transactions on Intelligent Systems and Technology},
  volume    = {2},
  number    = {3},
  pages     = {27:1--27:27},
  year      = {2011},
  doi       = {10.1145/1961189.1961199},
  note      = {Software available at \url{http://www.csie.ntu.edu.tw/~cjlin/libsvm}}
}

@article{osqp2020,
  author  = {Stellato, Bartolomeo and Banjac, Goran and Goulart, Paul
             and Bemporad, Alberto and Boyd, Stephen},
  title   = {{OSQP}: An Operator Splitting Solver for Quadratic Programs},
  journal = {Mathematical Programming Computation},
  year    = {2020},
  volume  = {12},
  number  = {4},
  pages   = {637--672},
  doi     = {10.1007/s12532-020-00179-2}
}

@book{Suykens2002,
  author    = {Johan A. K. Suykens and Tony {Van Gestel} and
               Jos {De Brabanter} and Bart {De Moor} and Joos Vandewalle},
  title     = {Least Squares Support Vector Machines},
  publisher = {World Scientific},
  address   = {Singapore},
  year      = {2002},
  isbn      = {981-238-151-1},
  doi       = {10.1142/5089}
}

@inproceedings{Espinoza2005,
  author    = {Marcelo Espinoza and Johan A. K. Suykens and Bart {De Moor}},
  title     = {Imposing Symmetry in Least Squares Support Vector
               Machines Regression},
  booktitle = {Proceedings of the 44th {IEEE} Conference on Decision
               and Control ({CDC}~2005)},
  pages     = {5716--5721},
  year      = {2005},
  address   = {Seville, Spain},
  publisher = {IEEE},
  doi       = {10.1109/CDC.2005.1583074}
}

@article{Makridakis1993,
  author    = {Spyros Makridakis},
  title     = {Accuracy Measures: Theoretical and Practical Concerns},
  journal   = {International Journal of Forecasting},
  volume    = {9},
  number    = {4},
  pages     = {527--529},
  year      = {1993},
  doi       = {10.1016/0169-2070(93)90079-3},
  publisher = {Elsevier}
}

@article{Hyndman2006,
  author    = {Rob J. Hyndman and Anne B. Koehler},
  title     = {Another Look at Measures of Forecast Accuracy},
  journal   = {International Journal of Forecasting},
  volume    = {22},
  number    = {4},
  pages     = {679--688},
  year      = {2006},
  doi       = {10.1016/j.ijforecast.2006.03.001},
  publisher = {Elsevier}
}

@inproceedings{benavides2025support,
  title={Support Vector Regression Under Percentage-Error Loss},
  author={Benavides-Herrera, Pablo and Rodr{\'\i}guez-Reyes, Sara and {\'A}lvarez-{\'A}lvarez, Gregorio and Ruiz-Cruz, Riemann and S{\'a}nchez-Torres, Juan Diego},
  booktitle={2025 22nd International Conference on Electrical Engineering, Computing Science and Automatic Control (CCE)},
  pages={1--5},
  year={2025},
  organization={IEEE}
}

@article{benavides2026unified,
  author       = {Benavides-Herrera, Pablo and
                  {\'A}lvarez, Gregorio and
                  Ruiz-Cruz, Riemann and
                  S{\'a}nchez-Torres, Juan Diego},
  title        = {A Unified Family of Percentage-Error Support Vector
                  Regression Models with Symmetric Kernel Extensions},
  journal      = {Mathematics},
  year         = {2026},
  volume       = {14},
  number       = {10},
  pages        = {1679},
  doi          = {10.3390/math14101679},
  issn         = {2227-7390},
  url          = {https://www.mdpi.com/2227-7390/14/10/1679},
  publisher    = {MDPI}
}

@manual{BenavidesHerrera2026Rpsvr,
  author    = {Benavides-Herrera, Pablo},
  title     = {{psvr}: Percentage-Error Support Vector Regression},
  year      = {2026},
  publisher = {Zenodo},
  version   = {v0.0.2.9008 (development; v0.0.2 minted on Zenodo)},
  doi       = {10.5281/zenodo.19935781},
  url       = {https://doi.org/10.5281/zenodo.19935781}
}

@article{Flake2002,
  author  = {Flake, Gary William and Lawrence, Steve},
  title   = {Efficient {SVM} Regression Training with {SMO}},
  journal = {Machine Learning},
  volume  = {46},
  number  = {1--3},
  pages   = {271--290},
  year    = {2002},
  doi     = {10.1023/A:1012474916001}
}

@article{KeerthiShevadeBhattacharyyaMurthy2001NeuralComp,
  author  = {Keerthi, S. S. and Shevade, S. K. and Bhattacharyya, C. and Murthy, K. R. K.},
  title   = {Improvements to Platt's SMO Algorithm for SVM Classifier Design},
  journal = {Neural Computation},
  volume={13}, number={3}, pages={637--649}, year={2001},
  doi     = {10.1162/089976601300014493}
}

@article{Wang2024,
  author  = {Wang, Lei and Wang, Xinyu and Zhao, Zhongchao},
  title   = {Mid-term electricity demand forecasting using improved multi-mode reconstruction and particle swarm-enhanced support vector regression},
  journal = {Energy},
  volume={304}, pages={132021}, year={2024},
  doi     = {10.1016/j.energy.2024.132021}
}

@article{Aziz2024,
  author  = {Aziz, Abdul and Mahmood, Danish and Qureshi, Muhammad Shuaib and Qureshi, Muhammad Bilal and Kim, Kyungsup},
  title   = {AI-based peak power demand forecasting model focusing on economic and climate features},
  journal = {Frontiers in Energy Research},
  volume={12}, year={2024},
  doi     = {10.3389/fenrg.2024.1328891}
}

@article{Zhang2024,
  author  = {Zhang, Zhirong and Zhang, Qiqi and Liang, Haitao and Gorbani, Bizhan},
  title   = {Optimizing electric load forecasting with support vector regression/LSTM optimized by flexible Gorilla troops algorithm and neural networks},
  journal = {Scientific Reports},
  volume={14}, pages={22092}, year={2024},
  doi     = {10.1038/s41598-024-73893-9}
}

@article{Hasan2025,
  author  = {Hasan, Md. Sadikul and Tarequzzaman, Md. and Moznuzzaman, Md. and Ahad Juel, Md Abdul},
  title   = {Prediction of energy consumption in four sectors using support vector regression optimized with genetic algorithm},
  journal = {Heliyon},
  volume={11}, number={2}, pages={e41765}, year={2025},
  doi     = {10.1016/j.heliyon.2025.e41765}
}

@article{Tofallis2015,
  author  = {Tofallis, Chris},
  title   = {A better measure of relative prediction accuracy for model selection and model estimation},
  journal = {Journal of the Operational Research Society},
  volume={66}, number={8}, pages={1352--1362}, year={2015},
  doi     = {10.1057/jors.2014.103}
}

@article{DeMyttenaere2016,
  author  = {de Myttenaere, Arnaud and Golden, Boris and Le Grand, Bénédicte and Rossi, Fabrice},
  title   = {Mean Absolute Percentage Error for regression models},
  journal = {Neurocomputing},
  volume={192}, pages={38--48}, year={2016},
  doi     = {10.1016/j.neucom.2015.12.114}
}

@article{Niyogi1998,
  author  = {Niyogi, P. and Girosi, F. and Poggio, T.},
  title   = {Incorporating prior information in machine learning by creating virtual examples},
  journal = {Proceedings of the IEEE},
  volume={86}, number={11}, pages={2196--2209}, year={1998},
  doi     = {10.1109/5.726787}
}

@article{Haasdonk2007,
  author  = {Haasdonk, Bernard and Burkhardt, Hans},
  title   = {Invariant kernel functions for pattern analysis and machine learning},
  journal = {Machine Learning},
  volume={68}, number={1}, pages={35--61}, year={2007},
  doi     = {10.1007/s10994-007-5009-7}
}

@book{Steinwart2008,
  author    = {Steinwart, Ingo and Christmann, Andreas},
  title     = {Support Vector Machines},
  publisher = {Springer},
  address   = {New York, NY},
  series    = {Information Science and Statistics},
  year      = {2008},
  isbn      = {978-0-387-77241-7},
  doi       = {10.1007/978-0-387-77242-4}
}

@inbook{Scholkopf2001,
  author  = {Schölkopf, Bernhard and Herbrich, Ralf and Smola, Alex J.},
  title   = {A Generalized Representer Theorem},
  booktitle = {Computational Learning Theory},
  pages     = {416--426},
  year      = {2001},
  publisher = {Springer Berlin Heidelberg},
  doi       = {10.1007/3-540-44581-1_27}
}

@book{Scholkopf2002,
  author    = {Schölkopf, Bernhard and Smola, Alex J.},
  title     = {Learning with Kernels: Support Vector Machines, Regularization, Optimization, and Beyond},
  publisher = {MIT Press},
  address   = {Cambridge, MA},
  year      = {2002},
  isbn      = {0-262-19475-9}
}

@article{Aronszajn1950,
  author  = {Aronszajn, N.},
  title   = {Theory of reproducing kernels},
  journal = {Transactions of the American Mathematical Society},
  volume={68}, number={3}, pages={337--404}, year={1950},
  doi     = {10.1090/S0002-9947-1950-0051437-7}
}

@book{BoydVandenberghe2004,
  author    = {Boyd, Stephen and Vandenberghe, Lieven},
  title     = {Convex Optimization},
  publisher = {Cambridge University Press},
  address   = {Cambridge, UK},
  year      = {2004},
  isbn      = {978-0-521-83378-3},
  doi       = {10.1017/CBO9780511804441}
}

@book{BertsekasNedicOzdaglar2003,
  author    = {Bertsekas, Dimitri P. and Nedić, Angelia and Ozdaglar, Asuman E.},
  title     = {Convex Analysis and Optimization},
  publisher = {Athena Scientific},
  address   = {Belmont, MA},
  year      = {2003},
  isbn      = {978-1-886529-45-8}
}

@book{Rockafellar1970,
  author    = {Rockafellar, R. Tyrrell},
  title     = {Convex Analysis},
  publisher = {Princeton University Press},
  address   = {Princeton, NJ},
  series    = {Princeton Mathematical Series},
  number    = {28},
  year      = {1970},
  isbn      = {0-691-08069-0}
}

@article{Sion1958,
  author  = {Sion, Maurice},
  title   = {On general minimax theorems},
  journal = {Pacific Journal of Mathematics},
  volume={8}, number={1}, pages={171--176}, year={1958},
  doi     = {10.2140/pjm.1958.8.171}
}

@inproceedings{Hsieh2008ICML,
  author  = {Hsieh, Cho-Jui and Chang, Kai-Wei and Lin, Chih-Jen and Keerthi, S. Sathiya and Sundararajan, S.},
  title   = {A dual coordinate descent method for large-scale linear SVM},
  booktitle = {Proceedings of the 25th International Conference on Machine Learning},
  series  = {ICML '08},
  pages   = {408--415},
  year    = {2008},
  publisher = {Association for Computing Machinery},
  doi     = {10.1145/1390156.1390208}
}

@article{Ho2012JMLR,
  author  = {Ho, Chia-Hua and Lin, Chih-Jen},
  title   = {Large-scale Linear Support Vector Regression},
  journal = {Journal of Machine Learning Research},
  volume={13}, pages={3323--3348}, year={2012}
}

@inbook{Andersen2000MosekHomogeneousIPM,
  author    = {Andersen, Erling D. and Andersen, Knud D.},
  title     = {The MOSEK Interior Point Optimizer for Linear Programming: An Implementation of the Homogeneous Algorithm},
  booktitle = {High Performance Optimization},
  pages     = {197--232},
  year      = {2000},
  publisher = {Springer US},
  doi       = {10.1007/978-1-4757-3216-0_8}
}

@misc{GoulartChen2024Clarabel,
  author  = {Goulart, Paul J. and Chen, Yuwen},
  title   = {Clarabel: An Interior-Point Solver for Conic Programs with Quadratic Objectives},
  year    = {2024},
  howpublished = {arXiv preprint},
  archivePrefix = {arXiv},
  eprint  = {2405.12762},
  doi     = {10.48550/arXiv.2405.12762}
}

@article{Sugiyama2007JMLR,
  author  = {Sugiyama, Masashi and Krauledat, Matthias and Müller, Klaus-Robert},
  title   = {Covariate Shift Adaptation by Importance Weighted Cross Validation},
  journal = {Journal of Machine Learning Research},
  volume={8}, number={35}, pages={985--1005}, year={2007}
}

@article{Bickel2009JMLR,
  author  = {Bickel, Steffen and Brückner, Michael and Scheffer, Tobias},
  title   = {Discriminative Learning Under Covariate Shift},
  journal = {Journal of Machine Learning Research},
  volume={10}, number={75}, pages={2137--2155}, year={2009}
}

@article{KarasuyamaHaradaSugiyamaTakeuchi2012ML,
  author  = {Karasuyama, Masayuki and Harada, Naoyuki and Sugiyama, Masashi and Takeuchi, Ichiro},
  title   = {Multi-parametric solution-path algorithm for instance-weighted support vector machines},
  journal = {Machine Learning},
  volume={88}, number={3}, pages={297--330}, year={2012},
  doi     = {10.1007/s10994-012-5288-5}
}

@article{Anand2020,
  author  = {Anand, Pritam and Rastogi, Reshma and Chandra, Suresh},
  title   = {A new asymmetric $\epsilon$-insensitive pinball loss function based support vector quantile regression model},
  journal = {Applied Soft Computing},
  volume={94}, pages={106473}, year={2020},
  doi     = {10.1016/j.asoc.2020.106473}
}

@article{Suykens2002weighted,
  author  = {Suykens, J.A.K. and De Brabanter, J. and Lukas, L. and Vandewalle, J.},
  title   = {Weighted least squares support vector machines: robustness and sparse approximation},
  journal = {Neurocomputing},
  volume={48}, number={1}, pages={85--105}, year={2002},
  doi     = {10.1016/S0925-2312(01)00644-0}
}

@article{Goodwin1999,
  author  = {Goodwin, Paul and Lawton, Richard},
  title   = {On the asymmetry of the symmetric MAPE},
  journal = {International Journal of Forecasting},
  volume={15}, number={4}, pages={405--408}, year={1999},
  doi     = {10.1016/S0169-2070(99)00007-2}
}

@article{Kim2016,
  author  = {Kim, Sungil and Kim, Heeyoung},
  title   = {A new metric of absolute percentage error for intermittent demand forecasts},
  journal = {International Journal of Forecasting},
  volume={32}, number={3}, pages={669--679}, year={2016},
  doi     = {10.1016/j.ijforecast.2015.12.003}
}

@article{Makridakis2020,
  author  = {Makridakis, Spyros and Spiliotis, Evangelos and Assimakopoulos, Vassilios},
  title   = {The M4 Competition: 100,000 time series and 61 forecasting methods},
  journal = {International Journal of Forecasting},
  volume={36}, number={1}, pages={54--74}, year={2020},
  doi     = {10.1016/j.ijforecast.2019.04.014}
}

@article{Tseng2001CoordinateDescent,
  author  = {Tseng, Paul},
  title   = {Convergence of a Block Coordinate Descent Method for Nondifferentiable Minimization},
  journal = {Journal of Optimization Theory and Applications},
  volume={109}, number={3}, pages={475--494}, year={2001},
  doi     = {10.1023/A:1017501703105}
}

@article{MohajerinEsfahaniKuhn2018MathProg,
  author  = {Mohajerin Esfahani, Peyman and Kuhn, Daniel},
  title   = {Data-driven distributionally robust optimization using the Wasserstein metric: performance guarantees and tractable reformulations},
  journal = {Mathematical Programming},
  volume={171}, number={1}, pages={115--166}, year={2018},
  doi     = {10.1007/s10107-017-1172-1}
}

@article{Du2024,
  author  = {Du, Ke-Lin and Jiang, Bingchun and Lu, Jiabin and Hua, Jingyu and Swamy, M. N. S.},
  title   = {Exploring Kernel Machines and Support Vector Machines: Principles, Techniques, and Future Directions},
  journal = {Mathematics},
  volume={12}, number={24}, pages={3935}, year={2024},
  doi     = {10.3390/math12243935}
}

@article{AmayaTejera2024,
  author  = {Amaya-Tejera, Nazhir and Gamarra, Margarita and Vélez, Jorge I. and Zurek, Eduardo},
  title   = {A distance-based kernel for classification via Support Vector Machines},
  journal = {Frontiers in Artificial Intelligence},
  volume={7}, year={2024},
  doi     = {10.3389/frai.2024.1287875}
}

@article{AkhtarTanveerArshad2024PatternRecognition,
  author  = {Akhtar, Mushir and Tanveer, M. and Arshad, Mohd.},
  title   = {Advancing Supervised Learning with the Wave Loss Function: A Robust and Smooth Approach},
  journal = {Pattern Recognition},
  volume={155}, pages={110637}, year={2024},
  doi     = {10.1016/j.patcog.2024.110637}
}

@article{YuLiLiu2023PatternRecognition,
  author  = {Yu, Lang and Li, Shengjie and Liu, Siyi},
  title   = {Fast support vector machine training via three-term conjugate-like SMO algorithm},
  journal = {Pattern Recognition},
  volume={139}, pages={109478}, year={2023},
  doi     = {10.1016/j.patcog.2023.109478}
}

@article{GlasmachersIgel2006JMLR,
  author  = {Glasmachers, Tobias and Igel, Christian},
  title   = {Maximum-Gain Working Set Selection for SVMs},
  journal = {Journal of Machine Learning Research},
  volume={7}, pages={1437--1466}, year={2006}
}

@article{GlasmachersIgel2008NeuralComp,
  author  = {Glasmachers, Tobias and Igel, Christian},
  title   = {Second-Order SMO Improves SVM Online and Active Learning},
  journal = {Neural Computation},
  volume={20}, number={2}, pages={374--382}, year={2008},
  doi     = {10.1162/neco.2007.10-06-354}
}

@article{Laskov2006JMLR,
  author  = {Laskov, Pavel and Gehl, Christian and Krüger, Stefan and Müller, Klaus-Robert},
  title   = {Incremental Support Vector Learning: Analysis, Implementation and Applications},
  journal = {Journal of Machine Learning Research},
  volume={7}, number={69}, pages={1909--1936}, year={2006}
}

@article{BordesErtekinWestonBottou2005JMLR,
  author  = {Bordes, Antoine and Ertekin, Seyda and Weston, Jason and Bottou, Léon},
  title   = {Fast Kernel Classifiers with Online and Active Learning},
  journal = {Journal of Machine Learning Research},
  volume={6}, number={54}, pages={1579--1619}, year={2005}
}

@article{FanChenLin2005JMLR,
  author  = {Fan, Rong-En and Chen, Pai-Hsuen and Lin, Chih-Jen},
  title   = {Working Set Selection Using Second Order Information for Training Support Vector Machines},
  journal = {Journal of Machine Learning Research},
  volume={6}, number={63}, pages={1889--1918}, year={2005}
}

@inproceedings{Joachims2006,
  author    = {Joachims, Thorsten},
  title     = {Training Linear {SVMs} in Linear Time},
  booktitle = {Proceedings of the 12th ACM SIGKDD International Conference on Knowledge Discovery and Data Mining},
  series    = {KDD '06},
  pages     = {217--226},
  year      = {2006},
  publisher = {Association for Computing Machinery},
  doi       = {10.1145/1150402.1150429}
}

@techreport{LinLin2003NonPSDSMO,
  author      = {Lin, Hsuan-Tien and Lin, Chih-Jen},
  title       = {A Study on Sigmoid Kernels for {SVM} and the Training of non-{PSD} Kernels by {SMO}-type Methods},
  institution = {Department of Computer Science and Information Engineering, National Taiwan University},
  type        = {Technical Report},
  year        = {2003},
  month       = {March},
  address     = {Taipei 106, Taiwan}
}

@book{RyuYin2022,
  author    = {Ryu, Ernest K. and Yin, Wotao},
  title     = {Large-Scale Convex Optimization: Algorithms \& Analyses
               via Monotone Operators},
  publisher = {Cambridge University Press},
  year      = {2022},
  isbn      = {978-1-009-16085-8},
  url       = {https://large-scale-book.mathopt.com/}
}

\end{document}